\newtheorem{lemma}{Lemma}
\newtheorem{theorem}[lemma]{Theorem}
\newtheorem{proposition}[lemma]{Proposition}
\newtheorem{remark}[lemma]{Remark}
\newtheorem{assumption}[lemma]{Assumption}
\newtheorem{corollary}[lemma]{Corollary}
\newtheorem{definition}[lemma]{Definition}
\DeclareMathOperator*{\esslim}{ess lim}
\DeclareMathOperator{\sgn}{sgn}
\DeclareMathOperator\argth{artanh}
\newcommand{\argsh}{\mbox{argsh\,}}
\def\E{\mathbb{E}}
\def\P{\mathbb{P}}
\def\R{\mathbb{R}}
\def\Z{\mathbb{Z}}
\def\N{\mathbb{N}}
\def\Var{\mathrm{Var}}
\def\tun{\mathbf{1}}
\newcommand{\bbC}{\mathbb{C}}
\newcommand{\bbD}{\mathbb{D}}
\newcommand{\bbE}{\mathbb{E}}
\newcommand{\bbN}{\mathbb{N}}
\newcommand{\bbP}{\mathbb{P}}
\newcommand{\bbZ}{\mathbb{Z}}
\newcommand{\cC}{\mathcal{C}}
\newcommand{\cF}{\mathcal{F}}
\newcommand{\cI}{\mathcal{I}}
\newcommand{\cJ}{\mathcal{J}}
\newcommand{\cL}{\mathcal{L}}
\newcommand{\cM}{\mathcal{M}}
\newcommand{\cN}{\mathcal{N}}
\newcommand{\cO}{\mathcal{O}}
\newcommand{\cQ}{\mathcal{Q}}
\newcommand{\ccM}{\mathscr{M}}
\begin{document}

\begin{frontmatter}

% "Title of the Paper"
\title{On the scaling limits of weakly asymmetric bridges}
%\title{On the scaling limits of weakly asymmetric bridges\thanksref{t1}}
%\thankstext{t1}{This is an original survey paper}
\runtitle{Scaling limits of bridges}

% indicate corresponding author with \corref{}
% \author{\fnms{John} \snm{Smith}\thanksref{t2}\corref{}\ead[label=e1]{smith@foo.com}\ead[label=e2,url]{www.foo.com}}
% \thankstext{t2}{Thanks to somebody} 
% \address{line 1\\ line 2\\ \printead{e1}\\ \printead{e2}}

\author{\fnms{Cyril} \snm{Labb\'e}\ead[label=e1]{labbe@ceremade.dauphine.fr}}
\address{Universit\'e Paris Dauphine,\\ PSL University, CNRS, UMR 7534,\\ CEREMADE, 75016 Paris, France.\\ \printead{e1}}

\runauthor{C.Labb\'e}

\begin{abstract}
We consider a discrete bridge from $(0,0)$ to $(2N,0)$ evolving according to the corner growth dynamics, where the jump rates are subject to an upward asymmetry of order $N^{-\alpha}$ with $\alpha \in (0,\infty)$. We provide a classification of the asymptotic behaviours - invariant measure, hydrodynamic limit and fluctuations - of this model according to the value of the parameter $\alpha$.
\end{abstract}

\begin{keyword}[class=MSC]
\kwd[Primary ]{60K35}
\kwd[; secondary ]{60H15; 82C24}
\end{keyword}

\begin{keyword}
\kwd{exclusion process}
\kwd{height function}
\kwd{bridge}
\kwd{stochastic heat equation}
\kwd{Burgers equation}
\kwd{KPZ equation}
\end{keyword}

% history:
% \received{\smonth{1} \syear{0000}}

\setcounter{tocdepth}{2}
\tableofcontents

\end{frontmatter}

\section{Introduction}

The simple exclusion process is a statistical physics model that has received much attention from physicists and probabilists over the years. In the present article, we consider the case where the lattice is a segment of length $2N$, the total number of particles is $N$ and the process is endowed with zero-flux boundary conditions. We present a classification of the scaling limits of this model according to the asymmetry imposed on the jump rates.\\

We start with a precise definition of the model. Consider a system of $N$ particles on the linear lattice $\{1,\ldots,2N\}$, subject to the exclusion rule that prevents any two particles from sharing a same site. Each particle, independently of the others, jumps to its left at rate $p_N$ and to its right at rate $1-p_N$ as long as the target site is not occupied. Additionally, we impose a ``zero-flux" boundary condition to the system: a particle located at site $1$, resp.~at site $2N$, is not allowed to jump to its left, resp.~to its right. At any given time $t$, let $X_i(t)$ be equal to $+1$ if the $i$-th site is occupied, and to $-1$ otherwise.\\
It is standard to associate to such a particle system a so-called height function, defined by
\begin{equation*}
S(0) = 0\;,\quad S(k)=\sum_{i=1}^k X_i\;,\quad k=1,\ldots, 2N\;.
\end{equation*}
Since the total number of particles is $N$, we necessarily have $S(2N)=0$ so that $S$ is a discrete bridge. The dynamics of the particle system can easily be expressed at the level of the height function: at rate $p_N$, resp.~$1-p_N$, each downwards corner, resp.~upwards corner, flips into its opposite: we refer to Figure \ref{Fig1} for an illustration. The law of the corresponding dynamical interface will be denoted by $\P^N$.

This dynamics admits a unique reversible probability measure:
\begin{equation}\label{Eq:muN}
\mu_N(S) = \frac1{Z_N} \Big(\frac{p_N}{1-p_N}\Big)^{\frac{1}{2}A(S)}\;,
\end{equation}
where $A(S) = \sum_{k=1}^{2N} S(k)$ is the area under the discrete bridge $S$, and $Z_N$ is a normalisation constant, usually referred to as the partition function. This observation appears in various forms in the literature, see for instance~\cite{JanLeb,FunaSasa,EthLab15}. Notice that the dynamics is reversible w.r.t.~$\mu_N$ even if the jump rates are asymmetric: this feature of the model is a consequence of our ``zero-flux" boundary condition.

\begin{figure}\centering\label{Fig1}
	\begin{tikzpicture}[xscale=0.4, yscale=0.4, >=stealth]
	
	%%%%%%%%%%%%%%%%%%%%%%%%
	%%% Discrete Bridge  %%%
	%%%%%%%%%%%%%%%%%%%%%%%%
	% x-y axis
	\draw[-,thin,color=gray] (0,0) -> (14,0);
	%\draw[-,thin, color=black] (0,-3) -- (0,3);
	
	% Limiting shape
	\draw[-,gray] (0,0) -- (7,7) -- (14,0);
	\draw[-,gray] (0,0) -- (7,-7) -- (14,0);
	
	\draw[-,thick,color=black] (0,0) node[below left] {$0$} -- (1,1) -- (2,2) -- (3,1) -- (4,0) -- (5,1) -- (6,0) -- (7,-1) -- (8,0) -- (9,-1) -- (10,-2) -- (11,-1) -- (12,0) -- (13,1) -- (14,0) node[below right] {$2N$};
	
	% Upwards jump
	\draw[-,style=dotted] (3,1) -- (4,2) node[above] {\tiny \mbox{rate} $p_N$} -- (5,1);
	\draw[->] (4,0.5) -- (4,1.5);
	
	% Downwards jump
	\draw[-,style=dotted] (7,-1) -- (8,-2)node[below] {\tiny \mbox{rate} $1-p_N$} -- (9,-1);
	\draw[->] (8,-0.5) -- (8,-1.5);
	
	%\draw[-,thin,color=black] (301.5,0) -- (301.5,-0.05) node[below] {$r$} -- (301.5,0);
	%\draw[-,thin,color=black] (301.5,0) node[below] {$r$};
	
	%\draw[-,thin,color=black] (502,0) -- (502,-0.05) node[below] {$1$} -- (502,0);
	%\draw[-,thin,color=black] (502,0) node[below] {$1$};
	\end{tikzpicture}
	\caption{An example of interface.}
\end{figure}
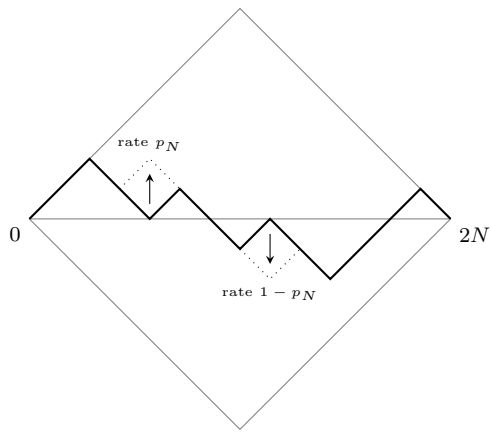

From now on, we only consider ``upwards'' asymmetries, that is, $p_N \geq 1/2$, and we aim at understanding the behaviour of the interface according to the strength of the asymmetry. It is clear that the interface will be pushed higher and higher as the asymmetry increases. On the other hand, the interface is subject to some geometric restrictions: it is bound to $0$ at both ends, and it is lower than the deterministic shape $k\mapsto k\wedge(2N-k)$. Actually, it is simple to check that under a strong asymmetry, that is, $p_N=p > 1/2$, the interface is essentially stuck to the latter deterministic shape. Therefore, to see non-trivial behaviours we need to consider asymmetries that vanish with $N$. We make the following choice of parametrisation:
\begin{equation*}
\frac{p_N}{1-p_N} = \exp\Big(\frac{4\sigma}{(2N)^\alpha}\Big)\;,\qquad \sigma > 0\;,\quad \alpha \in (0,\infty)\;,
\end{equation*}
so that
\begin{equation*}
p_N = \frac12 + \frac{\sigma}{(2N)^\alpha} + \cO\Big(\frac1{N^{2\alpha}}\Big)\;.
\end{equation*}
The important parameter is $\alpha$. When it equals $+\infty$, we are in the symmetric regime, while $\alpha = 0$ corresponds to a strong asymmetry. In the present paper, we investigate the whole range $\alpha \in (0,\infty)$.\\
The results are divided into three parts: first, we characterise the scaling limit of the invariant measure; second, the scaling limit of the fluctuations at equilibrium; and third, we investigate the scaling limit of the dynamics out of equilibrium. As we will see, the model displays a large variety of limiting behaviours, most of them already appear in related contexts of the literature. Let us mention that two sections of the present paper have been taken from the recent work~\cite{LabbeKPZ}, and have been enriched with more details and comments.

\medskip

From now on, we extend $S$ into a piecewise affine map from $[0,2N]$ into $\R$: namely, $S$ is affine on every interval $[k,k+1]$. We also let $L$ be the log-Laplace functional associated to the Bernoulli $\pm 1$ distribution with parameter $1/2$, namely $L(h) = \log \cosh h$. 

\subsection{The invariant measure}

The main result of this section is a Central Limit Theorem for the interface under $\mu_N$. To state this result, we need to rescale appropriately the interface according to the strength of the asymmetry. For $\alpha\geq 1$, the space variable will be rescaled by $2N$ so that the rescaled space variable will live in $I_\alpha = [0,1]$. On the other hand, for $\alpha < 1$, we will zoom in a window of order $(2N)^\alpha$ around the center of the lattice, hence the rescaled space variable will live in $I^N_\alpha = [-N/(2N)^\alpha,N/(2N)^\alpha]$ for any $N\geq 1$, and $I_\alpha = \R$ in the limit $N\rightarrow \infty$.\\
This being given, we introduce the curve $\Sigma^N_\alpha$ around which the fluctuations occur. One would have expected this curve to be defined as the mean of $S$ under $\mu_N$, but it is actually more convenient to opt for a different definition. However, $\Sigma^N_\alpha$ coincides with the mean under $\mu_N$ up to some negligible terms, see Remark \ref{Rk:Mean} below. For all $k\in\{0,\ldots,2N\}$, we set $x_k=k/(2N)$ if $\alpha\geq 1$, $x_k=(k-N)/(2N)^\alpha$ if $\alpha < 1$, and
\begin{equation}\label{Eq:DefSigmaN}
\Sigma^N_\alpha(x_k) = \sum_{i=1}^{k} L'(h^N_i)\;,\quad h^N_i = \frac{2\sigma}{(2N)^\alpha}\Big(N-i+\frac{1}{2}\Big)\;,\quad i\in\{1,\ldots,2N\}\;.
\end{equation}
In between these discrete values $x_k$'s, $\Sigma^N_\alpha$ is defined by affine interpolation. Let us mention that $\Sigma^N_\alpha(x)\sim (2N)^{2-\alpha}\sigma x(1-x)$ when $\alpha > 1$, and $\Sigma^N_\alpha(x) \sim 2N\int_0^x L'(\sigma(1-2y)) dy$ when $\alpha =1$. On the other hand, when $\alpha<1$, $\Sigma^N_\alpha$ differs from the maximal curve $k\mapsto k\wedge (2N-k)$ only in a window of order $N^\alpha$ around the center of the lattice. We refer to Figure \ref{Fig:Static} for an illustration and to Equations (\ref{Eq:Mean1}) and (\ref{Eq:Mean2}) for precise formulae.\\
We are now ready to introduce the rescaling for the fluctuations. For $\alpha \geq 1$, we set
\begin{equation*}
u^N(x) := \frac{S(x2N) - \Sigma_\alpha^N(x)}{\sqrt{2N}}\;,\quad x\in [0,1]\;,
\end{equation*}
and for $\alpha < 1$, we set
\begin{equation*}
u^N(x) := \frac{S(N + x(2N)^\alpha)-\Sigma_\alpha^N(x)}{(2N)^{\frac{\alpha}{2}}} \;,\quad x\in I^N_\alpha\;.
\end{equation*}

\begin{theorem}\label{Th:Static}
Under the invariant measure $\mu_N$, we have $u^N \xRightarrow{(d)} B_\alpha$ as $N\rightarrow\infty$. The process $B_\alpha$ is a centered Gaussian process on $I_\alpha$ with covariance
\begin{equation}\label{Eq:CovB}
\E\big[B_\alpha(x)B_\alpha(y)\big] = \frac{q_\alpha(0,x)\, q_\alpha(y,1)}{q_\alpha(0,1)}\;,\quad\forall x\leq y \in [0,1]\;,\quad \alpha\in [1,\infty)\;,
\end{equation}
and
\begin{equation}\label{Eq:CovB2}
\E\big[B_\alpha(x)B_\alpha(y)\big] = \frac{q_\alpha(-\infty,x)\, q_\alpha(y,+\infty)}{q_\alpha(-\infty,+\infty)}\;,\quad\forall x\leq y \in \R\;,\quad\alpha \in (0,1)\;,
\end{equation}
where
\begin{equation*}
q_\alpha(x,y) =
\begin{cases}
x\vee y - x\wedge y &\mbox{ if } \alpha \in (1,\infty)\\
\int_{x\wedge y}^{x\vee y} L''(\sigma(1-2u)) du &\mbox{ if } \alpha = 1\\
\int_{x \wedge y}^{x\vee y} L''(2\sigma u) du &\mbox{ if } \alpha \in (0,1)\;.
\end{cases}
\end{equation*}
\end{theorem}

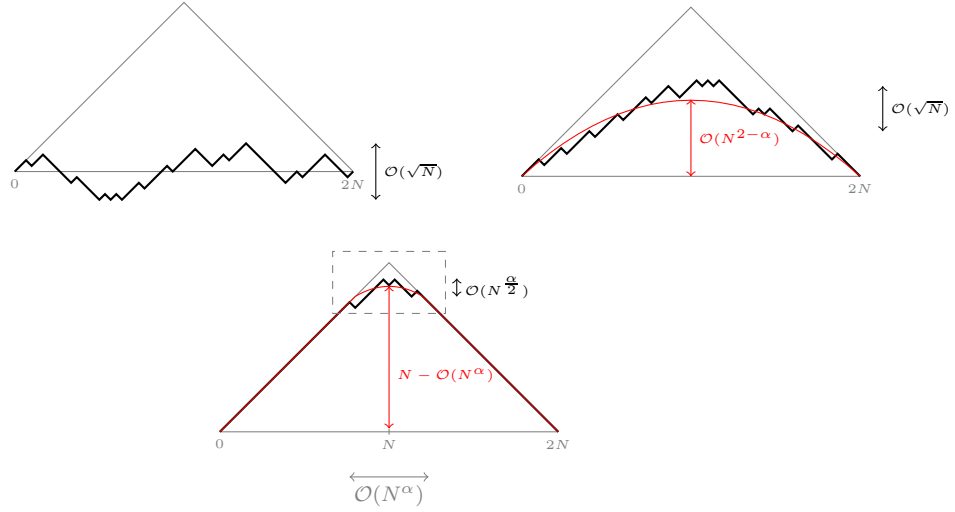
\begin{figure}
\begin{minipage}[b]{0.3\linewidth}
      \centering
\begin{tikzpicture}[scale=1.5]
\draw[-,gray] (0,0) node[below]{\tiny$0$} -- (3,0) node[below]{\tiny$2N$};
\draw[-,gray] (0,0) -- (1.5,1.5) -- (3,0);

\draw[-,thick] (0,0) -- (0.05,0.05) -- (0.1,0.1) -- (0.15,0.05) -- (0.2,0.1) -- (0.25,0.15) -- (0.3,0.1) -- (0.35,0.05) -- (0.4,0) -- (0.45,-0.05) -- (0.5,-0.1) -- (0.55,-0.05) -- (0.6,-0.1) -- (0.65,-0.15) -- (0.7,-0.2) -- (0.75,-0.25) -- (0.8,-0.2) -- (0.85,-0.25) -- (0.9,-0.2) -- (0.95,-0.25) -- (1,-0.2) -- (1.05,-0.15) -- (1.1,-0.1) -- (1.15,-0.15) -- (1.2,-0.1) -- (1.25,-0.05) -- (1.3,0) -- (1.35,0.05) -- (1.4,0) -- (1.45,0.05) -- (1.5,0.1) -- (1.55,0.15) -- (1.6,0.2) -- (1.65,0.15) -- (1.7,0.1) -- (1.75,0.05) -- (1.8,0.1) -- (1.85,0.15) -- (1.9,0.1) -- (1.95,0.15) -- (2,0.2) -- (2.05,0.25) -- (2.1,0.2) -- (2.15,0.15) -- (2.2,0.1) -- (2.25,0.05) -- (2.3,0) -- (2.35,-0.05) -- (2.4,-0.1) -- (2.45,-0.05) -- (2.5,0) -- (2.55,-0.05) -- (2.6, 0) -- (2.65,0.05) -- (2.7,0.1) -- (2.75,0.15) -- (2.8,0.1) -- (2.85,0.05) -- (2.9,0) -- (2.95,-0.05) -- (3,0);

\draw[<->,black] (3.2,-0.25) -- (3.2,0) node[right]{\tiny$\cO(\sqrt{N})$} -- (3.2,0.25);
\end{tikzpicture}
\end{minipage}\hspace{3cm}
\begin{minipage}[b]{0.3\linewidth}   
\centering
\begin{tikzpicture}[scale=1.5]
\draw[-,gray] (0,0) node[below]{\tiny$0$} -- (3,0) node[below]{\tiny$2N$};
\draw[-,gray] (0,0) -- (1.5,1.5) -- (3,0);

\draw[-,thick] (0,0) -- (0.05,0.05) -- (0.1,0.1) -- (0.15,0.15) -- (0.2,0.1) -- (0.25,0.15) -- (0.3,0.2) -- (0.35,0.25) -- (0.4,0.2) -- (0.45,0.25) -- (0.5,0.3) -- (0.55,0.35) -- (0.6,0.4) -- (0.65,0.35) -- (0.7,0.4) -- (0.75,0.45) -- (0.8,0.5) -- (0.85,0.55) -- (0.9,0.5) -- (0.95,0.55) -- (1,0.6) -- (1.05,0.65) -- (1.1,0.7) -- (1.15,0.65) -- (1.2,0.7) -- (1.25,0.75) -- (1.3,0.8) -- (1.35,0.75) -- (1.4,0.7) -- (1.45,0.75) -- (1.5,0.8) -- (1.55,0.85) -- (1.6,0.8) -- (1.65,0.85) -- (1.7,0.8) -- (1.75,0.85) -- (1.8,0.8) -- (1.85,0.75) -- (1.9,0.7) -- (1.95,0.65) -- (2,0.6) -- (2.05,0.55) -- (2.1,0.6) -- (2.15,0.55) -- (2.2,0.6) -- (2.25,0.55) -- (2.3,0.5) -- (2.35,0.45) -- (2.4,0.4) -- (2.45,0.45) -- (2.5,0.4) -- (2.55,0.35) -- (2.6, 0.3) -- (2.65,0.25) -- (2.7,0.2) -- (2.75,0.15) -- (2.8,0.2) -- (2.85,0.15) -- (2.9,0.1) -- (2.95,0.05) -- (3,0);

\draw[-,red] [domain=0:3] plot(\x,{0.3*\x*(3-\x)});

\draw[<->,red] (1.5,0) -- (1.5,0.34) node[right]{\tiny $\cO(N^{2-\alpha})$} -- (1.5,0.68);

\draw[<->,black] (3.2,0.4) -- (3.2,0.6) node[right]{\tiny$\cO(\sqrt{N})$} -- (3.2,0.8);

\end{tikzpicture}

\end{minipage}

\bigskip

\begin{minipage}[b]{1\linewidth}   
\centering
\begin{tikzpicture}[scale=1.5]
\draw[-,gray] (0,0) node[below]{\tiny$0$} -- (3,0) node[below]{\tiny$2N$};
\draw[-,gray] (0,0) -- (1.5,1.5) -- (3,0);

\draw[-,thick] (0,0) -- (0.05,0.05) -- (0.1,0.1) -- (0.15,0.15) -- (0.2,0.2) -- (0.25,0.25) -- (0.3,0.3) -- (0.35,0.35) -- (0.4,0.4) -- (0.45,0.45) -- (0.5,0.5) -- (0.55,0.55) -- (0.6,0.6) -- (0.65,0.65) -- (0.7,0.7) -- (0.75,0.75) -- (0.8,0.8) -- (0.85,0.85) -- (0.9,0.9) -- (0.95,0.95) -- (1,1) -- (1.05,1.05) -- (1.1,1.1) -- (1.15,1.15) -- (1.2,1.1) -- (1.25,1.15) -- (1.3,1.2) -- (1.35,1.25) -- (1.4,1.3) -- (1.45,1.35) -- (1.5,1.3) -- (1.55,1.35) -- (1.6,1.3) -- (1.65,1.25) -- (1.7,1.2) -- (1.75,1.25) -- (1.8,1.2) -- (1.85,1.15) -- (1.9,1.1) -- (1.95,1.05) -- (2,1) -- (2.05,0.95) -- (2.1,0.9) -- (2.15,0.85) -- (2.2,0.8) -- (2.25,0.75) -- (2.3,0.7) -- (2.35,0.65) -- (2.4,0.6) -- (2.45,0.55) -- (2.5,0.5) -- (2.55,0.45) -- (2.6, 0.4) -- (2.65,0.35) -- (2.7,0.3) -- (2.75,0.25) -- (2.8,0.2) -- (2.85,0.15) -- (2.9,0.1) -- (2.95,0.05) -- (3,0);

\draw[-,red] (0,0)--(1.2,1.2);
\draw[-,red][domain=1.2:1.8] plot(\x, {(\x-1.2)*(1.8-\x) + 1.2});
\draw[-,red] (1.8,1.2)--(3,0);

\draw[-,gray] (1.5,-0.025) -- (1.5,0.025);
\draw[-,gray] (1.5,0) node[below] {\tiny$N$};
\draw[<->,gray] (1.15,-0.4) -- (1.5,-0.4) node[below]{$\cO(N^{\alpha})$} -- (1.85,-0.4);

\draw[<->,red] (1.5,0.03) -- (1.5,0.5) node[right]{\tiny$N - \cO(N^\alpha)$} -- (1.5,1.29);

\draw[<->,black] (2.1,1.2) -- (2.1,1.28) node[right]{\tiny$\cO(N^{\frac{\alpha}{2}})$} -- (2.1,1.36);

\draw[-,dashed,gray] (1,1.05) -- (2,1.05) -- (2,1.6) -- (1,1.6) -- (1,1.05);

\end{tikzpicture}

\end{minipage}
\caption{\it Upper left $\alpha > 3/2$, upper right $\alpha \in [1,3/2]$, bottom $\alpha < 1$. The red curve is $\Sigma^N_\alpha$: in the first case, it is negligible compared to the fluctuations so we have not drawn it.}\label{Fig:Static}
\end{figure}

\begin{remark}\label{Rk:Cov}
In all cases, the process $(c_\alpha B_\alpha(r_\alpha(x)),x\in[0,1])$ is a Brownian bridge where
$$ c_\alpha = \begin{cases} 1 &\mbox{ if } \alpha > 1\;,\\
\sqrt{\frac{\sigma}{\tanh(\sigma)}} &\mbox{ if } \alpha = 1\;,\\
\sqrt{\sigma} &\mbox{ if } \alpha \in (0,1)\;.
\end{cases}
$$
and
$$
r_\alpha(x) = \begin{cases} x &\mbox{ if } \alpha > 1\;,\\
\frac12\big(1+\sigma^{-1} \argth((2x-1)\tanh(\sigma))\big) &\mbox{ if } \alpha = 1\;,\\
\frac{1}{2\sigma} \argth(2x-1) &\mbox{ if } \alpha \in (0,1)\;.
\end{cases}
$$
\end{remark}
Let us make a few comments on this result. Recall that the mean shape $\Sigma^N_\alpha$ is of the order $N^{(2-\alpha)\wedge 1}$. The scaling of our process $u^N$ together with the convergence result stated in Theorem \ref{Th:Static} show that the fluctuations of the interface around this mean shape are of the order $N^{(1\wedge \alpha)/2}$. Consequently, according as $\alpha$ is larger than $3/2$, resp. equal to $3/2$, resp. smaller than $3/2$, the mean shape is negligible, resp.~of the same order, resp.~dominant, compared to the fluctuations.\\
Notice that for $\alpha \le 1$, the features of the limiting mean shape and fluctuations depend on the step distribution of our interface (through the log-Laplace functional) while they are ``universal'' for $\alpha > 1$. The case $\alpha = 3/2$ is already covered in~\cite{EthLab15}, while the case $\alpha = 1$ can be deduced from previous results of Dobrushin and Hryniv~\cite{DobrushinHryniv} on paths of random walks conditioned on having a given large area.

\smallskip

\noindent We also derive the asymptotics of the partition function $Z_N$.
\begin{proposition}\label{Prop:Partition}
	As $N\rightarrow\infty$ we have
	\begin{equation*}
	\log \frac{Z_N}{2^{2N}} = \begin{cases} \frac{\sigma^2}{6}(2N)^{3-2\alpha} + \cO(N^{1-2\alpha}\vee N^{5-4\alpha})&\mbox{ if }\alpha \in (1,\infty)\;,\\
	(2N) \int_0^1 L\big(\sigma (1-2x)\big) dx +\cO(1)&\mbox{ if }\alpha=1\;,\\
	\frac{\sigma}{2}(2N)^{2-\alpha} - 2N \log 2 + \cO(N^{\alpha})&\mbox{ if }\alpha \in (0,1)\;.
	\end{cases}
	\end{equation*}
\end{proposition}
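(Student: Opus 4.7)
The plan is to express $Z_N$ via exponential tilting and then combine a deterministic log-Laplace sum with a local central limit theorem for the tilted variables. On the bridge event $\sum_i X_i = 0$, the exponent recenters: using $A(S) = \sum_{i=1}^{2N}(2N-i+1)X_i = (N+\tfrac12)\sum_i X_i + \sum_i (N-i+\tfrac12) X_i$ and the fact that the first sum vanishes on bridges, one obtains $(2\sigma/(2N)^\alpha)A(S) = \sum_i h^N_i X_i$. Let $\tilde X_i$ be i.i.d.\ $\pm1$-Bernoulli variables of parameter $1/2$; then
\begin{equ}
\frac{Z_N}{2^{2N}} = \E\Big[\exp\Big(\sum_i h^N_i \tilde X_i\Big)\un_{\sum_i \tilde X_i = 0}\Big] = \prod_{i=1}^{2N}\cosh(h^N_i) \cdot \P\Big(\sum_i \hat X_i = 0\Big),
\end{equ}
the second equality coming from a coordinate-wise tilt, with $\hat X_i$ an independent $\pm1$-Bernoulli of mean $\tanh(h^N_i)=L'(h^N_i)$ and variance $L''(h^N_i)$. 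Taking logarithms yields the decomposition
\begin{equ}
\log\frac{Z_N}{2^{2N}} = \sum_{i=1}^{2N} L(h^N_i) + \log \P\Big(\sum_i \hat X_i = 0\Big),
\end{equ}
and the three cases of the proposition now reduce to analysing each term separately.

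For the deterministic sum $\sum_i L(h^N_i)$, the appropriate expansion of $L$ differs according to the regime. When $\alpha>1$ the $h^N_i$'s are uniformly of order $N^{1-\alpha}$; Taylor expanding $L(h)=h^2/2-h^4/12+\cO(h^6)$ and using the identity $\sum_{i=1}^{2N}(N-i+\tfrac12)^2=(2N)^3/12+\cO(N)$ yields $\sum_i L(h^N_i) = \tfrac{\sigma^2}{6}(2N)^{3-2\alpha} + \cO(N^{5-4\alpha})$, the latter controlling $\tfrac{1}{12}\sum_i (h^N_i)^4$. When $\alpha=1$ the values $h^N_i=\sigma(1-(2i-1)/(2N))$ form a regular grid in $[-\sigma,\sigma]$ and the sum is a Riemann approximation to $2N\int_0^1 L(\sigma(1-2x))dx$ with $\cO(1)$ error. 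When $\alpha<1$ I would rely on the identity $L(h)=|h|-\log 2+\log(1+e^{-2|h|})$: direct computation gives $\sum_i|h^N_i|=\tfrac{\sigma}{2}(2N)^{2-\alpha}$ and $\sum_i(-\log 2)=-2N\log 2$, while the remainder $\sum_i\log(1+e^{-2|h^N_i|})$ is exponentially small outside the window $|i-N|\lesssim N^\alpha$ and is therefore $\cO(N^\alpha)$.

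The log-probability term is controlled by a local limit theorem. By the symmetry $h^N_{2N+1-i}=-h^N_i$ and the oddness of $L'$, the centred statistic $\sum_i \hat X_i$ is an integer-valued sum of independent Bernoullis with variance $V_N=\sum_i L''(h^N_i)$. A standard local CLT gives $\P(\sum_i \hat X_i=0)=(2\pi V_N)^{-1/2}(1+o(1))$, contributing $-\tfrac12\log V_N+\cO(1)$ to the log partition function. Since $V_N$ grows polynomially in $N$ (namely $V_N\asymp N$ when $\alpha\geq 1$ and $V_N\asymp N^\alpha$ when $\alpha<1$, as $L''$ decays exponentially and only indices with $|i-N|=\cO(N^\alpha)$ contribute), this is a logarithmic correction absorbed into the error terms announced in the proposition. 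The main technical obstacle is the local CLT in the regime $\alpha<1$: only $\cO(N^\alpha)$ of the $\hat X_i$'s have non-degenerate variance, while the remaining ones are exponentially close to deterministic $\pm1$ and simply shift the sum by a constant up to an exponentially small error; a classical Fourier-inversion local CLT applied to the sub-sum over the fluctuating middle coordinates then delivers the required estimate.
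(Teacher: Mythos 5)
Your proof takes essentially the same route as the paper's: the tilting identity $\log(Z_N/2^{2N}) = L_S(h^N) + \log\nu_N(S(2N)=0)$ is exactly the paper's Equation (\ref{Eq:Partition}), and your three expansions of $\sum_i L(h^N_i)$ (Taylor expansion at $0$ for $\alpha>1$, Riemann sum for $\alpha=1$, and $L(h)=|h|-\log 2+\log(1+e^{-2|h|})$ for $\alpha<1$) coincide with the ones used there. The only divergence is in the probability factor: the paper simply bounds $\nu_N(S(2N)=0)\le 1$, whereas you evaluate it by a local CLT as $-\tfrac12\log V_N+\cO(1)$; be aware that this $\log N$ correction is \emph{not} literally absorbed by the stated error terms when $\alpha\ge 5/4$ or $\alpha=1$ (where the announced error is $\cO(1)$) --- a point on which the paper's own proof is equally loose, so this is a defect of the statement's precision rather than of your argument.
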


Finally, let us observe that all the results presented above can be extended to a more general class of static models: namely, to paths of random walks having positive probability of coming back to $0$ after $2N$ steps and whose step distribution admits exponential moments.

\subsection{Fluctuations at equilibrium}

We turn our attention to the dynamics. Below, $\dot{W}$ will denote a space-time white noise on $[0,\infty)\times I_\alpha$, that is, a centred Gaussian random distribution such that for any two functions $f,g \in L^2\big([0,\infty)\times I_\alpha\big)$, we have $\E\dot{W}(f)\dot{W}(g) = \langle f,g\rangle$. We will denote by $S(t,k)$ the dynamic height function and by $X(t,k) = X_k(t) \in \{-1,1\}$ the occupation variables at time $t$ and site $k$.\\

For $\alpha \geq 1$, we set
\begin{equation*}
u^N(t,x) := \frac{S(t(2N)^2,x2N) - \Sigma_\alpha^N(x)}{\sqrt{2N}}\;,\quad x\in [0,1]\;,\; t\geq 0\;,
\end{equation*}
while for $\alpha < 1$, we set
\begin{equation*}
u^N(t,x) := \frac{S(t(2N)^{2\alpha},N + x(2N)^\alpha)-\Sigma_\alpha^N(x)}{(2N)^{\frac{\alpha}{2}}} \;,\quad x\in I^N_\alpha\;,\; t\geq 0\;.
\end{equation*}
For convenience, we set $\Sigma_1(x) = \lim_{N\rightarrow\infty} \Sigma^N_1(x)/(2N) = \int_0^x L'(\sigma(1-2y)) dy$ for all $x\in [0,1]$, and, for $\alpha < 1$, $\Sigma_\alpha(x) = \lim_{N\rightarrow\infty} (\Sigma^N_\alpha(x) - N)/(2N)^\alpha = x + \int_{-x}^\infty \big(L'(2\sigma y) - 1\big) dy$ for all $x\in\R$.

\begin{theorem}\label{Th:Dynamic}
Assume that the process starts from the invariant measure $\mu_N$. Then, as $N\rightarrow\infty$, the process $u^N$ converges in distribution to the process $u$ where
\begin{enumerate}
	\item For $\alpha \in (1,\infty)$, $u$ solves
	\begin{align}\label{SHE}
	\begin{cases}
	\partial_t u = \frac{1}{2} \partial^2_x u + \dot{W}\;,\quad x\in(0,1)\;,\\
	u(t,0)=u(t,1) = 0\;,
	\end{cases}
	\end{align}
	started from an independent realisation of $B_\alpha$,
	\item For $\alpha = 1$, $u$ solves
	\begin{align}\label{SHE2}
	\begin{cases}
	\partial_t u = \frac{1}{2} \partial^2_x u -2\sigma \partial_x \Sigma_1\, \partial_x u + \sqrt{1-(\partial_x \Sigma_1)^2}\,\dot{W}\;,\quad x\in(0,1)\;,\\
	u(t,0)=u(t,1) = 0\;,
	\end{cases}
	\end{align}
	started from an independent realisation of $B_1$,
	\item For $\alpha \in (0,1)$, $u$ solves
	\begin{equation}\label{SHE3}
	\partial_t u = \frac{1}{2} \partial^2_x u -2\sigma \partial_x \Sigma_\alpha\, \partial_x u +  \sqrt{1-(\partial_x \Sigma_\alpha)^2}\,\dot{W}\;,\quad x\in\R\;,
	\end{equation}
	started from an independent realisation of $B_\alpha$.
\end{enumerate}
In all cases, convergence holds in the Skorohod space $\bbD([0,\infty),\cC(I_\alpha))$.
\end{theorem}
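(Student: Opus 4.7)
\emph{Overall approach.} The plan is to use Dynkin's martingale decomposition of $u^N$ under the invariant measure $\mu_N$, identify the limits of the drift and of the predictable quadratic variation, and conclude through tightness and a martingale-problem characterisation. Convergence of the initial marginal to $B_\alpha$ is exactly Theorem~\ref{Th:Static}. A direct computation, using that a corner-flip at site $k$ changes only $S(k)$ (by $\pm 2$), yields
\[
\cL_N S(k) \;=\; \tfrac{1}{2}\Delta S(k) \;+\; (2p_N-1)\,\tfrac{1 - X_k X_{k+1}}{2}\,,
\]
with $\Delta$ the discrete Laplacian and $X_i = S(i)-S(i-1)$. Decomposing $X_i = L'(h_i^N) + \xi_i$ with $\xi_i$ centred under $\mu_N$, and linearising the product $X_k X_{k+1}$, the diverging ``static'' contribution $(2p_N-1)\,\E_{\mu_N}[\mathbf{1}_{\mathrm{corner}}(k)]$ cancels exactly $\tfrac{1}{2}\Delta\Sigma_\alpha^N(x_k)$: this is precisely the reason for the choice~\eqref{Eq:DefSigmaN} of $\Sigma_\alpha^N$. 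After this cancellation and the appropriate time/space rescaling, the generator applied to $u^N$ produces the Laplacian $\tfrac{1}{2}\partial_x^2 u^N$, the transport term $-2\sigma\,\partial_x\Sigma_\alpha\,\partial_x u^N$ (which disappears when $\alpha>1$ because $\partial_x\Sigma_\alpha\equiv 0$ at the fluctuation scale), and a remainder whose dominant contribution is a diverging prefactor times the quadratic fluctuation $\xi_k\xi_{k+1}$.

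\emph{Boltzmann--Gibbs, quadratic variation, tightness.} The remainder is handled via a Boltzmann--Gibbs replacement: $\xi_k\xi_{k+1}$ is exchanged for its local $\mu_N$-mean through a one-block/two-block-type estimate. For $\alpha>1$ the $X_i$'s are almost symmetric Bernoulli and a direct variance bound suffices; for $\alpha\leq 1$ one exploits the explicit structure of $\mu_N$ as inhomogeneous random-walk steps with bias $L'(h_i^N)$ conditioned on zero sum, combined with a local central limit theorem to absorb the bridge conditioning. The site-wise martingales are pairwise orthogonal (two flips never coincide at the same site), and a carr\'e-du-champ computation shows that the $\mu_N$-mean of $\langle M^N(\cdot,\phi)\rangle_t$ converges to $t\int_{I_\alpha}\phi(x)^2\,(1-(\partial_x\Sigma_\alpha(x))^2)\,dx$, which matches the target noise coefficient since $\E_{\mu_N}[\mathbf{1}_{\mathrm{corner}}(k)]\sim\tfrac{1}{2}L''(h_k^N)$ and $L''=1-(L')^2$; a second-moment estimate upgrades this to convergence in probability. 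Tightness in $\bbD([0,\infty),\cC(I_\alpha))$ follows from the Aldous--Rebolledo criterion applied to $\langle u^N,\phi\rangle$ for $\phi$ in a countable dense family of test functions, combined with a spatial Kolmogorov-type regularity estimate inherited from the equilibrium law of Theorem~\ref{Th:Static}. Any subsequential limit then solves the martingale problem for the target linear SPDE; since the coefficients are smooth in $x$, well-posedness identifies the limit uniquely and convergence of the whole sequence follows.

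\emph{Main obstacle.} The hardest step is the Boltzmann--Gibbs replacement in the regime $\alpha\in(0,1]$, where $\mu_N$ has a non-trivial macroscopic profile, the local slopes are genuinely biased, and the bridge conditioning is no longer a perturbation. Obtaining a quantitative $L^2(\mu_N)$ bound on the quadratic fluctuations $\xi_k\xi_{k+1}$ summed against smooth test functions, uniform in $N$, is where the main technical work concentrates. A secondary but nontrivial point is the control near the boundary of $I_\alpha$ (for $\alpha<1$, uniformly on large compact windows, since $I_\alpha=\mathbb{R}$), where the noise coefficient $\sqrt{1-(\partial_x\Sigma_\alpha)^2}$ degenerates.
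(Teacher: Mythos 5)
Your proposal is correct and follows essentially the same route as the paper: Dynkin decomposition of the discrete dynamics, cancellation of the static drift against $\tfrac12\Delta\Sigma_\alpha^N$, a Boltzmann--Gibbs replacement of the corner indicator $\tfrac{1-X_kX_{k+1}}{2}$ by its mean plus linear part (with the bridge conditioning absorbed via the local limit theorem and the uniform absolute continuity of $\mu_N$ w.r.t.\ the product measure $\nu_N$), identification via the martingale problem using $L''=1-(L')^2$, and tightness from martingale bounds on time increments combined with the equilibrium spatial regularity. You also correctly isolate the main difficulty (the replacement step for $\alpha\leq 1$, where $\nu_N$ is an inhomogeneous, genuinely biased product measure); the only cosmetic difference is that the paper runs tightness through negative Sobolev norms and an interpolation inequality rather than Aldous--Rebolledo, which is an equivalent route.
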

Once again, notice the specific behaviour when $\alpha \leq 1$. De Masi, Presutti and Scacciatelli~\cite{DeMasi89} and Dittrich and G\"artner~\cite{DitGar91} prove convergence of the fluctuations of the weakly asymmetric simple exclusion process (WASEP) on the full line when the asymmetry scales like $\epsilon$ and time is sped up by $\epsilon^{-2}$: the limiting fluctuations are then given by an equation similar to \eqref{SHE3}. Let us also cite the work of Derrida, Enaud, Landim and Olla~\cite{DerridaEnaudLandimOlla} on a related model interacting with reservoirs.\\
An important ingredient in the proof of this theorem is the Boltzmann-Gibbs principle, which is adapted to the present setting in Proposition \ref{Prop:BG}.

\subsection{Hydrodynamic limit}

The subsequent question we address concerns the convergence to equilibrium: suppose we start from some initial profile $S_0$ at time $0$, how does the interface reach its stationary state ?
We consider the following rescaled height function
\begin{equation*}
m^N(t,x) := \frac{S\big(t(2N)^{(\alpha+1)\wedge 2},x2N\big)}{2N}\;,\quad t\geq 0\;,\quad x\in[0,1]\;.
\end{equation*}
Notice that under this scaling, at any time $t\geq 0$ the profile $x\mapsto m^N(t,x)$ is $1$-Lipschitz.
\begin{theorem}\label{Th:Hydro}
Let $\alpha\in(0,\infty)$. We assume that the initial profile $m^N(0,\cdot)$ is deterministic and converges uniformly to some continuous profile $m_0(\cdot)$. Then, the process $m^N$ converges in probability, in the Skorohod space $\bbD([0,\infty),\cC([0,1]))$, to the deterministic process $m$ where:
\begin{enumerate}
\item If $\alpha \in (1,\infty)$, $m$ is the unique solution of the linear heat equation
	\begin{equation}\label{PDEHeat}
	\begin{cases}\partial_t m = \frac{1}{2} \partial^2_x m\;,\\
	m(t,0)=m(t,1)=0\;,\quad m(0,\cdot) = m_0(\cdot)\;.\end{cases}	
	\end{equation}
\item If $\alpha = 1$, $m$ is the solution of the following heat equation with non-linear drift
	\begin{equation}\label{PDEHC}
	\begin{cases}\partial_t m = \frac{1}{2} \partial^2_x m + \sigma\big(1- (\partial_x m)^2\big)\;,\\
	m(t,0)=m(t,1)=0\;,\quad m(0,\cdot) = m_0(\cdot)\;.\end{cases}
	\end{equation}
\item If $\alpha \in (0,1)$, $m$ is the solution of the following Hamilton-Jacobi equation
	\begin{equation}\label{PDEHJ}
	\begin{cases}\partial_t m = \sigma\big(1- (\partial_x m)^2\big)\;,\\
	m(t,0)=m(t,1)=0\;,\quad m(0,\cdot) = m_0(\cdot)\;.\end{cases}
	\end{equation}
\end{enumerate}
\end{theorem}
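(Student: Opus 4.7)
The plan is to derive the limit from the martingale formulation of the dynamics, handle the three asymptotic regimes separately, and identify each deterministic limit through uniqueness of the associated PDE. A direct computation of the generator $\cL^N$ of the corner-flip dynamics yields
\[\cL^N S(k) = \tfrac{1}{2} \Delta S(k) + \tfrac{2p_N - 1}{2}\big(1 - X_k X_{k+1}\big),\]
where $\Delta$ is the discrete Laplacian. Testing against a smooth $\phi$ compactly supported in $(0,1)$, summing by parts and applying Dynkin's formula, one obtains
\[\langle m^N(t), \phi\rangle - \langle m^N(0), \phi\rangle = \int_0^t \big(A^N_s(\phi) + B^N_s(\phi)\big)\, ds + M^N_t(\phi),\]
with leading-order expressions
\[A^N_s(\phi) = \frac{T_N}{2(2N)^2}\, \langle m^N(s), \phi''\rangle, \qquad B^N_s(\phi) = \frac{\sigma T_N}{(2N)^{\alpha+1}} \cdot \frac{1}{2N}\sum_k \big(1-X_k X_{k+1}\big)\phi(k/2N),\]
where $T_N := (2N)^{(\alpha+1)\wedge 2}$ is the chosen time scale. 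A standard bracket computation gives $\langle M^N(\phi)\rangle_t = O(T_N/(2N)^3) = o(1)$ uniformly on compact intervals, so the martingale term is negligible in the limit.

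Next I would establish tightness of $(m^N)$ in $\bbD([0,\infty), \cC([0,1]))$. Since $m^N(t, \cdot)$ is automatically $1$-Lipschitz in $x$, spatial equicontinuity is free. For temporal regularity I apply Aldous' criterion to $\langle m^N, \phi\rangle$ for $\phi$ ranging over a countable dense family of smooth test functions; the required modulus estimate follows from the $L^2$-smallness of $M^N$ together with the uniform $O(1)$ bounds on $A^N_s$ and $B^N_s$ (using $\|m^N\|_\infty \leq 1$ and $|X_k X_{k+1}|\leq 1$).

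For $\alpha>1$ one has $T_N/(2N)^{\alpha+1} = (2N)^{1-\alpha}\to 0$, so $B^N$ vanishes and any subsequential limit is a distributional solution of the linear heat equation \eqref{PDEHeat}; uniqueness for the Dirichlet problem concludes. For $\alpha=1$ both prefactors are of order one and the crucial step is the replacement of $X_k X_{k+1}$ by a local function of the slope. Invoking the Boltzmann--Gibbs principle (Proposition \ref{Prop:BG}) at a mesoscopic scale, one shows that $\frac{1}{2N}\sum_k (1-X_k X_{k+1})\phi(k/2N)$ is close in probability to $\int (1-(\partial_x m^N)^2)\phi\,dx$, because under local equilibrium at slope $\rho$ one has $\E[X_k X_{k+1}] = \rho^2$. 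This identifies the limit as the unique classical solution of the viscous Hamilton--Jacobi equation \eqref{PDEHC}.

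The regime $\alpha<1$ is the main obstacle. Here $T_N/(2N)^2 = (2N)^{\alpha-1}\to 0$ kills the diffusive term while $B^N$ remains of order one, so formally the limit is the first-order Hamilton--Jacobi equation \eqref{PDEHJ}. Its weak solutions are not unique and the limit must instead be characterised as the unique viscosity solution vanishing at $x=0,1$. A martingale-plus-Boltzmann--Gibbs argument alone cannot close this gap, since without parabolic smoothing there is nothing to dissipate microscopic oscillations. My plan is to exploit attractivity of the corner-growth dynamics: the canonical monotone coupling preserves pointwise ordering of height functions, so $m^N$ can be sandwiched between monotone profiles whose limiting viscosity evolutions are explicit. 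Passing to the limit in the coupling traps any subsequential limit between the largest viscosity sub-solution and the smallest viscosity super-solution of \eqref{PDEHJ} starting from $m_0$, which by the Crandall--Lions comparison principle coincide. Uniqueness of the viscosity solution then promotes tightness to convergence in probability.
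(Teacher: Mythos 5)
Your generator computation, the identification of the three scaling regimes, and the treatment of $\alpha>1$ are all correct and match the paper's strategy (martingale decomposition, vanishing quadratic variation, uniqueness for the Dirichlet heat equation). The two other regimes, however, each contain a genuine gap. For $\alpha=1$ you invoke Proposition \ref{Prop:BG} to replace $\frac{1}{2N}\sum_k(1-X_kX_{k+1})\phi(k/2N)$ by $\int(1-(\partial_x m^N)^2)\phi\,dx$, but that proposition is an \emph{equilibrium} statement: its proof uses stationarity under $\mu_N$ and the uniform absolute continuity of $\mu_N$ w.r.t.\ $\nu_N$ in an essential way, and it says nothing about a process started from an arbitrary deterministic profile. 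The out-of-equilibrium tool you need is the replacement lemma (Theorem \ref{Th:Replacement}, i.e.\ the one-block and two-blocks estimates controlled by the Dirichlet form via the entropy bound $H_N(\iota_N\,|\,\nu_N)=\cO(N)$); with that substitution your route becomes the classical Kipnis--Olla--Varadhan argument, which is a legitimate alternative to what the paper actually does for $\alpha=1$, namely Gärtner's microscopic Hopf--Cole transform $\xi^N=e^{-\gamma_N S+\lambda_N t}$, which linearises the drift and reduces the identification to a \emph{linear} heat equation with boundary data $e^{2\sigma^2 t}$. The Cole--Hopf route buys you a cheap uniqueness statement for the limit; the KOV route is more robust but requires you to also justify passing from $\tilde\Phi$ of block-averaged slopes to $1-(\partial_x m)^2$ for a merely Lipschitz limit.

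The more serious gap is in the regime $\alpha<1$. Your sandwiching argument is circular: to trap a subsequential limit between viscosity sub- and super-solutions you must already know that the comparison systems (started from the monotone profiles) converge to the viscosity solutions of their own initial data, which is exactly the hydrodynamic limit you are trying to prove. Attractivity alone transports ordering, not the selection principle; without an additional mechanism nothing rules out a subsequential limit being a non-entropic weak solution. The paper closes this by working at the level of the particle density and deriving \emph{microscopic Kruzhkov entropy inequalities}: one couples $\eta^N$ with a second system $\zeta^N$ kept stationary at an arbitrary density $c$ (Rezakhanlou's coupling, adapted here with carefully chosen boundary fluxes at sites $1$ and $2N$), controls the number of sign changes of $\eta^N-\zeta^N$, and uses the replacement lemma to convert the microscopic inequalities into the entropy inequalities of Proposition \ref{Prop:EntropySolution} for every constant $c$ — including the boundary terms that encode the zero-flux/BLN conditions, which your proposal does not address at all. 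Only after identifying the limit for product initial measures does the paper use $L^1$-contractivity and a monotone squeeze to reach general initial conditions; that is the (non-circular) role the coupling can play. As written, your argument for $\alpha<1$ does not produce the entropy/viscosity selection and therefore does not identify the limit.
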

Compare (\ref{PDEHeat}), (\ref{PDEHC}) and (\ref{PDEHJ}) and observe the competition between the Laplacian and the non-linear term: as $\alpha$ decreases, the non-linear term becomes predominant. Notice that (\ref{PDEHeat}) and (\ref{PDEHC}) are well-posed parabolic PDEs, while (\ref{PDEHJ}) does not admit unique weak solutions so that one needs to specify the notion of solutions considered, see below. The convergence result in the case $\alpha=1$ is similar to the results of Kipnis, Olla and Varadhan~\cite{KOV} and of G\"artner~\cite{Gartner88} who consider the WASEP respectively on the torus and on the line $\Z$; let us also cite the work of Enaud and Derrida~\cite{Enaud} on a similar model interacting with reservoirs. 

Let us now be more precise on the notion of solution of (\ref{PDEHJ}) that we consider here. For any Lipschitz function $m_0$, we let $\eta_0(\cdot) = (\partial_x m_0(\cdot) + 1)/2$ and we say that $m$ is solution of (\ref{PDEHJ}) if $m(t,x)=\int_0^x \big(2\eta(t,y)-1\big) dy$ where $\eta$ is the entropy solution of the Burgers equation with zero-flux boundary condition
\begin{equation}\label{PDEBurgersDensity}
	\begin{cases}\partial_t \eta = 2\sigma\partial_x\big(\eta(1-\eta)\big) \;,\quad x\in (0,1)\;,\quad t>0\;,\\
	\eta(t,x)(1-\eta(t,x))=0\;,\quad x\in\{0,1\}\;,\quad t>0\;,\\
	\eta(0,\cdot) = \eta_0\;.\end{cases}
\end{equation}
The precise formulation of the associated entropy conditions is given in Proposition \ref{Prop:EntropySolution}. Let us recall that this conservation law does not have unique solutions in general, and that one needs to impose further conditions - here the entropy conditions - to recover uniqueness. Let us mention that the interpretation of the boundary conditions for this type of equations is not elementary. In the case of Dirichlet boundary conditions, say $\eta(t,0)=a$ and $\eta(t,1)=b$, the solution theory does not yield solutions that satisfy the prescribed values at the boundaries at all times, but they rather satisfy the so-called BLN conditions at the boundaries: we refer to Bardos, Le Roux and N\'ed\'elec~\cite{Bardos} for the BV setting and to Otto~\cite{Otto} for the $L^\infty$ setting. On the other hand, zero-flux boundary conditions are simpler and one can impose to the solution to indeed have a zero-flux at the boundaries at almost every time. We refer to B\"urger, Frid and Karlsen~\cite{BFK07} for the complete solution theory.

It turns out that the solution of (\ref{PDEBurgersDensity}) coincides with the solution of the same PDE with appropriate Dirichlet boundary conditions: one simply needs to impose $\eta(t,0)=1$ and $\eta(t,1)=0$. This fact can be heuristically explained as follows.\\
If we consider the same dynamics on the whole line $\bbZ$, then the hydrodynamic limit is given by the same PDE but on the whole line $\R$: this was proved by Rezakhanlou~\cite{Reza} when the asymmetry does not depend on $N$, and, as we will see, the proof extends to the case where the asymmetry is not too weak. Now, if we start with particles at all negative sites and no particle at positive sites, then the hydrodynamic limit is constant in time: the system does not evolve at the macroscopic scale, and non-trivial behaviours can be observed only at a smaller scale around the origin. A coupling argument then shows that adding particles at positive sites does not modify the hydrodynamic behaviour on negative sites. Similarly, if we start with particles at all sites $k\le 2N$ and no particle after site $2N$, then the hydrodynamic limit remains constant. A coupling argument shows that if we remove some particles on $\{1,\ldots,2N\}$, the hydrodynamic behaviour on $\{2N+1,2N+2,\ldots\}$ remains unchanged.\\
Consequently, if we consider the same dynamics on the whole line $\bbZ$ and if we start with particles at all negatives sites, no particle after site $2N$, and the same initial configuration of particles on $\{1,\ldots,2N\}$ as in our original system, then the initial condition outside the segment remains invariant at the level of the hydrodynamic limit so that one should get the ``effective'' boundary conditions $\eta(t,0)=1$ and $\eta(t,1)=0$.\\

The proof of the theorem in the case $\alpha < 1$ thus mainly consists in showing convergence of the density of particles
\begin{equation}\label{Eq:DefrhoN}
\rho^N(t,dx)=\frac1{2N}\sum_{k=1}^{2N} \eta^N_t(k)\, \delta_{\frac{k}{2N}}(dx)\;,\quad \eta^N_t(k) = \frac{X\big(t(2N)^{1+\alpha},k\big) + 1}{2}\;,
\end{equation}
towards the deterministic process $\rho(t,dx)=\eta(t,x)dx$ where $\eta$ is the entropy solution of the Burgers equation with the above Dirichlet conditions.

This convergence result is taken from~\cite{LabbeKPZ}, it is in the flavour of the works of Rezakhanlou~\cite{Reza} and Bahadoran~\cite{Baha}.

\bigskip

Observe that in the case $\alpha \in (1,3/2)$ and under the invariant measure $\mu_N$, the interface is of order $N^{2-\alpha} \ll N$. Therefore, when the process starts from an initial condition which is at most of order $N^{2-\alpha}$, it is natural to derive the hydrodynamic limit at this finer scale $N^{2-\alpha}$. (This is no longer relevant when $\alpha \geq 3/2$ since, then, the fluctuations are dominant.) If we set
\begin{equation*}
v^N(t,x) := \frac{S(t(2N)^2,x2N)}{(2N)^{2-\alpha}}\;,\quad t\geq 0\;,\quad x\in[0,1]\;,
\end{equation*}
then it is possible to show that the sequence $v_N$ converges to the unique solution of the following linear heat equation 
\begin{equation}\label{PDEHeat2}
\begin{cases}\partial_t v = \frac{1}{2} \partial^2_x v + \sigma\;,\\
v(t,0)=v(t,1)=0\;,\quad v(0,\cdot) = v_0(\cdot)\;.\end{cases}	
\end{equation}
We do not provide the details on this convergence, but it relies on essentially the same arguments as the case $\alpha =1$ of the previous theorem.

\subsection{KPZ fluctuations}

From now on, we consider the flat initial condition
\begin{equation*}
S(0,k) = k \mbox{ mod } 2\;,\quad k\in\{0,\ldots,2N\}\;.
\end{equation*}
Let us provide explicitly the solution of the Hamilton-Jacobi equation (\ref{PDEHJ}) starting from the flat initial condition:
\begin{equation}\label{Eq:ExplicitHydroFlat}
m(t,x) = x\wedge (1-x) \wedge (\sigma t)\;,\quad t > 0\;,\quad x\in [0,1]\;,
\end{equation}
see Figure \ref{FigKPZ} for an illustration. Notice that the macroscopic stationary state is reached at the finite time $T=1/(2\sigma)$. This is an important feature of the hydrodynamic limit for $\alpha \in (0,1)$: indeed, when $\alpha \geq 1$, the hydrodynamic limit is parabolic and reaches its stationary state in \textit{infinite} time.
\begin{figure}
\begin{center}
\begin{tikzpicture}[scale=2]
\draw[-,gray] (0,0) node[below]{\tiny $0$} -- (3,0) node[below]{\tiny $2N$};
\draw[-,gray] (0,0) -- (1.5,1.5) -- (3,0);

\draw[-,thick] (0,0) -- (3,0);% node[right]{$t=0$};

\draw[-,dashed,thick] (0,0) -- (0.75,0.75) -- (2.25,0.75) -- (3,0);
%\draw (3,0.75) node[right]{$t > 0$};

\draw[-,dotted,thick] (0,0) -- (1.5,1.5) -- (3,0);
%\draw (3,1.5) node[right]{$t= \frac1{2\sigma}$};

\draw[gray] (1.5,-0.04)node[below]{\tiny $N$}--(1.5,0.04);
\draw[gray] (0.75,-0.04)node[below]{\tiny$\frac{\lambda_N}{\gamma_N}t$}--(0.75,0.04);
\draw[gray] (2.25,-0.04)node[below]{\tiny$2N-\frac{\lambda_N}{\gamma_N}t$}--(2.25,0.04);

\draw[thin,blue] (1.25,0.65) -- (1.75,0.65) -- (1.75,0.85) -- (1.25, 0.85) -- (1.25,0.65);
\draw[<->,blue] (1.25,0.55) --(1.5,0.55)node[below]{\tiny $N^{2\alpha}$} -- (1.75,0.55);
\draw[<->,blue] (2.5,0.65) --(2.5,0.75)node[right]{\tiny $N^{\alpha}$} -- (2.5,0.85);

\end{tikzpicture}
\end{center}
\caption{A plot of (\ref{Eq:ExplicitHydroFlat}): the bold black line is the initial condition, the dashed line is the solution at some time $0 < t < 1/(2\sigma)$, and the dotted line is the solution at the terminal time $1/(2\sigma)$. The blue box corresponds to the window where we see KPZ fluctuations.}\label{FigKPZ}
\end{figure}
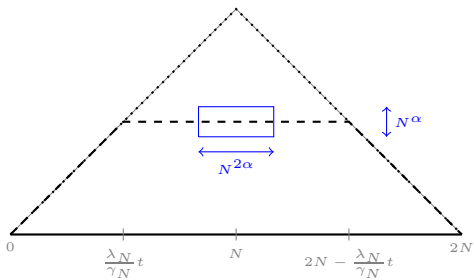

We are now interested in fluctuations around this hydrodynamic limit. The reader familiar with the Kardar Parisi Zhang (KPZ) equation would probably guess that it should arise in our setting. Let us first recall the famous result of Bertini and Giacomin~\cite{BG97} in that direction. Consider the WASEP on the infinite lattice $\Z$ with jump rates $1/2 + \sqrt{\epsilon}$ to the left and $1/2$ to the right. If one starts from a flat initial profile, then results in~\cite{Gartner88,DeMasi89} ensure that the hydrodynamic limit grows evenly at speed $\sqrt{\epsilon}$. Then, Bertini and Giacomin look at the fluctuations around this hydrodynamic limit and show that the random process $\sqrt{\epsilon}\,(S(t\epsilon^{-2}, x\epsilon^{-1}) - \epsilon^{-3/2} t)$ converges to the solution of the KPZ equation, whose expression is given in (\ref{KPZ}) below (in Bertini and Giacomin's case, $\sigma =1/2$).

Although our setting is similar to the one considered by Bertini and Giacomin, the ``zero-flux" boundary condition induces a major difference: our process admits a reversible probability measure, while this is not the case on the infinite lattice $\Z$. However, if one starts the interface ``far" from equilibrium, then we are in an irreversible setting up to the time needed by the interface to reach equilibrium, and one would expect the fluctuations to be described by the KPZ equation.

Bertini and Giacomin's result suggests to rescale the height function by $1/(2N)^\alpha$, the space variable by $(2N)^{2\alpha}$ and the time variable by $(2N)^{4\alpha}$. The space scaling immediately forces one to take $\alpha\leq 1/2$ since, otherwise, the lattice $\{0,1,\ldots,2N\}$ would be mapped onto a singleton in the limit. It happens that the geometry of our model imposes a further constraint: Theorem \ref{Th:Hydro} and Equation (\ref{Eq:ExplicitHydroFlat}) show that the interface reaches the stationary state in finite time in the time scale $(2N)^{\alpha +1}$; therefore, as soon as $4\alpha > \alpha + 1$, Bertini and Giacomin's scaling yields an interface which is immediately at equilibrium in the limit $N\rightarrow\infty$. Consequently, we have to restrict $\alpha$ to $(0,1/3]$ for this scaling to be non-trivial.

We set
\begin{equation}\label{Eq:hN}
h^N(t,x) := \gamma_N S\big(t(2N)^{4\alpha},N + x(2N)^{2\alpha}\big) - \lambda_N t \;,
\end{equation}
where
\begin{equation}\label{Eq:GammaLambda}
	\gamma_N := \frac12 \log \frac{p_N}{1-p_N}\;,\quad c_N := \frac{(2N)^{4\alpha}}{e^{\gamma_N} + e^{-\gamma_N}} \;,\quad  \lambda_N := c_N( e^{\gamma_N} -2 + e^{-\gamma_N})\;.
\end{equation}

\noindent The following result was established in~\cite{LabbeKPZ}

\begin{theorem}\label{Th:KPZ}
Take $\alpha \in (0,1/3]$ and consider the flat initial condition. As $N\rightarrow\infty$, the sequence $h^N$ converges in distribution to the solution of the KPZ equation:
\begin{equation}\label{KPZ}
\begin{cases}
\partial_t h = \frac{1}{2} \partial^2_x h - \sigma (\partial_x h)^2 + \dot{W}\;,\quad x\in\R\;,\quad t > 0\;,\\
h(0,x)=0\;.
\end{cases}
\end{equation}
The convergence holds on $\bbD\big([0,T),\cC(\R)\big)$ where $T=1/(2\sigma)$ when $\alpha=1/3$, and $T=\infty$ when $\alpha < 1/3$. Here $\bbD\big([0,T),\cC(\R)\big)$ is endowed with the topology of uniform convergence on compact subsets of $[0,T)$.
\end{theorem}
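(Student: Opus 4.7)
The plan is to use the microscopic Cole--Hopf (G\"artner) transform, following the Bertini--Giacomin approach~\cite{BG97}. The key observation is that while the height function itself solves a nonlinear equation in the limit, the exponential $\xi^N(t,k) := \exp\bigl(-\gamma_N S(t(2N)^{4\alpha},k) + \lambda_N t\bigr)$ satisfies a linear equation up to a martingale, which we then identify with the linear multiplicative stochastic heat equation on $\R$. The constants $\gamma_N, c_N, \lambda_N$ in (\ref{Eq:GammaLambda}) are designed precisely to make this work: a direct computation of the generator on the corner-flip dynamics, combined with the elementary identity $p_N e^{2\gamma_N} + (1-p_N) e^{-2\gamma_N} = 1$, shows that
\begin{equ}
 d\xi^N(t,k) = c_N \bigl(\xi^N(t,k-1) - 2\xi^N(t,k) + \xi^N(t,k+1)\bigr)\,dt + \xi^N(t,k)\, dM^N_k(t),
\end{equ}
where $M^N_k$ is a martingale whose predictable bracket can be computed explicitly in terms of the local corner configurations at site $k$.

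After zooming into the window $Z^N(t,x) := \xi^N(t, N + x(2N)^{2\alpha})$, the discrete Laplacian $c_N \Delta^N$ converges to $\tfrac{1}{2}\partial_x^2$, and the quadratic variation of $M^N$, when evaluated against smooth test functions, concentrates around $4\sigma^2$ times the Lebesgue measure on the diagonal of $\R^2$. Thus $Z^N\,dM^N$ converges formally to $-2\sigma Z \,dW$, and $Z^N$ converges to the mild solution $Z$ of $\partial_t Z = \tfrac{1}{2}\partial_x^2 Z - 2\sigma Z\,\dot W$. Since $\xi^N > 0$ by construction, strict positivity is preserved in the limit, and taking $-\frac{1}{2\sigma}\log Z$ yields the unique Cole--Hopf solution of (\ref{KPZ}) started from $h(0,\cdot) \equiv 0$.

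Tightness of $(Z^N)$ in $\bbD\bigl([0,T),\cC(\R)\bigr)$ with the uniform-on-compacts topology follows from $L^p$ moment bounds for discrete SHEs (Itô's formula on the mild formulation plus discrete heat kernel estimates), and identification of limit points is standard once the quadratic variation of $M^N$ is controlled via a Boltzmann--Gibbs style replacement to substitute local functions by their local averages. The flat initial condition produces $\xi^N(0,\cdot) \in \{1, e^{-\gamma_N}\}$, which after one application of the discrete heat semigroup (negligible on the diffusive time scale) becomes constant of order one, consistent with $Z(0,\cdot)\equiv 1$. The restriction to $t < T$ is dictated by the explicit hydrodynamic shape (\ref{Eq:ExplicitHydroFlat}): the flat boundary plateaus propagate inward at speed $\sigma$ in the time scale $(2N)^{\alpha+1}$, which in our finer time scale $(2N)^{4\alpha}$ translates to a collision time at the origin equal to $1/(2\sigma)$ when $\alpha = 1/3$ and to $+\infty$ when $\alpha < 1/3$.

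The main obstacle is, I believe, showing that the \emph{zero-flux} boundaries at $k=0$ and $k=2N$ do not pollute $Z^N$ on a compact spatial window before time $T$. Unlike on the infinite lattice treated in~\cite{BG97}, the dynamics here does not admit a trivial product-Bernoulli stationary measure, and the martingale noise near the boundary is not symmetric; one must establish a finite-speed-of-propagation estimate (say through coupling with an auxiliary unbounded-lattice process, or through the sharp moment bound $\E|Z^N(t,x)|^p \leq C_p$ away from the boundary) to show that the contribution of the boundary to $Z^N(t,x)$ is negligible for $|x| \leq L$ and $t \leq T-\varepsilon$. Once this a priori estimate is in hand, the Bertini--Giacomin scheme applies almost verbatim to yield convergence of $h^N$ to the KPZ equation (\ref{KPZ}).
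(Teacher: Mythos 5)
Your proposal follows essentially the same route as the paper: microscopic Cole--Hopf transform with the constants of (\ref{Eq:GammaLambda}), uniform moment bounds for $\xi^N$ away from the boundary (the paper does this by comparison with the barrier $b^N(t,\ell)=2+\exp(\lambda_N t-\gamma_N(\ell\wedge(2N-\ell)))$ and a Gr\"onwall argument, together with exponential heat-kernel decay outside the bulk window $B^N_\epsilon(t)$, which is exactly the ``finite speed of propagation'' control you flag as the main obstacle), then tightness and identification via the martingale problem for the multiplicative stochastic heat equation. Your reading of the flat initial condition and of the origin of the terminal time $T$ also matches the paper.

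One concrete caveat: the step where you propose to control the quadratic variation of $M^N$ ``via a Boltzmann--Gibbs style replacement'' would not work here. The Boltzmann--Gibbs principle is an equilibrium statement (the paper uses it only for the stationary fluctuations of Theorem \ref{Th:Dynamic}), whereas the KPZ regime starts far from the invariant measure. The problematic term in the bracket is $(2N)^{2\alpha}\nabla^+\xi^N\,\nabla^-\xi^N$, and the paper (following Lemma 4.8 of Bertini--Giacomin) kills it through the ``delicate estimate'' of Proposition \ref{Prop:DelicateKPZ}: a bound of order $(2N)^{-2\alpha-\kappa}(t-s)^{-1/2}$ on $\E\bigl[|\E[\nabla^+\xi^N(t,\ell)\nabla^-\xi^N(t,\ell)\,|\,\cF_s]|\bigr]$, obtained from a self-reproducing integral inequality for the two-point function driven by the kernel $K^N=\nabla^+p^N\nabla^-p^N$, with additional work to handle the boundary contributions. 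If you replace your Boltzmann--Gibbs step by this conditional-expectation decay estimate, your outline coincides with the paper's proof.
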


Observe that for $\alpha = 1/3$, $T$ is the time needed by the hydrodynamic limit to reach the stationary state. Indeed, in that case the time-scale of the hydrodynamic limit coincides with the time-scale of the KPZ fluctuations. Although one could have thought that the fluctuations continuously vanish as $t\uparrow T$, our result show that they don't: the limiting fluctuations are given by the solution of the KPZ equation, restricted to the time interval $[0,T)$. This means that the fluctuations suddenly vanish at time $T$; let us give a simple explanation for this phenomenon. At any time $t\in [0,T)$, the particle system is split into three zones: a high density zone $\{1,\ldots,\frac{\lambda_N}{\gamma_N} t\}$, a low density zone $\{2N- \frac{\lambda_N}{\gamma_N} t, \ldots,2N\}$ and, in between, the bulk where the density of particles is approximately $1/2$, we refer to Figure \ref{FigKPZ}. The KPZ fluctuations occur in a window of order $N^{2\alpha}$ around the middle point of the bulk: from the point of view of this window, the boundaries of the bulk are ``at infinity" but move ``at infinite speed". Therefore, inside this window the system does not feel the effect of the boundary conditions until the very final time $T$ where the boundaries of the bulk merge.

Let us recall that the KPZ equation is a singular SPDE: indeed, the solution of the linearised equation is not differentiable in space so that the non-linear term would involve the square of a distribution. While it was introduced in the physics literature~\cite{KPZ86} by Kardar, Parisi and Zhang, a first rigorous definition was given by Bertini and Giacomin~\cite{BG97} through the so-called Hopf-Cole transform $h\mapsto \xi = e^{-2\sigma h}$ that maps formally the equation (\ref{KPZ}) onto
\begin{equation}\label{mSHE}
\begin{cases}
	\partial_t \xi = \frac{1}{2} \partial^2_x \xi +  2\sigma \xi\dot{W}\;,\quad x\in\R\;,\quad t > 0\;,\\
	\xi(0,x)=1\;.
\end{cases}
\end{equation}
This SPDE is usually referred to as the multiplicative stochastic heat equation: it admits a notion of solution via It\^o integration, see for instance~\cite{DPZ,Walsh}. M\"uller~\cite{Mueller} showed that the solution is strictly positive at all times, if the initial condition is non-negative and non-zero. This allows to take the logarithm of the solution, and then, one can define the solution of (\ref{KPZ}) to be $h:=-\log \xi / 2\sigma$. This is the notion of solution that we consider in Theorem \ref{Th:KPZ}.\\
There exists a more direct definition of this SPDE (restricted to a bounded domain) due to Hairer~\cite{HairerKPZ, HairerReg} via his theory of regularity structures. Let us also mention the notion of ``energy solution" introduced by Gon\c{c}alves and Jara~\cite{GJEnergy}, for which uniqueness has been proved by Gubinelli and Perkowski~\cite{GubPerEnergy}. It provides a new framework for characterising the solution to the KPZ equation but it requires the equation to be taken under its stationary measure.\\
For related convergence results towards KPZ, we refer to Amir, Corwin and Quastel~\cite{ACQ}, Dembo and Tsai~\cite{DemboTsai}, Corwin and Tsai~\cite{CT} and Corwin, Shen and Tsai~\cite{CST}. We also point out the reviews of Corwin~\cite{Corwin}, Quastel~\cite{QuastelKPZ} and Spohn~\cite{Spohn}.

\bigskip

The paper is organised as follows. In Section \ref{Section:InvMeas}, we study the scaling limit of the invariant measure. Section \ref{Section:Fluctuations} is devoted to the fluctuations at equilibrium. In Section \ref{Section:CVEq} we prove Theorem \ref{Th:Hydro} on the hydrodynamic limit, and in Section \ref{Section:KPZ} we present the proof of the convergence of the fluctuations to the KPZ equation. Some technical bounds are postponed to the Appendix. The sections are essentially independent: at some localised places, we will rely on results obtained on the static of the model in Section 2.

\section{The invariant measure}\label{Section:InvMeas}

Let $\mbox{Be}(q)$ denote the Bernoulli $\pm 1$ distribution with parameter $q\in[0,1]$, and let $L$ be the log-Laplace functional associated with $\mbox{Be}(1/2)$, namely $L(h) = \log \cosh h$ for all $h\in\R$. For each given $N\geq 1$, we will work on the set $\{-1,+1\}^{2N}$ endowed with its natural sigma-field. The canonical process will be denoted by $X_1,\ldots,X_{2N}$, and will be viewed as the steps of the walk $S(n):= \sum_{k\leq n} X_k$. Recall that $A(S)$ is the area under the walk $S$, defined in (\ref{Eq:muN}) and recall that $p_N = 1/2 + \sigma/(2N)^\alpha + \cO(1/N^{2\alpha})$.\\

The strategy of the proof consists in introducing an auxiliary measure $\nu_N$ which is the same as $\mu_N$ except that, under $\nu_N$, the walk is not conditioned on coming back to $0$ but satisfies $\nu_N[S(2N)]=0$. This makes $\nu_N$ more amenable to limit theorems. In particular, we establish a Central Limit Theorem and a Local Limit Theorem for the marginals of the walk under $\nu_N$. Since $\mu_N$ is equal to $\nu_N$ conditioned on the event $S(2N)=0$, and since the $\nu_N$-probability of this event can be estimated with the Local Limit Theorem, we are able to get the convergence of the marginals of the walk under $\mu_N$. The tightness is obtained by similar considerations. Let us now provide the details.\\

Let $\pi_N$ be the law of the simple random walk, that is
\begin{equation*}
\pi_N := \underset{k=1}{\overset{2N}{\otimes}} \mbox{Be}(1/2)\;,
\end{equation*}
and let $\nu_N$ be the measure defined by
\begin{equation}\label{Eq:DefnuN}
\frac{d\nu_N}{d\pi_N} = \frac1{Z'_N} \Big(\frac{p_N}{1-p_N}\Big)^{\frac{A(S)}{2}} e^{\rho_N S(2N)}\;,\qquad Z'_N = e^{L_S(h^N)}\;,
\end{equation}
where $L_S(h^N):= \sum_{k=1}^{2N} L(h^N_k)$ and
\begin{equation*}
\rho_N = -\frac{2\sigma}{(2N)^\alpha}\Big(N+\frac{1}{2}\Big)\;,\quad h^N_{k} = \frac{2\sigma}{(2N)^\alpha}\Big(N-k+\frac{1}{2}\Big)\;,\quad k\in\{1,\ldots,2N\}\;.
\end{equation*}
\begin{remark}
Under the measure $\nu_N$, the total number of particles is not equal to $N$ almost surely, but is equal to $N$ in mean. The measure $\nu_N$ can be seen as a mixture of $2N+1$ measures, each of them being supported by an hyperplane of configurations with $\ell \in\{0,\ldots,2N\}$ particles. It is easy to check that our dynamics is reversible with respect to each of these measures, and therefore, with respect to $\nu_N$.
\end{remark}
\begin{lemma}
The measure $\nu_N$ satisfies
\begin{equation}\label{Eq:piNnuN}
\frac{d\nu_N}{d\pi_N} = \frac1{Z'_N} e^{\sum_k h^N_k X_k}\;,\quad \nu_N = \underset{k=1}{\overset{2N}{\otimes}} \mbox{Be}(q^N_k)\;,
\end{equation}
where $q^N_k = \big(L'(h^N_k)+1\big)/2$.
\end{lemma}
\begin{proof}
Notice that
$$ \Big(\frac{p_N}{1-p_N}\Big)^{\frac{A(S)}{2}} e^{\rho_N S(2N)} = e^{\frac{2\sigma}{(2N)^\alpha} A(S) + \rho_N S(2N)}\;,$$
and
$$ \frac{2\sigma}{(2N)^\alpha} A(S) + \rho_N S(2N) = \sum_{k=1}^{2N} X_k \Big((2N-k+1)\frac{2\sigma}{(2N)^\alpha} + \rho_N\Big) = \sum_{k=1}^{2N} X_k h^N_k\;.$$
Since $\pi_N$ is a product of Bernoulli measures and given the expression of the Radon-Nikodym derivative above, we deduce that $\nu_N$ is also a product measure of Bernoulli distributions with parameters
$$ \nu_N(X_k=1) = \frac{e^{h^N_k}}{e^{L(h^N_k)}} \pi_N(X_k=1) = \frac{e^{h^N_k}}{2 \cosh h^N_k} = \frac{L'(h^N_k)+1}{2}\;.$$
\end{proof}
From there, simple calculations yield
\begin{equation}
\nu_N\big[S(k)\big] = \sum_{i=1}^k L'(h^N_i)\;,\quad \Var_{\nu_N}\big[S(k),S(\ell)\big] = \sum_{i=1}^{k\wedge\ell} L''(h^N_i)\;.
\end{equation}
Observe that the curve $\Sigma_\alpha^N$ defined in (\ref{Eq:DefSigmaN}) is nothing but the mean of $S$ under $\nu_N$. Recall the definition of $q_\alpha$ from Theorem \ref{Th:Static}. A simple calculation yields the following asymptotics. For $\alpha \geq 1$, we have
\begin{equation}\label{Eq:Mean1}
\Sigma_\alpha^N(x) =\begin{cases}
(2N)^{2-\alpha}\sigma x(1-x) + \cO(N^{4-3\alpha}) \quad&\alpha > 1\;,\\
2N\int_0^x L'(\sigma(1-2y)) dy + \cO(1) \quad& \alpha = 1\;,
\end{cases} 
\end{equation}
and \begin{equation}
\Var_{\nu_N}\big[S(x2N),S(y2N)\big] = \begin{cases} (2N)\, q_\alpha(0,x\wedge y) + \cO(N^{3-2\alpha})\quad &\alpha > 1\;,\\
(2N)\, q_\alpha(0,x\wedge y) + \cO(1)\quad &\alpha = 1\;,\end{cases}
\end{equation}
for all $x,y \in [0,1]$.  For $\alpha < 1$, we find
\begin{equation}\label{Eq:Mean2}
\Sigma_\alpha^N(x) = N + (2N)^\alpha \Big(x + \int_{-x}^\infty \big(L'(2\sigma y) - 1\big) dy \Big) + \cO(1)\quad\alpha < 1\;,\quad
\end{equation}
and
\begin{equation}\label{Eq:Var2}
\Var_{\nu_N}\big[S(N + x(2N)^\alpha),S(N+y(2N)^\alpha)\big] = (2N)^\alpha q_\alpha(-\infty,x\wedge y) + \cO(1)\;,\quad \alpha < 1\;,
\end{equation}
for all $x,y\in\R$.

\smallskip

The important observation for the sequel is the following result.
\begin{lemma}
\begin{equation}\label{Eq:muNpiN}
\mu_N(\cdot) = \nu_N(\cdot \,|\, S(2N)=0)\;.
\end{equation}
\end{lemma}
\begin{proof}
Let $C_N$ be the set of all discrete bridges from $(0,0)$ to $(2N,0)$, then we have
\begin{equation*}
Z_N = \sum_{S\in C_N} \Big(\frac{p_N}{1-p_N}\Big)^{\frac12 A(S)} = 2^{2N}\pi_N\Big[\Big(\frac{p_N}{1-p_N}\Big)^{\frac12 A(S)} \tun_{\{S(2N)=0\}}(S)\Big]\;,
\end{equation*}
since $\pi_N$ is the uniform measure on the set of all lattice paths with $2N$-steps and since the latter set has cardinal $2^{2N}$. Hence, for any subset $D$ of $C_N$, we have (for all $S\in D$, $S(2N)=0$)
\begin{align*}
\nu_N(D \,|\, S_{2N}=0) &= \frac{\pi_N\big[\big(\frac{p_N}{1-p_N}\big)^{\frac12 A(S)} e^{-L_s(h^N)} \tun_D(S)\big]}{\pi_N\big[\big(\frac{p_N}{1-p_N}\big)^{\frac12 A(S)} e^{-L_s(h^N)} \tun_{\{S(2N)=0\}}(S)\big]}\\
&= \frac1{Z_N} 2^{2N}\pi_N\Big[\Big(\frac{p_N}{1-p_N}\Big)^{\frac12 A(S)} \tun_D(S)\Big]\\
&= \frac{1}{Z_N} \sum_{S\in C_N} \Big(\frac{p_N}{1-p_N}\Big)^{\frac12 A(S)} \tun_D(S)\\
&= \mu_N(D)\;,
\end{align*}
and (\ref{Eq:muNpiN}) follows.
\end{proof}

Recall that for $\alpha \geq 1$ we have
\begin{equation*}
u^N(x) = \frac{S(x2N) - \Sigma_\alpha^N(x)}{\sqrt{2N}}\;,\quad x\in [0,1]\;,
\end{equation*}
while for $\alpha < 1$
\begin{equation*}
u^N(x) = \frac{S(N + x(2N)^\alpha)-\Sigma_\alpha^N(x)}{(2N)^{\frac{\alpha}{2}}} \;,\quad x\in [-N/(2N)^\alpha,N/(2N)^\alpha]\;.
\end{equation*}

Until the end of the section, $k$ will denote an integer and $\vec{x}=(x_1,\ldots,x_k)$ will be an element of $(0,1]^k$ if $\alpha \ge 1$, of $(-\infty,+\infty]^k$ if $\alpha \in (0,1)$. It will be convenient to write $u^N(\vec{x}) = (u^N(x_1),\ldots,u^N(x_k))$. Also, we use the convenient notation $u^N(+\infty)$ to denote $u^N(N/(2N)^\alpha)$ when $\alpha \in (0,1)$. For $\alpha \in (0,1)$, we define the lattice approximation $x^N$ of $x$ by setting
\begin{equation*}
x^N := \frac{\lfloor x(2N)^{\alpha}\rfloor}{(2N)^{\alpha}}\;, \quad x\in\R\;,
\end{equation*}
$x^N :=+\infty$ for $x=+\infty$, and $\vec{x}^N := ( x_1^N,\ldots,x_k^N)$ for all $\vec{x}$ as above. We also let $x_f = 1$ when $\alpha \ge 1$ and $x_f = +\infty$ when $\alpha \in (0,1)$.\\
If $\alpha \ge 1$, we let $\tilde{B}_\alpha$ be the centred Gaussian process on $[0,1]$ whose covariance is given by $q_\alpha(0,\cdot\wedge\cdot)$. If $\alpha \in (0,1)$, we adopt the same definition except that the process lives on $\R$ and that its covariance is given by $q_\alpha(-\infty,\cdot\wedge\cdot)$. Notice that in this last case, $\tilde{B}_\alpha(x)$ converges to a finite limit when $x\to +\infty$ since this is a martingale bounded in $L^2$.\\
It is simple to check that $B_\alpha$ is obtained by conditioning $\tilde{B}_\alpha$ to vanish at $x_f$. More precisely, if we denote by $g^{x_1,\ldots,x_k}_\alpha$ and $\tilde{g}^{x_1,\ldots,x_k,x_f}_\alpha$ the probability densities of the vectors $(B_\alpha(x_1),\ldots,B_\alpha(x_k))$ and $(\tilde{B}_\alpha(x_1),\ldots,\tilde{B}_\alpha(x_k),\tilde{B}_\alpha(x_f))$, then we have
\begin{equation}\label{Eq:Densities}
g^{x_1,\ldots,x_k}_\alpha(y_1,\ldots,y_k) = \frac{\tilde{g}^{x_1,\ldots,x_k,x_f}_\alpha(y_1,\ldots,y_k,0)}{\tilde{g}^{x_f}_\alpha(0)} \;.
\end{equation}

The proof of Theorem \ref{Th:Static} is divided into the following four steps.
\paragraph{Step 1: Convergence of the marginals under $\nu_N$.}
\begin{lemma}\label{Lemma:CLT}
The vector $u^N(\vec{x})$ under $\nu_N$ converges in distribution to $\tilde{B}_\alpha(\vec{x})$.
\end{lemma}
\begin{proof}
The proof is classical, we only provide the details for the case $\alpha \in (0,1)$ as the other case is treated similarly. Until the end of the proof, $i$ denotes the complex number $\sqrt{-1}$. For each $j\in\{1,\ldots,k\}$, we define $k_j:=N+\lfloor x_j(2N)^\alpha\rfloor$. For all $\vec{t} = (t_1,\ldots,t_k) \in \R^{k}$, let
\begin{equation*}
L_k(\vec{t}) := \sum_{\ell=1}^{2N} L\Big(i \sum_{j=1}^k t_j \tun_{\{\ell \leq k_j\}} + h_\ell^N\Big)\;,
\end{equation*}
so that $L_k(0) = L_S(h^N) = \sum_{k=1}^{2N} L(h^N_k)$ and
\begin{equation*}
\log\nu_N\Big[e^{\sum_{j=1}^k i t_j S(N+x_j^N (2N)^\alpha)}\Big] = L_k(\vec{t})-L_k(0)\;.
\end{equation*}
It is simple to check that
\begin{equation*}
{\partial^2_{t_j,t_m} L_k}_{|\vec{t}=0} = -\Var_{\nu_N}\big[S(k_j),S(k_m) \big]\;.
\end{equation*}
Fix $\vec{t}\in\R^k$. Using \eqref{Eq:Var2} and a Taylor expansion at the second line, we get
\begin{align*}
\log \nu_N\Big[ e^{ i\langle \vec{t}, u^N(\vec{x}^N) \rangle} \Big] &= L_k\big(\vec{t}(2N)^{-\frac{\alpha}{2}}\big)-L_k(0) - \frac{1}{(2N)^{\frac{\alpha}{2}}}\langle \vec{t} , \nabla L_k(0) \rangle\\
&= -\frac12\sum_{j,\ell=1}^{k} t_j t_\ell\, q_\alpha(-\infty,x_j\wedge x_\ell) + \cO\Big(\frac{|t|^3}{N^{\alpha/2}} + \frac{|t|^2}{N^{\alpha}}\Big)\;,
\end{align*}
so that the characteristic function of the vector $u^N(\vec{x}^N)$ converges pointwise to the characteristic function of the Gaussian vector of the statement. Since the difference between $u^N(\vec{x}^N)$ and $u^N(\vec{x})$ is negligible, the lemma follows.
\end{proof}

\paragraph{Step 2: Local limit theorems under $\nu_N$.} We have the following Local Limit Theorems under $\nu_N$. Let $D^{\vec{x},N}_\alpha$ be the finite set of all $\vec{y}=(y_1,\ldots,y_k) \in \R^k$ such that $\nu_N(u^N(\vec{x}^N)=\vec{y}) > 0$.
\begin{lemma}\label{Lemma:LLT}
Uniformly over all $\vec{y}\in D^{\vec{x},N}_\alpha$ and all $N\geq 1$, we have
\begin{equation*}
\frac{(2N)^{\frac{k}{2}(\alpha\wedge 1)}}{2^k} \nu_N\big(u^N(\vec{x}^N)=\vec{y}\,\big) - \tilde{g}^{\vec{x}}_\alpha(\vec{y}) = o(1)\;.
\end{equation*}
\end{lemma}
\noindent In the case $k=1$, let $E^{x,N}_\alpha$ be the set of values $y$ such that $\nu_N(u^N(x_f)-u^N(x^N)=y) > 0$.
\begin{lemma}\label{Lemma:LLTReversed}
Uniformly over all $y\in E^{x,N}_\alpha$ and all $N\geq 1$, we have
\begin{equation*}
\frac{(2N)^{\frac{1}{2}(\alpha\wedge 1)}}{2}\, \nu_N\big(u^N(x_f)-u^N(x^N)=y\,\big) - \int_{z\in\R}\tilde{g}^{(x,x_f)}_\alpha(z,z+y)dz = o(1)\;.
\end{equation*}
\end{lemma}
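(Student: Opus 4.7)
The plan is to exploit the product structure of $\nu_N$ recorded in \eqref{Eq:piNnuN}: under $\nu_N$, the increments $X_i$ are independent $\pm 1$ Bernoullis. If $k_N$ and $\ell_N = 2N$ denote the indices associated with $x^N$ and $x_f$, then $S(\ell_N) - S(k_N) = \sum_{i = k_N+1}^{\ell_N} X_i$ is itself an independent sum, whose law moreover is independent of $S(k_N)$. Consequently $u^N(x_f) - u^N(x^N)$ is a centred, rescaled sum of independent $\pm 1$ variables supported on a lattice of spacing $2/a_N$, with $a_N := (2N)^{(\alpha \wedge 1)/2}$; this lattice spacing already accounts for the prefactor $a_N/2$ in the claim.

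The main step is then a one-dimensional local limit theorem for this sum, obtained along the same lines as Lemma \ref{Lemma:LLT}. Concretely I would compute the characteristic function
\begin{equ}
\log \nu_N\big[e^{it (u^N(x_f) - u^N(x^N))}\big] = \sum_{i=k_N+1}^{\ell_N}\big[L\big(h^N_i + \tfrac{it}{a_N}\big) - L(h^N_i)\big] - \tfrac{it}{a_N}\big(\Sigma^N_\alpha(x_f) - \Sigma^N_\alpha(x^N)\big),
\end{equ}
and Taylor-expand in $t$: the first-order terms vanish by the very definition of $\Sigma^N_\alpha$, while the second-order quantity $a_N^{-2}\sum_{i=k_N+1}^{\ell_N} L''(h^N_i)$ converges, by the same asymptotic computations leading to \eqref{Eq:Mean1}--\eqref{Eq:Mean2}, to a limiting variance $\sigma_x^2 = q_\alpha(\cdot, x_f) - q_\alpha(\cdot, x)$, where $\cdot$ stands for $0$ if $\alpha \geq 1$ and $-\infty$ if $\alpha < 1$. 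Fourier inversion on the shifted lattice $\tfrac{2}{a_N}\mathbb{Z} + c_N$ then yields that $\tfrac{a_N}{2}\,\nu_N(u^N(x_f) - u^N(x^N) = y)$ is, uniformly in $y \in E^{x,N}_\alpha$, arbitrarily close to $\tfrac{1}{\sqrt{2\pi}\sigma_x}\exp(-y^2/2\sigma_x^2)$ as $N \to \infty$.

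It remains to identify this Gaussian density with $\int_\mathbb{R} \tilde{g}^{(x, x_f)}_\alpha(z, z+y)\, dz$. If $(Z_1, Z_2)$ is Gaussian with density $\tilde{g}^{(x, x_f)}_\alpha$, the change of variables $(z, w) = (z, z+y)$ makes the integral the marginal density of $Z_2 - Z_1$; direct computation then shows that $Z_2 - Z_1$ is centred Gaussian with variance $q_\alpha(\cdot, x_f) + q_\alpha(\cdot, x) - 2 q_\alpha(\cdot, x \wedge x_f) = q_\alpha(\cdot, x_f) - q_\alpha(\cdot, x) = \sigma_x^2$, as required. The main obstacle in carrying out this plan is the uniform-in-$y$ error control in the Fourier inversion: outside a small neighbourhood of $0$ inside the fundamental interval $(-\pi a_N/2, \pi a_N/2]$, one has to bound the characteristic function by some $\exp(-c N^{\alpha \wedge 1} t^2)$ so that the corresponding contribution is $o(a_N^{-1})$. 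This rests on the non-degeneracy of the Bernoulli parameters $q^N_i$ and requires a careful bookkeeping of how many of them stay bounded away from $0$ and $1$, in particular in the regime $\alpha < 1$, where $h^N_i$ can become large near the endpoints of the lattice.
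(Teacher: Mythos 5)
Your proposal is correct and follows essentially the same route as the paper, which proves this lemma by noting that the arguments of Lemma \ref{Lemma:LLT} apply verbatim to the single increment $u^N(x_f)-u^N(x^N)$ (a centred sum of independent lattice variables, whose limit law is $\tilde{B}_\alpha(x_f)-\tilde{B}_\alpha(x)$) and that $\int_{\R}\tilde{g}^{(x,x_f)}_\alpha(z,z+y)\,dz$ is precisely the density of that limit at $y$. Your extra care about the unbounded $h^N_i$ when $\alpha<1$ matches the restriction to the windows $I_{N,a}$ used in the proof of Lemma \ref{Lemma:LLT}.
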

\noindent Below, we provide the proof of the first lemma. The second lemma follows from exactly the same arguments, one simply has to notice that $u^N(x_f)-u^N(x^N)$ converges in law to $\tilde{B}_\alpha(x_f)-\tilde{B}_\alpha(x)$, and that $\int_{z\in\R}\tilde{g}^{(x,x_f)}_\alpha(z,z+y)dz$ is the density at $y$ of this limiting r.v.
\begin{proof}[Proof of Lemma \ref{Lemma:LLT}]
Let us prove the case $\alpha \in (0,1)$ which is the most involved. The main difference in the proof with the case $\alpha \geq 1$ lies in the fact that the forthcoming bound (\ref{Eq:BoundCaract2}) cannot be applied to all $h^N_i$ simultaneously when $\alpha \in (0,1)$. Indeed, these coefficients are not bounded uniformly over $i$ and $N$ when $\alpha \in (0,1)$. However, for any given $a>0$, they are bounded uniformly over all $i\in I_{N,a}:=[N-a(2N)^\alpha,N+a(2N)^\alpha]$ and all $N\geq 1$.\\
Without loss of generality, we can assume that $x_1 < x_2 <\ldots < x_k$ so that only $x_k$ can take the value $+\infty$. Let $\phi_h(t) = \exp(L(h+it) - L(h))$ for $t,h\in\R$. This is the characteristic function of the Bernoulli $\pm 1$ r.v.~with mean $L'(h)$ so that
\begin{equation}\label{Eq:Charact}
\phi_h(t) = \cos(t) + iL'(h)\sin(t)\;,\quad t\in\R\;,\quad h\in\R\;.
\end{equation}
In particular, the characteristic function of the r.v.~$X_i$ under $\nu_N$ is given by $\phi_{h^N_i}$. The function $\phi$ is $2\pi$-periodic and $|\phi_h(t)|\leq 1$ for all $h,t$. From (\ref{Eq:Charact}), one deduces that for any compact set $K\subset \R$, there exists $r(K) > 0$ such that
\begin{equation}\label{Eq:BoundCaract2}
\big|\phi_h(t)\big| \leq \exp(-r t^2 L''(h))\;,\qquad\forall t\in \Big[-\frac{2\pi}{3},\frac{2\pi}{3}\Big]\;,\quad\forall h\in K\;.
\end{equation}
Let $\Phi_\alpha$ denote the characteristic function of the Gaussian vector $\tilde{B}_\alpha(\vec{x})$. Classical arguments from Fourier analysis entail that for all $\vec{y}\in D^{\vec{x},N}_\alpha$
\begin{equation*}
R_N := (2N)^{\frac{\alpha k}{2}} \nu_N\big(u^N(\vec{x}^N)=\vec{y}\big) - 2^k\tilde{g}^{\vec{x}}_\alpha(\vec{y})\;,
\end{equation*}
can be rewritten as
\begin{equation*}
R_N = \frac{1}{\pi^k} \int_{D}\Phi_N(\vec{t}) e^{-i\langle \vec{t} , \vec{y} \rangle} d\vec{t} - \frac{1}{\pi^k} \int_{\R^k}\Phi_\alpha(\vec{t}) e^{-i\langle \vec{t} , \vec{y} \rangle} d\vec{t}\;,
\end{equation*}
where $\Phi_N$ is the characteristic function of $u^N(\vec{x}^N)$ under $\nu_N$ and
\begin{equation*}
D:= \Big\{\vec{t} \in \R^{k}: |t_\ell| \leq \frac{\pi}{2} (2N)^{\frac{\alpha}{2}}, \ell=1,\ldots,k \Big\}\;.
\end{equation*}
Notice that the factor $1/2$ in the definition of $D$ comes from the simple fact that our step distribution charges $\{-1,1\}$, and therefore has a maximal span equal to $2$. Then, we take $\rho \in (0,\frac{1}{2(3+k)})$ and we bound $|R_N| \pi^k$ by the sum of the following three terms
\begin{align*}
J_1 &= \int_{D_1} |\Phi_N(\vec{t})-\Phi_\alpha(\vec{t})| d\vec{t}\;,\quad D_1=[-N^{\rho\alpha},N^{\rho\alpha}]^{k}\;,\\
J_2 &= \int_{D_2} |\Phi_\alpha(\vec{t})| d\vec{t}\;,\quad D_2=\R^{k}\backslash D_1\;,\\
J_3 &= \int_{D_3} |\Phi_N(\vec{t})| d\vec{t}\;,\quad D_3=D\backslash D_1\;.
\end{align*}
It suffices to show that these three terms vanish as $N\rightarrow\infty$. Regarding $J_1$, the proof of Lemma \ref{Lemma:CLT} shows that
\begin{equation*}
|\Phi_N(\vec{t})-\Phi_\alpha(\vec{t})| \lesssim |\Phi_\alpha(\vec{t})| \Big(\frac{|\vec{t}\,|^3}{N^{\frac{\alpha}{2}}} + \frac{|\vec{t}\,|^2}{N^{\alpha}} \Big)\;,
\end{equation*}
uniformly over all $|\vec{t}|^3 = o(N^{\alpha/2})$. Since $\rho(3+k) < \frac12$, a simple calculation shows that $J_1$ goes to $0$ as $N\rightarrow\infty$. The convergence of $J_2$ to $0$ as $N\rightarrow\infty$ is a consequence of the exponential decay of the characteristic functions of Gaussian r.v. We turn to $J_3$. For each $\ell\in\{1,\ldots,k\}$, we set
\begin{equation*}
D_{3,\ell} = D_3 \cap \Big\{|t_\ell| > 3^{-\ell} N^{\rho\alpha}; \forall j>\ell, |t_j| \leq 3^{-j} N^{\rho\alpha}\Big\}\;,
\end{equation*}
so that $D_3=\cup_\ell D_{3,\ell}$. The important feature of these sets is that for all $N$ large enough
\begin{equation}\label{Eq:BoundD3l}
\frac{|t_\ell|}{2} \leq |t_\ell+\ldots+t_k| \leq \frac{2\pi}{3}(2N)^{\frac{\alpha}{2}}\;, \quad \forall \vec{t}\in D_{3,\ell}\;, \quad \forall \ell\in\{1,\ldots,k\}\;.
\end{equation}
We bound separately each term $J_{3,\ell}$ arising from the restriction of the integral in $J_3$ to $D_{3,\ell}$. Take $a>0$ such that $-a < x_1 < x_{k-1} < a$ and recall that $x_k$ can be infinite. Let $K$ be a compact set that contains all the values $h^N_i$, $i\in I_{N,a}$, and let $r$ be the corresponding constant introduced above (\ref{Eq:BoundCaract2}). We also define $j_p = N+\lfloor x_p(2N)^\alpha\rfloor$ for all $p\in\{1,\ldots,k\}$ and
\begin{equation*}
I_{N,a,\ell} :=I_{N,a} \cap (N+x_{\ell-1}(2N)^\alpha, N+x_\ell (2N)^\alpha]\;.
\end{equation*}
Using the independence of the $X_i$'s under $\nu_N$ and the fact that the modulus of a characteristic function is smaller than $1$ at the second line, as well as (\ref{Eq:BoundCaract2}) and (\ref{Eq:BoundD3l}) at the third line, we get
\begin{align*}
\Big| \Phi_N(\vec{t}) \Big| &= \Big| \nu_N\Big[\exp\Big(i\sum_{j=1}^{2N} X(j)\sum_{p=1}^k \tun_{\{j\leq j_p\}}\frac{t_p}{(2N)^\frac{\alpha}{2}} \Big)\Big] \Big|\\
&\leq\prod_{j=1}^{2N} \Big|\phi_{h^N_j}\bigg(\frac{\sum_{p=1}^k \tun_{\{j\le j_p\}} t_p}{(2N)^\frac{\alpha}{2}} \bigg)\Big|\\
&\leq\prod_{j\in I_{N,a,\ell}} \Big|\phi_{h^N_j}\Big(\frac{t_\ell + \ldots + t_k}{(2N)^\frac{\alpha}{2}} \Big)\Big|\\
&\leq\exp\Big(-r \frac{(t_\ell+\ldots+t_k)^2}{(2N)^\alpha}\sum_{j\in I_{N,a,\ell}} L''(h^N_j)\Big)\;.
\end{align*}
Since $\frac{1}{(2N)^\alpha}\sum_{j\in I_{N,a,\ell}} L''(h^N_j) \rightarrow q(x_{\ell-1},x_\ell \wedge a)$ as $N\rightarrow\infty$ and since the limit is strictly positive, we have for $N$ large enough
\begin{equation*}
\frac{1}{(2N)^\alpha}\sum_{j\in I_{N,a,\ell}} L''(h^N_j) \geq \frac12 q(x_{\ell-1},x_\ell \wedge a)\;,
\end{equation*}
so that, using (\ref{Eq:BoundD3l}), we get
\begin{equation*}
J_{3,\ell} \leq \int_{D_{3,\ell}} e^{-\frac{r}{8} t_\ell^2 q(x_{\ell-1},x_\ell \wedge a)} d\vec{t} \lesssim N^\frac{(k-1)\alpha}{2} \int_{|t_\ell| > 3^{-\ell} N^{\rho\alpha}}e^{-\frac{r}{8} t_\ell^2 q(x_{\ell-1},x_\ell \wedge a)} dt_\ell \;,
\end{equation*}
which goes to $0$ as $N\rightarrow\infty$. This concludes the proof.
\end{proof}

\begin{remark}\label{Rk:Mean}
It is possible to push the expansion of the local limit theorem one step further, in the spirit of~\cite[Thm VII.12]{Petrov}. Then, a simple calculation shows the following. Let $x\in(0,1)$ if $\alpha \geq 1$, and $x\in \R$ if $\alpha < 1$. We have
\begin{equation*}
\mu_N\big[S(k)\big] - \nu_N\big[S(k)\big] = o(N^{\frac{1\wedge\alpha}{2}})\;,
\end{equation*}
uniformly over all $k\leq x(2N)$ if $\alpha \geq 1$, and all $k\leq N+x(2N)^\alpha$ if $\alpha < 1$.
\end{remark}

\begin{corollary}\label{Cor:AbsCont}
The Radon-Nikodym derivative of $\mu_N$ with respect to $\nu_N$, restricted to $\sigma(X_1,\ldots,X_N)$, is bounded uniformly over all $N\geq 1$.
\end{corollary}
\begin{proof}
Using (\ref{Eq:muNpiN}) at the first line and the independence of the $X_i$'s at the second line, we get
\begin{align*}
\mu_N\big(\underset{i=1}{\overset{N}{\cap}}\{S(i)=y_i\}\big) &= \frac{\nu_N\big(\underset{i=1}{\overset{N}{\cap}}\{S(i)=y_i\}; S(2N) = 0\big)}{\nu_N(S(2N)=0)}\\
&= \nu_N\big(\underset{i=1}{\overset{N}{\cap}}\{S(i)=y_i\}\big) \,\frac{\nu_N(S(2N)-S(N) = -y_N)}{\nu_N(S(2N)=0)}\;,
\end{align*} 
for all $y_1,\ldots,y_N \in \R$. By Lemmas \ref{Lemma:LLT} and \ref{Lemma:LLTReversed}, the fraction on the r.h.s.~is uniformly bounded over all $y_N \in \R$ and all $N\geq 1$, thus yielding the statement of the corollary.
\end{proof}

\paragraph{Step 3: Convergence of the marginals under $\mu_N$.} From (\ref{Eq:muNpiN}), we deduce that
\begin{equation}\label{Eq:AbsContMarg}
\mu_N\big(u^N(\vec{x}^N)=\vec{y}\,\big) = \frac{\nu_N\big(u^N(\vec{x}^N)=\vec{y};\, u^N(x_f)=0\big)}{\nu_N\big(u^N(x_f)=0\big)}\;.
\end{equation}
By Lemma \ref{Lemma:LLT} and Equation (\ref{Eq:Densities}), we have
\begin{equation*}
\mu_N\big(u^N(\vec{x}^N)=\vec{y}\,\big) = 2^k(2N)^{-\frac{k}{2}(\alpha\wedge 1)}g^{\vec{x}}_\alpha(\vec{y}) \big(1+o(1)\big)\;,
\end{equation*}
uniformly over all $N\geq 1$, and all $\vec{y}$ lying in the intersection of $D^{k,N}_\alpha$ with a compact domain of $\R^k$. Thus, we deduce that for all $\vec{v} < \vec{w} \in \R^{k}$, we have
\begin{align*}
\mu_N\big(u^N(\vec{x}^N) \in [\vec{v},\vec{w}]\big) &= \sum_{\vec{y} \in [\vec{v},\vec{w}] \cap D^{k,N}_\alpha} \mu_N\big(u^N(\vec{x}^N)=\vec{y}\,\big)\\
&= \sum_{\vec{y} \in [\vec{v},\vec{w}] \cap D^{k,N}_\alpha} \Big(\frac{2}{(2N)^{\frac{\alpha\wedge 1}{2}}}\Big)^k g^{\vec{x}}_\alpha(\vec{y}) \big(1+o(1)\big)\\
&\longrightarrow \int_{\vec{y} \in [\vec{v},\vec{w}]} g^{\vec{x}}_\alpha(\vec{y}) d\vec{y}\;,
\end{align*}
as $N\rightarrow\infty$. Since $| u^N(\vec{x}) - u^N(\vec{x}^N)| = \cO(N^{-\frac{\alpha\wedge 1}{2}})$ uniformly over all $\vec{x}$, we deduce that the finite dimensional marginals of $u^N$ under $\mu_N$ converge to those of $B_\alpha$.

\paragraph{Step 4: Tightness of the sequence $\mu_N$.} For convenience, we restrict to the case $\alpha \in (0,1)$; but the case $\alpha \ge 1$ is actually simpler. We start by showing tightness of $(u^N(x),x\in \R)$ under $\nu_N$. More precisely, we are going to show that there exists $\beta >0$ such that
$$ \sup_{N\geq 1} \nu_N\bigg[|u^N(0)|+\sup_{x\ne y \in [-A,+A]} \frac{|u^N(x)-u^N(y)|}{|x-y|^\beta}\bigg] < \infty\;,$$
for all $A>0$, which ensures tightness in $\cC(\R,\R)$.\\
We have for any $\lambda \in\R$
\begin{align*}
\log \nu_N\big[e^{\lambda u^N(0)}\big] &= \sum_{k=1}^N \log\nu_N\Big[\exp\Big(\frac{\lambda}{(2N)^{\frac{\alpha}{2}}}\big(X_k - L'(h^N_k)\big)\Big)\Big]\\
&= \sum_{k=1}^N \bigg(L\Big(h^N_k+\frac{\lambda}{(2N)^{\frac{\alpha}{2}}}\Big) -L\big(h^N_k\big)-\frac{\lambda}{(2N)^{\frac{\alpha}{2}}}L'\big(h^N_k\big)\bigg)\\
&=\frac{\lambda^2}{2} q_\alpha(-\infty,0) + \cO(N^{-\alpha/2})\;,
\end{align*}
uniformly over all $N\geq 1$, which ensures that all the moments of $u^N(0)$ are uniformly bounded in $N\geq 1$.\\
Regarding the H\"older semi-norm, a direct computation shows that for all $A,\delta > 0$
\begin{equation*}
\log \nu_N \Big[\exp\Big(\frac{u^N(y) - u^N(x)}{|y-x|^\delta}\Big)\Big] \leq \|L''\|_\infty |x-y|^ {1-2\delta}\;,
\end{equation*}
uniformly over all $x,y$ of the form $(k-N)/(2N)^\alpha$ with
$$ k\in\{N-\lfloor A(2N)^\alpha\rfloor,\ldots,N+\lfloor A(2N)^\alpha\rfloor\}\;.$$
Taking $\delta \in (0,1/2)$, this yields a finite bound uniformly over all $N\geq 1$ and all such discrete $x,y$. Using classical interpolation arguments, we deduce that this bound is still finite for non-discrete $x,y$ lying in $[-A,A]$. Henceforth, the Kolmogorov Continuity Theorem ensures that for any $\beta \in (0,1/2)$ and any $p\geq 1$ we have:
\begin{equation*}
\sup_{N\geq 1} \nu_N\bigg[\sup_{x\ne y \in J} \frac{|u^N(x)-u^N(y)|^p}{|x-y|^{p\beta}}\bigg] < \infty\;,
\end{equation*}
where $J=[-A,+A]$. Notice that we did not introduce a different notation for the modification built from the Kolmogorov Continuity Theorem, since it necessarily coincides almost surely with the continuous process $u^N$. Since $J$ is arbitrary, tightness in $\cC(\R,\R)$ of $u^N$ under $\nu_N$ follows.\\
By Corollary \ref{Cor:AbsCont}, the law of $(u^N(x),x\in [-\frac{N}{(2N)^\alpha},0])$ under $\mu_N$ is absolutely continuous w.r.t. the law of the same process under $\nu_N$. This ensures that the sequence of the laws of $(u^N(x),x\in [-\frac{N}{(2N)^\alpha},0])$ under $\mu_N$ is tight. Since the laws of the processes $(u^N(-x),x\in [0,\frac{N}{(2N)^\alpha}])$ and $(u^N(x),x\in [0,\frac{N}{(2N)^\alpha}])$ under $\mu_N$ coincide, we deduce the tightness of the whole process.\\

This concludes the proof of Theorem \ref{Th:Static}.

\begin{proof}[Proof of Proposition \ref{Prop:Partition}]
Recall that $\pi_N$ is the uniform measure on the set of lattice paths that make $2N$ steps and start from $0$. We write
\begin{equation}\label{Eq:Partition}
Z_N = 2^{2N} \pi^N\Big[\Big(\frac{p_N}{q_N}\Big)^{\frac{1}{2}A(S)} \tun_{\{S(2N)=0\}} \Big]= 2^{2N} \nu_N(S(2N)=0)\, e^{L_S(h^N)}\;.
\end{equation}
Since $\nu_N(S(2N)=0) \rightarrow 0$ as $N\rightarrow\infty$, it suffices to estimate the exponential term. When $\alpha > 1$, we use the fact that $L(0)=L'(0)=L^{(3)}(0)=0$, $L''(0)= 1$ and $\|L^{(4)}\|_\infty < \infty$, to get
\begin{align*}
	L_{S}(h^N) &=\sum_{i=1}^{2N} L(h^N_i)=\frac{2\sigma^2}{(2N)^{2\alpha}}L''(0) \sum_{i=1}^{2N}\Big(N-i+\frac{1}{2}\Big)^2 + \cO(N^{5-4\alpha})\\
	&= \frac{\sigma^2}{6}(2N)^{3-2\alpha} + \cO(N^{(5-4\alpha)\vee (1-2\alpha)})\;,
\end{align*}
and the asserted result follows in that case. For $\alpha = 1$, the result follows from the convergence of Riemann approximations of integrals. Finally, when $\alpha < 1$, we use the simple facts that $L$ is even and that $L(x)-x+\log 2$ is integrable on $[0,\infty)$ to get
\begin{align*}
L_S(h^N) &= 2\sum_{i=1}^N L(h^N_i) = 2 \sum_{i=1}^N h^N_i - 2N \log 2 + 2 \sum_{i=1}^N \big(L(h^N_i) - h^N_i+\log 2)\\
&= \frac{\sigma}{2}(2N)^{2-\alpha} - 2N \log 2 + \cO(N^{\alpha})\;,
\end{align*}
thus concluding the proof.
\end{proof}

\section{Equilibrium fluctuations}\label{Section:Fluctuations}

The goal of this section is to establish Theorem \ref{Th:Dynamic}. Our method of proof is standard: first, we show tightness of the sequence of processes $u^N$, then we identify the limit via a martingale problem. Recall that we work under the reversible measure $\mu_N$.

\subsection{Tightness}\label{Subsec:TightFluct}
From now on, we set $J=[0,1]$ when $\alpha \geq 1$ and $J=[-A,+A]$ for an arbitrary value $A>0$ when $\alpha < 1$, along with
\begin{equation*}
e_n(x)=\begin{cases} \sqrt{2} \sin(n\pi x)\quad&\mbox{ for } \alpha \geq 1\;,\\
\frac{1}{\sqrt{A}} \sin\Big(\frac{n\pi}{2A}(x+A)\Big)\quad&\mbox{ for }\alpha < 1\;.
\end{cases}
\end{equation*}
This is an orthonormal basis of $L^2(J)$. For all $\beta > 0$, we define the associated Sobolev spaces
\begin{equation*}
H^{-\beta}(J) := \Big\{ f \in S'(J): \|f\|_{H^{-\beta}}^2 := \sum_{n\geq 1} n^{-2\beta} \langle f, e_n\rangle^2 < \infty \Big\}\;.
\end{equation*}
Recall that for $\alpha < 1$, the value $A>0$ is arbitrary. In order to prove tightness of the sequence $u^N$ in the Skorohod space $\bbD([0,\infty),\cC([0,1])$ for $\alpha> 1$ and in $\bbD([0,\infty),\cC(\R))$ for $\alpha < 1$, it suffices to show that the sequence of laws of $u^N(t=0,\cdot)$ is tight in $\cC(J)$, and that for any $T>0$ there exists $p > 0$ such that
\begin{equation}\label{Eq:TightnessCriterionFluct}
	\lim_{h\downarrow 0}\varlimsup_{N\rightarrow\infty} \E^N_{\mu_N}\bigg[ \sup_{\substack{s,t \leq T\\|t-s|\leq h}} \| u^N(t)-u^N(s)\|^p_{\cC(J)} \bigg] = 0\;,
\end{equation}
see for instance~\cite[Thm 13.2]{Billingsley}.\\
Since we start from the stationary measure, the first condition is ensured by what we proved in Step 4 of the preceding section. To check the second condition, we proceed as follows. We introduce a piecewise linear interpolation in time $\bar{u}^N$ of our original process by setting
\begin{align*}
\bar{u}^N(t,\cdot) &:= \big( t_N +1 - t(2N)^{2\alpha\wedge 2}\big)u^N\Big(\frac{t_N}{(2N)^{2\alpha\wedge 2}},\cdot\Big)\\
&\quad+ \big(t(2N)^{2\alpha\wedge 2}-t_N) u^N\Big(\frac{t_N+1}{(2N)^{2\alpha\wedge 2}},\cdot\Big)\;,
\end{align*}
where  $t_N:=\lfloor t(2N)^{2\alpha\wedge 2} \rfloor$.

\begin{lemma}\label{Lemma:HolderFluct}
For all $\beta > 1/2$  and all $p \geq 1$, we have
\begin{equation*}
\E^N_{\mu_N}\Big[\| \bar{u}^N(t)-\bar{u}^N(s) \|_{H^{-\beta}(J)}^p\Big]^{\frac{1}{p}} \lesssim \sqrt{t-s}\;,
\end{equation*}
uniformly over all $0 \leq s \leq t \leq T$ and all $N\geq 1$.
\end{lemma}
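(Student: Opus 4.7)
The plan is to Fourier-decompose in the modes $e_n$ and bound each projection separately using reversibility of the dynamics, then sum with the weights $n^{-2\beta}$, which is summable precisely for $\beta>1/2$.

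\textbf{Step 1 (Reduction to grid times).} Since $\bar u^N$ is linear in time on intervals of length $(2N)^{-(2\alpha\wedge 2)}$, the increment $\bar u^N(t)-\bar u^N(s)$ can be written as a convex combination of grid-time increments of $u^N$ on a total time span of at most $|t-s|+O((2N)^{-(2\alpha\wedge 2)})$. Combined with the triangle inequality, this reduces the desired bound to the analogous statement for $u^N(t')-u^N(s')$, where $s'\leq t'$ are grid times with $|t'-s'|\leq C(|t-s|+(2N)^{-(2\alpha\wedge 2)})$; for $|t-s|$ below the grid scale, the linearity of $\bar u^N$ on a single interval yields the bound directly from the grid-time estimate.

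\textbf{Step 2 ($L^2$ estimate via reversibility).} For each $n\geq 1$ set $f_n(X):=\langle u^N,e_n\rangle$, viewed as a real function on the particle configuration space. By Parseval,
$$\|u^N(t')-u^N(s')\|_{H^{-\beta}}^2 = \sum_{n\geq 1} n^{-2\beta}\big|f_n(X_{t'})-f_n(X_{s'})\big|^2.$$
Since the speeded-up dynamics is reversible with respect to $\mu_N$, the spectral theorem gives $\E^N_{\mu_N}[(f_n(X_{t'})-f_n(X_{s'}))^2]\leq 2(t'-s')\,\cE_N(f_n,f_n)$, where $\cE_N$ denotes the associated Dirichlet form. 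A direct computation from the corner-flip rates shows that a flip at site $k$ changes $f_n$ by $O(|e_n(x_k)|/(2N)^{3/2})$ when $\alpha\geq 1$ (respectively $O(|e_n(x_k)|/(2N)^{3\alpha/2})$ when $\alpha<1$) at total rate at most $(2N)^{2\alpha\wedge 2}$, yielding $\cE_N(f_n,f_n)=O(1)$ uniformly in $n$ and $N$. Summing over $n$ with the weights $n^{-2\beta}$, convergent for $\beta>1/2$, establishes the case $p=2$.

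\textbf{Step 3 (Extension to general $p$).} The case $p\in[1,2]$ follows from $p=2$ by Jensen's inequality. For $p>2$ apply Dynkin's decomposition $f_n(X_t)-f_n(X_s)=M^n_t-M^n_s+\int_s^t \cL_N f_n(X_r)\,dr$, where $M^n$ is a purely discontinuous $L^2$-martingale whose carré du champ $\Gamma_N(f_n,f_n)$ is bounded pointwise uniformly in $n$ and $N$ by the same estimate as $\cE_N$. Burkholder-Davis-Gundy then yields $\|M^n_t-M^n_s\|_{L^p(\mu_N)}\lesssim\sqrt{t-s}$, uniformly in $n$. The drift $\int_s^t \cL_N f_n(X_r)\,dr$ is handled by discrete summation by parts: $\cL_N f_n$ decomposes, up to lower-order terms, as a discrete Laplacian of $e_n$ tested against $u^N$, plus an asymmetric correction involving derivatives of $\Sigma^N_\alpha$. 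Using the explicit asymptotics of $\Sigma^N_\alpha$ from (\ref{Eq:Mean1})-(\ref{Eq:Mean2}) together with the moment bound of Lemma~\ref{Lemma:Holder}, one shows that the drift contribution is square-summable against $n^{-2\beta}$ in $L^p$ for $\beta>1/2$, completing the bound.

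The main obstacle is the careful treatment of the asymmetric drift in $\cL_N f_n$, particularly in the regime $\alpha\leq 1$ where a transport term proportional to $\partial_x\Sigma^N_\alpha$ is present; this must be controlled so as to contribute only $O(n(t-s))$ rather than a higher power of $n$, which requires exploiting both the explicit structure of $\Sigma^N_\alpha$ and the stationarity of $u^N$.
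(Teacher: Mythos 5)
Your Steps 1 and 2 are sound: the reduction to grid times matches the paper's, and the spectral estimate $\E^N_{\mu_N}[(f_n(X_{t'})-f_n(X_{s'}))^2]\leq 2(t'-s')\,\cE_N(f_n,f_n)$ together with the computation $\cE_N(f_n,f_n)=\cO(1)$ uniformly in $n$ is a clean and correct way to get the case $p=2$. The genuine gap is in Step 3, in the treatment of the drift $\int_s^t \cL^N f_n(X_r)\,dr$ for $p>2$. Since $\cL^N f_n$ contains $\tfrac12\langle u^N,(2N)^{2}\Delta e_n\rangle_N\sim -\tfrac12 n^2\pi^2\,\hat u(\cdot,n)$ and $\hat u(\cdot,n)$ is only of size $\cO(1/n)$ in $L^p$ under stationarity, the best mode-by-mode bound you can hope for by the route you describe is indeed $\|d_n\|_{L^p}\lesssim n\,(t-s)$, exactly as you state. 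But then Minkowski gives
\begin{equ}
\Big\|\Big(\sum_n n^{-2\beta}d_n^2\Big)^{1/2}\Big\|_{L^p}\leq (t-s)\Big(\sum_n n^{2-2\beta}\Big)^{1/2}\;,
\end{equ}
and the sum on the right diverges for every $\beta\leq 3/2$. So an $\cO(n(t-s))$ drift bound does \emph{not} suffice for $\beta>1/2$; it would only prove the lemma for $\beta>3/2$, which is too weak for the interpolation argument of Lemma \ref{Lemma:Interpo}. There is no obvious repair along these lines: a Kipnis--Varadhan-type bound $\E[(\int_s^t\cL^N f_n(X_r)dr)^2]\lesssim (t-s)\,\cE_N(f_n,f_n)$ would give the right $n$-uniform control, but only in $L^2$, whereas you need $L^p$ for large $p$.

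The paper avoids the drift altogether by a forward--backward (Lyons--Zheng) decomposition: reversibility under $\mu_N$ yields a second Dynkin decomposition for the time-reversed process in which the drift appears with the opposite sign, so adding the two identities gives $2(\hat u(t,n)-\hat u(s,n))=M^N_{s,t}(n)+\tilde M^N_{t,s}(n)$ with two martingales (one in the forward, one in the reversed filtration). Each is controlled by (\ref{Eq:BDG3}) through its bracket, of order $t-s$ uniformly in $n$, plus a jump term of order $N^{-\frac32(1\wedge\alpha)}$ (which you also omit but which your grid-time reduction absorbs). You should replace your Step 3 by this argument; your Step 2 then becomes redundant, since the same martingale bounds already cover $p=2$.
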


\begin{proof}
Assume that we have the bound
\begin{equation}\label{Eq:BoundInterpo}
\E^N_{\mu_N}\Big[\|u^N(t)-u^N(s) \|_{H^{-\beta}(J)}^p\Big]^{\frac{1}{p}} \lesssim \sqrt{t-s} + N^{-\frac{3}{2} (1\wedge \alpha)}\;,
\end{equation}
uniformly over all $0\leq s \leq t$ and all $N\geq 1$. Let $0 \leq s \leq t \leq T$. We distinguish two cases. If $t_N=s_N$ or $t=(s_N+1)/(2N)^{2\alpha\wedge 2}$, then $t-s \leq 1/(2N)^{2\alpha\wedge 2}$ and
\begin{equation*}
\bar{u}^N(t,\cdot)-\bar{u}^N(s,\cdot) = (t-s)(2N)^{2\alpha\wedge 2} \bigg(u^N\Big(\frac{s_N+1}{(2N)^{2\alpha\wedge 2}},\cdot\Big)-u^N\Big(\frac{s_N}{(2N)^{2\alpha\wedge 2}},\cdot\Big)\bigg)\;,
\end{equation*}
so that the asserted bound follows from (\ref{Eq:BoundInterpo}) and the fact that $(t-s)(2N)^{2\alpha\wedge 2} \le \sqrt{t-s} (2N)^{\alpha\wedge 1}$ in that case. If $t_N\geq s_N+1$, then we write
\begin{align*}
\bar{u}^N(t,\cdot)-\bar{u}^N(s,\cdot) &= u^N\Big(\frac{t_N}{(2N)^{2\alpha\wedge 2}},\cdot\Big)-u^N\Big(\frac{s_N+1}{(2N)^{2\alpha\wedge 2}},\cdot\Big) \\
&+ \bar{u}^N(t,\cdot)-\bar{u}^N\Big(\frac{t_N}{(2N)^{2\alpha\wedge 2}},\cdot\Big)\\
&+ \bar{u}^N\Big(\frac{s_N+1}{(2N)^{2\alpha\wedge 2}},\cdot\Big) - \bar{u}^N(s,\cdot)\;.
\end{align*}
The second and third increments on the r.h.s.~can be bounded using the first case above, yielding a term of order $\sqrt{t-s}$. Regarding the first increment, either $t_N=s_{N}+1$ and it vanishes, or $t_N\ge s_{N}+2$ and (\ref{Eq:BoundInterpo}) yields a bound of order
\begin{equation*}
\sqrt{\frac{t_N}{(2N)^{2\alpha\wedge 2}}-\frac{s_N+1}{(2N)^{2\alpha\wedge 2}}} + N^{-\frac{3}{2} (1\wedge \alpha)} \lesssim \sqrt{t-s} + (t-s)^{\frac{3}{4}} \lesssim \sqrt{t-s}\;,
\end{equation*}
as required. To complete the proof of the lemma, it suffices to show (\ref{Eq:BoundInterpo}).\\
For all $n\geq 1$, we let $\hat{u}(t,n):= \int_J u(t,x)e_n(x) dx$. Since $\beta > 1/2$, (\ref{Eq:BoundInterpo}) is proved as soon as we show that for all $p\geq 1$ 
\begin{equation}\label{Eq:BoundFourier}
\E^N_{\mu_N}\Big[|\hat{u}(t,n)-\hat{u}(s,n) |^p\Big]^{\frac{1}{p}} \lesssim \sqrt{t-s} + N^{-\frac{3}{2} (1\wedge \alpha)}\;,
\end{equation}
uniformly over all $N\geq 1$, all $n\geq 1$ and all $0 \leq s \leq t$.\\
Let $\cL^N$ be the generator of $u^N$. Using the reversibility of the process, we have the following identities
\begin{align*}
\hat{u}(t,n) - \hat{u}(s,n) &= \int_s^t \cL^N \hat{u}(r,n) dr + M^N_{s,t}(n)\;,\\
\hat{u}(T-(T-t),n) - \hat{u}(T-(T-s),n) &= -\int_s^t \cL^N \hat{u}(r,n) dr + \tilde{M}^N_{t,s}(n)\;,
\end{align*}
where $M^N_{s,t}(n),t\geq s$ is a martingale adapted to the natural filtration of $u^N$, and $\tilde{M}^N_{t,s},s\leq t$ is a martingale in the reversed filtration. Summing up these two identities, we deduce that it suffices to control the $p$-th moment of the martingales $M^N_{s,t}(n)$ and $\tilde{M}^N_{t,s}(n)$. Using the Burkh\"older-Davis-Gundy inequality (\ref{Eq:BDG3}), we get
\begin{equation*}
\E^N_{\mu_N}\Big[|M^N_{s,t}(n) |^p\Big]^{\frac{1}{p}} \!\lesssim \E^N_{\mu_N}\Big[\langle M^N_{s,\cdot}(n)\rangle_t^{p/2}\Big]^{\frac{1}{p}}+ \E^N_{\mu_N}\Big[\sup_{r\in(s,t]}|M^N_{s,r}(n)-M^N_{s,r-}(n)|^p\Big]^{\frac{1}{p}},
\end{equation*}
uniformly over all $N\geq 1$, all $n\geq 1$ and all $0 \leq s \leq t$. It is then a simple calculation to check that almost surely $\langle M^N_{s,\cdot}(n)\rangle_t$ is bounded by a term of order $t-s$ and $\sup_{r\in(s,t]}|M^N_{s,r}(n)-M^N_{s,r-}(n)|$ by a term of order $N^{-\frac32 (1\wedge \alpha)}$, uniformly over all $n,N\ge 1$, thus yielding (\ref{Eq:BoundFourier}). The same bound holds for the reversed martingale by symmetry, thus concluding the proof.
\end{proof}

We need an interpolation inequality to conclude the proof of the tightness.
\begin{lemma}\label{Lemma:Interpo}
Let $\eta=1/2-\epsilon$ and $\beta=1/2+\epsilon$. For $\epsilon>0$ small enough, there exist $c>0$ and $\gamma, \kappa \in (0,1)$ such that
\begin{equation}\label{Interpo}
\|f\|_{\cC^\gamma(J)} \leq c\, \|f\|_{\cC^\eta(J)}^\kappa \|f\|_{H^{-\beta}(J)}^{1-\kappa}\;,\quad \forall f \in \cC^\eta(J) \cap H^{-\beta}(J)\;.
\end{equation}
\end{lemma}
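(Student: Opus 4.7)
The inequality is a classical interpolation statement between the Besov spaces $\cC^\eta(J)=B^\eta_{\infty,\infty}(J)$ and $H^{-\beta}(J)=B^{-\beta}_{2,2}(J)$. I would give a self-contained proof by Littlewood--Paley decomposition in the orthonormal basis $\{e_n\}_{n\geq 1}$. Fix a smooth dyadic partition of unity $\{\chi_j\}_{j\geq 0}$ on $[0,\infty)$ with $\chi_j$ supported in $[2^{j-1},2^{j+1}]$ for $j\geq 1$, and define spectral blocks
\begin{equ}
\Delta_j f := \sum_{n\geq 1}\chi_j(n)\,\hat f(n)\,e_n\;,\qquad \hat f(n):=\langle f,e_n\rangle\;,
\end{equ}
so that $f=\sum_{j\geq 0}\Delta_j f$.

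The key is to get two competing bounds on $\|\Delta_j f\|_\infty$. On the one hand, the standard Littlewood--Paley characterisation of H\"older regularity gives
\begin{equ}
\|\Delta_j f\|_\infty \lesssim 2^{-j\eta}\,\|f\|_{\cC^\eta(J)}\;,
\end{equ}
which for the sine basis on $J$ is obtained from the Fourier decay estimate $|\hat f(n)|\lesssim n^{-\eta}\|f\|_{\cC^\eta}$ (proven via the translation trick $e_n(x+\pi/n)\approx -e_n(x)$) combined with Abel summation on the smooth cut-off $\chi_j$. On the other hand, Bernstein's inequality applied to the spectrally localised function $\Delta_j f$ gives $\|\Delta_j f\|_\infty\lesssim 2^{j/2}\|\Delta_j f\|_{L^2}$, whence
\begin{equ}
\|\Delta_j f\|_\infty \lesssim 2^{j/2}\Bigl(\sum_{n\in[2^{j-1},2^{j+1}]} |\hat f(n)|^2\Bigr)^{1/2} \lesssim 2^{j(1/2+\beta)}\,\|f\|_{H^{-\beta}(J)}\;.
\end{equ}

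Next, Bernstein also implies $\|\partial_x\Delta_j f\|_\infty\lesssim 2^j\|\Delta_j f\|_\infty$, so that
\begin{equ}
|\Delta_j f(x)-\Delta_j f(y)|\;\lesssim\;\min\bigl(1,2^j|x-y|\bigr)\,\|\Delta_j f\|_\infty\;\leq\;\bigl(2^j|x-y|\bigr)^\gamma\|\Delta_j f\|_\infty\;,
\end{equ}
for any $\gamma\in(0,1)$. Summing over $j$ yields
\begin{equ}
\|f\|_{\cC^\gamma(J)} \lesssim \sum_{j\geq 0} 2^{j\gamma}\,\|\Delta_j f\|_\infty\;.
\end{equ}
Writing $A:=\|f\|_{\cC^\eta}$, $B:=\|f\|_{H^{-\beta}}$, I split this sum at the threshold $j_0$ where the two bounds on $\|\Delta_j f\|_\infty$ coincide, namely $2^{j_0}=(A/B)^{1/(\eta+1/2+\beta)}$: on $j\leq j_0$ apply the $H^{-\beta}$ bound, on $j>j_0$ apply the $\cC^\eta$ bound. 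The two resulting geometric sums both converge provided $\gamma<\eta=1/2-\epsilon$, and each is of order $A^\kappa B^{1-\kappa}$ with
\begin{equ}
\kappa\;=\;\frac{\gamma+1/2+\beta}{\eta+1/2+\beta}\;=\;\frac{\gamma+1+\epsilon}{3/2}\;.
\end{equ}
For $\epsilon\in(0,1/4)$, say, one picks $\gamma\in(0,1/2-\epsilon)$ small enough so that the above $\kappa$ lies in $(0,1)$; this is possible because $\kappa\to 2(1+\epsilon)/3\in(0,1)$ as $\gamma\downarrow 0$.

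\textbf{Main obstacle.} The only non-routine ingredient is the Besov characterisation $\|\Delta_j f\|_\infty\lesssim 2^{-j\eta}\|f\|_{\cC^\eta}$ for the Dirichlet sine basis rather than the standard periodic setting. This is classical but requires combining the Fourier decay $|\hat f(n)|\lesssim n^{-\eta}\|f\|_{\cC^\eta}$ with the regularity of $\chi_j$ through an Abel summation; alternatively one can reduce to the periodic case by antisymmetric extension across the endpoints of $J$. Beyond this, the argument is a bookkeeping of geometric sums, and the admissibility of $(\gamma,\kappa)$ follows for any sufficiently small $\epsilon>0$.
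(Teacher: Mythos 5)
Your route is genuinely different from the paper's, which does not use a Littlewood--Paley decomposition at all: it quotes two results from Triebel, namely the interpolation inequality $\|f\|_{W^{\delta,q}}\leq c'\|f\|_{\cC^\eta}^{\kappa}\|f\|_{H^{-\beta}}^{1-\kappa}$ with $\delta=\kappa\eta-(1-\kappa)\beta$ and $q=2/(1-\kappa)$, followed by the Sobolev embedding $W^{\delta,q}\hookrightarrow\cC^\gamma$ for $(\delta-\gamma)q>1$. A self-contained dyadic argument would be a nice alternative, and your exponent bookkeeping is correct: splitting at $2^{j_0}=(A/B)^{1/(\eta+1/2+\beta)}$ gives $\kappa=(\gamma+1/2+\beta)/(\eta+1/2+\beta)$ and requires $\gamma<\eta$, which is consistent with your conclusion. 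The Bernstein steps are also sound, since they only use the spectral localisation of $\Delta_jf$ (which extends to an odd trigonometric polynomial of degree $\lesssim 2^{j}$).

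The gap is the estimate $\|\Delta_jf\|_\infty\lesssim 2^{-j\eta}\|f\|_{\cC^\eta(J)}$, which is \emph{false} for the Dirichlet sine basis unless $f$ vanishes at $\partial J$. Take $f\equiv1$ on $J=[0,1]$: then $\hat f(n)=2\sqrt2/(n\pi)$ for odd $n$ and $0$ otherwise, and at $x\sim 2^{-j-1}$ the block $\Delta_jf(x)$ is a sum of order $2^{j}$ terms of size $\gtrsim 2^{-j}$ all of the same sign, so $\|\Delta_jf\|_{L^\infty(J)}\gtrsim1$ uniformly in $j$. This is the Gibbs oscillation of the odd periodic extension, whose jumps (of size $2f(0)$ and $2f(1)$) sit exactly on $\partial J$. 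Neither of your suggested fixes repairs this: the antisymmetric extension of a generic $f\in\cC^\eta(J)$ is discontinuous, and the coefficient decay $|\hat f(n)|\lesssim n^{-\eta}$ alone cannot yield better than $2^{j(1-\eta)}$ by any summation-by-parts, since the desired conclusion is simply wrong. One cannot restrict to boundary-vanishing $f$ either: in the case $\alpha<1$ the lemma is applied on $J=[-A,A]$ to increments of $u^N$ that do not vanish at $\pm A$. The argument can be saved by writing $f=f_0+\ell$ with $\ell$ the affine interpolant of the boundary values and $f_0$ vanishing at $\partial J$; your scheme then applies to $f_0$, while $|f(0)|$ and $|f(1)|$ must be interpolated separately, e.g.\ from $|f(0)|\leq h^{-1}|\langle f,\un_{[0,h]}\rangle|+h^{\eta}\|f\|_{\cC^\eta}$ combined with $\|\un_{[0,h]}\|_{H^{\beta}}\lesssim h^{1/2-\beta}$ and an optimisation over $h$. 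This extra step is genuinely needed and is missing from your write-up.
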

\begin{proof}
We rely on two standard interpolation results, we refer to the book of Triebel~\cite{Triebel} for the proofs. For $q\geq 1$ and $\delta \in (0,1)$, let $W^{\delta,q}(J)$ be the space of functions $f:J\rightarrow\R$ such that
\begin{equation*}
\|f\|_{W^{\delta,q}} := \| f\|_{L^q} + \Big(\int_x\int_y \frac{|f(x)-f(y)|^q}{|x-y|^{\delta q +1}}dx\,dy\Big)^{\frac{1}{q}} < \infty\;.
\end{equation*}
For $\eta,\beta > 0$ and $\kappa\in(0,1)$, we set $\delta := \kappa\eta - (1-\kappa)\beta$ as well as $q:=2/(1-\kappa)$. Then, there exists $c'>0$ such that
\begin{equation*}
\|f\|_{W^{\delta,q}} \leq c' \|f\|_{\cC^\eta}^\kappa \|f\|_{H^{-\beta}}^{1-\kappa}\;,\quad \forall f \in \cC^\eta\cap H^{-\beta}\;.
\end{equation*}
Furthermore, for any $\gamma>0$ such that $(\delta-\gamma)q > 1$ there exists $c''> 0$ such that
\begin{equation*}
\|f\|_{\cC^\gamma} \leq c'' \|f\|_{W^{\delta,q}}\;,\quad \forall f \in W^{\delta,q}\;.
\end{equation*}
Therefore, taking $\kappa\in (2/3,1)$, $\eta = 1/2-\epsilon$ and $\beta=1/2+\epsilon$ with $\epsilon$ small enough, we deduce the statement of the lemma.
\end{proof}

By the triangle inequality at the first line, we deduce that for all $p\geq 1$ and all $\eta \in (0,1/2)$
\begin{align*} \sup_{0 \leq s \leq t} \E^N_{\mu_N}\Big[\|\bar{u}^N(t)-\bar{u}^N(s)\|^p_{\cC^\eta(J)}\Big] &\le \sup_{0 \leq s \leq t} \E^N_{\mu_N}\Big[\Big(\|\bar{u}^N(t)\|_{\cC^\eta(J)}+\|\bar{u}^N(s)\|_{\cC^\eta(J)}\Big)^p\Big]\\
&\lesssim \sup_{t\ge 0} \E^N_{\mu_N}\Big[\|\bar{u}^N(t)\|_{\cC^\eta(J)}^p\Big]\;.
\end{align*}
Using the H\"older regularity of the interface under $\mu_N$ proved in Step 4 of the previous section, the stationarity of the process $u^N$ and the definition of $\bar{u}^N$, we then deduce that for all $p\geq 1$ and all $\eta \in (0,1/2)$
\begin{equation*}
\sup_{N\geq 1} \sup_{0 \leq s \leq t} \E^N_{\mu_N}\Big[\|\bar{u}^N(t)-\bar{u}^N(s)\|^p_{\cC^\eta(J)}\Big] < \infty\;.
\end{equation*}
Using Lemmas \ref{Lemma:HolderFluct} and \ref{Lemma:Interpo} together with H\"older's inequality, we deduce that there exist $\gamma,\kappa \in (0,1)$ such that for all $p\geq 1$
\begin{equation*}
\E^N_{\mu_N}\Big[\|\bar{u}^N(t)-\bar{u}^N(s)\|^p_{\cC^\gamma(J)}\Big] \lesssim (t-s)^{\frac{p(1-\kappa)}{2}}\;,
\end{equation*}
uniformly over all $N\geq 1$ and all $0 \leq s \leq t \leq T$. Applying Kolmogorov's Continuity Theorem, we deduce that for all $\nu \in \big(0,(1-\kappa)/2)$ and all $p\geq 1$, we have
\begin{equation*}
\sup_{N\geq 1}\E^N_{\mu_N}\bigg[\sup_{s\ne t \in [0,T]}\frac{\|\bar{u}^N(t)-\bar{u}^N(s)\|^p_{\cC^\gamma(J)}}{|t-s|^{\nu p}}\bigg] < \infty\;.
\end{equation*}
We deduce that condition (\ref{Eq:TightnessCriterionFluct}) is fulfilled by the process $\bar{u}^N$. The next lemma shows that $u^N$ and $\bar{u}^N$ are uniformly close on compact sets, so that (\ref{Eq:TightnessCriterionFluct}) is also fulfilled by the process $u^N$, thus concluding the proof of tightness.

\begin{lemma}\label{Lemma:uubar}
For all $p\geq 1$, $\lim_{N\rightarrow\infty} \E^N_{\mu_N}\big[\sup_{t\leq T} \|u^N(t)-\bar{u}^N(t)\|_{\cC(J)}^p\big] = 0$.
\end{lemma}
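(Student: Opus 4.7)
The plan is to exploit that $\bar{u}^N(t)$ is, by construction, a convex combination of $u^N$ evaluated at the endpoints of the macroscopic interval $I_{t_N} := [t_N/(2N)^{2\alpha\wedge 2}, (t_N+1)/(2N)^{2\alpha\wedge 2}]$ of length $(2N)^{-(2\alpha\wedge 2)}$. Hence the pointwise difference is controlled by the temporal oscillation of $u^N$ on $I_{t_N}$, which is itself a microscopic time interval of length $1$; on such an interval only a few corner flips can take place at each column, so the oscillation is essentially of order $(2N)^{-(1\wedge\alpha)/2}$ up to a logarithmic factor.

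Concretely, for every $t\in[0,T]$ convexity immediately gives the pointwise bound $\|u^N(t)-\bar{u}^N(t)\|_{\cC(J)}\leq \omega_N(I_{t_N})$, where $\omega_N(I):=\sup_{s,s'\in I}\|u^N(s)-u^N(s')\|_{\cC(J)}$, so that $\sup_{t\leq T}\|u^N(t)-\bar{u}^N(t)\|_{\cC(J)}\leq \max_{0\leq j\leq \lceil T(2N)^{2\alpha\wedge 2}\rceil}\omega_N(I_j)$. Next I would recall that a corner flip at position $k+1$ changes $S(\cdot,k)$ by $\pm 2$ and leaves every other $S(\cdot,\ell)$ unchanged, so that $u^N(\cdot,x)$ moves by exactly $\pm 2\,(2N)^{-(1\wedge\alpha)/2}$ per flip at the corresponding column. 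Combined with the piecewise linearity of $u^N$ in the space variable, which reduces the oscillation at an arbitrary $x$ to the oscillation at the two neighbouring integer sites, this gives
\begin{equ}
\omega_N(I_j) \;\leq\; \frac{2}{(2N)^{(1\wedge\alpha)/2}}\,\max_{k\in\cK_\alpha^N}\cN^{j,k}\;,
\end{equ}
where $\cK_\alpha^N$ denotes the set of columns visible from $J$ (of cardinality $\lesssim(2N)$ when $\alpha\geq 1$, $\lesssim(2N)^\alpha$ when $\alpha<1$) and $\cN^{j,k}$ is the number of flips of the corner at position $k+1$ during the unit-length microscopic interval corresponding to $I_j$. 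Each $\cN^{j,k}$ is stochastically dominated by a $\mathrm{Poisson}(1)$ variable, since the dynamics can be built from a family of independent rate-$1$ Poisson clocks and the effective flip rates are $p_N$ or $1-p_N$, both bounded by $1$.

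To conclude I would apply a union bound together with the Poisson tail estimate $\P[\cN^{j,k}\geq a]\leq e^{-a(\log a-1)}$, $a>1$: since the total number of pairs $(j,k)$ is polynomial in $N$, this yields $\E^N_{\mu_N}[\max_{j,k}(\cN^{j,k})^p]\leq C_p(\log N)^p$ for every $p\geq 1$, and therefore
\begin{equ}
\E^N_{\mu_N}\Big[\sup_{t\leq T}\|u^N(t)-\bar{u}^N(t)\|_{\cC(J)}^p\Big]\;\leq\;\frac{C'_p(\log N)^p}{(2N)^{p(1\wedge\alpha)/2}}\;\xrightarrow[N\to\infty]{}\;0\;,
\end{equ}
which handles all $p\geq 1$ simultaneously. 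There is no substantive obstacle here: the argument is essentially bookkeeping of the two different scalings $\alpha\geq 1$ and $\alpha<1$, the observation that one macro-interval of the interpolation corresponds to exactly one unit of microscopic time, and the elementary fact that the maximum of polynomially many $\mathrm{Poisson}(1)$ variables grows only logarithmically, which is easily swallowed by the polynomial prefactor $(2N)^{-(1\wedge\alpha)/2}$.
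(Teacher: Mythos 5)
Your proof is correct and follows essentially the same route as the paper: both reduce the difference $u^N-\bar{u}^N$ to the number of corner flips per column during one unit of microscopic time, each flip contributing $2(2N)^{-(\alpha\wedge 1)/2}$, with that count stochastically dominated by a Poisson variable of mean $1$. The only difference is the final bookkeeping: the paper sums $p$-th moments over all space-time boxes, which forces taking $p$ large first and then invoking Jensen, whereas you control the maximum of polynomially many Poisson variables by a logarithm via a union bound, handling all $p\geq 1$ at once.
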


\begin{proof}
For all $k\in\{0,\ldots,2N-1\}$ and all $i\in\N$, we set
\begin{align*}
	B_{i,k} &:= \Big[\frac{i}{(2N)^{2}},\frac{i+1}{(2N)^{2}}\Big] \times \Big[\frac{k}{2N},\frac{k+1}{2N}\Big]\;,\quad&&\alpha \geq 1\;,\\
	B_{i,k} &:= \Big[\frac{i}{(2N)^{2\alpha}},\frac{i+1}{(2N)^{2\alpha}}\Big] \times \Big[\frac{k-N}{(2N)^\alpha},\frac{k+1-N}{(2N)^\alpha}\Big]\;,\quad&&\alpha < 1\;.
\end{align*}
Suppose that for all $p\geq 1$ we have
\begin{equation}\label{Eq:Approxubaru}
\E^N_{\mu_N}\Big[\sup_{(t,x)\in B_{i,k}} |u^N(t,x)-\bar{u}^N(t,x) |^p\Big] \lesssim (2N)^{-(\alpha\wedge 1)\frac{p}{2}}\;,
\end{equation}
uniformly over all $i\in\N$ and all $k\in\{0,\ldots,2N-1\}$. Then, we deduce that
\begin{equation*}
\E^N_{\mu_N}\Big[\sup_{t\leq T} \|u^N(t)-\bar{u}^N(t)\|_{\cC(J)}^p\Big] \lesssim (2N)^{(\alpha\wedge 1)\big(3-\frac{p}{2}\big)}\;,
\end{equation*}
uniformly over all $N\geq 1$. This yields the statement of the lemma for $p$ large enough, and in turn, Jensen's inequality ensures that it holds for all $p\geq 1$. Therefore, we are left with the proof of (\ref{Eq:Approxubaru}). For notational convenience, let us consider the case $\alpha \ge 1$. We have
\begin{align*}
|u^N(t,x)-\bar{u}^N(t,x) | \leq \sum_{j,\ell\in\{0,1\}}|u^N(t,k+\ell) - u^N((i+j)(2N)^{-2},k+\ell)|\;,
\end{align*}
for all $(t,x)\in B_{i,k}$, all $i\in\N$, all $k\in\{0,\ldots,2N-1\}$ and all $N\geq 1$. There are four terms in the sum. For each of them, the supremum over $(t,x)\in B_{i,k}$ of the corresponding increment~is stochastically bounded by $2/(2N)^{(\alpha\wedge 1)/2}$ times a Poisson r.v.~with mean $1$. Computing the $p$-th moment of the latter yields (\ref{Eq:Approxubaru}).
\end{proof}

\subsection{The Boltzmann-Gibbs principle}

The next result is the main ingredient that we need for the identification of the limit. We will work at the level of the particle system $\eta \in \{0,1\}^{2N}$. Under the measure $\nu_N$ defined in (\ref{Eq:piNnuN}), the $\eta(k)$'s are independent Bernoulli r.v.~with parameter $q^N_k$.\\
Let $\tau_k$ denote the shift by $k$ modulo $2N$: namely, $\tau_k \eta(j) = \eta(j+k)$ for all $j\in \{1,\ldots,2N\}$. Let $\Psi$ be a cylinder function, that is, a function $\Psi:\{0,1\}^r \rightarrow \R$ for some $r\in\bbN$. As soon as $r\leq 2N$, we can define $\Psi(\eta) = \Psi(\eta(1),\ldots,\eta(r))$. Then, we set
\begin{equation*}
V^N_\Psi(\eta):=\Psi(\eta) - \tilde{\Psi}_N - r \tilde{\Psi}_N'(\eta(1)-q^N_1)\;,\quad \tilde{\Psi}_N := \nu_N\big[ \Psi\big]\;,\quad \tilde{\Psi}_N' := \partial_{q^N_1}\nu_N\big[ \Psi\big]\;.
\end{equation*}
as well as its shift by $k$
\begin{equation*}
\tau_k V^N_\Psi(\eta) :=\Psi(\tau_k\eta) - \tau_k\tilde{\Psi}_N - r (\tau_k\tilde{\Psi}_N')(\eta(k+1)-q^N_{k+1})\;,
\end{equation*}
where $\tau_k\tilde{\Psi}_N := \nu_N\big[ \Psi(\tau_k\cdot)\big]$ and $\tau_k\tilde{\Psi}_N' := \partial_{q^N_{k+1}}\nu_N\big[ \Psi(\tau_k\cdot)\big]$. Notice that $V^N_\Psi$ and all its shifts have zero expectation under $\nu_N$.
\begin{proposition}[Boltzmann-Gibbs principle]\label{Prop:BG}
Let $\varphi$ be a continuous function on $[0,1]$ if $\alpha \geq 1$, a continuous and compactly supported function on $\R$ if $\alpha < 1$. Then for every $t>0$ we have
\begin{equation}\label{Eq:BoltzmannGibbs}
\begin{split}
\lim_{N\rightarrow\infty} \E^N_{\mu_N} \bigg[\Big(\int_0^t \frac1{\sqrt{2N}} \sum_{k=1}^{2N} \tau_k V^N_\Psi(\eta_s) \varphi\Big(\frac{k}{2N}\Big)\Big)^2\bigg] = 0\;,\quad \alpha \geq 1\;,\\
\lim_{N\rightarrow\infty} \E^N_{\mu_N} \bigg[\Big(\int_0^t \frac1{(2N)^{\frac{\alpha}{2}}} \sum_{k=1}^{2N} \tau_k V^N_\Psi(\eta_s) \varphi\Big(\frac{k-N}{(2N)^\alpha}\Big)\Big)^2\bigg] = 0\;,\quad \alpha < 1\;.
\end{split}
\end{equation}
\end{proposition}
\noindent Note the specific scaling of the test function for $\alpha < 1$: this is because in the scaling limit we only ever look at the interface within a window of size $N^\alpha$ around site $N$. When $\alpha\ge 1$, we consider the whole interface in the scaling limit.\\

This type of result is classical in the literature on fluctuations of particle systems. However, our setting presents some specificities. First, our stationary measure is not a product measure, but it can be obtained by conditioning the product measure $\nu_N$ on the hyperplane of all configurations with $N$ particles, as we did in Section \ref{Section:InvMeas}. Second, $\nu_N$ is the product of independent but non-identically distributed Bernoulli measures; however the means of these Bernoulli measures vary ``smoothly" in space. Given these differences with the usual setting, we provide the details of the proof, following the structure of the classical proof provided in~\cite[Thm 11.1.1]{KipLan}. We restrict ourselves to proving the case $\alpha < 1$, as the case $\alpha \geq 1$ is actually simpler.
\begin{proof}
Let $A>0$ be such that supp $\varphi \subset [-A,A]$. We adopt the notation $\varphi(k)$ for $\varphi((k-N)/(2N)^\alpha)$ for simplicity. An important argument in the proof will be the uniform absolute continuity of $\mu_N$ w.r.t.~$\nu_N$, when the measures are restricted to the filtration generated by $\eta(1),\ldots,\eta(N+A(2N)^\alpha)$, which follows as an immediate adaptation of Corollary \ref{Cor:AbsCont}. To prove the proposition, we let $K$ be an integer and we decompose $\{N-A(2N)^\alpha,\ldots, N+A(2N)^\alpha\}$ into $M$ disjoint, consecutive boxes of size $2K+1$ (except the last box that may be of smaller size), that we denote by $B_i$, $i=1\ldots M$. Necessarily $M$ is of order $N^\alpha/K$. For each box $B_i$, we define its interior $B_i^\circ$ as the subset of all points in $B_i$ which are at distance at least $r+1$ from the complement of $B_i$. This being given, we denote by $B^c= \cup_i (B_i \backslash B_i^\circ)$. We also let $k_i$ be an arbitrary point in $B_i$, for each $i$. Then, we write
\begin{equation}\label{Eq:DecompoBG}\begin{split}
\frac1{(2N)^{\frac{\alpha}{2}}}\sum_{k=1}^{2N} \tau_k V^N_\Psi(\eta) \varphi(k) &= \frac1{(2N)^{\frac{\alpha}{2}}}\sum_{k\in B^c} \tau_k V^N_\Psi(\eta)\varphi(k)\\
&+ \frac1{(2N)^{\frac{\alpha}{2}}}\sum_{i=1}^M \sum_{k\in B_i^\circ} \tau_k V^N_\Psi(\eta)\big(\varphi(k)-\varphi(k_i)\big)\\
&+ \frac1{(2N)^{\frac{\alpha}{2}}}\sum_{i=1}^M \sum_{k\in B_i^\circ} \tau_k V^N_\Psi(\eta)\varphi(k_i)\;.
\end{split}\end{equation}
The contribution to (\ref{Eq:BoltzmannGibbs}) of the first term on the right gives (using Jensen's inequality on the time integral, the stationarity of $\mu_N$, the absolute continuity of $\mu_N$ w.r.t.~$\nu_N$):
\begin{align*}
&\E^N_{\mu_N} \bigg[\Big(\int_0^t \frac1{(2N)^{\frac{\alpha}{2}}} \sum_{k\in B^c} \tau_k V^N_\Psi(\eta_s)\varphi(k) ds\Big)^2\bigg]\\
&\lesssim t^2 \mu_N\bigg[\Big(\frac1{(2N)^{\frac{\alpha}{2}}} \sum_{k\in B^c} \tau_k V^N_\Psi(\eta)\varphi(k)\Big)^2\bigg]\\
&\lesssim t^2 \nu_N\bigg[\Big(\frac1{(2N)^{\frac{\alpha}{2}}} \sum_{k\in B^c} \tau_k V^N_\Psi(\eta)\varphi(k)\Big)^2\bigg]\;,
\end{align*}
Using the independence of the $\eta(i)$'s under $\nu_N$ and the fact that the $\nu_N$-expectation of $V^N_\Psi$ is zero, we get
\begin{align*}
&\nu_N\bigg[\Big(\frac1{(2N)^{\frac{\alpha}{2}}} \sum_{k\in B^c} \tau_k V^N_\Psi(\eta)\varphi(k)\Big)^2\bigg]\\
&\lesssim \frac1{(2N)^{\alpha}} \sum_{k\in B^c} \sum_{\ell \in B^c}\nu_N\Big[ \tau_k V^N_\Psi(\eta)\tau_\ell V^N_\Psi(\eta)\varphi(k)\varphi(\ell)\Big]\\
&\lesssim \frac1{(2N)^{\alpha}} \sum_{k\in B^c} \sum_{\ell \in B^c:|\ell-k|\le r}\nu_N\Big[ \tau_k V^N_\Psi(\eta)\tau_\ell V^N_\Psi(\eta)\varphi(k)\varphi(\ell)\Big]\\
&\lesssim r^2 \frac{M}{(2N)^{\alpha}}\;,
\end{align*}
so that it vanishes when $N\rightarrow\infty$ and then $K\rightarrow\infty$. Similarly, the contribution to (\ref{Eq:BoltzmannGibbs}) of the second term on the right of \eqref{Eq:DecompoBG} vanishes $N\rightarrow\infty$ and then $K\rightarrow\infty$. Let us deal with the third term, which is more delicate. For each $i$, we set $\xi_i=(\eta(k),k\in B_i)$ and we let $L^N_{B_i}$ be the generator of our process restricted to $B_i$ and \textit{not} sped up by $(2N)^{2\alpha}$:
\begin{align*}
L^N_{B_i} f (\xi_i) = \sum_{k,k+1 \in B_i} \big(f(\xi_i^{k,k+1}) - f(\xi)\big)&\Big(p_N (1-\xi_i(k))\xi_i(k+1)\\
&+ (1-p_N) \xi_i(k)(1-\xi_i(k+1))\Big)\;.
\end{align*}
Following the calculations made at Equation (1.2) and below, in the proof of~\cite[Thm 11.1.1]{KipLan}, we deduce that
\begin{align*}
\lim_{K\rightarrow\infty} \inf_f \lim_{N\rightarrow\infty} \E^N_{\mu_N} \bigg[ \Big(\int_0^t (2N)^{-\frac{\alpha}{2}} \sum_{i=1}^M \varphi(k_i) L_{B_i}^N f(\xi_i(s)) ds\Big)^2 \bigg] = 0\;,
\end{align*}
so that it suffices to show that
\begin{align*}
\lim_{K\rightarrow\infty} \inf_f \lim_{N\rightarrow\infty} \E^N_{\mu_N} &\bigg[ \Big(\int_0^t (2N)^{-\frac{\alpha}{2}} \sum_{i=1}^M \varphi(k_i)\\
&\qquad\qquad\times\Big(\sum_{k\in B_i^\circ} \tau_k V^N_\Psi(\eta_s)-L_{B_i}^N f(\xi_i(s)) \Big) ds\Big)^2 \bigg] = 0\;,
\end{align*}
where the infimum is taken over all $f:\{0,1\}^{2K+1}\rightarrow\R$. Using Jensen's inequality on the time integral, the stationarity of our dynamics w.r.t.~$\mu_N$ and then the absolute continuity property recalled above, we bound the expectation in the last expression by a term of order
\begin{equation}\label{Eq:LastTerm}
t^2 (2N)^{-\alpha} \nu_N \Big[\Big(\sum_{i=1}^M \varphi(k_i) \Big(\sum_{k\in B_i^\circ} \tau_k V^N_\Psi(\xi_i)-L_{B_i}^N f(\xi_i)\Big) \Big)^2 \Big]\;,
\end{equation}
uniformly over all $N\geq 1$, all $K\geq 1$ and all $t\geq 0$. Recall that the $\nu_N$-expectation of $V^N_\Psi$ is zero, and observe that $\nu_N$ is reversible for our dynamics. Hence the $\nu_N$-expectation of
\begin{equation*}
\sum_{k\in B_i^\circ} \tau_k V^N_\Psi(\xi_i)-L_{B_i}^N f(\xi_i)
\end{equation*}
is also zero. Moreover, $\xi_i$ and $\xi_j$ being independent under $\nu_N$ as soon as $i\ne j$, we deduce that the expression in \eqref{Eq:LastTerm} can be rewritten as
\begin{equation*}
t^2 (2N)^{-\alpha} \sum_{i=1}^M \varphi(k_i)^2 F^N_K(i) \lesssim \frac{t^2\|\varphi\|^2}{K} \frac{1}{M} \sum_{i=1}^M F^N_K(i)\;,
\end{equation*}
where
\begin{equation*}
F^N_K(i) = \nu_N \Big[\Big(\sum_{k\in B_i^\circ} \tau_k V^N_\Psi(\xi_i)-L_{B_i}^N f(\xi_i)\Big)^2 \Big]\;.
\end{equation*}
The main difference with the classical proof presented in~\cite[Thm 11.1.1]{KipLan} lies in the following argument. Let $K$ and $f$ be as above. For every $x\in (-A,A)$ let $j=j(N,x)\in \{1,\ldots,M\}$ such that
\begin{equation*}
|k_j - N - x(2N)^\alpha | = \min_{i\in \{1,\ldots,M\}}(|k_i - N - x(2N)^\alpha |)\;.
\end{equation*}
Recall the definition of $q^N_k$ given below (\ref{Eq:piNnuN}). As $N\rightarrow \infty$, $q^N_{k_j}$ converges to $q(x):=(1+L'(-2\sigma x))/2$ and
\begin{equation*}
F^N_K(j(N,x)) \rightarrow F_K(x) :=\nu^{q(x)}_K\Big[ \Big(\sum_{k=r}^{2K-r+1} \tau_k V_\Psi^{q(x)}(\xi)-L^{\mbox{\tiny sym}}_K f(\xi)\Big)^2\Big]\;,
\end{equation*}
where $\nu_K^{q}$ is the product of $2K+1$ Bernoulli measures with parameter $q$, $V_\Psi^q$ is defined by
\begin{equation*}
V_\Psi^q(\xi) = \Psi(\xi) - \nu^q_K\big[\xi\big] - \partial_q (\nu^q_K\big[\xi\big]) (\xi(1)-q)\;,
\end{equation*}
and $L^{\mbox{\tiny sym}}_K$ is the generator of the simple exclusion process on $\{0,1\}^{2K+1}$, that is
\begin{equation*}
L^{\mbox{\tiny sym}}_K f(\xi) = \frac12 \sum_{k=1}^{2K} \big(f(\xi^{k,k+1})-f(\xi)\big)\;.
\end{equation*}
Since $F^N_K(i)$ is bounded uniformly over all $i$ and all $N\geq 1$, we deduce that
\begin{equation*}
\frac{1}{M} \sum_{i=1}^M F^N_K(i) \rightarrow \frac1{2A}\int_{-A}^{A} F_K(x) dx\;,
\end{equation*}
as $N\rightarrow\infty$. Let $\cQ=\{q(x), x\in[-A,A]\}$, and observe that it is a compact subset of $(0,1)$. Putting everything together, we deduce that
\begin{align*}
{}&\lim_{N\rightarrow\infty} \E^N_{\mu_N} \bigg[ \Big(\int_0^t (2N)^{-\frac{\alpha}{2}} \sum_{i=1}^M \varphi(k_i)\Big(\sum_{k\in B_i^\circ} \tau_k V^N_\Psi(\eta_s)-L_{B_i}^N f(\xi_i(s)) \Big) ds\Big)^2 \bigg]\\
&\lesssim \frac1{K} \sup_{q\in \cQ} \nu^q_K\Big[ \Big(\sum_{k=r}^{2K-r+1} \tau_k V_\Psi(\xi)-L^{\mbox{\tiny sym}}_K f(\xi)\Big)^2\Big]\;,
\end{align*}
uniformly over all $f$ and all $K$ as above. The supremum on the right is achieved for some $q_0$ by continuity and compactness. Then, we can directly apply the arguments below Equation (1.3) in the proof of~\cite[Thm 11.1.1]{KipLan}, which prove that the infimum over $f$ of the latter expression vanishes as $K\rightarrow\infty$, thus concluding the proof of the Boltzmann-Gibbs principle.
\end{proof}

\subsection{Identification of the limit}

We treat in details the convergence of the processes $u^N$ when $\alpha < 1$, the arguments for $\alpha \geq 1$ are essentially the same.  Let us introduce a few notations first. We write $\langle f,g\rangle$ for the usual $L^2(\R,dx)$ product as well as
\begin{equation*}
\langle f ,g \rangle_N := \frac{1}{(2N)^\alpha} \sum_{k=1}^{2N} f\Big(\frac{k-N}{(2N)^\alpha}\Big)\,g\Big(\frac{k-N}{(2N)^\alpha}\Big)\;,
\end{equation*}
for the discrete $L^2$ product, and
\begin{align*}
\nabla f(x) := f(x+(2N)^{-{\alpha}}) - f(x)\;, \quad \Delta f (x) := \nabla f(x) - \nabla f(x-(2N)^{-\alpha})\;,
\end{align*}
for the discrete gradient and Laplacian. Let us state a classical result of the theory of stochastic PDEs.

\begin{proposition}[Martingale problem]\label{Def:MgalePb}
	Let $(u(t,x),x\in\R,t\geq 0)$ be a continuous process such that $\E[\|u(0,\cdot)\|_\infty]<\infty$ and for all $\varphi\in\cC^\infty_c(\R)$, the processes $M(\varphi)$ and $L(\varphi)$ are continuous martingales where
	\begin{align*}
	M_t(\varphi) &= \langle u(t),\varphi\rangle - \langle u(0),\varphi \rangle - \frac{1}{2} \int_0^t \big\langle u(s), \partial^2_x \varphi + 4\sigma \partial_x\big(\varphi \partial_x \Sigma_\alpha\big) \big\rangle ds\;,\\
	L_t(\varphi) &= M_t(\varphi)^2 - t \langle \varphi, \varphi \big(1-(\partial_x \Sigma_\alpha)^2\big) \rangle\;.
	\end{align*}
	Then, $u$ solves (\ref{SHE3}) started from the initial profile $u(0,\cdot)$.
\end{proposition}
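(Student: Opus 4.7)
The plan is to recover a space-time white noise $\dot W$ from the martingale family $\{M(\varphi)\}$ and then read off the weak formulation of (\ref{SHE3}). I focus on the case $\alpha<1$ of the proposition; the $\alpha\geq 1$ cases on $I_\alpha=[0,1]$ are entirely analogous.

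First, I would polarise the second martingale identity (apply it to $\varphi\pm\psi$ and subtract) to obtain the cross bracket
\begin{equ}
\langle M(\varphi),M(\psi)\rangle_t = t\,\big\langle \varphi, \psi\big(1-(\partial_x\Sigma_\alpha)^2\big)\big\rangle\;,\quad \varphi,\psi\in\cC^\infty_c(\R)\;.
\end{equ}
A direct computation from $\Sigma_\alpha(x)=x+\int_{-x}^\infty(L'(2\sigma y)-1)dy$ gives $\partial_x\Sigma_\alpha(x)=-\tanh(2\sigma x)$, so $\sigma_\star(x):=\sqrt{1-(\partial_x\Sigma_\alpha(x))^2}=1/\cosh(2\sigma x)$ is smooth and strictly positive on $\R$. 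In particular $\psi/\sigma_\star\in\cC^\infty_c(\R)$ whenever $\psi$ is, and the family
\begin{equ}
W_t(\psi):=M_t(\psi/\sigma_\star)\;,\quad \psi\in\cC^\infty_c(\R)\;,
\end{equ}
consists of continuous martingales, linear in $\psi$, with $\langle W(\psi),W(\psi')\rangle_t=t\langle\psi,\psi'\rangle$. L\'evy's characterisation identifies each $W(\psi)/\|\psi\|_2$ as a standard Brownian motion, and orthogonality of the brackets lets me extend the family by $L^2$-isometry (on a possibly enlarged probability space) to a Walsh orthogonal martingale measure $W(ds,dx)$, i.e.~to a space-time white noise $\dot W$ on $[0,\infty)\times\R$.

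Next, I would write $M_t(\varphi)=W_t(\sigma_\star\varphi)=\int_0^t\!\int_\R \sigma_\star(x)\varphi(x)\,W(ds,dx)$ and substitute into the definition of $M_t(\varphi)$ to obtain, for every $\varphi\in\cC^\infty_c(\R)$,
\begin{equs}
\langle u(t),\varphi\rangle &= \langle u(0),\varphi\rangle + \tfrac{1}{2}\int_0^t\big\langle u(s),\partial_x^2\varphi + 4\sigma\partial_x(\varphi\,\partial_x\Sigma_\alpha)\big\rangle\,ds\\
&\qquad + \int_0^t\!\int_\R \sigma_\star(x)\,\varphi(x)\,W(ds,dx)\;.
\end{equs}
Since $\partial_x^2+4\sigma\partial_x(\cdot\,\partial_x\Sigma_\alpha)$ is the formal $L^2$-adjoint of $\partial_x^2-4\sigma(\partial_x\Sigma_\alpha)\partial_x$, this is exactly the weak form of (\ref{SHE3}) tested against $\varphi$. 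Classical theory for linear SPDEs with bounded coefficients then identifies $u$ as the unique weak solution of (\ref{SHE3}) driven by $\dot W$ with initial datum $u(0,\cdot)$.

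The main obstacle I anticipate lies in the construction of $\dot W$: beyond the bracket computation, I must verify that $\{W_t(\psi)\}$ is a genuine space-time white noise, not merely a family of Brownian motions indexed by $\psi$. This requires checking bilinearity in $\psi$, Gaussianity of all finite marginals (which follows from the deterministic bracket via L\'evy), extending by $L^2$-isometry to general $\psi\in L^2(\R)$, and possibly enlarging the probability space if the natural filtration of $u$ is insufficient. Crucially, the strict positivity of $\sigma_\star$ on all of $\R$ (a feature of the hyperbolic secant) is what lets me perform the change of variable $\varphi\mapsto\psi/\sigma_\star$ without having to adjoin an independent noise on a degeneracy set.
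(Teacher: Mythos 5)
The paper does not actually prove this proposition: it is introduced with the sentence ``Let us state a classical result of the theory of stochastic PDEs'' and is used as a black box in the identification step for Theorem \ref{Th:Dynamic}. Your reconstruction is the standard argument behind that classical result (in the spirit of Walsh and Konno--Shiga) and it is sound: polarisation of $L(\varphi)$ is legitimate because $\varphi\mapsto M_t(\varphi)$ is linear by its very definition as a functional of $u$; the computation $\partial_x\Sigma_\alpha=-\tanh(2\sigma x)$, hence $\sigma_\star=1/\cosh(2\sigma x)>0$, is correct, so $\psi/\sigma_\star\in\cC^\infty_c(\R)$ and the noise can be recovered without adjoining an independent Brownian sheet on a degeneracy set; the multidimensional L\'evy characterisation then gives joint Gaussianity of the family $W_t(\psi)$ and the $L^2$-isometric extension produces the white noise, after which $M_t(\varphi)=\int_0^t\!\int\sigma_\star\varphi\,W(ds,dx)$ holds identically (not merely in law) because $\sigma_\star\varphi\in\cC^\infty_c(\R)$. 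The only step you leave slightly implicit is the final uniqueness assertion: one should note that the difference of two weak solutions driven by the same $\dot W$ solves the deterministic parabolic equation $\partial_t v=\frac12\partial_x^2v-2\sigma\,\partial_x\Sigma_\alpha\,\partial_x v$ with $v(0)=0$, whose coefficients are smooth and bounded, so $v\equiv 0$ in the relevant class; this is exactly the part the paper defers to the literature, and stating it explicitly would close the argument.
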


\begin{proof}[Proof of Theorem \ref{Th:Dynamic}]
We treat in details the case $\alpha < 1$. By Subsection \ref{Subsec:TightFluct}, we already know that $(u^N)_{N\geq 1}$ is a tight sequence in $\bbD([0,\infty),\cC(\R))$. Since the sizes of the jumps are vanishing with $N$, any limit point lies in $\bbC([0,\infty),\cC(\R))$. Let us consider an arbitrary convergent subsequence $(u^{N_i})_{i\geq 1}$ and let $u$ be its limit. We only need to check that $u$ satisfies the Martingale problem of Proposition \ref{Def:MgalePb}. Our starting point is the stochastic differential equations solved by the discrete process. Namely, for all $\varphi\in\cC^\infty_c(\R)$, we set
\begin{align*}
M^N_t(\varphi) :=& \langle u^N(t),\varphi\rangle_N - \langle u^N(0),\varphi \rangle_N - \frac{1}{2} \int_0^t \big\langle u^N(s), (2N)^{2\alpha}\Delta \varphi \big\rangle_N ds\\
&-(2N)^\frac{3\alpha}{2} \int_0^t \Big\langle \frac{1}{2}\Delta \Sigma^N_\alpha(\cdot)+ (2p_N-1)\tun_{\{\Delta S(s,\cdot)\ne 0\}}, \varphi \Big\rangle_N ds\;,
\end{align*}
where we have abbreviated $\Delta S(s(2N)^{2\alpha},N + \cdot(2N)^\alpha)$ into $\Delta S(s,\cdot)$ for simplicity. Then, $M^N(\varphi)$ is a martingale, as well as $L^N_t(\varphi):= M^N_t(\varphi)^2 - \langle M^N_\cdot(\varphi)\rangle_t$ where
\begin{align*}
\langle M^N_\cdot(\varphi)\rangle_t &= \int_0^t 4 \big\langle  p_N \tun_{\{\Delta S(s,\cdot) > 0\}} + (1-p_N) \tun_{\{\Delta S(s,\cdot) < 0\}} ,\varphi^2\big\rangle_N ds\\
&= \int_0^t 2\big\langle \tun_{\{\Delta S_{\cdot}(s)\ne 0\}} ,\varphi^2 \big\rangle_N ds + \cO(N^{-\alpha})\;.
\end{align*}
Notice the similarity between $M^N(\varphi), L^N(\varphi)$ and $M(\varphi), L(\varphi)$. In order to pass to the limit along the subsequence $N_i$, we need to deal with the indicators in the expressions above. To that end, we set $\Psi(\eta)=\eta(1)(1-\eta(2)) + (1-\eta(1))\eta(2)$, which is nothing else than the indicator of the event $\{\Delta S(1)\ne 0\}$, and we aim at applying the Boltzmann-Gibbs principe of Proposition \ref{Prop:BG}. A simple calculation yields
\begin{equation*}
\tilde{\Psi}_N = q^N_1(1-q^N_2) + (1-q^N_1)q^N_2\;,\;\; \tilde{\Psi}_N' = 1-2q^N_2\;,\;\; 2p_N-1 = \frac{2\sigma}{(2N)^\alpha}+\cO(N^{-2\alpha})\;.
\end{equation*}
By Proposition \ref{Prop:BG}, the error made upon replacing the indicators in $M^N$ and $L^N$ by $\tau_\cdot \tilde{\Psi}_N + 2\tau_\cdot\tilde{\Psi}_N'(\eta_s(\cdot+1)-q^N_{\cdot+1})$ vanishes in probability as $N\rightarrow\infty$. We are left with computing
\begin{align}
{}&(2N)^\frac{\alpha}{2} \sum_{k=1}^{2N} \Big(\frac{1}{2}\Delta \Sigma^N_\alpha\Big(\frac{k-N}{(2N)^\alpha}\Big)+ (2p_N-1)\tau_k \tilde{\Psi}_N\\
&\qquad\qquad+ 2(2p_N-1)\tau_k\tilde{\Psi}_N'(\eta_s(k+1)-q^N_{k+1}) \Big)\varphi\Big(\frac{k-N}{(2N)^\alpha}\Big)\;,
\end{align}
as well as
\begin{equation*}
\frac1{(2N)^\alpha}\sum_{k=1}^{2N} \Big(\tau_k \tilde{\Psi}_N + 2\tau_k\tilde{\Psi}_N'(\eta_s(k+1)-q^N_{k+1}) \Big)\varphi^2\Big(\frac{k-N}{(2N)^\alpha}\Big)\;.
\end{equation*}
We have the identities $L'(h^N_k) = 2q^N_k-1$ and
\begin{equation*}
\nabla u^N\Big(s,\frac{k-N}{(2N)^\alpha}\Big) = \frac{2(\eta_s(k+1)-q^N_{k+1})}{(2N)^{\alpha/2}}\;,\;\;\tau_k\tilde{\Psi}_N = \frac12 \big(1-L'(h^N_k)^2\big) + \cO(N^{-\alpha})\;,
\end{equation*}
uniformly over all $k$. By a simple integration by parts, we get
\begin{align*}
{}&2(2p_N-1) (2N)^{\frac{\alpha}{2}}\sum_{k=1}^{2N} \tau_k\tilde{\Psi}_N' (\eta_s(k+1)-q^N_{k+1})\varphi\Big(\frac{k-N}{(2N)^\alpha}\Big)\\
=& \frac{2\sigma}{(2N)^\alpha} \sum_{k=1}^{2N} u^N\Big(s,\frac{k-N}{(2N)^\alpha}\Big) (2N)^\alpha\nabla\Big(L'(h^N_k) \varphi\Big(\frac{k-N}{(2N)^\alpha}\Big) \Big) + \cO(N^{-\alpha})\;.
\end{align*}
Furthermore, the identity $L'' = 1-(L')^2$ together with the expression of $\Sigma^N_\alpha$ yield
\begin{equation*}
\frac{1}{2}\Delta \Sigma^N_\alpha\Big(\frac{k-N}{(2N)^\alpha}\Big) + (2p_N-1)\tau_k\tilde{\Psi}^N = \cO(N^{-2\alpha})\;,
\end{equation*}
uniformly over all $k$. Putting everything together, we deduce that $M^N_t(\varphi)$ and $L^N_t(\varphi)$ converge in probability to $M_t(\varphi)$ and $L_t(\varphi)$ along the convergent subsequence $u^{N_i}$. This completes the proof of Theorem \ref{Th:Dynamic}.
\end{proof}

\section{Hydrodynamic limit}\label{Section:CVEq}

\subsection{The replacement lemma}\label{Subsection:Replacement}

The goal of this subsection is to establish the so-called \textit{replacement lemma} that allows to replace averages of non-linear functionals applied to the particle system by non-linear functionals applied to averages of the particle system. This lemma will be needed in the proof of the hydrodynamic limit.\\

In this subsection, we work at the level of the particle system $\eta_t,t\geq 0$ where
\begin{equation*}
\eta_t(k) = \frac{1+X(t(2N)^{(1+\alpha)\wedge 2},k)}{2}\;.
\end{equation*}
Even though the model was defined on systems of $N$ particles, the dynamics still makes sense when the total number of particles is any integer between $0$ and $2N$.\\
We need to introduce some notations. Let $r\geq 1$ be an integer and $\Phi:\{0,1\}^r \rightarrow\R$. For all $\eta \in \{0,1\}^{2N}$ and as soon as $2N \geq r$, we define
\begin{equation*}
\Phi(\eta) := \Phi\big(\eta(1),\ldots,\eta(r)\big)\;.
\end{equation*}
We also introduce the expectation of $\Phi$ under a product of Bernoulli measures with parameter $a \in [0,1]$:
\begin{equation}\label{Eq:tildePhi}
\tilde{\Phi}(a) := \sum_{\eta \in \{0,1\}^r} \Phi(\eta) a^{\#\{i:\eta(i)=1\}} (1-a)^{\#\{i:\eta(i)=0\}}\;.
\end{equation}
We let $T_\ell(i):=\{i-\ell,i-\ell+1,\ldots,i+\ell\}$ be the box of size $2\ell+1$ around site $i$, and for any sequence $a(k),k\in\Z$, we define its average over $T_\ell(i)$ as follows:
\begin{equation*}
\ccM_{T_\ell(i)} a := \frac1{2\ell+1}\sum_{k=i-\ell}^{i+\ell} a(k)\;.
\end{equation*}
For every $k\in\Z$, we define the shift operator $\tau_k$ as follows
$$ \tau_k \eta (\ell) := \eta(\ell+k)\;,\quad \ell \in \{1,\ldots,2N\}\;,$$
where $\ell+k$ is taken modulo $2N$. We consider the sequence $\Phi(\eta)(k):= \Phi(\tau_k \eta)$ and the associated averages $\ccM_{T_\ell(i)} \Phi(\eta)$. In the sequel, we will need a ``replacement lemma" that bounds the following quantity
\begin{equation*}
V_{\ell}(\eta) = \Big| \ccM_{T_\ell(0)} \Phi(\eta) - \tilde{\Phi}\Big(\ccM_{T_\ell(0)} \eta\Big)\Big|\;.
\end{equation*}
Informally, if this quantity is small, this says that one can replace the average on a large box of some function $\Phi$ evaluated at the particle system $\eta$, by the expectation of $\Phi$ under a product of Bernoulli measures whose parameter equals the density of $\eta$ on this large box.\\

The replacement lemma works for all initial conditions when $\alpha \geq 1$. On the other hand, when $\alpha < 1$, we make the following assumption.
\begin{assumption}\label{Assumption:IC}
For all $N\geq 1$, the initial condition $\iota_N$ is a product measure on $\{0,1\}^{2N}$ of the form $\otimes_{k=1}^{2N} \mbox{Be}(f(k/2N))$, where $f:[0,1]\rightarrow[0,1]$ is assumed to be piecewise constant and does not depend on $N$.
\end{assumption}
We can probably relax this assumption, but it is sufficient for our purpose.

\begin{theorem}[Replacement lemma]\label{Th:Replacement}
Let $\alpha \in(0,\infty)$ and let $\iota_N$ be a measure on $\{0,1\}^{2N}$. For $\alpha \in (0,1)$, we suppose that Assumption \ref{Assumption:IC} is fulfilled. Then, for every $\delta > 0$, we have
\begin{equation}\label{Eq:Replacement}
\varlimsup_{\epsilon\downarrow 0}\varlimsup_{N\rightarrow\infty} \bbP^N_{\iota_N} \Big(\int_0^t \frac{1}{N}\sum_{k=1}^{2N} V_{\epsilon N}(\tau_k\eta_s) ds \geq \delta \Big) = 0\;.
\end{equation}
\end{theorem}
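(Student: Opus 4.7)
My plan is to implement the Guo--Papanicolaou--Varadhan entropy method, adapted to the three features specific to the present setting: the weak asymmetry, the zero-flux boundary condition, and the fact that, when $\alpha<1$, the time scale $(2N)^{1+\alpha}$ is sub-diffusive with respect to the symmetric part of the generator. I first fix a reference product measure $\bar\nu_N$: for $\alpha\ge 1$ I take $\bar\nu_N=\pi_N$, the Bernoulli$(1/2)$ product measure; for $\alpha<1$, under Assumption~\ref{Assumption:IC}, I take $\bar\nu_N=\iota_N$ itself. Either way the relative entropy $H(\iota_N\,|\,\bar\nu_N)$ is $O(N)$: trivially for $\alpha<1$ since it vanishes, and by the crude bound $H(\iota_N\,|\,\pi_N)\le 2N\log 2$ otherwise.

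\textbf{Entropy inequality and Feynman--Kac.} The exponential Chebyshev inequality combined with the entropy inequality reduces the proof of (\ref{Eq:Replacement}) to establishing, for some $\gamma>0$ independent of $N$,
\[
\varlimsup_{\epsilon\downarrow 0}\varlimsup_{N\to\infty}\frac{1}{N}\log \bbE^N_{\bar\nu_N}\Big[\exp\Big(\gamma\int_0^t\sum_{k=1}^{2N} V_{\epsilon N}(\tau_k\eta_s)\,ds\Big)\Big] \le 0\;.
\]
By Feynman--Kac the logarithm on the left is at most $t\,\lambda^N(\gamma)$ with
\[
\lambda^N(\gamma)=\sup_{F}\Big\{\gamma\!\sum_{k=1}^{2N}\bar\nu_N[V_{\epsilon N}(\tau_k\cdot)F^2] - (2N)^{(1+\alpha)\wedge 2}\bigl(D^N_{\mathrm{sym}}(F)+\Delta^N(F,F)\bigr)\Big\}\;,
\]
the supremum running over densities $F$ with respect to $\bar\nu_N$; here $D^N_{\mathrm{sym}}$ is the Dirichlet form of the symmetric part of the generator and $\Delta^N$ encodes the antisymmetric contribution. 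Using $|2p_N-1|=O(N^{-\alpha})$ together with the gradient structure of the exclusion jumps, a discrete integration by parts and Cauchy--Schwarz show that $|\Delta^N(F,F)|$ is bounded by $\tfrac12 D^N_{\mathrm{sym}}(F)$ plus a lower-order remainder.

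\textbf{One-block and two-block estimates.} With the asymmetric correction absorbed, it remains to control $\gamma\sum_k\bar\nu_N[V_{\epsilon N}(\tau_k\cdot)F^2]$ against a multiple of $D^N_{\mathrm{sym}}(F)$. Introducing an intermediate scale $1\ll\ell\ll\epsilon N$ and writing
\[
V_{\epsilon N}(\eta)\le \ccM_{T_{\epsilon N}(0)}\bigl|\Phi(\tau_\cdot\eta)-\tilde\Phi(\ccM_{T_\ell(\cdot)}\eta)\bigr| + \bigl|\ccM_{T_{\epsilon N}(0)}\tilde\Phi(\ccM_{T_\ell(\cdot)}\eta)-\tilde\Phi(\ccM_{T_{\epsilon N}(0)}\eta)\bigr|\;,
\]
the first term is controlled by the one-block estimate (equivalence of ensembles applied to boxes of size $\ell$, for densities $F$ with bounded Dirichlet form), which vanishes upon letting $N\to\infty$ and then $\ell\to\infty$; the second term is controlled by the two-block estimate via the uniform continuity of $\tilde\Phi$ and a standard control of density gradients at scale $\epsilon N$. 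Boundary sites within distance $\ell$ of $\{1,2N\}$ produce an error $O(\ell/N)$ which is harmless in the limit.

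\textbf{Main obstacle.} The step I expect to be the hardest is the absorption of the asymmetric contribution $\Delta^N(F,F)$ when $\alpha<1$: on that sub-diffusive time scale the naive comparison with $D^N_{\mathrm{sym}}(F)$ fails, and one must first rewrite $\Delta^N$ as the expectation against $F^2$ of a discrete gradient of a local current, then apply Cauchy--Schwarz to split it between the symmetric Dirichlet form and a remainder that is $o(1)$ in $N$; this is in the spirit of the arguments of Rezakhanlou and Bahadoran cited in the introduction. A secondary technical point, also specific to $\alpha<1$, is that $\bar\nu_N$ is not translation invariant: the one- and two-block estimates must accordingly be performed piecewise on each macroscopic region where the profile $f$ is constant, which is precisely what Assumption~\ref{Assumption:IC} makes possible.
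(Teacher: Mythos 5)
Your overall GPV strategy is the right family of arguments, but the proposal misidentifies where the difficulty lies and the step you wave through is exactly the one that fails. First, a structural point: the Feynman--Kac variational bound (and the bound $D_N(f_t^N)\le H_N(\iota_N|\bar\nu_N)/2t$) requires the reference measure to be invariant for the dynamics. Neither of your choices is: $\pi_N$ is not invariant for the weakly asymmetric dynamics, and for $\alpha<1$ the initial product measure $\iota_N$ is certainly not invariant. The paper works with $\nu_N$, the inhomogeneous Bernoulli product measure of Section \ref{Section:InvMeas}, which \emph{is} reversible (thanks to the zero-flux boundary condition). As a consequence your ``main obstacle'' --- absorbing an antisymmetric contribution $\Delta^N(F,F)$ into the symmetric Dirichlet form --- is a non-issue: with the reversible reference measure there is no antisymmetric term at all. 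The price is that $H_N(\iota_N|\nu_N)$ is not $O(N)$ when $\alpha<1$; it is of order $N^{2-\alpha}$, which is still good enough for the one-block estimate but not for more.

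The genuine gap is the two-block estimate for $\alpha\in(0,1)$, which you dismiss as ``standard control of density gradients at scale $\epsilon N$''. On the time scale $(2N)^{1+\alpha}$ the available Dirichlet bound is $D_N(f)\lesssim N^{2-\alpha}$ (with $D_N$ built on the sped-up generator), and transporting an occupation variable over a distance $\epsilon N$ by the telescoping/path argument costs a factor $(\epsilon N)^2$, yielding a bound of order $\epsilon^2 N^{1-\alpha}$ for the relevant Dirichlet form $D^\circ$ --- this diverges for every $\alpha<1$, so the Dirichlet-form route to comparing the densities of two boxes at macroscopic distance is dead. The paper's proof abandons the variational approach for that term: it uses the plain Markov inequality to reduce to an expectation, and then proves the two-block estimate by Rezakhanlou's coupling of $\eta^N$ with a stationary Bernoulli$(c)$ system $\zeta^N$, controlling the number of sign changes of $\eta-\zeta$ uniformly in time (Lemmas \ref{Lemma:TwoBlocks} and \ref{Lemma:Coupling}). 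This coupling argument, not a piecewise equivalence of ensembles, is where Assumption \ref{Assumption:IC} is really needed (it guarantees $n(0)$ is bounded and that the initial ordering can be chosen according to $\sgn(f-c)$). Without supplying an argument of this type, your proof does not go through for $\alpha<1$.
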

The proof of this theorem relies on the classical one-block and two-blocks estimates. First, let us introduce the Dirichlet form associated to our dynamics:
\begin{equation*}
D_N(f) = -\sum_{\eta} \sqrt{f}(\eta) \cL^N \sqrt{f}(\eta) \,\nu_N(\eta)\;,
\end{equation*}
where $f:\{0,1\}^{2N}\rightarrow\R_+$ and $\cL^N$ is the generator of our sped up process, that is
\begin{align*}
\cL^N g(\eta) = (2N)^{(1+\alpha)\wedge 2}\sum_{k=1}^{2N-1}\big(&g(\eta^{k,k+1})-g(\eta)\big) \big(p_N\, \eta(k+1)(1-\eta(k))\\
&+ (1-p_N)\,\eta(k)(1-\eta(k+1))\big)\;,
\end{align*}
where $\eta^{k,k+1}$ is obtained from $\eta$ by permuting the values at sites $k$ and $k+1$.\\
Notice that the reference measure in the Dirichlet form is taken to be the reversible measure $\nu_N$, which was defined in Section \ref{Section:InvMeas}. Recall that $\nu_N$ is supported by the whole set $\{0,1\}^{2N}$, but its restriction to the hyperplane with $N$ particles coincides with the measure $\mu_N$ up to a multiplicative constant.\\
In the statements of the lemmas below, the function $f$ will always be non-negative and such that $\nu_N[f]=1$.

\begin{lemma}[One-block estimate]\label{Lemma:OneBlock}
For any $\alpha>0$ and any $C>0$, we have
\begin{equation*}
\varlimsup_{\ell\rightarrow\infty}\varlimsup_{N\rightarrow\infty} \sup_{f: D_N(f) \leq C N^{(2-\alpha)\vee 1}} \frac1{N}\sum_{k=1}^{2N} \nu_N\Big[V_\ell(\tau_k\eta) f(\eta)\Big] = 0\;.
\end{equation*}
\end{lemma}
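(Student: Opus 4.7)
The plan is to follow the classical one-block argument of Kipnis--Landim~\cite{KipLan}, the main new feature being to handle the spatial inhomogeneity of the reference measure $\nu_N=\otimes_{i=1}^{2N}\mbox{Be}(q^N_i)$. First, I would localize: since $\eta\mapsto V_\ell(\tau_k\eta)$ depends only on $\eta|_{T_{\ell+r}(k)}$, replacing $f$ by its conditional expectation $f_k$ given the $\sigma$-field of $\eta|_{T_{\ell+r}(k)}$ does not change $\nu_N[V_\ell(\tau_k\eta)f]$. By convexity of the Dirichlet form, and since each bond lies in at most $2(\ell+r)+1$ of the blocks $T_{\ell+r}(k)$,
\begin{equ}
\frac{1}{N}\sum_{k=1}^{2N}d_k(\sqrt{f_k})\;\lesssim\;\frac{\ell\,D_N(\sqrt{f})}{N\,(2N)^{(1+\alpha)\wedge 2}}\;\lesssim\;\ell\,N^{-2(\alpha\wedge 1)}\;,
\end{equ}
where $d_k$ is the symmetric un-sped-up Dirichlet form on $T_{\ell+r}(k)$. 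Thus the average local Dirichlet form tends to $0$ as $N\rightarrow\infty$, for every fixed $\ell$.

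Next, I would homogenize the reference measure on each block. On $T_{\ell+r}(k)$ the parameters $q^N_i$ differ by at most $O(N^{-(\alpha\wedge 1)})$, so the Radon--Nikodym derivative between the marginal $\nu_N^{(k)}$ of $\nu_N$ on that block and the homogeneous product $\tilde\nu_k:=\otimes_i\mbox{Be}(\bar q_k)$, with $\bar q_k:=\ccM_{T_{\ell+r}(k)}q^N$, is uniformly bounded and converges to $1$. Hence $\nu_N[V_\ell(\tau_k\eta)f_k]$ can be replaced by $\tilde\nu_k[V_\ell(\tau_k\eta)f_k]$ up to a vanishing error, and the bound on the local Dirichlet form is preserved up to a multiplicative constant.

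The third step is the canonical ensemble decomposition: condition $\tilde\nu_k$ on the number $K$ of particles in the block. Under $\tilde\nu_k(\cdot\,|\,K)$ the law is uniform on the hyperplane $H_K$ of configurations of $T_{\ell+r}(k)$ with exactly $K$ particles, and the symmetric simple exclusion process on $H_K$ has a spectral gap of order $\ell^{-2}$. Combined with the bound from the first step, this forces the conditional $L^2$-projection of $f_k$ onto $H_K$ to be close to a constant (depending on $K$ only), with an error that vanishes as $N\rightarrow\infty$, uniformly in $k$, in $K$, and in the allowed family of $f$'s.

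Finally, under the uniform measure on $H_K$ the empirical average $\ccM_{T_\ell(0)}\Phi(\tau_k\eta)$ concentrates around $\tilde\Phi(K/(2\ell+1))$ with an error vanishing as $\ell\rightarrow\infty$, uniformly in $K$; this is the classical equivalence of ensembles. Since $\ccM_{T_\ell(0)}\eta=K/(2\ell+1)$ on $H_K$, it follows that $V_\ell(\tau_k\eta)$ is $L^1$-negligible in the iterated limit, which concludes. The main obstacle is precisely the inhomogeneity of $\nu_N$: for $\alpha<1$ the parameters $q^N_i$ sweep across the whole of $(0,1)$, so no global homogenization is available; however, on each fixed block $T_{\ell+r}(k)$ the $q^N_i$'s differ by only $O(N^{-\alpha})$ as $N\rightarrow\infty$, which is precisely enough for the second step to go through.
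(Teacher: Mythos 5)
Your argument is correct and reaches the conclusion through the standard one-block skeleton (localization, control of the block Dirichlet form, reduction to canonical measures, equivalence of ensembles), but the middle two steps are executed differently from the paper. Where you homogenize the reference measure block by block — comparing the marginal of $\nu_N$ on $T_{\ell+r}(k)$ to $\otimes_i\mbox{Be}(\bar q_k)$ via a Radon--Nikodym derivative that tends to $1$ — the paper sidesteps any comparison of measures by folding the weights $\nu_N(\eta)$ directly into the averaged block density $f_\ell(\xi):=\frac{1}{\#R^N_\ell}\sum_i\sum_{\eta:\eta_{|T_\ell(i)}=\xi}\nu_N(\eta)f(\eta)$ and then working with the \emph{flat} Dirichlet form $D^*$ on $\{0,1\}^{2\ell+1}$; your check that the ratios $(1-q^N_i)/(1-\bar q_k)$ stay under control even near the edges (where $q^N_i\to 0$ or $1$ when $\alpha<1$) is the point that makes your variant legitimate and should be stated explicitly. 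The second divergence is in identifying the limit: you invoke the quantitative spectral gap of order $\ell^{-2}$ for SSEP on each hyperplane $H_K$, which combined with your bound $\ell N^{-2(\alpha\wedge1)}$ on the averaged local Dirichlet form gives an explicit $O(\ell^3 N^{-2(\alpha\wedge1)})$ rate; the paper instead uses the soft compactness/lower-semicontinuity argument ($D^*(g_\infty)=0$ forces $g_\infty$ to be a convex combination of the canonical measures $\pi_{\ell,K}$), which requires no gap estimate but yields no rate. Both routes then finish with the same equivalence-of-ensembles step. Two small points to tidy: your displayed bound should read $D_N(f)$ rather than $D_N(\sqrt f)$ (the paper's Dirichlet form already carries the square root), and in passing from the averaged bound $\frac1N\sum_k d_k(\sqrt{f_k})\lesssim \ell N^{-2(\alpha\wedge1)}$ to a statement about each block you should note that $V_\ell$ is uniformly bounded, so blocks with anomalously large local Dirichlet form contribute negligibly to the $k$-average.
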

The proof is an adaptation of Kipnis, Olla and Varadhan~\cite{KOV}.
\begin{proof}
In this proof, $\xi$ always denotes an element of $\{0,1\}^{2\ell+1}$ and $\eta$ an element of $\{0,1\}^{2N}$. The identity $\eta_{|T_\ell(i)}=\xi$ will be an abusive notation for $\eta(i-\ell-1+j)=\xi(j)$ for all $j\in\{1,\ldots,2\ell+1\}$. Recall that $\Phi$ only depends on $r$ sites.\\
First, let us observe that we can restrict the sum over $k$ to $R_\ell^N := \{\ell+1,\ldots,2N-\ell\}$ since the remaining terms have a negligible contribution. Second, we have the following identity
\begin{equation}\label{Eq:DirichletOneBlock}
\frac1{N}\sum_{k\in R_\ell^N} \nu_N\Big[V_\ell(\tau_k\eta) f(\eta)\Big] = \frac1{N} \sum_{k\in R_\ell^N} \sum_{\xi} \sum_{\eta: \eta_{|T_\ell(k)} = \xi} V_\ell(\xi) f(\eta)\nu_N(\eta) + \cO(\ell^{-1})\;,
\end{equation}
uniformly over all densities $f$. Let $D^*$ denote the Dirichlet form of the symmetric simple exclusion process on $\{0,1\}^{2\ell+1}$, that is
\begin{equation*}
D^*(g) = \frac14 \sum_{\xi} 2^{-(2\ell+1)} \sum_{j=1}^{2\ell}\Big(\sqrt{g(\xi^{j,j+1})} - \sqrt{g(\xi)}\Big)^2\;.
\end{equation*}
Let us introduce
\begin{equation*}
f_\ell(\xi) := \frac{1}{\# R_\ell^N} \sum_{i\in R_\ell^N} \sum_{\eta: \eta_{|T_\ell(i)}=\xi} \nu_N(\eta) f(\eta)\;. 
\end{equation*}
Since $\nu_N[f]=1$, we immediately deduce that $\sum_\xi f_\ell(\xi)=1$. Recall the inequality
\begin{equation*}
\bigg(\sqrt{\sum_i a_i} - \sqrt{\sum_i b_i}\bigg)^2 \leq \sum_i\big(\sqrt{a_i} - \sqrt{b_i}\big)^2\;,
\end{equation*}
that holds for all summable sequences $a_i, b_i \geq 0$. Using this inequality, one gets the bound
\begin{equation*}
D^*(f_\ell) \lesssim \frac{\ell 2^{-2\ell}}{\# R_\ell^N (2N)^{2\wedge (1+\alpha)}} D_N(f)\;,
\end{equation*}
uniformly over all densities $f$, all $\ell\geq 1$ and all $N\geq 1$. Notice that $\# R_\ell^N= 2N-2\ell$. Combining the last bound with (\ref{Eq:DirichletOneBlock}), we deduce that we only need to show
\begin{equation}
\varlimsup_{\ell\rightarrow\infty} \varlimsup_{N\rightarrow\infty} \sup_{g:D^*(g) \leq \frac{C'\ell}{(2N)^{2(\alpha\wedge 1)}}} F(g)=0\;,
\end{equation}
where the supremum is taken over all $g:\{0,1\}^{2\ell+1} \rightarrow\R_+$ such that $\sum_\xi g(\xi)=1$, and where $F(g) := \sum_{\xi} V_\ell(\xi) g(\xi)$. By the lower semi-continuity of the Dirichlet form, $\{g:D^*(g) \leq \frac{C'\ell}{(2N)^{2(\alpha\wedge 1)}}\}$ is a closed subset of the compact set of all densities $g$ and is therefore a compact set. Let $g_N$ be an element for which $F$ reaches its maximum over this compact set. We claim that
\begin{equation*}
\varlimsup_{N\rightarrow\infty} F(g_N) \leq \sup_{g:D^*(g) = 0} F(g)\;.
\end{equation*}
Indeed, there exists a subsequence of $(g_N)_N$ whose image through $F$ converges to the l.h.s. One can extract another sub-subsequence that converges to some element $g_\infty$. Necessarily $g_\infty$ is a density and $D^*(g_\infty)=0$, thus yielding the claim.\\
Notice that $\{g: D^*(g)=0\}$ is the set formed by all convex combinations of the measures $\pi_{\ell,k}$, $k\in\{0,\ldots,2\ell+1\}$ where $\pi_{\ell,k}$ is the uniform measure on the subset of $\{0,1\}^{2\ell+1}$ with $k$ particles (which is irreducible for our dynamics). Henceforth, we have to show
\begin{equation*}
\varlimsup_{\ell\rightarrow\infty} \sup_{k=0,\ldots,2\ell+1} \sum_\xi V_{\ell}(\xi) \pi_{\ell,k}(\xi)=0\;.
\end{equation*}
This can be done using a Local Limit Theorem, see~\cite[Step 6 Chapter 5.4]{KipLan}.
\end{proof}

\begin{lemma}[Two-blocks estimate]\label{Lemma:TwoBlocks}
For any $\alpha \geq 1$ and any $C>0$, we have
\begin{align*}
\varlimsup_{\ell\rightarrow\infty}\varlimsup_{\epsilon\downarrow 0}\varlimsup_{N\rightarrow\infty} &\sup_{f:D_N(f)\leq C N} \frac{1}N\sum_{k=1}^{2N-1} \frac1{(2\epsilon N+1)^2}\\
&\times\nu_N\bigg[\sum_{j: |j-k| \leq \epsilon N}\sum_{j':|j'-k|\leq \epsilon N} \Big|\ccM_{T_\ell(j')}(\eta) - \ccM_{T_\ell(j)}(\eta)\Big| f(\eta)\bigg]=0\;.
\end{align*}
For $\alpha < 1$, if $\iota_N$ satisfies Assumption \ref{Assumption:IC}, we have for all $t,\delta >0$
\begin{align*}
\varlimsup_{\ell\rightarrow\infty}\varlimsup_{\epsilon\downarrow 0}\varlimsup_{N\rightarrow\infty} &\frac{1}N\sum_{k=1}^{2N-1} \frac1{(2\epsilon N+1)^2}\sum_{j: |j-k| \leq \epsilon N}\sum_{j':|j'-k|\leq \epsilon N} \\
&\times \int_0^t \P^N_{\iota_N}\Big(\big|\ccM_{T_\ell(j')}(\eta_s) - \ccM_{T_\ell(j)}(\eta_s)\big| \geq \delta\Big)ds=0\;.
\end{align*}
\end{lemma}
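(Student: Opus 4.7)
I would follow the classical Kipnis--Landim two-blocks scheme, splitting the analysis according to the two regimes $\alpha\geq 1$ (diffusive, via equilibrium Dirichlet-form methods) and $\alpha<1$ (sub-diffusive, exploiting Assumption~\ref{Assumption:IC}).

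In the diffusive regime $\alpha\geq 1$ the sped-up generator carries the speed factor $(2N)^2$ and the hypothesis $D_N(f)\leq CN$ is the usual diffusive bound. Fix $k$ and $j,j'$ with $|j-k|,|j'-k|\leq \epsilon N$. I would introduce the two-block marginal density $f_{j,j'}(\xi,\xi')$ of $f\,\nu_N$ on $\{0,1\}^{T_\ell(j)\cup T_\ell(j')}$ and compare it with the density obtained by averaging over all permutations of the two blocks that preserve the individual particle counts. Any such permutation decomposes into $O(|j-j'|)\leq O(\epsilon N)$ nearest-neighbour transpositions, and each transposition contributes at most $(2N)^{-2}$ to the Dirichlet form; the total cost of smoothing is therefore of order $\epsilon N\cdot(2N)^{-2}\cdot D_N(f)\lesssim \epsilon$, which vanishes as $\epsilon\downarrow 0$. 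After smoothing, the same compactness and lower semi-continuity argument as in the one-block estimate reduces the supremum to densities of product form $\pi_{\ell,a}\otimes\pi_{\ell,b}$, and the equivalence of ensembles together with a local central limit theorem forces the two empirical averages to coincide in the triple limit $N\to\infty$, $\epsilon\downarrow 0$, $\ell\to\infty$.

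In the sub-diffusive regime $\alpha<1$ the analogous smoothing cost would be of order $\epsilon N\cdot(2N)^{-(1+\alpha)}\cdot N^{2-\alpha}=\epsilon N^{2-2\alpha}$, which blows up, so an equilibrium argument is ruled out. Exploiting Assumption~\ref{Assumption:IC} I would replace it by a relative-entropy argument \`a la Yau. Introduce the time-dependent reference product measure $\pi^s_N:=\bigotimes_k \mbox{Be}(\rho(s,k/2N))$ whose density profile $\rho(s,\cdot)$ follows the Hamilton--Jacobi hydrodynamic limit of Theorem~\ref{Th:Hydro}, and show that
\begin{equation*}
H(s):=H\bigl(\bbP^N_{\iota_N}(\eta_s\in\cdot)\,\big|\,\pi^s_N\bigr)=o(N),
\end{equation*}
uniformly in $s\in[0,t]$, starting from $H(0)=0$ and bounding the entropy production via the generator. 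The entropy inequality
\begin{equation*}
\bbP^N_{\iota_N}(A_s)\leq \frac{\log 2+H(s)}{\log(1+1/\pi^s_N(A_s))}
\end{equation*}
applied to $A_s=\{|\ccM_{T_\ell(j')}\eta_s-\ccM_{T_\ell(j)}\eta_s|\geq \delta\}$, combined with the standard local LLN bound $\pi^s_N(A_s)\lesssim \exp(-c\delta^2\ell)$ coming from the product structure, then yields the desired probabilistic estimate after integrating in $s\in[0,t]$ and in $j,j',k$.

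The main obstacle is clearly the sub-diffusive regime: making the relative-entropy method work quantitatively, and in particular exhibiting a reference profile $\rho(s,\cdot)$ for which $H(s)=o(N)$ uniformly in time despite the weak asymmetry continuously injecting drift (and despite possible shocks in the limit), is the technical heart of the proof. In the diffusive case all the remaining ingredients---smoothing via long transpositions, compactness of densities, equivalence of ensembles---are standard once the Dirichlet-form cost has been correctly estimated.
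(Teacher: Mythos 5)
For $\alpha\geq 1$ your plan coincides with the paper's proof: restrict to well-separated blocks, form the two-block marginal density, smooth it by exchanging the contents of the two blocks through a chain of nearest-neighbour transpositions whose Dirichlet cost is $O(\epsilon)$ under the hypothesis $D_N(f)\leq CN$ (the paper's bookkeeping gives $O(\epsilon^2)$ after the extra average over $j,j'$, but either vanishes as $\epsilon\downarrow 0$), then pass to zero-Dirichlet-form densities by compactness and lower semi-continuity and conclude by equivalence of ensembles and a local limit theorem. That half is fine.

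The case $\alpha<1$ is where your proposal has a genuine gap. The paper does \emph{not} use relative entropy there: it invokes the coupling of Subsection \ref{SectionHyperbo}, pairing $\eta^N$ with a \emph{stationary} process $\zeta^N$ of constant density $c$, ordered at time $0$ according to the sign of $f(\cdot)-c$; Lemma \ref{Lemma:Coupling} shows the number of sign changes of $k\mapsto\eta(k)-\zeta(k)$ stays bounded uniformly in $t$ and $N$, so on all but $O(\epsilon N)$ boxes the configurations are ordered, and comparing block averages of $\eta$ with those of the equilibrium $\zeta$ forces nearby block averages of $\eta$ to agree, as in Rezakhanlou's Lemma 6.6. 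Your route founders precisely on the point you flag as ``the technical heart'': in this regime the hydrodynamic limit is the entropy solution of a Burgers-type conservation law, the piecewise-constant data of Assumption \ref{Assumption:IC} immediately produce discontinuities and shocks, and Yau's computation of $\frac{d}{ds}H(s)$ requires $\partial_x\rho(s,\cdot)$ to be a bounded (indeed smooth) function entering a Taylor expansion of the current. For non-smooth entropy solutions the bound $H(s)=o(N)$ is not a technical refinement that one can expect to push through — its unavailability is exactly why the coupling method was introduced for asymmetric dynamics, so the step that carries the whole argument is the one that fails. A secondary, repairable issue: even granting $H(s)=o(N)$, the entropy inequality applied to the single event $A_s$ yields only $\bbP^N_{\iota_N}(A_s)\lesssim (H(s)+1)/(\delta^2\ell)$, which diverges as $N\to\infty$ for fixed $\ell$; one must first aggregate over $k$ and exploit independence of well-separated blocks under $\pi^s_N$ before optimising, a step your sketch omits. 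As written, the sub-diffusive half of the proof does not go through.
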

The proof in the case $\alpha \geq 1$ is due to Kipnis, Olla and Varadhan~\cite{KOV}.
\begin{proof}[Proof of Lemma \ref{Lemma:TwoBlocks}, $\alpha \ge 1$]
We can restrict the sum to all $k\in R^N_\epsilon$ where $R^N_\epsilon:=\{\lceil \epsilon N\rceil,\ldots,2N-\lceil \epsilon N\rceil\}$, since the contribution of the remaining terms is negligible. We can also restrict the sum over $j,j'$ to the set
\begin{equation*}
J(k):=\{(j,j'): |j-k|\leq \epsilon N, |j'-k|\leq \epsilon N, j' > j+2\ell\}\;.
\end{equation*}
Notice that $\# J(k)=\#J$ does not depend on $k$ and that it is of order $(\epsilon N)^2$ as long as $\ell$ is small compared to $\epsilon N$. Therefore, we have to control
\begin{equation*}
\frac{1}N\sum_{k\in R^N_\epsilon} \frac1{\#J}\,\nu_N\bigg[\sum_{(j,j')\in J(k)} \Big|\ccM_{T_\ell(j')}(\eta) - \ccM_{T_\ell(j)}(\eta)\Big| f(\eta)\bigg]\;.
\end{equation*}
From now on, $(\xi_1,\xi_2)$ will always denote an element of $\{0,1\}^{2\ell+1}\times\{0,1\}^{2\ell+1}$, and $\eta$ an element of $\{0,1\}^{2N}$. We set
\begin{align*}
D^1(g) &:=\frac14 \sum_{\xi_1,\xi_2}2^{-2(2\ell+1)} \sum_{n=1}^{2\ell} \big(\sqrt{g(\xi_1^{n,n+1},\xi_2)}-\sqrt{g(\xi_1,\xi_2)}\big)^2\;,\\
D^2(g) &:=\frac14 \sum_{\xi_1,\xi_2}2^{-2(2\ell+1)} \sum_{n=1}^{2\ell} \big(\sqrt{g(\xi_1,\xi_2^{n,n+1})}-\sqrt{g(\xi_1,\xi_2)}\big)^2\;,\\
D^\circ(g) &:=\frac14 \sum_{\xi_1,\xi_2}2^{-2(2\ell+1)} \sum_{n=1}^{2\ell} \big(\sqrt{g((\xi_1,\xi_2)^{\circ})}-\sqrt{g(\xi_1,\xi_2)}\big)^2\;,
\end{align*}
where $(\xi_1,\xi_2)^{\circ}$ is obtained from $(\xi_1,\xi_2)$ upon exchanging the values of $\xi_1(\ell+1)$ and $\xi_2(\ell+1)$. Let us now set
\begin{equation*}
f_\ell(\xi_1,\xi_2) := \sum_{k\in R^N_\epsilon} \frac1{\#R^N_\epsilon \#J} \sum_{(j,j')\in J(k)} \sum_{\substack{\eta:\\ \eta_{|T_\ell(j)} = \xi_1\\\eta_{|T_\ell(j')} = \xi_2}} f(\eta) \nu_N(\eta)\;.
\end{equation*}
Notice that $\sum_{\xi_1,\xi_2} f_\ell(\xi_1,\xi_2) = 1$. As in the proof of Lemma \ref{Lemma:OneBlock}, we get the bounds
\begin{equation*}
D^1(f_\ell) \lesssim \frac{\ell}{N^3} D_N(f)\;,\qquad D^2(f_\ell) \lesssim \frac{\ell}{N^3} D_N(f)\;,
\end{equation*}
uniformly over all $\ell$ and all densities $f$. On the other hand, we have the bound
\begin{align*}
D^\circ(f_\ell) \lesssim \sum_{\xi_1,\xi_2} \sum_{k\in R^N_\epsilon} \frac1{\#R^N_\epsilon \#J} \sum_{(j,j')\in J(k)} \sum_{\substack{\eta:\\ \eta_{|T_\ell(j)} = \xi_1\\\eta_{|T_\ell(j')} = \xi_2}} \nu_N(\eta) \Big(\sqrt{f(\eta^{j,j'})}-\sqrt{f(\eta)}\Big)^2\;.
\end{align*}
Observe that we have
\begin{equation*}
\eta^{j,j'} = \Big(\ldots \Big(\Big( \big(\ldots\big((\eta^{j,j+1})^{j+1,j+2}\big)\ldots\big)^{j'-1,j'}\Big)^{j'-2,j'-1}\Big)\ldots\Big)^{j,j+1}\;.
\end{equation*}
This induces a chain of configurations $\eta_0=\eta$, $\eta_1=\eta^{j,j+1}$, $\ldots$, $\eta_{2(j'-j)-1}=\eta^{j,j'}$. Then we write
\begin{equation*}
\Big(\sqrt{f(\eta^{j,j'})}-\sqrt{f(\eta)}\Big)^2 \leq (2(j'-j)-1) \sum_{m=1}^{2(j'-j)-1}\Big(\sqrt{f(\eta_m)} - \sqrt{f(\eta_{m-1})}\Big)^2\;.
\end{equation*}
A simple calculation then yields the following bound:
\begin{equation*}
D^\circ(f_\ell) \lesssim \frac{(\epsilon N)^2}{N} \frac{D_N(f)}{(2N)^2}\;.
\end{equation*}
By similar arguments as in the proof of Lemma \ref{Lemma:OneBlock}, we deduce that it suffices to show that
\begin{equation*}
\varlimsup_{\ell\rightarrow\infty} \sup_{D^1(g)=D^2(g)=D^\circ(g)=0} \sum_{\xi_1,\xi_2} \Big|\ccM_{T_\ell(0)} \xi_1 - \ccM_{T_\ell(0)} \xi_2\Big| g(\xi_1,\xi_2) = 0\;.
\end{equation*}
The set $\{g:D^1(g)=D^2(g)=D^\circ(g)=0\}$ is the set of all convex combinations of the measures $m_{\ell,k}$, where $m_{\ell,k}$ is the uniform measure on $\{0,1\}^{2\ell+1}\times\{0,1\}^{2\ell+1}$ with $k$ particles, $k=0,1,\ldots,4\ell+2$. Consequently, it suffices to show
$$ \varlimsup_{\ell\rightarrow\infty} \sup_{k=0,\ldots,4\ell+2} \sum_{\xi_1,\xi_2} \Big|\ccM_{T_\ell(0)} \xi_1 - \ccM_{T_\ell(0)} \xi_2\Big| m_{\ell,k}(\xi_1,\xi_2) = 0\;.$$
A computation shows that this quantity vanishes, thus concluding the proof.
\end{proof}
The proof in the case $\alpha < 1$ is due to Rezakhanlou, we only adapt the arguments in~\cite[Lemma 6.6]{Reza}.
\begin{proof}[Proof of Lemma \ref{Lemma:TwoBlocks}, $\alpha \in (0,1)$]
We rely on the coupling $(\eta_t^N,\zeta_t^N)$, $t\geq 0$, introduced in Subsection \ref{SectionHyperbo}: $\zeta^N$ is stationary with law $\otimes_{k=1}^{2N}\mbox{Be}(c)$, $\eta^N$ follows the same dynamics as before, $\eta_0$ has law $\iota_N$, $(\eta_0,\zeta_0)$ is ordered as follows:
$$ \iota_N[\eta_0(k)] \ge c \Leftrightarrow \eta_0(k) \ge \zeta_0(k)\;,$$
and the dynamics preserves the ordering (see Subsection \ref{SectionHyperbo} for more details).\\
Forthcoming Lemma \ref{Lemma:Coupling} shows that the number $n(t)$ of changes of sign of $k\mapsto \eta_t(k)-\zeta_t(k)$ is bounded by a constant $C>0$ for all $t\geq 0$ and all $N\geq 1$. We deduce that on the box $T_{2\epsilon N}(k)$ and for all $t\geq 0$, either $\eta_t \geq \zeta_t$ or $\eta_t \leq \zeta_t$ except for at most $2C\epsilon N$ integers $k$'s in $\{1,\ldots,2N\}$. Consequently, except for at most $2C\epsilon N$ integers $k$, we have for all $j$ such that $|j-k| \le \epsilon N$ and all $\ell \le \epsilon N$:
$$ \ccM_{T_\ell(j)}(\eta_t) \le \ccM_{T_\ell(j)}(\zeta_t)\;,\mbox{ and }\ccM_{T_{2\epsilon N}(j)}(\eta_t) \le \ccM_{T_{2\epsilon N}(k)}(\zeta_t)\;,$$
or
$$ \ccM_{T_\ell(j)}(\eta_t) \ge \ccM_{T_\ell(j)}(\zeta_t)\;,\mbox{ and }\ccM_{T_{2\epsilon N}(j)}(\eta_t) \ge \ccM_{T_{2\epsilon N}(k)}(\zeta_t)\;.$$
With a probability going to $1$ as $N\to\infty$, $\epsilon \downarrow 0$ and $\ell\to\infty$, we can replace the averages of $\zeta_t$ by the value $c$. Therefore, for any given $\delta > 0$:
\begin{align*}
\varliminf_{\ell \to\infty}\varliminf_{\epsilon\downarrow 0}\varliminf_{N\to\infty} &\frac1{N} \sum_{k=1}^{2N} \frac1{2\epsilon N+1} \sum_{|j-k| \le \epsilon N}\P_{\iota_N}^N\Big( \ccM_{T_\ell(j)}(\eta_t), \ccM_{T_{2\epsilon N}(j)}(\eta_t) \le c+\delta\;, \\
&\mbox{  or } \ccM_{T_\ell(j)}(\eta_t), \ccM_{T_{2\epsilon N}(j)}(\eta_t) \ge c-\delta\Big) = 1\;.
\end{align*}
Applying this reasoning simultaneously for all values $c\in (\delta \Z) \cap [0,1]$, we deduce that
\begin{align*}
\varliminf_{\ell \to\infty}\varliminf_{\epsilon\downarrow 0}\varliminf_{N\to\infty} &\frac1{N} \sum_{k=1}^{2N} \frac1{2\epsilon N+1} \sum_{|j-k| \le \epsilon N}\P_{\iota_N}^N\Big( |\ccM_{T_\ell(j)}(\eta_t)-\ccM_{T_{2\epsilon N}(j)}(\eta_t)| < 2 \delta\Big) = 1\;.
\end{align*}
Consequently, for any $t\ge 0$
\begin{align*}
\varlimsup_{\ell\rightarrow\infty}\varlimsup_{\epsilon\downarrow 0}\varlimsup_{N\rightarrow\infty} &\frac{1}N\sum_{k=1}^{2N-1} \frac1{(2\epsilon N+1)^2}\sum_{j: |j-k| \leq \epsilon N}\sum_{j':|j'-k|\leq \epsilon N} \\
&\times \P^N_{\iota_N}\Big(\big|\ccM_{T_\ell(j')}(\eta_t) - \ccM_{T_\ell(j)}(\eta_t)\big| \geq 4\delta\Big)=0\;.
\end{align*}
The Dominated Convergence Theorem completes the proof.
\end{proof}
We also need the following technical lemma.
\begin{lemma}\label{Lemma:Radon}
Let $G:\{0,1\}^{2N}\to\R_+$. For any $t > 0$ there exists $C>0$ such that for any initial probability measure $\iota_N$ on $\{0,1\}^{2N}$ we have
\begin{align*}
\bbP^N_{\iota_N} \Big(\int_0^t G(\eta_s) ds \geq \delta \Big) &\le \delta^{-1} t \sup_{f:D_N(f) \leq C N^{1\vee(2-\alpha)}} \sum_{\eta} \nu_N(\eta) G(\eta) f(\eta)\;,\label{Bound:Radon}
\end{align*}
where the supremum is taken over all $f:\{0,1\}^{2N}\rightarrow\R_+$ such that $\nu_N[f]=1$.
\end{lemma}
\begin{proof}
Denote by $P^N_t$ the semigroup associated to our discrete dynamics and by $f_t^N$ the density of the measure $\iota_N P^N_t$ w.r.t.~the measure $\nu_N$. In other words, $f_t^N$ is the density w.r.t.~$\nu_N$ of the law of our process starting from the initial distribution $\iota_N$. Since the dynamics is reversible w.r.t.~$\nu_N$, the operator $\cL^N$ is self-adjoint in $L^2(\nu_N)$ and we have:
$$ f_0^N = \frac{d \iota_N}{d\nu_N}\;,\quad \partial_t f_t^N = \cL^N f^N_t\;.$$
Let us collect a bound on $D_N(\frac1{t} \int_0^t f^N_s ds)$ following the lines of~\cite[Section 5.2]{KipLan}. To that end, we recall the definition of the entropy of some measure $\pi$ w.r.t.~to the measure $\nu_N$:
$$ H_N(\pi | \nu_N) := \nu_N\bigg[ \frac{d\pi}{d\nu_N} \log \frac{d\pi}{d\nu_N} \bigg] = \sum_\eta \nu_N(\eta)\frac{d\pi}{d\nu_N}(\eta) \log \frac{d\pi}{d\nu_N}(\eta)\;.$$
In the case where $\pi = \iota_N P^N_t$, we will write $H_N(f^N_t | \nu_N)$ for simplicity.\\
We have:
$$\partial_t H_N(f^N_t | \nu_N) = \sum_\eta \log(f^N_t)(\eta) \cL^N \big(f^N_t(\eta)\big) \nu_N(\eta) + \sum_\eta \cL^N \big(f^N_t(\eta)\big) \nu_N(\eta)\;.$$
Since $\nu_N$ is invariant under the dynamics the second term on the right vanishes and since $\cL^N$ is self-adjoint in $L^2(\nu_N)$ we can rewrite the first term as follows
$$= \sum_\eta \cL^N \big(\log(f^N_t)(\eta)\big) f^N_t(\eta) \nu_N(\eta)\;.$$
From the elementary inequality $\log x \le x - 1$ that holds for all $x\ge 0$, by setting $x=\sqrt{b/a}$ we deduce that $a \log b/a \le 2 \sqrt a(\sqrt b - \sqrt a)$ holds for all $a,b \ge 0$. Thus we get for every $k$
$$ f^N_t(\eta)(\log(f^N_t)(\eta^{k,k+1}) - \log(f^N_t)(\eta)) \le 2 \sqrt{f^N_t(\eta)}(\sqrt{f^N_t(\eta^{k,k+1})}-\sqrt{f^N_t(\eta)})\;,$$
and consequently
\begin{align*}
\sum_\eta \cL^N \big(\log(f^N_t)(\eta)\big) f^N_t(\eta) \nu_N(\eta) &\le 2\sum_\eta \sqrt{f^N_t(\eta)} \cL^N \Big(\sqrt{f^N_t(\eta)}\Big) \nu_N(\eta)\\
&= -2 D_N(f^N_t(\eta))\;.
\end{align*}
We thus get
$$ H_N(f^N_t | \nu_N) + 2 \int_0^t D_N(f^N_s(\eta)) \le H_N(f^N_0)\;.$$
The convexity of the Dirichlet form and the positivity of the entropy yield
$$ D_N(\frac1{t} \int_0^t f^N_s ds) \le \frac1{t} \int_0^t D_N(f^N_s) ds \le \frac{1}{2t}H_N(f^N_0) \;.$$
Let us estimate this last quantity. Using the inequality $x\log x \le 0$ that holds for all $x\le 1$ we have
\begin{align*}
H_N(f^N_0) &= H_N(\iota_N) = \iota_N\Big[\log \frac{d\iota_N}{d\nu_N} \Big]\\
&= \sum_{\eta} \iota_N(\eta)\Big( \log \iota_N(\eta) - \log \nu_N(\eta) \Big)\\
&\le \sum_{\eta} \iota_N(\eta) \log(1/\nu_N(\eta))\\
&\le \max_{\eta} \log(1/\nu_N(\eta))\;.
\end{align*}
From the definition of $\nu_N$ given in \eqref{Eq:DefnuN}, we deduce that
$$ 1/\nu_N(\eta) = 2^{2N} \Big(\frac{p_N}{1-p_N}\Big)^{-\frac{A(S)}{2}} e^{L_S(h^N)+\frac{2\sigma}{(2N)^\alpha}(N+\frac12)S(2N)}\;,$$
where $S$ is the height function associated to $\eta$. Maximising over $\eta$ this quantity, we find
$$ \max_{\eta} 1/\nu_N(\eta) \le 2^{2N} \Big(\frac{p_N}{1-p_N}\Big)^{(2N)^2} e^{L_S(h^N) + 2\sigma (2N)^{2-\alpha}}\;.$$
Using the computations made in the proof of Proposition \ref{Prop:Partition}, we deduce that there exists $C>0$ such that for all $N\ge 1$ we have
$$ H_N(f^N_0) \le C N^{1\vee(2-\alpha)}\;.$$
Let $G:\{0,1\}^{2N}\rightarrow\R_+$. For any measure $\iota_N$ we have
\begin{align*}
\bbP^N_{\iota_N} \Big(\int_0^t G(\eta_s) ds \geq \delta \Big) &\leq \delta^{-1} \bbE^N_{\iota_N} \bigg[\int_0^t G(\eta_s) ds \bigg]\\
&\le \delta^{-1} t \sum_{\eta} (\frac1{t} \int_0^t \iota_N P^N_s ds)(\eta) G(\eta)\\
&\le \delta^{-1} t \sup_{f:D_N(f) \leq \frac{C}{2t} N^{1\vee(2-\alpha)}} \sum_{\eta} \nu_N(\eta) G(\eta) f(\eta)\;,
\end{align*}
where the supremum is taken over all $f:\{0,1\}^{2N}\rightarrow\R_+$ such that $\nu_N[f]=1$.
\end{proof}
We now proceed to the proof of the Replacement Lemma.
\begin{proof}[Proof of Theorem \ref{Th:Replacement}]
Fix $\ell \ge 1$. Following the calculation performed on p.120 of~\cite{KOV}, we get
\begin{align*}
V_{\epsilon N}(\tau_k\eta) &= \Big|\frac1{2\epsilon N+1} \sum_{|j-k|\le \epsilon N} \tau_j \Phi(\eta) - \tilde{\Phi}\Big(\frac1{2\epsilon N+1} \sum_{|j'-k|\le \epsilon N} \eta(j')\Big) \Big|\\
&\le \frac1{2\epsilon N+1} \sum_{|j-k|\le \epsilon N} \Big| \frac1{2\ell + 1} \sum_{|n-j|\le \ell} \tau_n \Phi(\eta) - \tilde{\Phi}\Big(\frac1{2\epsilon N+1} \sum_{|j'-k|\le \epsilon N} \eta(j')\Big) \Big|\\
&+ \cO(\ell/N)
\end{align*}
where the last term is uniform over all $k$ and $\eta$. Then, we write
\begin{align*}
& \frac1{2\epsilon N+1} \sum_{|j-k|\le \epsilon N} \Big| \frac1{2\ell + 1} \sum_{|n-j|\le \ell} \tau_n \Phi(\eta) - \tilde{\Phi}\Big(\frac1{2\epsilon N+1} \sum_{|j'-k|\le \epsilon N} \eta(j')\Big) \Big|\\
&\le \frac1{2\epsilon N+1} \sum_{|j-k|\le \epsilon N} \Big| \frac1{2\ell + 1} \sum_{|n-j|\le \ell} \tau_n \Phi(\eta) - \tilde{\Phi}\Big(\frac1{2\ell+1} \sum_{|n-j|\le \ell} \eta(n)\Big) \Big|\\
&+ \frac1{2\epsilon N+1} \sum_{|j-k|\le \epsilon N} \Big|\tilde{\Phi}\Big(\frac1{2\ell+1} \sum_{|n-j|\le \ell} \eta(n)\Big) - \tilde{\Phi}\Big(\frac1{2\epsilon N+1} \sum_{|j'-k|\le \epsilon N} \eta(j')\Big)\Big|\;.
\end{align*}
The latter term can be bounded as follows
\begin{align*}
&\frac1{2\epsilon N+1} \sum_{|j-k|\le \epsilon N} \Big|\tilde{\Phi}\Big(\frac1{2\ell+1} \sum_{|n-j|\le \epsilon N} \eta(n)\Big) - \tilde{\Phi}\Big(\frac1{2\epsilon N+1} \sum_{|j'-k|\le \epsilon N} \eta(j')\Big)\Big|\\
&\le \frac{\|\tilde{\Phi}'\|_\infty}{2\epsilon N+1} \sum_{|j-k|\le \epsilon N} \Big|\frac1{2\ell+1} \sum_{|n-j|\le \ell} \eta(n) - \frac1{2\epsilon N+1} \sum_{|j'-k|\le \epsilon N} \eta(j')\Big|\\
&\le \frac{\|\tilde{\Phi}'\|_\infty}{(2\epsilon N+1)^2} \sum_{|j-k|\le \epsilon N}\sum_{|j'-k|\le \epsilon N} \Big|\frac1{2\ell+1} \sum_{|n-j|\le \ell} \eta(n) - \frac1{2\ell+1} \sum_{|n-j'|\le \ell} \eta(n)\Big|\\
&+ \cO(\ell/N)\;.
\end{align*}
Putting everything together, we showed that:
\begin{equation}\label{Eq:CalcKOV}\begin{split}
V_{\epsilon N}(\tau_k\eta) &\leq \frac{\|\tilde{\Phi}'\|_\infty}{(2\epsilon N +1)^2} \sum_{j: |j-k| \leq \epsilon N}\sum_{j':|j'-k|\leq \epsilon N} \Big|\ccM_{T_\ell(j')}(\eta) - \ccM_{T_\ell(j)}(\eta)\Big|\\
& + \frac{1}{2\epsilon N +1} \sum_{j:|j-k| \leq \epsilon N} V_{\ell}(\tau_j\eta) + \cO\Big(\frac{\ell}{N}\Big)\;,
\end{split}\end{equation}
where the $\cO\big(\frac{\ell}{N}\big)$ is uniform in $k$ and $\eta$, so that it has a negligible contribution in (\ref{Eq:Replacement}) when $N\to\infty$ and then $\ell \to \infty$.\\
To control the contribution of the second term of \eqref{Eq:CalcKOV}, we apply Lemma \ref{Lemma:Radon} with the map
$$ G(\eta) = \frac{1}{N}\sum_{k=1}^{2N} \frac{1}{2\epsilon N +1} \sum_{j:|j-k| \leq \epsilon N} V_{\ell}(\tau_j\eta)\;.$$
We obtain:
\begin{align*}
 {}&\bbP^N_{\iota_N} \Big(\int_0^t \frac{1}{N}\sum_{k=1}^{2N} \frac{1}{2\epsilon N +1} \sum_{j:|j-k| \leq \epsilon N} V_{\ell}(\tau_j\eta_s) ds \geq \delta \Big)\\
 &\leq \delta^{-1} t \sup_{f:D_N(f) \leq C N^{1\vee(2-\alpha)}} \nu_N \bigg[\frac{1}{N}\sum_{k=1}^{2N} \frac{1}{2\epsilon N +1} \sum_{j:|j-k| \leq \epsilon N} V_{\ell}(\tau_j\eta) f(\eta) \bigg]\;,
\end{align*}
so that Lemma \ref{Lemma:OneBlock} ensures that this term has a vanishing contribution as $N\rightarrow\infty$, $\epsilon\downarrow 0$ and then $\ell\rightarrow\infty$.\\
Similarly, for $\alpha \geq 1$ the contribution of the first term of (\ref{Eq:CalcKOV}) is handled by applying Lemma \ref{Lemma:TwoBlocks} combined with the bound of Lemma \ref{Lemma:Radon}.\\
For $\alpha < 1$, the contribution of the first term of (\ref{Eq:CalcKOV}) is dealt with as follows. Using the Markov inequality, we get
\begin{equation}\label{Eq:ExprTwoBlocks}\begin{split}
 {}&\bbP^N_{\iota_N} \Big(\int_0^t \frac{1}{N}\sum_{k=1}^{2N} \frac{\|\tilde{\Phi}'\|_\infty}{(2\epsilon N +1)^2} \sum_{\substack{j: |j-k| \leq \epsilon N\\j':|j'-k|\leq \epsilon N}} \Big|\ccM_{T_\ell(j')}(\eta_s) - \ccM_{T_\ell(j)}(\eta_s)\Big| ds \geq \delta \Big)\\
 &\leq \delta^{-1}  \frac{1}{N}\sum_{k=1}^{2N} \frac{\|\tilde{\Phi}'\|_\infty}{(2\epsilon N +1)^2} \sum_{\substack{j: |j-k| \leq \epsilon N\\j':|j'-k|\leq \epsilon N}} \int_0^t \bbE^N_{\iota_N} \Big[\big|\ccM_{T_\ell(j')}(\eta_s) - \ccM_{T_\ell(j)}(\eta_s)\big|\Big] ds\;.
 \end{split}\end{equation}
Then, for any $\kappa>0$ we write
 \begin{equation*}
\bbE^N_{\iota_N} \Big[\big|\ccM_{T_\ell(j')}(\eta_s) - \ccM_{T_\ell(j)}(\eta_s)\big|\Big] \leq \bbP^N_{\iota_N} \Big(\big|\ccM_{T_\ell(j')}(\eta_s) - \ccM_{T_\ell(j)}(\eta_s)\big| \geq \delta \kappa\Big)  + \delta \kappa\;, 
\end{equation*}
where we have used the fact that $\ccM_{T_\ell(j)}(\eta)$ belongs to $[0,1]$ for all $\ell,j,\eta$. By Lemma \ref{Lemma:TwoBlocks}, we deduce that (\ref{Eq:ExprTwoBlocks}) goes to $0$ as $N\rightarrow\infty$, $\epsilon\downarrow 0$ and $\ell\rightarrow\infty$. This concludes the proof.
\end{proof}

\subsection{Hydrodynamic limit: the parabolic case}\label{Section:HydroParabo}

The goal of this subsection is to prove Theorem \ref{Th:Hydro} for $\alpha\in [1,\infty)$. We start with the proof of tightness, and then we identify the limit. This second task is carried out separately according as $\alpha > 1$ or $\alpha =1$: this is because the limiting PDEs are different and a special treatment is necessary in the second case. We write $\P^N$ for the law of the process involved in the statements. Recall that we write $m^N(t,k)$ instead of $m^N(t,k/2N)$ for simplicity.\\

To prove tightness of the sequence $m^N$ in the Skorohod space $\bbD([0,\infty),\cC([0,1]))$, it suffices to show that the sequence $m^N(t=0,\cdot)$ is tight in $\cC([0,1])$, and that we have for any $T>0$
\begin{equation}\label{Eq:TightnessCriterion}
	\varlimsup_{h\downarrow 0} \varlimsup_{N\rightarrow\infty} \E^N\Big[ \sup_{t,s \leq T, |t-s|\leq h}\| m^N(t,\cdot)-m^N(s,\cdot)\|_\infty\Big]=0\;.
\end{equation}
The former is actually a hypothesis of our theorem. To prove the latter, we introduce a piecewise linear time interpolation of $m^N$, namely we set $t_N:=\lfloor t(2N)^{2}\rfloor$ and
\begin{align*}
	\bar{m}^N(t,\cdot) := &\big(t_N+1-t(2N)^{2}\big) m^N\Big(\frac{t_N}{(2N)^{2}},\cdot\Big)\\
	&+ \big(t(2N)^{2}-t_N\big) m^N\Big(\frac{t_N+1}{(2N)^{2}},\cdot\Big)\;.
\end{align*}
This process is continuous in time so that one can hope that it is (uniformly in $N$) H\"older continuous in space-time. If such a property holds true, then we get tightness of $m^N$ if we are able to control the distance between $m^N$ and $\bar{m}^N$: this is the content of the next lemma.

\begin{lemma}\label{Lemmambarm}
For all $T>0$, we have
\begin{equation*}
\lim_{N\rightarrow\infty}\E^N\Big[\sup_{t\in[0,T]}\|m^N(t,\cdot)-\bar{m}^N(t,\cdot) \|_\infty\Big] = 0 \;.
\end{equation*}
\end{lemma}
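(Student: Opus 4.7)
The plan is to mirror the argument of Lemma \ref{Lemma:uubar}, adjusting only for the scaling of $m^N$. For $\alpha \in [1,\infty)$, introduce the spacetime boxes
\begin{equ}
B_{i,k} := \Big[\frac{i}{(2N)^{2}},\frac{i+1}{(2N)^{2}}\Big] \times \Big[\frac{k}{2N},\frac{k+1}{2N}\Big]\;,
\end{equ}
for $i\in\{0,\ldots,\lfloor T(2N)^2 \rfloor\}$ and $k\in\{0,\ldots,2N-1\}$; these cover $[0,T]\times[0,1]$ with $\cO(N^3)$ boxes. Writing $\theta = t(2N)^2-i\in[0,1]$ and $\phi = x\cdot 2N - k \in [0,1]$ and using that $\bar m^N$ is linear in $t$ between $m^N(i/(2N)^2,\cdot)$ and $m^N((i+1)/(2N)^2,\cdot)$, while both processes are piecewise affine in $x$ with nodes at the $k/(2N)$, a direct decomposition yields
\begin{equ}
|m^N(t,x) - \bar m^N(t,x)| \leq \sum_{j,\ell\in\{0,1\}}\Big|m^N\Big(t,\frac{k+\ell}{2N}\Big) - m^N\Big(\frac{i+j}{(2N)^2},\frac{k+\ell}{2N}\Big)\Big|\;,
\end{equ}
for all $(t,x)\in B_{i,k}$.

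Taking the supremum over $(t,x)\in B_{i,k}$, each of the four resulting terms is the oscillation of $m^N(\cdot,(k+\ell)/(2N))$ on a rescaled time interval of length $(2N)^{-2}$, that is, a physical time interval of length $1$. Since each corner flip changes $S(\cdot, j)$ by $\pm 2$, and the total rate of clock rings at the two edges incident to site $k+\ell$ is bounded by $2$ uniformly in the configuration (one has $p_N \leq 1$), this oscillation is stochastically dominated by $\frac{2}{2N}\,\Pi_{i,k,j,\ell}$, where $\Pi_{i,k,j,\ell}$ is a Poisson r.v.~of mean at most $2$. Hence, for every $p\geq 1$,
\begin{equ}
\E^N\Big[\sup_{(t,x)\in B_{i,k}} |m^N(t,x) - \bar m^N(t,x)|^p\Big] \leq \frac{C_p}{(2N)^p}\;,
\end{equ}
uniformly in $i,k,N$.

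Bounding a supremum by a sum of $p$-th powers and summing over the $\cO(N^3)$ boxes then gives
\begin{equ}
\E^N\Big[\sup_{t\leq T,\, x\in[0,1]} |m^N(t,x) - \bar m^N(t,x)|^p\Big] \leq C_p\, N^{3-p}\;,
\end{equ}
which tends to $0$ as $N\to\infty$ as soon as $p>3$. By Jensen's inequality, this forces $\E^N[\sup_{t\leq T}\|m^N(t,\cdot)-\bar m^N(t,\cdot)\|_\infty]\to 0$. There is no real obstacle: the only point that requires care is the stochastic domination by Poisson variables, which is standard from the graphical construction of the dynamics with independent Poisson clocks on edges.
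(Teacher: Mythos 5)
Your proof is correct and follows exactly the route the paper intends: the paper omits the proof of Lemma \ref{Lemmambarm} by pointing to Lemma \ref{Lemma:uubar}, and your argument is precisely that proof transposed to the scaling of $m^N$ (height normalised by $2N$ instead of $(2N)^{(\alpha\wedge 1)/2}$, hence a per-box bound of order $(2N)^{-p}$, summed over $\cO(N^3)$ boxes, with $p>3$ and Jensen).
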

The proof of this lemma is almost the same as the proof of Lemma \ref{Lemma:uubar}, so we omit it. This result ensures that it is actually sufficient to show (\ref{Eq:TightnessCriterion}) with $m^N$ replaced by $\bar{m}^N$ in order to get tightness.\\

The following proposition ensures that $\bar{m}^N$ satisfies (\ref{Eq:TightnessCriterion}).

\begin{proposition}\label{Prop:Tightness}
For any $T>0$, there exists $\delta> 0$ such that
\begin{equation*}
\sup_{N\geq 1} \bbE^N\bigg[ \sup_{0\leq s < t \leq T} \frac{\|\bar{m}^N(t,\cdot)-\bar{m}^N(s,\cdot)\|_\infty}{|t-s|^{\delta}}\bigg] < \infty\;.
\end{equation*}
\end{proposition}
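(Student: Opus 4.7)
The strategy I propose is a Kolmogorov-type continuity argument applied to $\bar{m}^N$ viewed as a process valued in $\cC([0,1])$. Concretely, the goal is to establish a moment bound of the form
\[\E^N\big[\|\bar{m}^N(t) - \bar{m}^N(s)\|_\infty^p\big] \leq C(T)\, |t-s|^{\gamma}\]
uniformly over $N \geq 1$ and $0 \leq s \leq t \leq T$, for some $p \geq 2$ and $\gamma > 1$, from which the asserted H\"older modulus of continuity in time follows by Kolmogorov's criterion.

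Since the walk $S(r,\cdot)$ has steps $\pm 1$, each profile $m^N(r,\cdot)$ is $1$-Lipschitz on $[0,1]$, and so is $\bar{m}^N(r,\cdot)$ as a convex combination. This reduces the sup-norm of the increment to the maximum of the pointwise increments at the $2N+1$ lattice sites $k/(2N)$, up to an error of order $1/N$; a union bound then brings the problem down to pointwise moment estimates on $\bar{m}^N(t,k/(2N)) - \bar{m}^N(s,k/(2N))$.

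For each fixed $k$, Dynkin's formula yields
\[m^N(t,k) - m^N(s,k) = \int_s^t \cL^N m^N(r,k)\, dr + M^N_{s,t}(k),\]
with an explicit computation giving
\[\cL^N m^N(r,k) = \tfrac{(2N)^2}{2}\,\Delta^2 m^N(r,k) \pm (2N)(2p_N-1)\,\tun_{\{\Delta^2 S(r,k)\neq 0\}}.\]
The asymmetric term is of order $N^{1-\alpha}$, hence $O(1)$ for $\alpha \geq 1$, and contributes at most $O(t-s)$ after integration. For the martingale, Burkh\"older-Davis-Gundy applied to the c\`adl\`ag $M^N_{s,\cdot}(k)$ gives $\E^N[|M^N_{s,t}(k)|^p] \lesssim |t-s|^{p/2}$, using that its predictable quadratic variation has bounded density and that its jumps are of size $O(1/N)$.

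The main obstacle is the Laplacian drift $(2N)^2 \Delta^2 m^N(r,k)/2$, whose pointwise size is of order $N$ and therefore yields the too-weak estimate $O(N(t-s))$ after naive integration. I plan to address this via two complementary estimates. First, the total number of jumps affecting $m^N(\cdot,k)$ over $[s,t]$ is stochastically dominated by a Poisson variable with mean $O(N^2(t-s))$, which gives $\E^N[|m^N(t,k) - m^N(s,k)|^p] \lesssim \min(1, (N(t-s))^p)$; combined with the global trivial bound $|\bar{m}^N|\leq 1$ and the martingale bound, an optimal interpolation in $|t-s|$ produces the desired power for $p$ large. Second, the sampling of $\bar{m}^N$ at spacing $(2N)^{-2}$ is what makes matters work at the shortest scale: for $|t-s| < (2N)^{-2}$ the linear interpolation reduces the increment to a bounded number of Poisson jumps (as in the argument of Lemma \ref{Lemma:uubar}), while for $|t-s| \geq (2N)^{-2}$ the martingale/drift estimates dominate. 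Matching these two regimes and feeding the resulting moment bound into Kolmogorov's criterion will give the stated modulus of continuity for some $\delta>0$.
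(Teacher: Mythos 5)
Your overall architecture (pointwise moment bounds in time, spatial Lipschitz regularity, Kolmogorov's criterion) matches the paper's, but there is a genuine gap precisely at the step you flag as ``the main obstacle'': the contribution of the Laplacian drift $\frac{(2N)^2}{2}\int_s^t \Delta m^N(r,k)\,dr$. The two tools you propose for it cannot close the argument in the regime $N(t-s)\gtrsim 1$. The Poisson jump-count bound gives $\E^N\big[|m^N(t,k)-m^N(s,k)|^p\big]\lesssim \min\big(1,(N(t-s))^p\big)$, and for, say, $t-s=N^{-1}$ this is of order $1$, whereas Kolmogorov's criterion requires a bound of order $|t-s|^{\gamma}=N^{-\gamma}$ with $\gamma>1$; the trivial bound $|\bar m^N|\leq 1$ gives a constant with no decay in $|t-s|$; and the BDG bound controls only the martingale part, not the drift. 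No interpolation among these three quantities produces decay in $|t-s|$ on the range $|t-s|\in[N^{-1},1]$. The difficulty is real, not an artefact of the estimates: the corner indicator $\Delta S(r(2N)^2,k)/2\in\{-1,0,1\}$ flips sign at rate $O((2N)^2)$, and its time integral is small only because of cancellations between up-flips and down-flips, which a one-sided count of jumps cannot detect.

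The paper avoids integrating the Laplacian drift altogether: it uses the mild (Duhamel) representation (\ref{Eq:MildHydroHeat}) with the discrete Dirichlet heat kernel $p^N$, so that $m^N(t',\ell)-m^N(t,\ell)$ splits into (i) the heat flow of the $1$-Lipschitz initial condition, bounded by $\sqrt{t'-t}$ via Lemma \ref{Lemma:BoundHeatKernelZ}, (ii) the asymmetry term convolved with $p^N$, bounded by $(t'-t)^\delta$ via the kernel estimates of Appendix \ref{Appendix:Kernel}, and (iii) a stochastic convolution handled by (\ref{Eq:BDG3}) together with $\sum_k p^N_{t-s}(k,\ell)^2\lesssim \big(\sqrt{t-s}\,(2N)\big)^{-1}$; this is Lemma \ref{Lemma:IncrTime}. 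You would need this representation, or some equivalent mechanism exploiting the cancellation in the time-integrated curvature, to repair the argument. Two smaller remarks: the union bound over the $2N+1$ lattice sites costs a factor $N$ that threatens the uniformity in $N$ of the Kolmogorov constant unless it is traded against a power of $|t-s|$; the paper instead combines the time-increment bound with the deterministic spatial $1$-Lipschitz bound and applies Kolmogorov's criterion jointly in $(t,x)$. Your treatment of the short-time regime $|t-s|<(2N)^{-2}$ via the linear interpolation $\bar m^N$ and Poisson jump counts is, on the other hand, exactly the paper's.
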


\noindent Before we proceed to the proof of this proposition, we need to collect a few preliminary results. The stochastic differential equations solved by the discrete process $m^N$ are given by
\begin{align*}
dm^N(t,\ell) &= \frac{(2N)^{2}}{2} \Delta m^N(t,\ell)dt\\
&+ (2N)(2p_N-1) \tun_{\{\Delta S(t(2N)^{2},\ell)\ne 0\}}dt + dM^N(t,\ell)\;,
\end{align*}
where $M^N$ is a martingale with bracket given by
\begin{equation*}
d\langle M^N(\cdot,\ell)\rangle_t = 4\Big(p_N\tun_{\{\Delta S(t(2N)^2,\ell) > 0\}} + (1-p_N)\tun_{\{\Delta S(t(2N)^2,\ell) < 0\}}\Big)dt\;.
\end{equation*}
If we let $p^N_t(k,\ell)$ be the fundamental solution of the discrete heat equation:
\begin{align*}
	\begin{cases}
	\partial_t p^N_t(k,\ell) = \frac{(2N)^2}{2} \Delta p^N_t(k,\ell)\;,\\
	p^N_0(k,\ell) = \delta_k(\ell)\;,\\
	p^N_t(k,0) = p^N_t(k,2N) = 0\;,\end{cases}
\end{align*}
then it is simple to check that we have
\begin{equation}\label{Eq:MildHydroHeat}\begin{split}
m^N(t,\ell)&=\sum_k p^N_t(k,\ell)m^N(0,k)+ N^t_t(\ell)\\
&+ (2N)(2p_N-1) \int_0^t \sum_k p^N_{t-s}(k,\ell)\tun_{\{\Delta S(s(2N)^2,k)\ne 0\}} ds \;,
\end{split}\end{equation}
where $N^t_s(\ell)$ is the martingale defined by
\begin{equation*}
N^t_s(\ell) := \int_0^s \sum_k p^N_{t-r}(k,\ell) dM^N(r,k)\;,\quad s\in[0,t]\;.
\end{equation*}
\begin{lemma}\label{Lemma:IncrTime}
For all $\delta \in (0, \frac{1}{2})$, all $T>0$ and all $p\geq 1$, we have
\begin{equation}
	\E^N\Big[|m^N(t',x)-m^N(t,x)|^p \Big]^{\frac{1}{p}} \lesssim |t'-t|^\delta + \frac{1}{\sqrt{2N}} \;,
\end{equation}
uniformly over all $t',t\in [0,T]$, all $x \in [0,1]$ and all $N\geq 1$.
\end{lemma}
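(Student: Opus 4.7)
The starting point is the mild formulation (\ref{Eq:MildHydroHeat}), which expresses $m^N$ as the sum of a heat-semigroup term acting on the initial data, a drift term, and a stochastic integral $N^t_t(\ell)$ against the martingale $M^N$. Assuming without loss of generality that $t\leq t'$, I would decompose $m^N(t',\ell)-m^N(t,\ell) = A_N + B_N + C_N$ into one increment per piece, and bound each separately using the discrete heat-kernel estimates from the Appendix together with the Burkholder-Davis-Gundy inequality.

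For $A_N=(P^N_{t'}-P^N_t)m^N(0,\cdot)(\ell)$, I would rewrite $A_N = P^N_t(P^N_{t'-t}-I)m^N(0,\cdot)(\ell)$ and exploit that $m^N(0,\cdot)$ is $1$-Lipschitz (with an $O(1/N)$ correction absorbing the discreteness), since $S$ changes by $\pm 1$ at each step. Because $p^N_u(\cdot,k)$ has typical displacement $\sqrt u$ in the rescaled spatial variable, this yields $|A_N|\lesssim \sqrt{t'-t}+1/N$. For the drift $B_N$, the prefactor $(2N)(2p_N-1)$ is $O(N^{1-\alpha})$, hence bounded for $\alpha\geq 1$; splitting
\[
B_N = (2N)(2p_N-1)\Big[\int_t^{t'}\!\sum_k p^N_{t'-s}(k,\ell)\tun_{\{\cdot\}}ds + \int_0^t\!\sum_k (p^N_{t'-s}-p^N_{t-s})(k,\ell)\tun_{\{\cdot\}}ds\Big],
\]
the first integral is bounded by $|t'-t|$, and for the second I would use the $L^1$-estimate $\sum_k |p^N_{u'}(k,\ell)-p^N_u(k,\ell)|\lesssim \sqrt{(u'-u)/u}$, which integrates to $O(\sqrt{t'-t})$ over $s\in[0,t]$.

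The martingale increment $C_N = N^{t'}_{t'}(\ell)-N^t_t(\ell)$ splits analogously as the sum of two martingales, to each of which I would apply the $L^p$ Burkholder-Davis-Gundy inequality. The jumps of $M^N$ are $O(1/N)$, so the jump contribution is negligible compared to the predictable quadratic variation. The latter, for the first martingale, is $\lesssim \int_0^{t'-t}\sum_k(p^N_u(k,\ell))^2\,du\lesssim \int_0^{t'-t}(N\sqrt{u})^{-1}du \lesssim \sqrt{t'-t}/N$. For the second martingale, rather than use a crude pointwise bound on $p^N_{u'}-p^N_u$, I would exploit the Chapman-Kolmogorov identity
\[
\sum_k (p^N_{u'}(k,\ell)-p^N_u(k,\ell))^2 = p^N_{2u'}(\ell,\ell)-2p^N_{u'+u}(\ell,\ell)+p^N_{2u}(\ell,\ell),
\]
combined with the scaling $p^N_v(\ell,\ell)\sim (N\sqrt{v})^{-1}$, to obtain an integrable bound again yielding total quadratic variation $\lesssim \sqrt{t'-t}/N$. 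Hence $\|C_N\|_{L^p}\lesssim (t'-t)^{1/4}/\sqrt N$, which is $\lesssim (t'-t)^\delta$ for any $\delta<1/4$.

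The hard part is this second martingale piece: the naive pointwise bound on $p^N_{u'}-p^N_u$ produces a non-integrable singularity at $s=t$, and it is the Chapman-Kolmogorov identity above that makes the quadratic variation integrable. This is also what restricts the final exponent to $1/4$ rather than the $1/2$ obtained for the deterministic pieces $A_N$ and $B_N$.
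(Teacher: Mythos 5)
Your proposal is correct and follows essentially the same route as the paper: the mild formulation (\ref{Eq:MildHydroHeat}), the three-term decomposition, the $1$-Lipschitz initial data combined with the first moment of $\bar p^N$, and BDG applied to the two martingales $A^{t,t'}$ and $B^{t,t'}$ with jump size $O(1/N)$. The one genuine variation is your treatment of the bracket of the second martingale: you use the Chapman--Kolmogorov identity $\sum_k(p^N_{u'}(k,\ell)-p^N_u(k,\ell))^2=p^N_{2u'}(\ell,\ell)-2p^N_{u+u'}(\ell,\ell)+p^N_{2u}(\ell,\ell)$, which cleanly yields the endpoint bound $\sqrt{t'-t}/N$, whereas the paper uses the pointwise interpolated estimate $|p^N_{t'-s}(k,\ell)-p^N_{t-s}(k,\ell)|\lesssim (tc_N)^{-1/2}|(t'-t)/(t-s)|^\beta$ with $\beta<1/4$, giving $(t'-t)^{2\delta}/2N$. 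Note, though, that your remark that the pointwise bound ``produces a non-integrable singularity at $s=t$'' is only true if one insists on $\beta=1/2$; taking $\beta<1/4$ (as Lemma \ref{Lemma:BoundHeatKernel} permits for any $\beta\in[0,1]$) the singularity $(t-s)^{-1/2-2\beta}$ is integrable, so the Chapman--Kolmogorov step is a nice sharpening but not logically necessary. Both arguments land on the same restriction $\delta<1/4$, coming from the square root of the martingale bracket.
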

Observe that the term $1/\sqrt{2N}$ reflects the discontinuous nature of the process $m^N$.
\begin{proof}
Let $t' > t$. Given the expression (\ref{Eq:MildHydroHeat}), the increment $m^N(t',\ell)-m^N(t,\ell)$ can be written as the sum of three terms: the contribution of the initial condition, of the asymmetry and of the martingale terms. We bound separately the $p$-th moments of these three terms. First, we let $\bar{p}^N$ be the fundamental solution of the discrete heat equation on the whole line $\Z$:
\begin{align*}
	\begin{cases}\partial_t \bar{p}^N_t(\ell) = \frac{(2N)^2}{2} \Delta \bar{p}^N_t(\ell)\;,\\
	\bar{p}^N_0(\ell) = \delta_0(\ell)\;,\end{cases}
\end{align*}
Contrary to $p^N$, $\bar{p}^N$ is translation invariant. Let us also extend $m^N$ into a function on the whole line $\Z$: we simply consider the $4N$-periodic, odd function that coincides with $m^N$ on $[0,2N]$. By symmetry for every $\ell \in \{0,\ldots,2N\}$ and all $t\ge 0$ we have
$$ \sum_{k\in\Z} \bar{p}^N_{t}(\ell-k) m^N(0,k) = \sum_{k=1}^{2N-1} p^N_t(k,\ell) m^N(0,k)\;.$$
Therefore,
\begin{align*}
{}&\sum_{k=1}^{2N-1} \big( p^N_{t'}(k,\ell) - p^N_t(k,\ell) \big) m^N(0,k)\\
&= \sum_{k\in\Z} \big( \bar{p}^N_{t'}(\ell-k) - \bar{p}^N_t(\ell-k) \big) m^N(0,k)\\
&= \sum_{k\in\Z} \bar{p}^N_t(k) \sum_{j\in\Z} \bar{p}^N_{t'-t}(j) \big(m^N(0,\ell-k-j) - m^N(0,\ell-k)\big)\;.
\end{align*}
At this point we use the $1$-Lipschitz regularity of the initial condition and a simple bound on the heat kernel (see Lemma \ref{Lemma:BoundHeatKernelZ} for a proof) to get
$$ |\sum_{j\in\Z} \bar{p}^N_{t'-t}(j) \big(m^N(0,\ell-k-j) - m^N(0,\ell-k)\big)| \le \sum_{j\in\Z} \bar{p}^N_{t'-t}(j)\frac{|j|}{2N} \lesssim \sqrt{t'-t}\;.$$
Consequently
\begin{equation*}
\Big|\sum_{k=1}^{2N-1} \big( p^N_{t'}(k,\ell) - p^N_t(k,\ell) \big) m^N(0,k)\Big| \lesssim \sqrt{t'-t}\;,
\end{equation*}
uniformly over all $\ell\in\{1,\ldots,2N-1\}$, all $t\leq t' \in [0,T]$ and all $N\geq 1$.\\
We turn to the contribution of the asymmetry. Using the estimate
$$ \big| p^N_{t'-s}(k,\ell) - p^N_{t-s}(k,\ell) \big| \lesssim \frac{1}{(2N) \sqrt{t-s}}\Big(\frac{t'-t}{t-s}\Big)^{\delta}\;,$$
whose proof is presented in Lemma \ref{Lemma:BoundHeatKernel}, we get the following almost sure bound
\begin{align*}
{}&\Big| \int_0^{t'} \sum_k p^N_{t'-s}(k,\ell) \tun_{\{\Delta S(s(2N)^2,k)\ne 0\}} ds - \int_0^{t} \sum_k p^N_{t-s}(k,\ell) \tun_{\{\Delta S(s(2N)^2,k)\ne 0\}} ds\Big|\\
&\leq \int_0^t \sum_k \big| p^N_{t'-s}(k,\ell) - p^N_{t-s}(k,\ell) \big| ds + \int_t^{t'} \sum_k p^N_{t'-s}(k,\ell) ds\\
&\lesssim (t'-t)^\delta\;,
\end{align*}
uniformly over all $\ell \in \{1,\ldots,2N-1\}$, all $t\le t'\in[0,T]$ and all $N\geq 1$. Finally, we treat the martingale term: since this term does not have contribution in the hydrodynamic limit, we can use the rough inequality $|N^t_t(\ell)-N^{t'}_{t'}(\ell)| \le |N^t_t(\ell)|+|N^{t'}_{t'}(\ell)|$ and we bound separately the two corresponding terms. By symmetry, it suffices to bound $|N^t_t(\ell)|$. Since $p^N_{t-r}(k,\ell) \lesssim N^{-1} (t-r)^{-1/2}$, see Lemma \ref{Lemma:BoundHeatKernel}, we have the almost sure bound
\begin{equation*}
\big\langle N^{t}_\cdot(\ell) \big\rangle_{t} \leq 4 \int_0^{t}\sum_k p^N_{t-r}(k,\ell)^2 dr\lesssim \frac{1}{2N}\;,
\end{equation*}
uniformly over all $t\in[0,T]$. Since the jumps of the martingale $N^t_\cdot(\ell)$ are all of size at most $1/N$, we apply the Burkholder-Davis-Gundy inequality (\ref{Eq:BDG3}) and get the bound
$$ \E^N\Big[|N^t_t(\ell)|^p \Big]^{\frac{1}{p}} \lesssim \frac1{\sqrt{2N}}\;,$$
as required. This concludes the proof.
\end{proof}

\begin{proof}[Proof of Proposition \ref{Prop:Tightness}]
Fix $T>0$. Recall the definition of $\bar{m}^N$. Arguing differently according to the relative values of $|t'-t|$ and $(2N)^{-2}$ (similarly as what we did in the proof of Lemma \ref{Lemma:HolderFluct}), one deduces from Lemma \ref{Lemma:IncrTime} that there exists $\delta >0$ such that for any $p\geq 1$
\begin{equation*}
	\E^N\Big[|\bar{m}^N(t',x)-\bar{m}^N(t,x)|^p \Big]^{\frac{1}{p}} \lesssim |t'-t|^\delta \;,
\end{equation*}
uniformly over all $t',t\in [0,T]$, all $x \in [0,1]$ and all $N\geq 1$. Using the $1$-Lipschitz regularity in space of $m^N$ and the definition of $\bar{m}^N$, we also get
\begin{align*}
	{}&\E^N\Big[\big|\bar{m}^N(t,x)-\bar{m}^N(t,y)\big|^p\Big]^{\frac{1}{p}}\\
	&\leq \sum_{j=0}^1 \E^N\Big[\Big|m^N\Big(\frac{t_N+j}{(2N)^{2}},x\Big)-m^N\Big(\frac{t_N+j}{(2N)^{2}},y\Big)\Big|^p\Big]^{\frac{1}{p}}\\
	&\lesssim |x-y|\;,
\end{align*}
uniformly over all $x,y\in [0,1]$, all $t \in [0,T]$ and all $N\geq 1$. Combining these two bounds, we obtain for all $p\geq 1$,
\begin{equation*}
	\E^N\big[|\bar{m}^N(t',x)-\bar{m}^N(t,y)|^p\big]^{\frac{1}{p}} \lesssim \big(|t'-t|+|x-y|\big)^\delta\;,
\end{equation*}
uniformly over the same set of parameters. Kolmogorov's Continuity Theorem then ensures that $\bar{m}^N$ admits a modification satisfying the bound stated in Proposition \ref{Prop:Tightness} uniformly in $N\geq 1$ for some $\delta > 0$. Since $\bar{m}^N$ is already continuous, it coincides with its modification $\P^N$-a.s., thus concluding the proof.
\end{proof}

We now proceed to the proof of Theorem \ref{Th:Hydro}: we argue differently in the cases $\alpha\in(1,\infty)$ and $\alpha=1$. In both cases, we set
\begin{equation}\label{Eq:InnerProduct}
\langle f,g \rangle_N = \frac1{2N}\sum_{k=1}^{2N}f\Big(\frac{k}{2N}\Big)g\Big(\frac{k}{2N}\Big)\;.
\end{equation}

\begin{proof}[Proof of Theorem \ref{Th:Hydro}, $\alpha \in (1,\infty)$]
We already know that the sequence $m^N, N\geq 1$ is tight. Let $m$ be the limit of a convergent subsequence. To conclude the proof, we only need to show that for any $\varphi\in\cC^2([0,1])$ such that $\varphi(0)=\varphi(1)=0$, we have
\begin{equation}\label{Eq:m}
\langle m(t),\varphi \rangle = \langle m(0),\varphi \rangle + \frac{1}{2} \int_0^t \langle m(s),\varphi'' \rangle ds\;.
\end{equation}
This characterises the unique weak solution of the PDE (\ref{PDEHeat}).\\
The definition of our dynamics implies that for all $\varphi\in\cC^2([0,1])$ such that $\varphi(0)=\varphi(1)=0$, we have
\begin{equation}\label{Eq:mN}\begin{split}
\langle m^N(t),\varphi\rangle_N =& \langle m^N(0),\varphi \rangle_N + \frac{1}{2} \int_0^t \big\langle m^N(s), (2N)^2 \Delta \varphi \big\rangle_N ds\\
&+ \cO(N^{1-\alpha}) + M^N_t(\varphi)\;,
\end{split}\end{equation}
where $M^N(\varphi)$ is a martingale with bracket
\begin{align*}
\langle M^N(\varphi) \rangle_t &= \int_0^t\frac{4}{2N} \big\langle \varphi^2, p_N \tun_{\{\Delta S(s(2N)^2,\cdot)> 0\}} + (1-p_N) \tun_{\{\Delta S(s(2N)^2,\cdot)< 0\}} \big\rangle_N ds\\
&\leq \frac{4t\|\varphi\|_\infty^2}{2N}\;.
\end{align*}
The jumps of $M^N(\varphi)$ are almost surely bounded by a term of order $N^{-1}$, uniformly over all $N\geq 1$. Then, by the Burkholder-Davis-Gundy inequality (\ref{Eq:BDG3}) we get
\begin{equation*}
	\E^N\Big[\sup_{t\leq T} \big|M^N_t(\varphi)\big|^2\Big]^{\frac{1}{2}} \lesssim \frac1{\sqrt{N}} + \frac1{N}\;,
\end{equation*}
uniformly over all $N\geq 1$, so that $M^N(\varphi)$ vanishes in probability as $N\rightarrow\infty$. Then classical arguments ensure that, along a convergent subsequence of $m^N$, we can pass to the limit on (\ref{Eq:mN}) and get (\ref{Eq:m}), thus concluding the proof.
\end{proof}

\begin{proof}[Proof of Theorem \ref{Th:Hydro}, $\alpha = 1$]
In that case, we characterise the limit via the Hopf-Cole transform $\xi(t,x) = \exp(-2\sigma m(t,x)+2\sigma^2 t)$ that maps, formally, the PDE (\ref{PDEHC}) into
\begin{equation}\label{PDEHC2}
\begin{cases}
\partial_t \xi = \frac{1}{2} \partial^2_x \xi\;,\quad x\in [0,1]\;,\quad t>0\;,\\
\xi(t,0)=\xi(t,1)=e^{2\sigma^2 t}\;,\quad \xi(0,\cdot) = e^{-2\sigma m(0,\cdot)}\;.
\end{cases}
\end{equation}
This equation admits a unique weak solution in the space of continuous space-time functions, and it is well-known that the unique weak solution of (\ref{PDEHC}) coincides with the latter solution upon reverse Hopf-Cole transform.\\
A famous result due to G\"artner~\cite{Gartner88} shows that a similar transform, performed at the level of the exclusion process, linearises the drift of the stochastic differential equations solved by our discrete process. Namely, if one sets
\begin{equation*}
\gamma_N = \frac{2\sigma}{2N}\;,\quad c_N = \frac{(2N)^2}{e^{\gamma_N}+e^{-\gamma_N}} \;,\quad \lambda_N = c_N (e^{\gamma_N} - 2 + e^{-\gamma_N})\;,
\end{equation*}
and
\begin{equation*}
\xi^N(t,x) := e^{-\gamma_N S(t(2N)^2,2N x) + \lambda_N t} \;,\quad x\in [0,1]\;,\quad t\geq 0\;,
\end{equation*}
then, using the abusive notation $\xi^N(t,k)$ for $\xi^N(t,x)$ when $x=k/2N$, we have
\begin{equation}
\begin{cases}
d\xi^N(t,k) = c_N \Delta \xi^N(t,k)dt + d\tilde{M}^N(t,k)\;,\\
\xi^N(t,0)=\xi^N(t,1) = e^{\lambda_N t}\;,\end{cases}
\end{equation}
where $\Delta$ is the discrete Laplacian and $\tilde{M}^N(t,k)$ is a martingale with quadratic variation given by
\begin{equation}\label{Eq:MgaleCH}\begin{split}
\langle \tilde{M}^N(\cdot,k) \rangle_t = (2N)^2\int_0^t \xi^N(s,k)^2\Big(&\big(e^{-2\gamma_N} - 1 \big)^2 \tun_{\{\Delta S(s(2N)^2,k) > 0\}} p_N\\
+& \big(e^{2\gamma_N} - 1 \big)^2 \tun_{\{\Delta S(s(2N)^2,k) < 0\}}(1-p_N)\Big) ds\;.
\end{split}\end{equation}
The tightness of $m^N$ implies the tightness of $\xi^N$. It only remains to identify the limit. To that end, we observe that for all $\varphi\in\cC^2([0,1])$ such that $\varphi(0)=\varphi(1)=0$, we have
\begin{align*}
\langle \xi^N(t),\varphi\rangle_N &= \langle \xi^N(0),\varphi\rangle_N + R^N_t(\varphi)\\
&+c_N \int_0^t \Big(\big\langle \xi^N(s), \Delta \varphi \big\rangle_N + \frac1{2N}e^{\lambda_Ns}\Big(\varphi\Big(\frac1{2N}\Big)+\varphi\Big(\frac{2N-1}{2N}\Big)\Big)\Big) ds\;,
\end{align*}
where
\begin{equation*}
R^N_t(\varphi) = \int_0^t \frac{1}{2N}\sum_{k=1}^{2N-1} \varphi\Big(\frac{k}{2N}\Big) d\tilde{M}^N(s,k)\;.
\end{equation*}
It is elementary to check that there exists $C>0$ such that $|\xi^N(t,k)| \leq C$ for all $t$ in a compact set of $\R_+$, all $k\in\{1,\ldots,2N-1\}$ and all $N\geq 1$. Consequently there exists $C'>0$ such that $\langle \tilde{M}^N(\cdot,k) \rangle_t \leq C' t$ uniformly over the same set of parameters. Moreover, the jumps of this martingale are uniformly bounded by some constant on the same set of parameters. Then, a simple calculation based on the Burkholder-Davis-Gundy inequality (\ref{Eq:BDG3}) shows that $R^N(\varphi)$ converges to $0$ uniformly on compact sets, as $N\rightarrow\infty$. Hence, any limit $\xi$ of a convergent subsequence of $\xi^N$ satisfies
\begin{align*}
\langle \xi(t),\varphi\rangle &= \langle \xi(0),\varphi\rangle + \frac12 \int_0^t \Big(\big\langle \xi(s), \varphi'' \big\rangle + e^{2\sigma^2 s}(\varphi'(0)-\varphi'(1))\Big) ds\;,
\end{align*}
for all $\varphi$ as above, and therefore coincides with the unique weak solution of (\ref{PDEHC2}), thus concluding the proof.
\end{proof}

\subsection{Hydrodynamic limit: the hyperbolic case}\label{SectionHyperbo}
\textit{This subsection is taken from~\cite{LabbeKPZ}}.\\

For simplicity, we take $\sigma = 1$ in this whole subsection. The general case $\sigma > 0$ can be obtained \textit{mutatis mutandis}.\\

Let us present the outline of this technical section which is split into four parts. We start with a short subsection on the notion of solution that we consider: we show the equivalence between the solution with appropriate Dirichlet boundary conditions and the solution with zero-flux boundary conditions; so that in the rest of the proof we rely on the former notion of solution.\\
Second we establish tightness of the sequences of processes at stake: this is rather elementary since we are looking at the hydrodynamic scale.\\
Third, we assume that the convergence of the density of particles holds when we start from ``simple" initial conditions, that is, given by a product of Bernoulli r.v.~with densities that are piecewise constant. Then we show how to go from simple initial conditions to general initial conditions. To do so, the idea is to bound from above and below the given initial condition by some simple initial conditions, then to run three instances of the particle systems under a monotone coupling (that is, a coupling that preserves the order on the height functions) and finally to use the continuity in the initial condition of the solution map associated to the PDE.\\
Fourth, we prove the convergence of the density of particles when we start from ``simple" initial conditions. The identification of the limit in that case then follows the arguments presented in~\cite{Reza}: we show that the entropy inequalities are satisfied at the microscopic level, see Lemma \ref{Lemma:MicroIneq}, and then we show in Lemma \ref{Lemma:MacroIneq} that they can be transferred to the macroscopic level, using in particular the Replacement Lemma established in Theorem \ref{Th:Replacement} which relies on our assumption on the initial condition to be simple.

\subsubsection{Notion of solution}

Recall the notation introduced in Subsection \ref{Subsection:Replacement}. Let us present the notion of solution that we consider for the Burgers equation with zero-flux boundary condition. This material is taken from~\cite[Def.4]{BFK07}.

\begin{definition}\label{Def:EntropySolutionZeroFlux}
Let $\eta_0\in L^\infty(0,1)$. We say that $\eta\in L^\infty\big((0,\infty)\times (0,1)\big)$ is an entropy solution of (\ref{PDEBurgersDensity}) if:\begin{enumerate}
\item For all $c\in[0,1]$ and all $\varphi\in\cC^\infty_c\big((0,\infty)\times(0,1),\R_+\big)$, we have
\begin{align*}
{}\int_0^\infty \int_0^1 \Big(&\big|\eta(t,x)-c\big|\partial_t \varphi(t,x)- 2\sgn(\eta(t,x)-c)\\
&\times\big((\eta(t,x)(1-\eta(t,x)) - c(1-c)\big)\partial_x\varphi(t,x)\Big) dx\,dt \geq 0\;,
\end{align*}
\item We have $\esslim_{t\downarrow 0} \int_0^1 \big| \eta(t,x)-\eta_0(x)\big| dx = 0$,
\item We have $\eta(t,x)(1-\eta(t,x)) = 0$ for almost all $t>0$ and all $x\in\{0,1\}$.
\end{enumerate}
\end{definition}
Let us mention that the first condition is sufficient to ensure that $\eta$ has a trace at the boundaries so that the third condition is meaningful. B\"urger, Frid and Karlsen~\cite[Sect. 4 and 5]{BFK07} show existence and uniqueness of entropy solutions with zero-flux boundary condition.

Let us now introduce the Burgers equation with some appropriate Dirichlet boundary conditions:
\begin{equation}\label{PDEBurgersDirichlet}
	\begin{cases}\partial_t \eta = 2\partial_x\big(\eta(1-\eta)\big) \;,\\
	\eta(t,0) = 1\;,\quad \eta(t,1)=0\;,\\
	\eta(0,\cdot) = \eta_0(\cdot)\;.\end{cases}
\end{equation}
The precise definition of the entropy solution of (\ref{PDEBurgersDirichlet}) is the following.
\begin{definition}\label{Def:EntropySolutionDirichlet}
Let $\eta_0\in L^\infty(0,1)$. We say that $\eta\in L^\infty\big((0,\infty)\times (0,1)\big)$ is an entropy solution of (\ref{PDEBurgersDirichlet}) if it satisfies conditions 1. and 2. from Definition \ref{Def:EntropySolutionZeroFlux} together with the so-called BLN conditions
\begin{equation}\label{Eq:BLN}\begin{split}
\sgn(\eta(t,0)-1)\big(\eta(t,0)(1-\eta(t,0)) - c(1-c)\big) &\geq 0\;,\quad \forall c\in [\eta(t,0),1]\;,\\
\sgn(\eta(t,1)-0)\big(\eta(t,1)(1-\eta(t,1)) - c(1-c)\big) &\leq 0\;,\quad \forall c\in [0,\eta(t,1)]\;,
\end{split}\end{equation}
for almost all $t>0$.
\end{definition}

Here again, there is existence and uniqueness of entropy solutions of (\ref{PDEBurgersDirichlet}), see for instance~\cite[Sect. 2.7 and 2.8]{Ruzicka}.

\begin{proposition}\label{Prop:PDE}
The entropy solutions of (\ref{PDEBurgersDensity}) and (\ref{PDEBurgersDirichlet}) coincide.
\end{proposition}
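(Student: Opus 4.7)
The plan is to reduce the identification to an elementary analysis of the two boundary conditions. The interior entropy inequality (first bullet in Definition \ref{Def:EntropySolutionZeroFlux}) and the initial trace condition are common to the two problems, so only the conditions at $x\in\{0,1\}$ differ: the zero-flux condition $\gamma\eta(t,x)(1-\gamma\eta(t,x))=0$ on one side, the BLN conditions (\ref{Eq:BLN}) on the other. Once their equivalence is established, the statement follows from the uniqueness of entropy solutions for each formulation (B\"urger, Frid and Karlsen~\cite{BFK07} for the zero-flux problem, and Bardos, Le Roux and N\'ed\'elec~\cite{Bardos}/Otto~\cite{Otto} for the Dirichlet problem).

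The first step I would carry out is to guarantee the existence of strong boundary traces $\gamma\eta(t,0^+)$ and $\gamma\eta(t,1^-)$ for almost every $t>0$. This is needed to phrase both sets of boundary conditions pointwise in $t$, and is a standard consequence of the interior entropy inequality since the flux $u\mapsto -2u(1-u)$ is strictly convex, hence not affine on any subinterval, so Vasseur's strong trace theorem applies.

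The core is then a two-line algebraic check. At $x=0$ with boundary value $u_b=1$, set $a:=\gamma\eta(t,0)$. If $a<1$ then $\sgn(a-1)=-1$ and (\ref{Eq:BLN}) becomes $a(1-a)\leq c(1-c)$ for every $c\in[a,1]$; the choice $c=1$ forces $a(1-a)\leq 0$, hence $a\in\{0,1\}$, so $a=0$. Conversely, if $a\in\{0,1\}$ then (\ref{Eq:BLN}) is satisfied trivially: the case $a=1$ is void, and the case $a=0$ reduces to $c(1-c)\geq 0$ on $[0,1]$. The same computation at $x=1$ with $u_b=0$ shows that BLN there is equivalent to $\gamma\eta(t,1)\in\{0,1\}$. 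In both cases, the BLN condition is equivalent to the trace lying in $\{0,1\}$, i.e.\ to the zero-flux condition, and the common interior uniqueness then identifies the two entropy solutions. The main (mild) obstacle lies not in this boundary equivalence, which is purely algebraic and relies only on the fact that $u(1-u)$ vanishes exactly at $\{0,1\}$, but in the extraction of the strong trace property from the entropy inequality; this is the step requiring the most care.
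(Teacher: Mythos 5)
Your proposal is correct and follows essentially the same route as the paper: both rest on existence/uniqueness for the two formulations plus the elementary observation that the BLN conditions at $x=0$ (with $u_b=1$) and $x=1$ (with $u_b=0$) force the boundary traces into $\{0,1\}$, which is exactly the zero-flux condition. The paper only needs the one implication (Dirichlet solution satisfies the zero-flux conditions, then invoke uniqueness of the latter), so your converse check and the appeal to strong traces are harmless extra detail rather than a different argument.
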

\begin{proof}
Both solutions exist and are unique. Let us show that the solution of (\ref{PDEBurgersDirichlet}) satisfies the conditions of Definition \ref{Def:EntropySolutionZeroFlux}: actually, the two first conditions are automatically satisfied, so we focus on the third one. If $\eta(t,0)\in (0,1)$ then pick $c=1$ and observe that
$$ \eta(t,0)(1-\eta(t,0)) > c(1-c)\;,$$
so that
$$ \sgn(\eta(t,0)-1)\big(\eta(t,0)(1-\eta(t,0)) - c(1-c)\big) < 0\;.$$
This raises a contradiction so that $\eta(t,0)$ needs to be in $\{0,1\}$. A similar argument shows that $\eta(t,1)$ needs to be in $\{0,1\}$.
\end{proof}

As a consequence, we can choose the formulation (\ref{PDEBurgersDirichlet}) in the proof of our convergence result. Let us finally collect some properties of the solutions that we will use later on.

\begin{proposition}\label{Prop:EntropySolution}
Let $\eta_0\in L^\infty(0,1)$. A function $\eta\in L^\infty\big((0,\infty)\times (0,1)\big)$ is the entropy solution of (\ref{PDEBurgersDirichlet}) if and only if for all $c\in[0,1]$ and all $\varphi\in\cC^\infty_c\big([0,\infty)\times[0,1],\R_+\big)$ we have
\begin{equation}\label{Eq:EntropyCond}\begin{split}
{}&\int_0^\infty \int_0^1 \big((\eta(t,x)-c)^{\pm}\partial_t \varphi(t,x) + h^{\pm}(\eta(t,x),c) \partial_x\varphi(t,x)\big) dx\,dt\\
&+ \int_0^1 (\eta_0(x)-c)^\pm \varphi(0,x) dx+ 2 \int_0^\infty \Big((1-c)^\pm \varphi(t,0) + (0-c)^\pm \varphi(t,1)\Big) dt \geq 0\;,
\end{split}\end{equation}
where $(x)^\pm$ denotes the positive/negative part of $x\in\R$, $\sgn^\pm(x)=\pm\tun_{(0,\infty)}(\pm x)$ and $h^\pm(\eta,c):=-2 \sgn^\pm(\eta-c) \big(\eta(1-\eta) - c(1-c)\big)$.\\
Furthermore for any $t>0$, the map $\eta_0\mapsto \eta(t)$ is $1$-Lipschitz in $L^1(0,1)$.
\end{proposition}
\begin{proof}
The notion of solution defined by (\ref{Eq:EntropyCond}) is introduced in~\cite[Def. 1]{Vovelle} and it is shown therein that it coincides with another notion of solution, originally due to Otto, which is based on boundary entropy-entropy flux pairs. It is then shown in~\cite[Th 7.31]{Ruzicka} that the latter notion of solution is equivalent with the notion of solution of Definition \ref{Def:EntropySolutionDirichlet}. This completes the proof of the first part of the statement. The Lipschitz continuity in $L^1(0,1)$ is proved in~\cite[Th. 3]{BFK07} for the Burgers equation with zero-flux boundary conditions. By Proposition \ref{Prop:PDE}, it also holds for (\ref{PDEBurgersDirichlet}), thus concluding the proof.
\end{proof}

\subsubsection{Tightness}

We let $\cM$ be the space of measures on $[0,1]$ with total mass at most $1$, endowed with the topology of weak convergence. Recall the process $\rho^N$ defined in (\ref{Eq:DefrhoN}) and that $m^N$ is the rescaled height process. These two processes are related through:
$$ \rho^N\big(\big[0,\frac{k}{2N}\big]\big) = \frac12 \Big(m^N\big(\frac{k}{2N}\big) + \frac{k}{2N}\Big)\;,\quad k\in\{0,\ldots,2N\}\;.$$

\begin{proposition}\label{Prop:TightnessHyperbo}
Let $\iota_N$ be any probability measure on $\{0,1\}^{2N}$. The sequence of processes $(\rho^N_t, t\geq 0)$, starting from $\iota_N$, is tight in the space $\bbD([0,\infty),\cM)$. Furthermore, the associated sequence of processes $(m^N(t,x),t\geq 0,x\in[0,1])$ is tight in $\bbD([0,\infty),\cC([0,1]))$.
\end{proposition}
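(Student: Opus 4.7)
The plan is to establish the two tightness statements separately via Aldous--Jakubowski. For $(\rho^N)$ in $\bbD([0,\infty),\cM([0,1]))$, compact containment is immediate since every $\rho^N_t$ is a sub-probability measure on the compact interval $[0,1]$, so all trajectories stay in a fixed compact subset of $\cM([0,1])$ at all times. By Jakubowski's criterion it therefore suffices to show that, for every $\varphi\in\cC^\infty([0,1])$, the real-valued process $\langle\rho^N_\cdot,\varphi\rangle$ is tight in $\bbD([0,\infty),\R)$.

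Fix such a $\varphi$ and set $F_\varphi(\eta):=\frac{1}{2N}\sum_{k=1}^{2N}\varphi(k/(2N))\eta(k)$, so that $F_\varphi(\eta_t)=\langle\rho^N_t,\varphi\rangle$. Dynkin's formula gives
\begin{equation*}
\langle\rho^N_t,\varphi\rangle = \langle\rho^N_0,\varphi\rangle + \int_0^t \cL^N F_\varphi(\eta_s)\,ds + M^N_t(\varphi),
\end{equation*}
where $\cL^N$ denotes the generator sped up by $(2N)^{1+\alpha}$. Splitting $\cL^N$ into its symmetric and antisymmetric parts, a discrete integration by parts shows that the symmetric contribution is $(2N)^{\alpha-1}$ times a sum involving the discrete Laplacian of $\varphi$, hence bounded in absolute value by $C\|\varphi''\|_\infty(2N)^{\alpha-1}=o(1)$ since $\alpha<1$; the antisymmetric contribution carries the prefactor $(2p_N-1)(2N)^\alpha\to 2\sigma$ and reduces to a sum involving the discrete gradient of $\varphi$ applied to $\eta(k)(1-\eta(k+1))$, which is uniformly bounded by $C\|\varphi'\|_\infty$. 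A direct computation of the bracket yields $\langle M^N(\varphi)\rangle_t\leq C\|\varphi'\|_\infty^2\, t/(2N)^{2-\alpha}$. Aldous's criterion then reads: for any stopping time $\tau\leq T$ and $h\in(0,1)$,
\begin{equation*}
\P\bigl(|\langle\rho^N_{\tau+h},\varphi\rangle-\langle\rho^N_\tau,\varphi\rangle|>\delta\bigr)\leq \frac{Ch}{\delta}+\frac{Ch}{\delta^2(2N)^{2-\alpha}},
\end{equation*}
which vanishes first as $N\to\infty$ and then as $h\downarrow 0$, uniformly in the initial measure $\iota_N$.

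For tightness of $(m^N)$ in $\bbD([0,\infty),\cC([0,1]))$, the spatial bound $|m^N(t,x)-m^N(t,y)|\leq|x-y|$ follows from $S$ having $\pm 1$ increments and is uniform in $t$ and $N$, yielding spatial equicontinuity. For the temporal modulus, use the identity $m^N(t,k/(2N))=2\langle\rho^N_t,\tun_{[0,k/(2N)]}\rangle-k/(2N)$. Although $\tun_{[0,x]}$ is not smooth, for any $\varepsilon>0$ one picks a smooth $\varphi_\varepsilon$ with $\|\tun_{[0,x]}-\varphi_\varepsilon\|_{L^1}\leq\varepsilon$; the elementary bound $\rho^N_t(I)\leq|I|+C/N$, valid for any interval $I$ and uniform in $t,N$, then yields $|\langle\rho^N_t,\tun_{[0,x]}-\varphi_\varepsilon\rangle|\leq C\varepsilon+C/N$ uniformly in $t,N$. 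Combined with the tightness of $\langle\rho^N_\cdot,\varphi_\varepsilon\rangle$ established above, this gives tightness of $m^N(\cdot,x)$ in $\bbD([0,\infty),\R)$ for every $x\in[0,1]$; tightness of $m^N$ in $\bbD([0,\infty),\cC([0,1]))$ then follows by evaluating on a finite net in $[0,1]$ and applying the standard modulus-of-continuity criterion, the 1-Lipschitz bound filling the gaps between net points.

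The main technical point is the dimensional analysis in the generator computation: identifying that for $\alpha<1$ the symmetric (diffusive) part is of order $(2N)^{\alpha-1}=o(1)$, while the antisymmetric (advective) part enters at the order $(2p_N-1)(2N)^\alpha=O(1)$. Once these scalings are tracked, both the uniform drift bound and the $O(1/(2N)^{2-\alpha})$ bound on the martingale bracket follow from elementary summations, and they hold for an arbitrary initial measure $\iota_N$, as required. The additional subtlety in passing from $\rho^N$-tightness to $m^N$-tightness, namely that the relevant test functions are discontinuous indicators, is handled by the uniform density bound $\rho^N_t(I)\leq|I|+C/N$.
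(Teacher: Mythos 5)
Your proof is correct and follows essentially the same route as the paper: the same Dynkin decomposition with the symmetric part of the generator contributing $O(N^{\alpha-1})$, the asymmetric part $O(1)$, and the martingale bracket $O(t/(2N)^{2-\alpha})$, and for $m^N$ the same combination of the $1$-Lipschitz spatial bound with a smoothed indicator tested against $\rho^N$. The only (cosmetic) differences are that you invoke Aldous's criterion where the paper bounds $\E[\sup_{|t-s|\le h}|\langle\rho^N_t-\rho^N_s,\varphi\rangle|]$ directly, and you use a fixed mollifier $\varphi_\varepsilon$ with an $\varepsilon\downarrow 0$ limit where the paper uses an $N$-dependent mollifier at scale $(2N)^{\beta-1}$, $\beta\in(\alpha,1)$, to get a quantitative moment bound followed by a time interpolation.
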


\noindent Note that for a generic measure $\iota_N$ on $\{0,1\}^{2N}$, $m^N(t,1)$ is not necessarily equal to $0$.

\begin{proof}
Let $\varphi\in\cC^2([0,1])$. It suffices to show that $\langle \rho^N_0, \varphi\rangle$ is tight in $\R$, and that for all $T>0$
\begin{equation}\label{Eq:TightnessHydro}
\varlimsup_{h\downarrow 0} \varlimsup_{N\rightarrow\infty} \E^N_{\iota_N}\Big[ \sup_{s,t\leq T, |t-s|\leq h}| \langle \rho^N_t-\rho^N_s, \varphi\rangle |\Big]=0\;.
\end{equation}
The former is immediate since $|\langle \rho^N_0, \varphi\rangle| \leq \|\varphi\|_\infty$. Regarding the latter, we let $L^N$ be the generator of our sped-up process and we write
\begin{align*}
\langle \rho^N_t-\rho^N_s, \varphi\rangle = \frac1{2N} \int_s^t \sum_{k=1}^{2N} \varphi(k) L^N \eta_r^N(k) dr + M^N_{s,t}(\varphi)\;,
\end{align*}
where $M^N_{s,t}(\varphi)$ is a martingale. Its bracket can be bounded almost surely as follows
\begin{equation*}
\langle M^N_{s,\cdot}(\varphi) \rangle_t \leq \int_s^t \frac1{(2N)^2} \sum_{k=1}^{2N-1}\big(\nabla\varphi(k)\big)^2 (2N)^{1+\alpha} dr \lesssim \frac{t-s}{(2N)^{2-\alpha}}\;.
\end{equation*}
Since the jumps of this martingale are bounded by a term of order $\|\varphi'\|_\infty / (2N)^2$, and since
$$\E^N_{\iota_N}\Big[ \sup_{t\in [s,s+h]\cap [0,T]}|M^N_{s,t}(\varphi)|\Big]\leq \E^N_{\iota_N}\Big[ \sup_{t\in [s,s+h]\cap [0,T]}|M^N_{s,t}(\varphi)|^2\Big]^\frac12\;,$$
the BDG inequality (\ref{Eq:BDG3}) ensures that we have
\begin{equation}\label{Eq:BdSupMgale}
\varlimsup_{N\rightarrow\infty} \sup_{s\in [0,T]} \E^N_{\iota_N}\Big[ \sup_{t\in [s,s+h]\cap [0,T]}|M^N_{s,t}(\varphi)|\Big]= 0\;.
\end{equation}
This being given, we observe that $M^N_{s,t}(\varphi)=M^N_{r,t}(\varphi) - M^N_{r,s}(\varphi)$ where $r$ is taken to be the largest element in $\{0,h,2h,\ldots, \lfloor\frac{T}{h}\rfloor h\}$ which is below $s$. Therefore, we have
\begin{align*}
\E^N_{\iota_N}\Big[ \sup_{0\le s\le t\le T, |t-s|\leq h}|M^N_{s,t}(\varphi)|\Big] &\le 2\, \E^N_{\iota_N}\Big[\sum_{r=0,h,\ldots,\lfloor\frac{T}{h}\rfloor h} \sup_{t\in [r,r+2h]\cap [0,T]} |M^N_{r,t}(\varphi)|\Big]\\
&\le 2 \sum_{r=0,h,\ldots,\lfloor\frac{T}{h}\rfloor h} \E^N_{\iota_N}\Big[ \sup_{t\in [r,r+2h]\cap [0,T]} |M^N_{r,t}(\varphi)|\Big]\\
&\le \frac{C}{h} \sup_{r\in[0,T]} \E^N_{\iota_N}\Big[ \sup_{t\in [r,r+2h]\cap [0,T]} |M^N_{r,t}(\varphi)|\Big]\;,
\end{align*}
for some $C>0$. Combining this with \eqref{Eq:BdSupMgale} we deduce that
\begin{equation}\label{Eq:TightnessHydro1}
\varlimsup_{h\downarrow 0} \varlimsup_{N\rightarrow\infty} \E^N_{\iota_N}\Big[ \sup_{0\le s\le t\le T, |t-s|\leq h}|M^N_{s,t}(\varphi)|\Big]=0\;.
\end{equation}
Let us bound the term involving the generator. Decomposing the jump rates into the symmetric part (of intensity $1-p_N$) and the totally asymmetric part (of intensity $2p_N-1$), we find
\begin{align*}
\frac1{2N} \sum_{k=1}^{2N} \varphi(k) L^N \eta(k)&=
-(2N)^\alpha (1-p_N) \sum_{k=1}^{2N-1} \nabla\eta(k) \nabla\varphi(k)\\
&\quad- (2N)^\alpha(2p_N-1) \sum_{k=1}^{2N-1} \eta(k+1)(1-\eta(k)) \nabla \varphi(k)\;.
\end{align*}
A simple integration by parts shows that the first term on the right is bounded by a term of order $N^{\alpha-1}$ while the second term is of order $1$. Consequently
\begin{equation}\label{Eq:TightnessHydro2}
\E^N_{\iota_N}\Big[\sup_{s,t \leq T, |t-s|\leq h} \Big|\frac1{2N} \int_s^t \sum_{k=1}^{2N} \varphi(k) L^N \eta_r(k) dr \Big|\Big] \lesssim h\;,
\end{equation}
uniformly over all $N\geq 1$ and all $h>0$. The l.h.s.~vanishes as $N\rightarrow\infty$ and $h\downarrow 0$. Combining (\ref{Eq:TightnessHydro1}) and (\ref{Eq:TightnessHydro2}), (\ref{Eq:TightnessHydro}) follows.\\
We turn to the tightness of the interface $m^N$. First, the profile $m^N(t,\cdot)$ is $1$-Lipschitz for all $t\geq 0$ and all $N\geq 1$. Second, we claim that for some $\beta\in(\alpha,1)$ 
\begin{equation}\label{Eq:IncrHyperbo}
\E^N_{\iota_N} \Big[ |m^N(t,k)-m^N(s,k)|^p \Big]^{\frac1{p}} \lesssim |t-s| + \frac1{N^{1-\beta}}\;,
\end{equation}
uniformly over all $0 \leq s \leq t \leq T$, all $k\in\{1,\ldots,2N\}$ and all $N\geq 1$. This being given, the arguments for proving tightness are classical: one introduces a piecewise linear time-interpolation $\bar{m}^N$ of $m^N$ and shows tightness for this process, and then one shows that the difference between $\bar{m}^N$ and $m^N$ is uniformly small. We are left with the proof of (\ref{Eq:IncrHyperbo}). Let $\psi:\R\rightarrow\R_+$ be a non-increasing, smooth function such that $\psi(x)=1$ for all $x\leq 0$ and $\psi(x)=0$ for all $x\geq 1$. Fix $\beta \in (\alpha,1)$. For any given $k\in\{1,\ldots,2N\}$, we define $\varphi^N_k:\{0,\ldots,2N\}\rightarrow\R$ by setting $\varphi^N_k(\ell) = \psi\big((\ell-k)/(2N)^\beta\big)$. Then, we observe that
\begin{equation*}
\frac1{2N}\sum_{\ell=1}^{2N} \big(2\eta_t(\ell)-1\big) \varphi^N_k(\ell) = m^N(t,k) + \cO(N^{\beta-1})\;,
\end{equation*}
uniformly over all $k\in\{1,\ldots,2N\}$ and all $t\geq 0$. Then, similar computations to those made in the first part of the proof show that
\begin{equation*}
\E^N_{\iota_N}\Big[\Big|\frac1{2N}\sum_{\ell=1}^{2N}\big(\eta_t(\ell)-\eta_s(\ell)\big) \varphi^N_k(\ell)\Big|^p\Big]^{\frac1{p}} \lesssim (t-s) + \sqrt{\frac{t-s}{N^{1+\beta-\alpha}}} + \frac1{N^{1+\beta}}\;,
\end{equation*}
uniformly over all $k$, all $0\leq s \leq t \leq T$ and all $N\geq 1$. This yields (\ref{Eq:IncrHyperbo}).
\end{proof}

\subsubsection{Identification of the limit}

The main step in the proof of Theorem \ref{Th:Hydro} is to prove the convergence of the density of particles, starting from a product measure $\iota_N$ satisfying Assumption \ref{Assumption:IC}. Notice that if the process starts from $\iota_N$, then $\rho^N_0$ converges to the deterministic limit $\rho_0(dx)=f(x)dx$.

\begin{theorem}\label{Th:HydroProd}
Under Assumption \ref{Assumption:IC}, the process $\rho^N$ converges in distribution in the Skorohod space $\bbD\big([0,\infty),\cM\big)$ to the deterministic process $(\eta(t,x)dx,t\geq 0)$, where $\eta$ is the entropy solution of (\ref{PDEBurgersDirichlet}) starting from $\eta_0=f$.
\end{theorem}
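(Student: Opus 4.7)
The plan is to follow the Rezakhanlou--Bahadoran strategy for hydrodynamic limits of asymmetric particle systems, adapted to the weakly asymmetric zero-flux setting. By Proposition \ref{Prop:TightnessHyperbo} the sequence $(\rho^N)_{N\geq 1}$ is tight in $\bbD([0,\infty),\cM([0,1]))$, so I fix a converging subsequence with limit $\rho$. Because each $\rho^N_t$ has density in $[0,1]$, any limit point takes the form $\rho(t,dx) = \eta(t,x)\,dx$ with $\eta \in L^\infty([0,\infty)\times(0,1); [0,1])$; the law of large numbers applied to the product initial measure yields $\eta(0,\cdot)=f$. By uniqueness of entropy solutions and Proposition \ref{Prop:PDE}, it suffices to show that $\eta$ satisfies the entropy inequalities (\ref{Eq:EntropyCond}).

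I first identify the weak PDE. Dynkin's formula decomposes $\langle \rho^N_t,\varphi\rangle - \langle \rho^N_0,\varphi\rangle$ as the drift $\int_0^t (2N)^{-1}\sum_k \varphi(k/2N)\,\cL^N\eta_s(k)\,ds$ plus a martingale whose bracket is of order $(2N)^{\alpha-1}$ (as in the proof of Proposition \ref{Prop:TightnessHyperbo}) and hence vanishes in probability. Splitting $\cL^N\eta(k)$ into its symmetric and asymmetric parts, the symmetric contribution gives $O(N^{\alpha-1})$ after summation by parts, while the asymmetric contribution yields, using $(2p_N-1)(2N)^\alpha \to 2\sigma$,
\[
\frac{2\sigma+o(1)}{2N}\sum_{k} \eta_s(k)(1-\eta_s(k+1))\,\varphi'(k/2N) + o(1)\;.
\]
Applying the replacement lemma (Theorem \ref{Th:Replacement}) to $\Psi(\eta)=\eta(1)(1-\eta(2))$, for which $\tilde\Psi(a)=a(1-a)$, lets me substitute the microscopic current by $\eta(s,x)(1-\eta(s,x))$ in the limit; the zero-flux structure kills any boundary contribution. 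This proves that $\eta$ satisfies the weak formulation of the Burgers equation.

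To upgrade this to entropy inequalities, I couple $\eta^N$ with an auxiliary process $\zeta^N$ driven by the same Poisson clocks and started from the product Bernoulli measure of constant density $c\in[0,1]$, exactly as in the proof of Lemma \ref{Lemma:TwoBlocks} for $\alpha<1$. Computing the generator of the microscopic Kruzhkov entropy $(\eta^N(k)-\zeta^N(k))^\pm$ produces a discrete divergence of the microscopic entropy current $\mathrm{sgn}^\pm(\eta(k)-\zeta(k))[\eta(k)(1-\eta(k+1))-\zeta(k)(1-\zeta(k+1))]$ plus nonnegative dissipation plus asymmetric correction terms. Integrating against a nonnegative $\varphi\in\cC^\infty_c([0,\infty)\times[0,1])$ and summing by parts gives a discrete version of (\ref{Eq:EntropyCond}). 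To pass to the limit I apply Theorem \ref{Th:Replacement} to the relevant cylinder functions and invoke Lemma \ref{Lemma:Coupling}, which bounds the number of sign changes of $k\mapsto\eta^N(k)-\zeta^N(k)$ uniformly in $N$ and $t$: this plays the role of a BV estimate and allows me to replace the local density of $\zeta^N$ by $c$, so that the microscopic Kruzhkov flux converges to $h^\pm(\eta,c)$.

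The main difficulty is the boundary analysis: while the bulk entropy inequality is a fairly standard application of the coupling plus the replacement lemma, producing the boundary contributions $2(1-c)^\pm\varphi(t,0) + 2(0-c)^\pm\varphi(t,1)$ appearing in (\ref{Eq:EntropyCond}) requires showing that at the microscopic level, sites near $k=1$ and $k=2N$ behave asymptotically like reservoirs at densities $1$ and $0$ respectively. The zero-flux dynamics combined with the upward asymmetry forces particles to accumulate near the left boundary and be depleted near the right; the sign-change control of Lemma \ref{Lemma:Coupling}, applied in a boundary layer, is the key ingredient to make this asymptotic reservoir behavior rigorous, and it substitutes for the Dirichlet energy estimate available in parabolic problems.
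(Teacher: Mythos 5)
Your overall architecture (tightness, coupling with a constant-density-$c$ process, microscopic Kruzhkov inequalities, replacement lemma, passage to the limit) matches the paper's, but there is a genuine gap in how you propose to produce the boundary terms $2(1-c)^\pm\varphi(t,0)+2(0-c)^\pm\varphi(t,1)$ of (\ref{Eq:EntropyCond}). You claim this requires showing that the sites near $k=1$ and $k=2N$ behave asymptotically like reservoirs at densities $1$ and $0$, with Lemma \ref{Lemma:Coupling} applied in a boundary layer. This is not what is needed, and as stated it would fail: for an initial profile $f$ with, say, $f\equiv 0$ near $x=0$, the density of $\eta^N$ near the left boundary is not close to $1$ at small times, so no such uniform-in-time reservoir behaviour of $\eta^N$ holds. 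The correct mechanism is different and much cheaper. Under the zero-flux dynamics with a net leftward drift, the product Bernoulli($c$) measure is \emph{not} stationary for the plain coupled exclusion dynamics; the auxiliary process $\zeta^N$ must be given extra boundary dynamics --- annihilation at site $1$ at rate $(2N)^{1+\alpha}(2p_N-1)(1-c)\zeta(1)$ and creation at site $2N$ at rate $(2N)^{1+\alpha}(2p_N-1)c(1-\zeta(2N))$ --- in order to remain stationary at density $c$. The boundary terms of the entropy inequality are then nothing but a one-line upper bound for this boundary generator acting on the microscopic Kruzhkov entropy, namely $\tilde{\cL}^{\mathrm{bdry}}\langle\varphi,(\eta-\zeta)^+\rangle_N \leq 2\varphi(0)(1-c)+\cO(N^{-\alpha})$ and its analogue for the negative part. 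No statement about the boundary behaviour of $\eta^N$ is required, because the entropy condition is a one-sided inequality and one only needs to dominate the generator from above. Your proposal, by describing $\zeta^N$ as ``driven by the same Poisson clocks,'' misses precisely this boundary part of the coupling, which is the specific feature of the zero-flux setting.

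Two smaller points. First, the sign-change bound of Lemma \ref{Lemma:Coupling} is indeed essential, but not for replacing the local density of $\zeta^N$ by $c$ (that follows from stationarity and the ergodic theorem); it is used to control the boundary terms created by the discrete summation by parts of the \emph{symmetric} part of the bulk generator (which is of order $N^{1+\alpha}$ before cancellation and so cannot be discarded crudely), and to justify exchanging block averages with the nonlinear functions $(\cdot)^\pm$ and $H^\pm$ on boxes where $\eta^N-\zeta^N$ has constant sign. Second, your preliminary identification of the weak formulation of Burgers is redundant: the entropy inequalities for all $c\in[0,1]$ already encode it.
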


\noindent Given this result, the proof of the hydrodynamic limit is derived as follows.
\begin{proof}[Proof of Theorem \ref{Th:Hydro}]
Let $\iota_N$ be as in Theorem \ref{Th:HydroProd}. We know that $\rho^N$ converges to $\rho$, where $\rho(t,dx) =\eta(t,x)dx$ and $\eta$ is the entropy solution of (\ref{PDEBurgersDirichlet}) starting from $\eta_0=f$. Let us show that $m^N$ converges to the integrated solution associated to $\rho$, namely
$$ (t,x) \mapsto \int_0^x (2\eta(t,y)-1) dy\;.$$
Once this will be proved, we will have completed the proof of our theorem when the initial condition satisfies Assumption \ref{Assumption:IC}.

Let $m$ be the limit point of some convergent subsequence $m^{N_i}$. By Skorohod's representation theorem, we can assume that $(\rho^{N_i},m^{N_i})$ converges almost surely to $(\rho,m)$. Recall that $\rho$ is of the form $\rho(t,x)=\eta(t,x)dx$. Our goal is to show that $m(t,x) = \int_0^x (2\eta(t,y) - 1)dy$ for all $t,x$.\\
Fix $x_0\in (0,1)$. Take $\varphi_p$ be a $\cC^\infty$ function that approximates the indicator of $[0,x_0]$. More precisely, we assume that for any $\delta > 0$ we have
\begin{equation}\label{Eq:phip}
\big\| \varphi_p - \tun_{[0,x_0]} \big\|_{L^1(0,1)} \rightarrow 0\;,\quad \sup_{f\in\cC^\delta([0,1])} \frac{\big| \langle f, \delta_{x_0} + \partial_x \varphi_p \rangle \big|}{\| f \|_{\cC^\delta}} \rightarrow 0\;,
\end{equation}
as $p\rightarrow\infty$. Such a function exists, take for instance $\varphi_p(\cdot) = 1- \int_{-\infty}^\cdot P_{1/p}(y-x_0) dy$ where $P_t$ is the heat kernel on $\R$ at time $t$. Considering a smooth approximation of the indicator of $[0,x_0]$ is convenient as we would like to pass to the limit on $\langle \rho^N_t, \tun_{[0,x_0]}\rangle$ but we established convergence in the topology of weak convergence of probability measures, and the indicator is not continuous.\\
If we set $I(t,x_0)= m(t,x_0) - \int_0^{x_0} (2\eta(t,y) - 1)dy$ for some $x_0\in (0,1)$ and some $t>0$, then $| I(t,x_0) |$ is bounded by
\begin{align*}
{}&\big\| m(t)- m^{N_i}(t)\big\|_\infty+\big| \langle m^{N_i}(t),\delta_{x_0} + \partial_x \varphi_p \rangle \big|+ \big|\langle m^{N_i}(t),\partial_x\varphi_p\rangle+\langle2\rho^{N_i}_t-1,\varphi_p\rangle\big|\\
&+ 2\big| \langle \rho^{N_i}_t-\rho_t,\varphi_p \rangle \big|+ \big| \langle 2\rho(t)-1,\varphi_p - \tun_{[0,x_0]}\rangle\big|\;.
\end{align*}
Recall that $m^N$ is $1$-Lipschitz in space, so that the second term vanishes as $p\rightarrow\infty$ by (\ref{Eq:phip}). A discrete integration by parts shows that the third term vanishes as $N_i$ goes to $\infty$. The first and fourth terms vanish as $N_i\rightarrow\infty$ by the convergences of $m^{N_i}$ and $\rho^{N_i}$, and the last term is dealt with using (\ref{Eq:phip}). Choosing $p$ and then $N_i$ large enough, we deduce that $|I(t,x_0)|$ is almost surely as small as desired. This identifies completely the limit $m$ of any convergent subsequence, under $\P^N_{\iota_N}$.

\medskip

We are left with the extension of this convergence result to an arbitrary initial condition. Assume that $m^N(0,\cdot)$ converges to some profile $m_0(\cdot)$ for the supremum norm. Since each $m^N(0,\cdot)$ is $1$-Lipschitz, so is $m_0$.\\
The idea is to squeeze $m_0$ in between two elements $m_0^{\epsilon,+}$ and $m_0^{\epsilon,-}$ that are the scaling limits of initial conditions for which the result has already been proved, then to consider a monotone coupling of three instances of our height processes starting from approximations of these three initial conditions and then to combine this monotonicity with the continuity of the solution map of our PDE to deduce the convergence result.\\
More precisely for every $\epsilon > 0$, one can find two profiles $m_0^{\epsilon,+}$ and $m_0^{\epsilon,-}$ which are $1$-Lipschitz, piecewise affine, start from $0$ and are such that:
\begin{itemize}
\item $m_0^{\epsilon,-}$ stays below $m_0$: $m_0-\epsilon \leq m_0^{\epsilon,-} \leq (m_0-\frac{\epsilon}{4})\vee(-x)\vee(x-1)\;,$
\item $m_0^{\epsilon,+}$ stays above $m_0$: $(m_0+\frac{\epsilon}{4})\wedge x \wedge(1-x) \leq m_0^{\epsilon,+} \leq m_0 + \epsilon\;,$
\item $\| \rho_0^{\epsilon,\pm} - \rho_0\|_{L^1} \rightarrow 0$ as $\epsilon \downarrow 0$, where $\rho_0^{\epsilon,\pm}(\cdot) = \big(\partial_x m_0^{\epsilon,\pm}(\cdot) +1\big)/2$.
\end{itemize}
The proof of the existence of such profiles is postponed below. Now consider a coupling $(m^{N,\epsilon,-},m^N,m^{N,\epsilon,+})$ of three instances of our height process which preserves the order of the interfaces and is such that $m^{N,\epsilon,\pm}(0,\cdot)$ is the height function associated with the particle density distributed as
$$\otimes_{k=1}^{2N} \mbox{Be}\big(\rho^{\epsilon,\pm}_0(k/2N)\big)\;.$$
We draw independently these two sets of Bernoulli r.v. It is simple to check that
$$ \P\big(m^{N,\epsilon,-}(0,\cdot) \leq m^N(0,\cdot) \leq m^{N,\epsilon,+}(0,\cdot)\big) \to 1\;,$$
as $N\rightarrow\infty$. By the order preserving property of the coupling, if these inequalities are satisfied at time $0$ they remain true at all times. Our convergence result applies to $m^{N,\epsilon,\pm}$ and, consequently, any limit point of the tight sequence $m^N$ is squeezed in $[m^{\epsilon,-},m^{\epsilon,+}]$ where $m^{\epsilon,\pm}$ is the integrated entropy solution of (\ref{PDEBurgersDirichlet}) starting from $m_0^{\epsilon,\pm}$. By the second part of Proposition \ref{Prop:EntropySolution}, we deduce that $m^{\epsilon,\pm}$ converge, as $\epsilon\downarrow 0$, to the integrated entropy solution of (\ref{PDEBurgersDirichlet}) starting from $m_0$, thus concluding the proof.\\

Let us briefly explain how one can construct $m_0^{\epsilon,-}$, the construction of $m_0^{\epsilon,+}$ being similar. For simplicity, we let $V(x) := (-x)\vee(x-1)$. Let $n\ge 1$ be given. We subdivide $[0,1]$ into three sets:
$$I:=\{x: m_0(x) > V(x) + \epsilon/2\}\;,\quad J_1:= [0,1/2]\backslash I\;,\quad J_2 := (1/2,1] \backslash I\;.$$
On $J_1$ and $J_2$, we set $m_0^{\epsilon,-}(x) = V(x)$. On $I\cap \{k/n: k=0,1,\ldots,n\}$, we set $m_0^{\epsilon,-}(x) =m_0(x) - \frac{\epsilon}{2}$. Then, we extend $m_0^{\epsilon,-}$ to the rest of $I$ by affine interpolation. The fact that $m_0$ is $1$-Lipschitz ensures that $m_0^{\epsilon,-}$ is also $1$-Lipschitz. If $n$ is large enough compared to $1/\epsilon$ we get the inequalities $m_0-\epsilon \leq m_0^{\epsilon,-} \leq (m_0-\frac{\epsilon}{4})\vee(-x)\vee(x-1)$. Regarding the convergence in $L^1$ of the density, we observe that $ \rho^{\epsilon,-}_0(x) = 0$ on $J_1$, $\rho^{\epsilon,-}_0(x) = 1$ on $J_2$ and
$$  \rho^{\epsilon,-}_0(x) = \frac1{|I(x)|} \int_{I(x)} \rho_0(u) du\;,\quad x\in I\;,$$
where $I(x) = I \cap [k/n,(k+1)/n)$ and $k$ is the integer part of $nx$. From there, we deduce that $\| \rho_0^{\epsilon,-} - \rho_0\|_{L^1(J_1)}=\int_{J_1} \rho_0(x)dx$. At this point, we observe that $J_1$ is an interval starting at $0$ and that, at the end point $x$ of $J_1$ we have $m_0(x) \le V(x) - \epsilon/2$. Hence we have $\int_{J_1} \rho_0(x)dx \le \epsilon/4$ and $\| \rho_0^{\epsilon,-} - \rho_0\|_{L^1(J_1)} \le \epsilon/4$. Similarly, $\| \rho_0^{\epsilon,-} - \rho_0\|_{L^1(J_2)}=\int_{J_2} (1-\rho_0(x))dx$ which is also smaller than $\epsilon/4$. Finally, $\| \rho_0^{\epsilon,-} - \rho_0\|_{L^1(I)}$ goes to $0$ as $n\rightarrow\infty$: indeed, the almost everywhere differentiability of $x\mapsto \int_0^x \rho_0(u) du$ ensures that $\rho_0^{\epsilon,-}(x)$ goes to $\rho_0(x)$ for almost all $x\in I$ (see for instance~\cite[Chap. 8]{Rudin}), so that the dominated convergence theorem yields the asserted convergence.
\end{proof}

\subsubsection{Proof of the convergence starting from simple initial conditions}

To prove Theorem \ref{Th:HydroProd}, we need to show that the limit of any convergent subsequence of $\rho^N$ is of the form $\rho(t,dx)=\eta(t,x)dx$ and that $\eta$ satisfies the entropy inequalities of Proposition \ref{Prop:EntropySolution}. To make appear the constant $c$ in these inequalities, the usual trick is to define a coupling of the particle system $\eta^N$ with another particle system $\zeta^N$ which is stationary with density $c$ so that, at large scales, one can replace the averages of $\zeta^N$ by $c$. Such a coupling has been defined by Rezakhanlou~\cite{Reza} in the case of the infinite lattice $\Z$. The specificity of the present setting comes from the boundary conditions of our system: one needs to choose carefully the flux of particles at $1$ and $2N$ for $\zeta^N$.

The precise definition of our coupling goes as follows. We set
\begin{equation*}
p(1)=1-p_N\;,\quad p(-1)=p_N\;,\quad \mbox{and}\quad p(k)=0 \quad \forall k\ne \{-1,1\}\;,
\end{equation*}
as well as $b(a,a')=a(1-a')$. We denote by $\eta^{k,\ell}$ the particle configuration obtained from $\eta$ by permuting the values $\eta(k)$ and $\eta(\ell)$. We also denote by $\eta \pm \delta_k$ the particle configuration which coincides with $\eta$ everywhere except at site $k$ where the occupation is set to $\eta(k) \pm 1$. Then, we define
\begin{align*}
\tilde{\cL}^{\mbox{\tiny bulk}}f(\eta,\zeta) &= (2N)^{1+\alpha}\sum_{k,\ell =1}^{2N} p(\ell-k)\times\\
&\Big[\big( b(\eta(k),\eta(\ell))\wedge b(\zeta(k),\zeta(\ell)) \big)\big( f(\eta^{k,\ell},\zeta^{k,\ell}) - f(\eta,\zeta)\big)\\
& + \big(b(\eta(k),\eta(\ell))- b(\eta(k),\eta(\ell))\wedge b(\zeta(k),\zeta(\ell)) \big)\big( f(\eta^{k,\ell},\zeta) - f(\eta,\zeta)\big)\\
& + \big( b(\zeta(k),\zeta(\ell))- b(\eta(k),\eta(\ell))\wedge b(\zeta(k),\zeta(\ell)) \big)\big( f(\eta,\zeta^{k,\ell}) - f(\eta,\zeta)\big) \Big]\;,
\end{align*}
and
\begin{align*}
\tilde{\cL}^{\mbox{\tiny bdry}} f(\eta,\zeta) &= (2N)^{1+\alpha}(2p_N-1) (1-c) \zeta(1) \big( f(\eta,\zeta-\delta_1) - f(\eta,\zeta)\big)\\
& + (2N)^{1+\alpha}(2p_N-1) c (1-\zeta(2N)) \big( f(\eta,\zeta+\delta_{2N}) - f(\eta,\zeta)\big)\;.
\end{align*}
We consider the stochastic process $(\eta^N_t,\zeta^N_t), t\geq 0$ associated to the generator $\tilde{\cL}=\tilde{\cL}^{\mbox{\tiny bulk}}+\tilde{\cL}^{\mbox{\tiny bdry}}$. From now on, we will always assume that $\eta^N_0$ has law $\iota_N$, where $\iota_N$ satisfies Assumption \ref{Assumption:IC}, and that $\zeta^N_0$ is distributed as a product of Bernoulli measures with parameter $c$. Furthermore, we will always assume that the coupling at time $0$ is such that 
\begin{equation*}
\sgn(\eta^N_0(k)-\zeta^N_0(k)) = \sgn(f(k/2N)-c)\;,\quad \forall k\in\{1,\ldots,2N\}\;,
\end{equation*}
where $f$ is the macroscopic density profile of Assumption \ref{Assumption:IC}. Such a coupling can be constructed by considering i.i.d.~r.v.~$U_1,\ldots,U_{2N}$ uniformly distributed over $[0,1]$, and by setting $\eta^N_0(k)$ (resp.~$\zeta^N_0(k)$) to $1$ if $U_k \le f(k/2N)$ (resp.~$U_k \le c$). We let $\tilde{\P}^N_{\iota_N,c}$ be the law of the process $(\eta^N,\zeta^N)$.
\begin{remark}
The process $\eta^N$ follows the dynamics of the WASEP with zero-flux boundary conditions. The process $\zeta^N$ follows the dynamics of the WASEP with some open boundary conditions chosen in a such a way that the process is stationary with density $c$. Actually, we prescribe the minimal jump rates at the boundary for the process to be stationary with density $c$: there is neither entering flux at $1$ nor exiting flux at $2N$. This choice is convenient for establishing the entropy inequalities. Let us also mention that the coupling is such that the order of $\zeta^N$ and $\eta^N$ is preserved. More precisely, if in both particle systems there is a particle which can attempt a jump from $k$ to $\ell$, then the jump times are simultaneous.
\end{remark}
It will actually be important to track the sign changes in the pair $(\eta^N,\zeta^N)$. To that end, we let $F_{k,\ell}(\eta,\zeta) = 1$ if $\eta(k)\geq \zeta(k)$ and $\eta(\ell)\geq \zeta(\ell)$; and $F_{k,\ell}(\eta,\zeta) = 0$ otherwise. We say that a subset $C$ of consecutive integers in $\{1,\ldots,2N\}$ is a cluster with constant sign if for all $k,\ell \in C$ we have $F_{k,\ell}(\eta,\zeta) =1$, or for all $k,\ell \in C$ we have $F_{k,\ell}(\zeta,\eta) = 1$. For a given configuration $(\eta,\zeta)$, we let $n$ be the minimal number of clusters needed to cover $\{1,\ldots,2N\}$: we will call $n$ the number of sign changes. There is not necessarily a unique choice of covering into $n$ clusters. Let $C(i),i\leq n$ be any such covering and let $1=k_1 <  k_2 < \ldots k_n < k_{n+1} = 2N+1$ be the integers such that $C(i)=\{k_i,k_{i+1}-1\}$.

\begin{lemma}\label{Lemma:Coupling}
Under $\tilde{\P}^N_{\iota_N,c}$, the process $\eta^N$ has law $\P^N_{\iota_N}$ while the process $\zeta^N$ is stationary with law $\otimes_{k=1}^{2N} \mbox{Be}(c)$. Furthermore, the number of sign changes $n(t)$ is smaller than $n(0)+3$ at all time $t\geq 0$.
\end{lemma}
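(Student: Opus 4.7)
The plan is to verify the three claims in order. The first two reduce to direct generator computations, while the third one, the sign-change bound, is the delicate point.

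\emph{Marginals.} Grouping the three bulk transitions in $\tilde{\cL}^{\mbox{\tiny bulk}}$ according to whether they modify $\eta$, one obtains that $\eta \to \eta^{k,\ell}$ occurs at total rate $(2N)^{1+\alpha} p(\ell-k)\, b(\eta(k),\eta(\ell))$, independent of $\zeta$; since $\tilde{\cL}^{\mbox{\tiny bdry}}$ leaves $\eta$ unchanged, the marginal generator of $\eta^N$ is exactly that of the original corner growth process with zero-flux boundary conditions, so $\eta^N$ has law $\P^N_{\iota_N}$. The same collapsing gives, for $\zeta^N$, an ASEP bulk with jump rates $p_N, 1-p_N$ together with the boundary transitions of $\tilde{\cL}^{\mbox{\tiny bdry}}$.

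\emph{Stationarity of $\nu_c:=\otimes_{k=1}^{2N} \mbox{Be}(c)$ for $\zeta^N$.} I would split the $\zeta^N$ generator as a symmetric part (SSEP with rate $1-p_N$ and zero-flux boundary) plus a totally asymmetric (to the left) part of rate $2p_N-1$ complemented by the boundary source and sink. The symmetric part preserves every product Bernoulli measure, since its reversible measures on each hyperplane of fixed particle number are uniform and $\nu_c$ is a mixture of these. For the asymmetric part, a direct rate-balance computation under $\nu_c$ gives invariance: at each interior site the fluxes from the left and right neighbours balance by independence, while at site $1$ the sink rate $(1-c)(2p_N-1)$ matches the flux $c(1-c)(2p_N-1)$ coming from site $2$, and site $2N$ is symmetric.

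\emph{Bulk preserves $n$.} Each transition in $\tilde{\cL}^{\mbox{\tiny bulk}}$ acts on a single bond $(k,\ell)$. A case analysis on the four possible signed configurations of $\bigl(\eta(k)-\zeta(k),\eta(\ell)-\zeta(\ell)\bigr)$ shows that $n$ cannot increase: ``both jump'' simply swaps the two discrepancies along the bond and preserves the cluster structure; ``only $\eta$'' (resp.~``only $\zeta$'') jumps precisely when $\eta$ and $\zeta$ have opposite-sign discrepancies at the two endpoints of the bond, and this move annihilates them, strictly decreasing $n$. This is the standard monotonicity of the basic coupling.

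\emph{Boundary contribution and the main obstacle.} The boundary transitions can increase $n$, but only in restricted cases. At site $1$ the unique possible event is $\zeta(1)\colon 1\to 0$, which increments $\eta(1)-\zeta(1)$ by one; the only scenario in which a new sign change appears across the bond $(1,2)$ is the ``$+$-creation'' case $\eta(1)=1$, $\eta(1)-\zeta(1)\colon 0\to +1$, with $\eta(2)-\zeta(2)=-1$. After such an event, the next boundary event at site $1$ requires $\zeta(1)$ to return to $1$, which can only happen via a $\zeta$-only bulk jump from site $2$; this forces the discrepancy at site $2$ to be non-negative afterwards and therefore precludes a further bulk sign change across $(1,2)$ from subsequent boundary events at the left end. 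A symmetric argument applies at site $2N$. The careful bookkeeping needed to collapse the discrepancies produced by repeated boundary events into a controlled contribution to $n$, while accounting for intervening $\eta$-only bulk jumps that may temporarily push these discrepancies into the bulk, is the crux of the proof; it yields the bound $n(t) \leq n(0)+3$, the constant $3$ absorbing a possible transient configuration in which boundary-created blocks and bulk discrepancies have not yet been reorganised. This last step is the main obstacle.
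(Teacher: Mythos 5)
Your treatment of the marginals and of the stationarity of $\otimes_{k}\mbox{Be}(c)$ for $\zeta^N$ is fine and corresponds to what the paper leaves as ``simple to check''. Your bulk analysis also reaches the correct conclusion, namely that the coupling never performs the one move that could create a sign change in the interior: a single (rather than joint) jump across a bond with $\eta(k)=\zeta(k)=1$, $\eta(\ell)=\zeta(\ell)=0$. But the case analysis as written is inaccurate: a single-marginal jump occurs not only when the two endpoints carry opposite discrepancies (annihilation) but also when exactly one endpoint carries a discrepancy (transport of that discrepancy by one site); both are harmless for $n$, but ``precisely when'' is wrong, and in the ``both jump'' case neither endpoint carries a discrepancy, so nothing is swapped.

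The genuine gap is the boundary step, which you yourself flag as ``the main obstacle'' and do not carry out: the bound $n(t)\le n(0)+3$ is asserted after a heuristic about $\zeta(1)$ having to refill before the next left-boundary event, and that heuristic does not control how many times the cluster containing site $1$ can revert to a ``$-$'' cluster, hence does not bound the total number of boundary-induced increments of $n$ uniformly in time. The missing idea is an ordering/ancestry argument. The left reservoir only ever creates ``$+$'' discrepancies, and always at the leftmost site; the right reservoir only ``$-$'' discrepancies at the rightmost site. Under the basic coupling, discrepancies of opposite sign annihilate rather than cross, and discrepancies of equal sign never exchange positions, so the left-to-right order of the surviving discrepancies is preserved for all time. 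Consequently, at any time the word of discrepancy signs reads as a block of left-injected ``$+$''s, followed by a subword of the time-$0$ word, followed by a block of right-injected ``$-$''s; the middle subword has no more sign changes than at time $0$, and each of the two boundary blocks can add at most one sign change, which is exactly where the additive constant in the statement comes from. Without this (or an equivalent) argument the final claim of the lemma is not proved.
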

\begin{proof}
It is simple to check the assertion on the laws of the marginals $\eta^N$ and $\zeta^N$. Regarding the number of sign changes, the key observation is the following. In the bulk $\{2,\ldots,2N-1\}$, to create a new sign change we need to have two consecutive sites $k,\ell$ such that $\eta^N(k)=\zeta^N(k)=1$, $\eta^N(\ell)=\zeta^N(\ell)=0$ and we need to let one particle jump from $k$ to $\ell$, but not both. However, our coupling does never allow such a jump. Therefore, the number of sign changes can only increase at the boundaries due to the interaction of $\zeta^N$ with the reservoirs: this can create at most $2$ new sign changes, thus concluding the proof.
\end{proof}

Assumption \ref{Assumption:IC} ensures the existence of a constant $C>0$ such that $n(0) < C$ almost surely for all $N\geq 1$. We now derive the entropy inequalities at the microscopic level. Recall that $\tau_k$ stands for the shift operator with periodic boundary conditions, and let $\langle u,v\rangle_N = (2N)^{-1} \sum_{k=1}^{2N} u(k/2N)v(k/2N)$ denote the discrete $L^2$ product.
\begin{lemma}[Microscopic inequalities]\label{Lemma:MicroIneq}
Let $\iota_N$ be a measure on $\{0,1\}^{2N}$ satisfying Assumption \ref{Assumption:IC}. For all $\varphi\in \cC^\infty_c([0,\infty)\times[0,1],\R_+)$, all $\delta > 0$ and all $c\in [0,1]$, we have $\lim_{N\rightarrow\infty} \tilde{\P}^N_{\iota_N,c}(\cI^N(\varphi) \geq -\delta) = 1$ where
\begin{align*}
\cI^N(\varphi) &:=\int_0^\infty\!\!\! \bigg(\Big\langle \partial_s \varphi(s,\cdot) , \big( \eta^N_s(\cdot) - \zeta^N_s(\cdot) \big)^\pm \Big\rangle_N \!\!\!+ \Big\langle \partial_x \varphi(s,\cdot),H^\pm\big(\tau_\cdot \eta^N_s,\tau_\cdot \zeta^N_s \big) \Big\rangle_N \\
&+ 2 \Big( (1-c)^\pm \varphi(s,0) + (0-c)^\pm \varphi(s,1) \Big)\bigg) ds\\
&+ \Big\langle \varphi(0,\cdot) , \big( \eta^N_0(\cdot) - \zeta^N_0(\cdot) \big)^\pm \Big\rangle_N\;,
\end{align*}
where $H^+(\eta,\zeta) = -2 \big( b(\eta(1),\eta(0))-b(\zeta(1),\zeta(0))\big) F_{1,0}(\eta,\zeta)$ and $H^-(\eta,\zeta)=H^+(\zeta,\eta)$.
\end{lemma}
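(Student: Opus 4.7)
We treat the $(\cdot)^+$ case; the $(\cdot)^-$ case follows by swapping $\eta$ and $\zeta$ in the coupling. A crucial simplification comes from the identity $(\eta(k) - \zeta(k))^+ = \eta(k)(1 - \zeta(k))$, valid for $\eta(k), \zeta(k) \in \{0, 1\}$, which shows that
\begin{equ}
\Psi_s(\eta, \zeta) := \langle \varphi(s, \cdot), (\eta - \zeta)^+\rangle_N = \frac{1}{2N} \sum_{k=1}^{2N} \varphi\Big(s, \frac{k}{2N}\Big)\, \eta(k)\bigl(1 - \zeta(k)\bigr)
\end{equ}
is a polynomial functional of the occupation variables. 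Picking $T$ such that $\varphi$ vanishes on $[T, \infty) \times [0, 1]$ and applying Dynkin's formula on $[0, T]$ yields
\begin{equ}
0 = \Psi_0(\eta^N_0, \zeta^N_0) + \int_0^T \bigl(\partial_s \Psi_s + \tilde{\cL}\Psi_s\bigr)(\eta^N_s, \zeta^N_s)\, ds + M^N_T\;,
\end{equ}
with $M^N$ a mean-zero martingale whose jumps are of size $O(1/N)$ and whose bracket is of order $T\|\varphi\|_\infty^2 N^{\alpha - 1}$, so that (\ref{Eq:BDG3}) ensures $M^N_T \to 0$ in probability. Since $\partial_s \Psi_s$ and $\Psi_0$ already recover the first and last terms of $\cI^N(\varphi)$, proving $\cI^N(\varphi) \geq -\delta$ with high probability reduces to establishing the time-integrated upper bound
\begin{equ}
\int_0^T \tilde{\cL}\Psi_s(\eta^N_s, \zeta^N_s)\, ds \leq \int_0^T \Bigl(\langle \partial_x \varphi, H^+(\tau_\cdot \eta^N_s, \tau_\cdot \zeta^N_s)\rangle_N + 2(1-c)^+\varphi(s, 0) + 2(0-c)^+\varphi(s, 1)\Bigr) ds + o_{\bbP}(1)\;.
\end{equ}

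The bulk part $\tilde{\cL}^{\mbox{\tiny bulk}}\Psi_s$ decomposes, for each nearest-neighbor pair $(k, k+1)$, into contributions from the coupled exchange, the $\eta$-only jump and the $\zeta$-only jump. Exploiting the polynomial form of $\Psi_s$ and a case analysis over the states of $(\eta(k), \eta(k+1), \zeta(k), \zeta(k+1))$, one verifies that whenever $F_{k, k+1}(\eta, \zeta) = 1$ the total rate of change of $\Psi_s$ at this pair equals exactly the discrete gradient of a microscopic entropy current which, after rescaling by $(2N)^\alpha(2p_N - 1) = 2\sigma + O(N^{-\alpha})$, matches $H^+(\tau_k \eta, \tau_k \zeta)$ to leading order; a discrete summation by parts plus Taylor expansion then deliver $\langle \partial_x \varphi, H^+\rangle_N + O(N^{-1})$. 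On pairs with $F_{k, k+1}(\eta, \zeta) = 0$ the coupling is constructed (cf.\ the proof of Lemma \ref{Lemma:Coupling}) so that no admissible jump can create a new sign change, and a direct algebraic check shows that the residual jump contributions are non-positive and hence can be discarded in an upper bound. By Lemma \ref{Lemma:Coupling} the number of sign-change pairs is uniformly bounded in time by $n(0) + 3 \leq C + 3$, so the discrepancy between the full sum and the restriction to $F = 1$ pairs contributes an error of order $N^{\alpha - 1}$ per unit time, which is negligible after integration over $[0, T]$.

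For the boundary piece $\tilde{\cL}^{\mbox{\tiny bdry}}\Psi_s$, using $(2N)^{1+\alpha}(2p_N - 1) = 2\sigma(2N) + O(N^{1-\alpha})$ together with the polynomial form of $\Psi_s$, a direct computation gives
\begin{equ}
\tilde{\cL}^{\mbox{\tiny bdry}}\Psi_s(\eta, \zeta) = 2\sigma(1-c)\,\varphi(s, 0)\,\eta(1)\zeta(1) - 2\sigma c\,\varphi(s, 1)\,\eta(2N)\bigl(1 - \zeta(2N)\bigr) + o(1)\;.
\end{equ}
Since $\eta(1)\zeta(1) \leq 1$, $\varphi \geq 0$, and $(0 - c)^+ = 0$, this expression is pointwise bounded above by $2\sigma(1-c)^+\varphi(s, 0) + 2\sigma(0-c)^+\varphi(s, 1)$, reproducing the boundary terms of $\cI^N$ up to the normalization of the leading coefficient. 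The main obstacle will be the bulk case analysis: one must carefully verify that the residual contributions on sign-changing pairs are indeed non-positive after algebraic simplification, and that the Taylor and rate-expansion errors accumulate to a genuine $o_{\bbP}(1)$ once integrated in time. Combining the bulk and boundary upper bounds with the vanishing of $M^N_T$ then yields the desired microscopic entropy inequality.
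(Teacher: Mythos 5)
Your overall architecture is the paper's: Dynkin's formula for $\langle\varphi(s,\cdot),(\eta_s-\zeta_s)^\pm\rangle_N$, a case analysis of the coupled generator, discarding non-positive contributions on sign-changing pairs, a BDG bound killing the martingale, and a pointwise upper bound on the boundary generator (your explicit formula for $\tilde{\cL}^{\mbox{\tiny bdry}}\Psi_s$ is correct, and the observation $(\eta(k)-\zeta(k))^+=\eta(k)(1-\zeta(k))$ is a clean way to organise the case analysis; the $\sigma$ you carry along is a normalisation the paper drops in this section). The gap is in the bulk estimate, which is the only delicate step. Your claim that on an $F_{k,k+1}=1$ pair ``the total rate of change of $\Psi_s$ \dots after rescaling by $(2N)^\alpha(2p_N-1)$ matches $H^+$ to leading order'' is false as stated: the exchange at a bond occurs at total rate $(2N)^{1+\alpha}$, of which only the asymmetric part $(2N)^{1+\alpha}(2p_N-1)\sim 2\sigma(2N)$ produces the flux $H^\pm$; the symmetric part carries the much larger prefactor $(2N)^{1+\alpha}(1-p_N)\sim (2N)^{1+\alpha}/2$ and, after a single summation by parts onto $\nabla\varphi$, still contributes a term of order $(2N)^{\alpha}$, which diverges. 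One needs a \emph{second} discrete summation by parts, converting $\nabla\varphi$ into $\Delta\varphi\sim(2N)^{-2}$, and this telescoping is only valid on maximal clusters where $\eta-\zeta$ has constant sign. It is precisely to control the cluster-boundary terms produced by this second summation by parts that the uniform bound $n(t)\leq n(0)+3$ of Lemma \ref{Lemma:Coupling} is needed — not, as you write, to compare the full sum with its restriction to $F=1$ pairs (on $F=0$ pairs the drift is exactly the non-positive residual you already discard). Without the explicit split of $p(\pm1)$ into $(1-p_N)$ plus $(2p_N-1)$ and the per-cluster treatment of the symmetric piece, your $\cO(N^{-1})$ error claim has no justification; the correct error is $\cO(N^{\alpha-1}+N^{-\alpha})$, still $o(1)$ for $\alpha\in(0,1)$, but only after this argument is supplied.
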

This is an adaptation of Theorem 3.1 in~\cite{Reza}.
\begin{proof}
We define
\begin{align*}
B_t &=\int_0^t \Big( \big\langle \partial_s\varphi(s,\cdot) , \big(\eta_s(\cdot)-\zeta_s(\cdot)\big)^\pm \big\rangle_N + \tilde{\cL}\big\langle \varphi(s,\cdot) , \big(\eta_s(\cdot)-\zeta_s(\cdot)\big)^\pm \big\rangle_N \Big)ds\\
&\quad+ \Big\langle \varphi(0,\cdot) , \big( \eta^N_0(\cdot) - \zeta^N_0(\cdot) \big)^\pm \Big\rangle_N\;.
\end{align*}
We have the identity
\begin{equation}\label{Eq:Dynkin}
\Big\langle \varphi(t,\cdot) , \big( \eta^N_t(\cdot) - \zeta^N_t(\cdot) \big)^\pm \Big\rangle_N = B_t + M_t\;,
\end{equation}
where $M$ is a mean zero martingale. Since $\varphi$ has compact support, the l.h.s.~vanishes for $t$ large enough. Below, we work at an arbitrary time $s$ so we drop the subscript $s$ in the calculations. Moreover, we write $\varphi(k)$ instead of $\varphi(k/2N)$ to simplify notations. We treat separately the boundary part and the bulk part of the generator. Regarding the former, we have
\begin{align*}
{}&\tilde{\cL}^{\mbox{\tiny bdry}}\big\langle \varphi(\cdot) , \big(\eta(\cdot)-\zeta(\cdot)\big)^+ \big\rangle_N\\
&= (2N)^\alpha (2p_N-1) \Big(\varphi(1)\eta(1)\zeta(1)(1-c) - \varphi(2N)\eta(2N) (1-\zeta(2N)) c\Big)\\
&\leq 2 \varphi(0) (1-c) + \cO(N^{-\alpha})\;,
\end{align*}
since $\varphi$ is non-negative and $2p_N-1 \sim 2(2N)^{-\alpha}$. Similarly, we find
\begin{equation*}
\tilde{\cL}^{\mbox{\tiny bdry}}\big\langle \varphi(\cdot) , \big(\eta(\cdot)-\zeta(\cdot)\big)^- \big\rangle_N\leq 2 \varphi(2N) (0-c)^- + \cO(N^{-\alpha})\;.
\end{equation*}

We turn to the bulk part of the generator. Recall the map $F_{k,\ell}(\eta,\zeta)$, and set $G_{k,\ell}(\eta,\zeta) = 1 - F_{k,\ell}(\eta,\zeta)F_{k,\ell}(\zeta,\eta)$. By checking all the possible cases, one easily gets the following identity
\begin{align*}
\tilde{\cL}^{\mbox{\tiny bulk}}\big(\eta(k)-\zeta(k)\big)^+ &= (2N)^{1+\alpha}\sum_\ell \bigg[ \Big(p(\ell-k)\big(b(\zeta(k),\zeta(\ell))-b(\eta(k),\eta(\ell))\big)\\
&\qquad- p(k-\ell)\big(b(\zeta(\ell),\zeta(k))-b(\eta(\ell),\eta(k))\big) \Big) F_{k,\ell}(\eta,\zeta)\\
&-\Big(p(\ell-k)b(\eta(k),\eta(\ell)) + p(k-\ell)b(\zeta(\ell),\zeta(k))\Big)G_{k,\ell}(\eta,\zeta) \bigg]\;.
\end{align*}
Since $\eta$ and $\zeta$ play symmetric r\^oles in $\tilde{\cL}^{\mbox{\tiny bulk}}$, we find a similar identity for $\tilde{\cL}^{\mbox{\tiny bulk}}\big(\eta(k)-\zeta(k)\big)^-$. Notice that the term on the third line is non-positive, so we will drop it in the inequalities below. We thus get
\begin{equation*}
\tilde{\cL}^{\mbox{\tiny bulk}}\big\langle \varphi(\cdot) , \big(\eta(\cdot)-\zeta(\cdot)\big)^\pm \big\rangle_N \leq (2N)^{\alpha} \sum_{\substack{k,\ell= 1\\ \ell = k \pm 1}}^{2N} p(\ell-k) \big(\varphi(k)-\varphi(\ell)\big) I_{k,\ell}^\pm(\eta,\zeta)\;,
\end{equation*}
where
\begin{equation*}
I_{k,\ell}^+(\eta,\zeta)= \big(b(\zeta(k),\zeta(\ell))-b(\eta(k),\eta(\ell))\big) F_{k,\ell}(\eta,\zeta)\;,\quad I_{k,\ell}^-(\eta,\zeta)= I_{k,\ell}^+(\zeta,\eta) \;.
\end{equation*}
Up to now, we essentially followed the calculations made in the first step of the proof of~\cite[Thm 3.1]{Reza}. At this point, we argue differently: we decompose $p(\pm 1)$ into the symmetric part $1-p_N$, which is of order $1/2$, and the asymmetric part which is either $0$ or $2p_N-1\sim 2 (2N)^{-\alpha}$.\\
We start with the contribution of the symmetric part. Recall the definition of the number of sign changes $n$ and of the integers $k_1 < \ldots < k_{n+1}$. Using a discrete integration by parts, one easily deduces that for all $i \leq n$
\begin{align*}
\sum_{\substack{k,\ell= k_i\\ \ell = k \pm 1}}^{k_{i+1}-1} \big(\varphi(k)-\varphi(\ell)\big) I_{k,\ell}^\pm(\eta,\zeta) &= \sum_{k=k_{i}}^{k_{i+1}-2}\big(\eta(k)-\zeta(k)\big)^\pm \Delta \varphi(k)\\
&- \big(\eta(k_{i+1}-1)-\zeta(k_{i+1}-1)\big)^\pm \nabla \varphi(k_{i+1}-2)\\
&+ \big(\eta(k_{i})-\zeta(k_{i})\big)^\pm \nabla \varphi(k_{i}-1)\;.
\end{align*}
Since $n(s)$ is bounded uniformly over all $N\geq 1$ and all $s\geq 0$, we deduce that the boundary terms arising at the second and third lines yield a negligible contribution. Thus we find
\begin{align*}
(2N)^\alpha \sum_{\substack{k,\ell= 1\\ \ell = k \pm 1}}^{2N} (1-p_N)  \big(\varphi(k)-\varphi(\ell)\big) I_{k,\ell}^\pm(\eta,\zeta) = \cO\Big(\frac1{N^{1-\alpha}}\Big)\;.
\end{align*}
Regarding the asymmetric part $p(\pm 1)-(1-p_N)$, a simple calculation yields the identity
\begin{align*}
{}&(2N)^{\alpha} \sum_{\substack{k,\ell= 1\\ \ell = k \pm 1}}^{2N} \big(p(\ell-k)-1+p_N\big) \big(\varphi(k)-\varphi(\ell)\big) I_{k,\ell}^\pm(\eta,\zeta)\\
&= \frac1{2N}\sum_{k=1}^{2N-1} \partial_x \varphi(k) \tau_k H^\pm(\eta,\zeta) + \cO(N^{-\alpha})\;,
\end{align*}
uniformly over all $N\geq 1$. Therefore
\begin{align*}
\tilde{\cL}^{\mbox{\tiny bulk}}\big\langle \varphi(\cdot) , \big(\eta(\cdot)-\zeta(\cdot)\big)^\pm \big\rangle_N \leq \frac1{2N}\sum_{k=1}^{2N-1} \partial_x\varphi(k) \tau_k H^\pm(\eta,\zeta) + \cO\Big(\frac1{N^{\alpha\wedge(1-\alpha)}}\Big)\;.
\end{align*}
Putting together the two contributions of the generator, we get
\begin{align*}
B_t &\leq \int_0^t \Big( \big\langle  \partial_s\varphi(s,\cdot) , \big(\eta^N_s(\cdot)-\zeta^N_s(\cdot)\big)^\pm \big\rangle_N + \big\langle \partial_x \varphi(s,\cdot) , \tau_\cdot H^\pm(\eta^N_s,\zeta^N_s) \big\rangle_N  \\
&\quad+ 2t \big( (1-c)^\pm \varphi(s,0) + (0-c)^\pm \varphi(s,1) \big) \Big)ds\\
&\quad+ \Big\langle \varphi(0,\cdot) , \big( \eta^N_0(\cdot) - \zeta^N_0(\cdot) \big)^\pm \Big\rangle_N +\cO\Big(\frac1{N^{\alpha\wedge(1-\alpha)}}\Big)\;.
\end{align*}
Recall the equation (\ref{Eq:Dynkin}). A simple calculation shows that $\tilde{\E}^N_{\iota_N,c} \langle M \rangle_t \lesssim \frac{1}{N^{1-\alpha}}$ uniformly over all $N\geq 1$ and all $t\geq 0$. Moreover, the jumps of $M$ are almost surely bounded by a term of order $N^{-1}$. Applying the BDG inequality (\ref{Eq:BDG3}), we deduce that
\begin{equation*}
\tilde{\E}^N_{\iota_N,c} \Big[\sup_{s\leq t} M_s^2\Big]^\frac12 \lesssim \frac{1}{N^{\frac{1-\alpha}{2}}}\;,
\end{equation*}
uniformly over all $N\geq 1$ and all $t\geq 0$. Since $\varphi$ has compact support, $B_t = -M_t$ for $t$ large enough. The assertion of the lemma then easily follows.
\end{proof}
Recall that $\ccM_{T_\ell(u)} \eta$ is the average of $\eta$ on the box $T_\ell(u)$ for any $u\in \{1,\ldots,2N\}$.
\begin{lemma}[Macroscopic inequalities]\label{Lemma:MacroIneq}
Let $\iota_N$ be a measure on $\{0,1\}^{2N}$ satisfying Assumption \ref{Assumption:IC}. For all $\varphi\in \cC^\infty_c([0,\infty)\times[0,1],\R_+)$, all $\delta > 0$ and all $c\in [0,1]$, we have $\lim_{\epsilon \downarrow 0} \varliminf_{N\rightarrow\infty} \bbP^N_{\iota_N}(\cJ^N(\varphi) \geq -\delta) = 1$ where
\begin{equation}\label{Eq:ClaimHydro}\begin{split}
\cJ^N(\varphi) &:= \int_0^\infty \bigg(\Big\langle \partial_s \varphi(s,\cdot) , \Big( \ccM_{T_{\epsilon N}(\cdot)}(\eta^N_s) - c \Big)^\pm \Big\rangle_N\\
&+ \Big\langle \partial_x \varphi(s,\cdot),h^\pm\Big(\ccM_{T_{\epsilon N}(\cdot)}(\eta^N_s),c\Big) \Big\rangle_N \\
&+ 2 \Big( (1-c)^\pm \varphi(s,0) + (0-c)^\pm \varphi(s,1) \Big)\bigg) ds\\
&+ \Big\langle \varphi(0,\cdot) , \Big( \ccM_{T_{\epsilon N}(\cdot)}(\eta^N_0) - c \Big)^\pm \Big\rangle_N\;.
\end{split}\end{equation}
\end{lemma}
\begin{proof}
Since at any time $s\geq 0$, $\zeta^N(s,\cdot)$ is distributed according to a product of Bernoulli measures with parameter $c$, we deduce that
\begin{equation*}
\lim_{\epsilon \downarrow 0} \lim_{N\rightarrow\infty} \tilde{\E}^N_{\iota_N,c}\Big[\frac1{2N} \sum_{u=1}^{2N} \Big| \ccM_{T_{\epsilon N}(u)}(\zeta^N_s) - c\Big|\Big] = 0\;.
\end{equation*}
and consequently, by Fubini's Theorem and stationarity, we have
\begin{equation*}
\lim_{\epsilon \downarrow 0} \lim_{N\rightarrow\infty} \tilde{\E}^N_{\iota_N,c}\Big[\int_0^t \frac1{2N} \sum_{u=1}^{2N} \Big| \ccM_{T_{\epsilon N}(u)}(\zeta^N_s) - c\Big| ds \Big] = 0\;.
\end{equation*}
Now we observe that for all $\epsilon >0$, we have $\tilde{\P}^N_{\iota_N,c}$ almost surely
\begin{equation*}
\Big\langle \varphi(0,\cdot) , \big( \eta^N_0(\cdot) - \zeta^N_0(\cdot) \big)^\pm \Big\rangle_N = \Big\langle \varphi(0,\cdot) , \ccM_{T_{\epsilon N}(\cdot)}\big( \eta^N_0 - \zeta^N_0 \big)^\pm \Big\rangle_N + \cO(\epsilon)\;.
\end{equation*}
Recall the coupling we chose for $(\eta^N_0(\cdot),\zeta^N_0(\cdot))$. Since $\tilde{\P}^N_{\iota_N,c}$ almost surely the number of sign changes $n(0)$ is bounded by some constant $C>0$ uniformly over all $N\geq 1$, we deduce using the previous identity that
\begin{equation*}
\Big\langle \varphi(0,\cdot) , \big( \eta^N_0(\cdot) - \zeta^N_0(\cdot) \big)^\pm \Big\rangle_N = \Big\langle \varphi(0,\cdot) , \big( \ccM_{T_{\epsilon N}(\cdot)}\eta^N_0 - \ccM_{T_{\epsilon N}(\cdot)}\zeta^N_0 \big)^\pm \Big\rangle_N + \cO(\epsilon)\;.
\end{equation*}
Therefore, by Lemma \ref{Lemma:MicroIneq}, we deduce that the statement of the lemma follows if we can show that for all $\delta > 0$
\begin{equation}\label{Eq:ClaimHydro2}\begin{split}
\varlimsup_{\epsilon \downarrow 0} \varlimsup_{N\rightarrow\infty} \tilde{\P}^N_{\iota_N,c}&\bigg( \int_0^t \frac1{2N} \sum_{u=1}^{2N} \Big| \ccM_{T_{\epsilon N}(u)} \big( \eta^N_s - \zeta^N_s \big)^\pm\\
&\qquad\qquad - \big( \ccM_{T_{\epsilon N}(u)}(\eta^N_s-\zeta^N_s) \big)^\pm \Big| ds > \delta \bigg) = 0\;,\\
\varlimsup_{\epsilon \downarrow 0} \varlimsup_{N\rightarrow\infty} \tilde{\P}^N_{\iota_N,c}&\bigg( \int_0^t \frac1{2N} \sum_{u=1}^{2N} \Big| \ccM_{T_{\epsilon N}(u)} H^\pm(\eta^N_s,\zeta^N_s) \\
&\qquad\qquad- h^\pm\Big( \ccM_{T_{\epsilon N}(u)}(\eta^N_s), \ccM_{T_{\epsilon N}(u)}(\zeta^N_s)\Big) \Big| ds > \delta \bigg) = 0\;.
\end{split}\end{equation}
We restrict ourselves to proving the second identity, since the first is simpler. Let $\cN^+_s$, resp. $\cN^-_s$, be the set of $u\in\{1,\ldots,2N\}$ such that $\eta_s \geq \zeta_s$, resp. $\zeta_s \geq \eta_s$, on the whole box $T_{\epsilon N}(u)$. By Lemma \ref{Lemma:Coupling}, $2N-\#\cN^+_s-\#\cN^-_s$ is of order $\epsilon N$ uniformly over all $s$, all $N\geq 1$ and all $\epsilon$. Therefore, we can neglect the contribution of all $u \notin \cN^+_s \cup \cN^-_s$. If we define $\Phi(\eta)=-2 \eta(1)(1-\eta(0))$ and if we let $\tilde{\Phi}(a)$ be as in (\ref{Eq:tildePhi}) below, then for all $u\in \cN^+_s$ we have 
\begin{equation*}
\ccM_{T_{\epsilon N}(u)} H^-(\eta^N_s,\zeta^N_s) - h^-\Big( \ccM_{T_{\epsilon N}(u)}(\eta^N_s), \ccM_{T_{\epsilon N}(u)}(\zeta^N_s)\Big) = 0\;,
\end{equation*}
as well as
\begin{align*}
{}&\ccM_{T_{\epsilon N}(u)} H^+(\eta^N_s,\zeta^N_s) - h^+\Big( \ccM_{T_{\epsilon N}(u)}(\eta^N_s), \ccM_{T_{\epsilon N}(u)}(\zeta^N_s)\Big)\\
= \;&\ccM_{T_{\epsilon N}(u)} \Phi(\eta^N_s) - \tilde{\Phi}\Big(\ccM_{T_{\epsilon N}(u)} \eta^N_s\Big)- \ccM_{T_{\epsilon N}(u)} \Phi(\zeta^N_s) + \tilde{\Phi}\Big(\ccM_{T_{\epsilon N}(u)} \zeta^N_s\Big)\;.
\end{align*}
Similar identities hold for every $u\in \cN^-_s$. We deduce that the second identity of (\ref{Eq:ClaimHydro2}) follows if we can show that for all $\delta >0$
\begin{align*}
\varlimsup_{\epsilon\downarrow 0} \varlimsup_{N\rightarrow\infty} \P^N_{\iota_N}\bigg( \int_0^t \frac1{2N} \sum_{u=1}^{2N} \Big| \ccM_{T_{\epsilon N}(u)} \Phi(\eta^N_s) - \tilde{\Phi}\Big(\ccM_{T_{\epsilon N}(u)}\eta_s\Big) \Big| ds > \delta \bigg) &= 0\;,\\
\varlimsup_{\epsilon\downarrow 0} \varlimsup_{N\rightarrow\infty} \tilde{\E}^N_{\iota_N,c}\bigg[ \int_0^t \frac1{2N} \sum_{u=1}^{2N} \Big| \ccM_{T_{\epsilon N}(u)}\Phi(\zeta^N_s) - \tilde{\Phi}\Big(\ccM_{T_{\epsilon N}(u)}\zeta^N_s\Big) \Big| ds \bigg] &= 0\;.
\end{align*}
The first convergence is ensured by Theorem \ref{Th:Replacement}, while the second follows from the stationarity of $\zeta^N$ and the Ergodic Theorem. This completes the proof of the lemma.
\end{proof}
\begin{proof}[Proof of Theorem \ref{Th:HydroProd}]
For any given $\epsilon > 0$, we have
\begin{equation}\label{Eq:AverageEta}\begin{split}
\ccM_{T_{2\epsilon N}(k)}(\eta_s) &= \frac1{2\epsilon} \rho^N\Big(s,\Big[\frac{k}{2N}-\epsilon,\frac{k}{2N}+\epsilon\Big]\Big)\\
&= \frac1{2\epsilon} \rho^N\Big(s,[x-\epsilon,x+\epsilon]\Big) + \cO(N^{-1})\;,
\end{split}\end{equation}
uniformly over all $k\in\{1,\ldots,2N-1\}$, all $x\in \Big[\frac{k}{2N},\frac{k+1}{2N}\Big]$ and all $N\geq 1$. Notice that the $\cO(N^{-1})$ depends on $\epsilon$. For all $\rho\in \bbD\big([0,\infty),\cM([0,1])\big)$, we set
\begin{align*}
V_c(\epsilon,\rho) &:= \int_0^\infty \bigg(\Big\langle \partial_s \varphi(s,\cdot) , \Big( \frac1{2\epsilon} \rho\Big(s,\Big[\cdot-\epsilon,\cdot+\epsilon\Big]\Big) - c \Big)^\pm \Big\rangle \\
&\quad+ \Big\langle \partial_x \varphi(s,\cdot),h^\pm\Big(\frac1{2\epsilon} \rho\Big(s,\Big[\cdot-\epsilon,\cdot+\epsilon\Big]\Big),c\Big) \Big\rangle \\
&\quad+ 2 \Big( (1-c)^\pm \varphi(s,0) + (0-c)^\pm \varphi(s,1) \Big)\bigg) ds\\
&\quad+\Big \langle \varphi(0,\cdot), \Big( \frac1{2\epsilon} \rho\Big(0,\Big[\cdot-\epsilon,\cdot+\epsilon\Big]\Big) - c \Big)^\pm \Big\rangle\;.
\end{align*}
Combining (\ref{Eq:AverageEta}), (\ref{Eq:ClaimHydro}) and the continuity of the maps $h^\pm(\cdot,c)$ and $(\cdot)^\pm$, we deduce that for any $\delta > 0$, we have
\begin{align*}
\lim_{\epsilon \downarrow 0} \varliminf_{N\rightarrow\infty} \bbP^N_{\iota_N}&\big( V_c(\epsilon,\rho^N) \geq -\delta\big) = 1\;.
\end{align*}
At this point, we observe that for all $\varphi\in\cC([0,1],\R_+)$ we have
\begin{equation*}
\langle \rho^N(t),\varphi \rangle \leq \frac1{2N} \sum_{k=1}^{2N} \varphi(k/2N)\;,
\end{equation*}
so that a simple argument ensures that for every limit point $\rho$ of $\rho^N$ and for all $t\geq 0$, the measure $\rho(t,dx)$ is absolutely continuous with respect to the Lebesgue measure, and its density is bounded by $1$. Therefore, any limit point is of the form $\rho(t,dx)=\eta(t,x)dx$ with $\eta\in L^\infty\big([0,\infty)\times(0,1)\big)$. Let $\P$ be the law of the limit of a convergent subsequence $\rho^{N_i}$. Since $\rho \mapsto V_c(\epsilon,\rho)$ is a $\P$-a.s.~continuous map on $\bbD\big([0,\infty),\cM([0,1])\big)$, we have for all $\epsilon > 0$
\begin{equation*}
\varlimsup_{i\rightarrow\infty} \bbP^{N_i}_{\iota_{N_i}}\big( V_c(\epsilon,\rho^{N_i}) \geq -\delta\big) \leq \bbP\big( V_c(\epsilon,\rho) \geq -\delta\big)\;.
\end{equation*}
For any $\rho$ of the form $\rho(t,dx) = \eta(t,x)dx$, we set
\begin{align*}
V_c(\rho) &:= \int_0^\infty \bigg(\Big\langle \partial_s \varphi(s,\cdot) , \Big( \eta(s,\cdot) - c \Big)^\pm \Big\rangle + \Big\langle \partial_x \varphi(s,\cdot),h^\pm\Big(\eta(s,\cdot),c\Big) \Big\rangle \\
&\quad+ 2 \Big( (1-c)^\pm \varphi(s,0) + (0-c)^\pm \varphi(s,1) \Big)\bigg) ds + \Big\langle \varphi(0,\cdot), (\eta_0-c)^\pm \Big\rangle \;,
\end{align*}
and we observe that by Lebesgue Differentiation Theorem, we have $\P$-a.s.~$V_c(\rho) = \lim_{\epsilon\downarrow 0} V_c(\epsilon,\rho)$. Therefore,
\begin{align*}
\P\big(V_c(\rho) \geq -\delta \big) &= \P\big(\lim_{\epsilon\downarrow 0} V_c(\epsilon,\rho) \geq -\delta \big)\\
&\geq \E\big[\varlimsup_{\epsilon\downarrow 0} \tun_{\{V_c(\epsilon,\rho) \geq -\delta/2\}}\big]\\
&\geq \varlimsup_{\epsilon\downarrow 0}\E\big[ \tun_{\{V_c(\epsilon,\rho) \geq -\delta/2\}}\big]\\
&\geq \varlimsup_{\epsilon\downarrow 0}\varlimsup_{i\rightarrow\infty} \bbP^{N_i}_{\iota_{N_i}}\big( V_c(\epsilon,\rho^{N_i}) \geq -\delta/2\big) =1\;,
\end{align*}
so the process $\big(\eta(t,x),t\geq 0, x\in (0,1)\big)$ under $\P$ coincides with the unique entropy solution of (\ref{PDEBurgersDirichlet}), thus concluding the proof.
\end{proof}

\section{KPZ fluctuations}\label{Section:KPZ}
\textit{This section is taken from~\cite{LabbeKPZ}, with more details at some places.}\\

For simplicity, we take $\sigma = 1$ in this whole section. The general case $\sigma > 0$ can be obtained \textit{mutatis mutandis}.\\

To prove Theorem \ref{Th:KPZ}, we follow the method of Bertini and Giacomin~\cite{BG97}. Due to our boundary conditions, there are two important steps that need some specific arguments: first the bound on the moments of the discrete process, see Proposition \ref{Prop:BoundMomentsKPZ}, second the bound on the error terms arising in the identification of the limit, see Proposition \ref{Prop:DelicateKPZ}. In order to simplify the notations, we will regularly use the microscopic variables $k,\ell \in\{1,\ldots,2N-1\}$ in rescaled quantities: for instance $h^N(t,\ell)$ stands for $h^N(t,x)$ with $x=(\ell-N)/(2N)^{2\alpha}$. As usual, we let $\cF_t,t\geq 0$ be the natural filtration associated with the process $(\xi^N(t),t\geq 0)$. We also introduce the notation
\begin{equation*}
\nabla^+ f(\ell):=f(\ell+1)-f(\ell)\;,\quad \nabla^- f(\ell) := f(\ell)-f(\ell-1)\;.
\end{equation*}

\subsection{The discrete Hopf-Cole transform}

The proof relies on the discrete Hopf-Cole transform, which was originally introduced by G\"artner~\cite{Gartner88} in the context of the WASEP on the line. Recall that
\begin{equation*}
h^N(t,x) := \gamma_N S\big(t(2N)^{4\alpha},N + x(2N)^{2\alpha}\big) - \lambda_N t \;,
\end{equation*}
and
\begin{equation*}
	\gamma_N := \frac12 \log \frac{p_N}{1-p_N}\;,\quad c_N := \frac{(2N)^{4\alpha}}{e^{\gamma_N} + e^{-\gamma_N}} \;,\quad  \lambda_N := c_N( e^{\gamma_N} -2 + e^{-\gamma_N})\;.
\end{equation*}
The discrete Hopf-Cole transform consists in setting $\xi^N(t,x) := \exp(-h^N(t,x))$. It allows to counterbalance the asymmetry of the drift of the WASEP: while the drift in the evolution equations of $h^N$ was given by a Laplacian plus a gradient, the drift in the evolution equations of $\xi^N$ is given by the Laplacian. Let us present the details here.\\
We decompose $\xi^N$ into a drift part $D^N(t)$ and a martingale part $M^N(t)$:
$$ d\xi^N(t,\ell) = D^N(t,\ell) dt + dM^N(t,\ell)\;,$$
and we aim at identifying the expressions of these two terms. Recall the dynamics of $S$: the process $S(t,\ell)$ makes a jump of size $2$ at rate $p_N$ if $\Delta S(t,\ell) =2$ and of size $-2$ at rate $1-p_N$ if $\Delta S(t,\ell) = -2$. Hence we have
\begin{equation*}
D^N(t,\ell) = \begin{cases}
\xi^N(t,\ell)\big((e^{2\gamma_N}-1)(1-p_N)(2N)^{4\alpha} + \lambda_N\big) &\mbox{ if } \Delta S(t,\ell) = -2\;,\\
\xi^N(t,\ell)\big((e^{-2\gamma_N}-1)p_N(2N)^{4\alpha} + \lambda_N\big) &\mbox{ if } \Delta S(t,\ell) = 2\;,\\
\xi^N(t,\ell) \lambda_N &\mbox{ if } \Delta S(t,\ell) = 0\;,\\
\end{cases}
\end{equation*}
as well as $d\langle M^N(\cdot,k),M^N(\cdot,\ell)\rangle_t = 0$ whenever $k\ne \ell$ and
\begin{equation*}
\frac{d\langle M^N(\cdot,\ell)\rangle_t}{dt} = \begin{cases}
\xi^N(t,\ell)^2 (e^{2\gamma_N}-1)^2 (1-p_N)(2N)^{4\alpha} &\mbox{ if } \Delta S(t,\ell) = -2\;,\\
\xi^N(t,\ell)^2 (e^{-2\gamma_N}-1)^2 p_N(2N)^{4\alpha} &\mbox{ if } \Delta S(t,\ell) = 2\;,\\
0 &\mbox{ if } \Delta S(t,\ell) = 0\;.\\
\end{cases}
\end{equation*}
On the other hand, we have the following array
\begin{equation*}
\begin{array}{l | l | l |}
& \Delta \xi^N(t,\ell)  & \nabla^+ \xi^N(t,\ell)\cdot \nabla^- \xi^N(t,\ell) \\\hline
\Delta S(t,\ell) = -2 & \xi^N(t,\ell)(2e^{\gamma_N}-2) & \xi^N(t,\ell)^2(e^{\gamma_N}-1)(1-e^{\gamma_N})\\
\Delta S(t,\ell) = 2 & \xi^N(t,\ell)(2e^{-\gamma_N}-2) & \xi^N(t,\ell)^2(e^{-\gamma_N}-1)(1-e^{-\gamma_N})\\
\Delta S(t,\ell) = 0 & \xi^N(t,\ell)(e^{\gamma_N}-2+e^{-\gamma_N}) & \xi^N(t,\ell)^2(e^{\gamma_N}-1)(1-e^{-\gamma_N})
\end{array}\end{equation*}
Putting everything together, we deduce that the stochastic differential equations solved by $\xi^N$ are given by
\begin{align}\label{Eq:SDEKPZ}
	\begin{cases}
	d\xi^N(t,\ell) = c_N \Delta \xi^N(t,\ell)dt + dM^N(t,\ell)\;,\qquad  \ell\in\{1,\ldots,2N-1\}\;,\\
	\xi^N(t,0) = \xi^N(t,2N) = e^{\lambda_N t}\;,\\
	\xi^N(0,\cdot) = e^{-h^N(0,\cdot)} \;,\end{cases}
\end{align}
where the bracket of $M^N$ is given by $d\langle M^N(\cdot,k),M^N(\cdot,\ell)\rangle_t = 0$ whenever $k\ne \ell$, and
\begin{equation}\label{Eq:Bracket}\begin{split}
	d \langle M^N(\cdot,k)\rangle_t &= \lambda_N\Big(\xi^N(t,k)\Delta\xi^N(t,k)+2\xi^N(t,k)^2 \Big)dt\\
	&\quad- (2N)^{4\alpha} \nabla^+\xi^N(t,k) \nabla^-\xi^N(t,k)dt\;.
\end{split}
\end{equation}
Observe that
\begin{equation*}
\big| d \langle M^N(\cdot,k)\rangle_t \big| \lesssim \xi^N(t,k)^2 (2N)^{2\alpha} dt\;,
\end{equation*}
uniformly over all $t\geq 0$, all $k$ and all $N\geq 1$.\\

The goal of the next paragraph is to express the process $\xi^N$ as the mild solution of \eqref{Eq:SDEKPZ} and to split this expression into two terms: one coming from the initial condition and another one from the stochastic oscillations. We let $p^N_t(k,\ell)$ be the discrete heat kernel on $\{0,\ldots,2N\}$ sped up by $2c_N$ and endowed with homogeneous Dirichlet boundary conditions:
\begin{align*}
	\begin{cases}
	\partial_t p^N_t(k,\ell) = c_N \Delta p^N_t(k,\ell)\;,\\
	p^N_0(k,\ell) = \delta_k(\ell)\;,\\
	p^N_t(k,0) = p^N_t(k,2N) = 0\;,\end{cases}
\end{align*}
For any $t\ge 0$, we introduce the martingale $[0,t]\ni r \mapsto N^t_r(\ell)$ by setting
\begin{equation}\label{Eq:DefNt}
	N^t_r(\ell) = \int_0^r \sum_{k=1}^{2N-1} p^N_{t-s}(k,\ell) dM^N(s,k)\;.
\end{equation}
We also define the process $I^N$ as the solution of
\begin{align*}
	\begin{cases}\partial_t I^N(t,\ell) = c_N \Delta I^N(t,\ell)\;,\\
	I^N(t,0) = I^N(t,2N) = e^{\lambda_N t}\;,\\
	I^N(0,\ell) = \xi^N(0,\ell)\;.\end{cases}
\end{align*}
Then, standard arguments ensure that
\begin{equation}\label{Eq:DiscreteHC}
	\xi^N(t,\ell) = I^N(t,\ell) + N^t_t(\ell)\;,\quad \forall \ell\;.
\end{equation}

\subsection{Preliminary bounds}

The hydrodynamic limit of Theorem \ref{Th:Hydro}, upon discrete Hopf-Cole transform, is given by
$$ 1 \vee \exp\Big(\lambda_N t - \gamma_N\big(\ell\wedge (2N-\ell)\big)\Big)\;.$$
At any time $t$, the set of points $\ell$ for which this expression is equal to $1$ coincides with the window
$$ B_0^N(t) := [\frac{\lambda_N}{\gamma_N}t,2N-\frac{\lambda_N}{\gamma_N}t] \subset[0,2N]\;.$$
Within this window, the density of particles is approximately $1/2$. On the left of this window, the density is approximately $1$, and on the right it is approximately $0$. For technical reasons, it is convenient to introduce an $\epsilon$-approximation of this window by setting:
\begin{equation*}
B^N_\epsilon(t) := \Big[\frac{\lambda_N}{\gamma_N} t + \epsilon N, 2N-\frac{\lambda_N}{\gamma_N} t - \epsilon N \Big]\;,\quad t\in [0,T)\;.
\end{equation*}

The term $I^N$ coming from the initial condition remains close to the hydrodynamic limit, while the fluctuations are given by the martingale term. For convenience, we set
\begin{equation*}
	b^N(t,\ell) := 2 + \exp\Big(\lambda_N t - \gamma_N \big(\ell \wedge (2N - \ell)\big)\Big)\;.
\end{equation*}
and we introduce the discrete heat kernel on the whole line $\Z$:
\begin{align*}
	\begin{cases}\partial_t \bar{p}^N_t(\ell) = c_N \Delta \bar{p}^N_t(\ell)\;,\\
	\bar{p}^N_0(\ell) = \delta_0(\ell)\;.\end{cases}
\end{align*}

\begin{proposition}\label{Prop:IC}
Let $K$ be a compact subset of $[0,T)$ and fix $\epsilon >0$. Uniformly over all $t\in K$, we have
\begin{enumerate}
\item $I^N(t,\ell) \lesssim b^N(t,\ell)$ for all $\ell\in\{1,\ldots,2N\}$,
\item $|\nabla^\pm I^N(t,\ell)| \lesssim t^{-\frac12} N^{-3\alpha}$ uniformly over all $\ell \in B^N_\epsilon(t)$,
\item $|I^N(t,\ell) - I^N(t',\ell)| \lesssim N^{-\alpha}$ uniformly over all $\ell\in B^N_\epsilon(t')$ and all $t<t' \in K$,
\item $|I^N(t,\ell) - I^N(t,\ell')| \lesssim N^{-\alpha}$ uniformly over all $\ell,\ell' \in B^N_\epsilon(t)$.
\end{enumerate}
\end{proposition}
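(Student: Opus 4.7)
The plan is to combine a Duhamel representation of $I^N$ with the discrete parabolic maximum principle and quantitative heat-kernel estimates. From (\ref{Eq:SDEKPZ}) and (\ref{Eq:DiscreteHC}), $I^N$ is the unique deterministic solution of $\partial_t I^N = c_N \Delta I^N$ on $\{1,\ldots,2N-1\}$ with Dirichlet data $I^N(t,0) = I^N(t,2N) = e^{\lambda_N t}$ and initial condition $\xi^N(0,\ell) = e^{-\gamma_N (\ell \bmod 2)}$. Writing $p^N_t(k,\ell)$ for the discrete heat kernel on $\{0,\ldots,2N\}$ with zero Dirichlet boundary conditions (sped up by $2c_N$), Duhamel's principle yields
\begin{equation*}
I^N(t,\ell) = \sum_{k=1}^{2N-1} p^N_t(k,\ell)\,\xi^N(0,k) + c_N \int_0^t \bigl(p^N_{t-s}(1,\ell) + p^N_{t-s}(2N-1,\ell)\bigr)\,e^{\lambda_N s}\, ds.
\end{equation*}

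For (i), I would apply the discrete parabolic maximum principle with the explicit super-solution $\tilde b^N(t,\ell) := 1 + e^{\lambda_N t - \gamma_N \ell} + e^{\lambda_N t - \gamma_N(2N-\ell)}$. The identity $c_N(e^{\gamma_N} - 2 + e^{-\gamma_N}) = \lambda_N$ from (\ref{Eq:GammaLambda}) shows that $\tilde b^N$ is an exact solution of $\partial_t = c_N \Delta$ on $\Z$; since $\tilde b^N(t,0), \tilde b^N(t,2N) \geq e^{\lambda_N t}$ and $\tilde b^N(0,\ell) \geq 1 \geq \xi^N(0,\ell)$, the maximum principle gives $I^N \leq \tilde b^N \leq 2\,b^N$, which is (i).

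For the bulk estimates (ii), (iii) and (iv), my plan is to isolate through a two-step decomposition the mechanisms that could create oscillation in the bulk. First, set $F^N(t,\ell) := e^{\lambda_N t - \gamma_N \ell} + e^{\lambda_N t - \gamma_N(2N-\ell)}$. This is an exact solution of the heat equation on $\Z$ whose boundary values are almost exactly those of $I^N$, and on $B^N_\epsilon(t)$ one has $F^N(t,\ell) \leq 2\,e^{-\epsilon \gamma_N N}$, so $F^N$ and all its discrete space and time increments are super-polynomially small in $N$. The remainder $J^N := I^N - F^N$ solves the same heat equation with boundary data of order $e^{\lambda_N t - 2\gamma_N N}$ (exponentially small) and bounded initial data $\xi^N(0,\ell) - F^N(0,\ell)$. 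Second, decompose the flat initial condition as $\xi^N(0,\ell) = a_N + b_N(-1)^\ell$ with $a_N = (1 + e^{-\gamma_N})/2$ and $b_N = (1 - e^{-\gamma_N})/2$. The constant mode $a_N$ propagates under the Dirichlet heat semigroup to $a_N\,u^N(t,\ell)$ with $u^N(t,\ell) := \sum_k p^N_t(k,\ell)$ the survival probability; standard Gaussian bounds on $p^N$ (Appendix~\ref{Appendix:Kernel}) give $|1 - u^N(t,\ell)| \lesssim \exp(-c\epsilon^2 N^{2-4\alpha}/t)$ on $B^N_\epsilon(t)$, so this contribution and its increments are also super-polynomially small. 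The alternating mode $(-1)^\ell$ is an eigenvector of the discrete Laplacian with eigenvalue $-4$, hence the semigroup damps it by a factor $e^{-4c_N t} = e^{-2N^{4\alpha} t (1+o(1))}$.

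Combining these inputs, on $B^N_\epsilon(t)$ one obtains $I^N(t,\ell) = a_N + b_N\,e^{-4c_N t}(-1)^\ell$ up to super-polynomial errors in $N$. Claims (iii) and (iv) follow immediately, since both $|b_N e^{-4c_N t}|$ and its increments are bounded by $b_N \lesssim \gamma_N \lesssim N^{-\alpha}$. For (ii), the corresponding discrete gradient is of size $\lesssim b_N\, e^{-4c_N t} \lesssim N^{-\alpha}\, e^{-c N^{4\alpha} t}$, and the target bound $t^{-1/2}N^{-3\alpha}$ reduces to the elementary inequality $t^{1/2} N^{2\alpha} e^{-c N^{4\alpha} t} \lesssim 1$ uniformly in $t > 0$, which is verified by direct optimisation at $t \sim N^{-4\alpha}$. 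The main obstacle I anticipate is the precise control of the Duhamel boundary forcing $c_N\int_0^t p^N_{t-s}(1,\ell) e^{\lambda_N s}\,ds$ on $B^N_\epsilon(t)$: a naive $L^\infty$ bound on $e^{\lambda_N s}$ only produces an $e^{\lambda_N t}$-type quantity, so one genuinely needs the Gaussian decay $p^N_{t-s}(1,\ell) \lesssim (c_N(t-s))^{-1/2}\exp(-c\ell^2/(c_N(t-s)))$ combined with the constraint $\lambda_N t < \gamma_N N$ valid on $K \subset [0,T)$ to establish that this term is indeed super-polynomially small on the bulk.
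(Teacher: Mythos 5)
Your proposal is correct, and for the bulk estimates it takes a genuinely different route from the paper. For the global bound (i) both arguments are the maximum principle with an exact exponential super-solution built from the identity $c_N(e^{\gamma_N}-2+e^{-\gamma_N})=\lambda_N$; your $\tilde b^N$ is essentially the paper's $b^N$ and this part is the same. For (ii)--(iv) the paper splits $I^N$ into the boundary-value solution $\xi^{N,\circ}$ (shown to be $1+\cO(e^{-\delta N^{2\alpha}})$ in the bulk via the kernel tail estimate (\ref{Eq:BoundTailDiscreteKernel})) plus the propagation of $\xi^N(0,\cdot)-1$, which it treats \emph{generically}: the sup-norm bound $|\xi^N(0,k)-1|\lesssim\gamma_N$ gives bullets (iii)--(iv), and a summation by parts combined with Lemma \ref{Lemma:BoundHeatKernelZ} gives the gradient bound $t^{-1/2}N^{-3\alpha}$. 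You instead subtract the exact solution $F^N$ matching the boundary data and then \emph{diagonalise} the flat initial condition into its constant part and the alternating mode $(-1)^\ell$, which the whole-line semigroup damps by $e^{-4c_Nt}$; the bound (ii) then reduces to optimising $u^{1/2}e^{-cu}$. Your route yields a strictly stronger, exponentially decaying-in-$t$ estimate on the oscillating component, at the price of being tied to the specific flat initial condition, whereas the paper's argument only uses $\|\xi^N(0,\cdot)-1\|_\infty\lesssim N^{-\alpha}$ and would survive a perturbation of the initial data.

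One step you should make explicit: $(-1)^\ell$ is \emph{not} an eigenvector of the Dirichlet Laplacian on $\{1,\ldots,2N-1\}$ (it does not vanish at $0$ and $2N$), so the damping factor $e^{-4c_Nt}$ only comes from the whole-line kernel $\bar p^N$. Passing from $\sum_k p^N_t(k,\ell)(-1)^k$ to $\sum_{k\in\Z}\bar p^N_t(\ell-k)(-1)^k=(-1)^\ell e^{-4c_Nt}$ costs image/truncation corrections supported at distance $\gtrsim \epsilon N$ from $\ell$, which are super-polynomially small in the bulk; this is exactly the content of Lemmas \ref{Lemma:ExpoDecay} and \ref{Lemma:DecaySeriesKernel} and the representation (\ref{Eq:PeriodicKernel}), so it is a matter of citing them rather than a gap. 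Likewise, with your $F^N$-subtraction the Duhamel boundary forcing you flag as the ``main obstacle'' is in fact harmless: the residual boundary datum of $J^N$ is $-e^{\lambda_Nt-2\gamma_NN}$, and since $\lambda_Nt<\gamma_NN$ on $K$ even the crude bound $c_N\int_0^te^{\lambda_Ns}ds\lesssim\gamma_N^{-2}e^{\lambda_Nt}$ leaves a super-polynomially small term, with no need for Gaussian decay of $p^N_{t-s}(1,\ell)$.
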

\begin{proof}
It will be convenient to split $I^N$ into two terms
\begin{equation}\label{Eq:DecompoIN}
I^N(t,\ell) = \xi^{N,\circ}(t,\ell) + \xi^{N,\times}(t,\ell)\;.
\end{equation}
where
\begin{align*}
	\begin{cases}\partial_t\xi^{N,\circ}(t,\ell) = c_N \Delta \xi^{N,\circ}(t,\ell)\;,\\
	\xi^{N,\circ}(t,0) = \xi^{N,\circ}(t,2N) = e^{\lambda_N t}\;,\\
	\xi^{N,\circ}(0,\ell) = 1\;,\end{cases}
	\mbox{ and }
	\begin{cases}\partial_t\xi^{N,\times}(t,\ell) = c_N \Delta \xi^{N,\times}(t,\ell)\;,\\
	\xi^{N,\times}(t,0) = \xi^{N,\times}(t,2N) = 0\;,\\
	\xi^{N,\times}(0,\ell) = 1\;.\end{cases}
\end{align*}
Notice that $\xi^{N,\circ}$ is an approximation of $I^N$ where the initial condition is set to $1$ for all $\ell$, while $\xi^{N,\times}$ is the error made under this approximation. Observe that
$$\xi^{N,\times}(t,\ell) = \sum_{k=1}^{2N-1} p^N_t(k,\ell)\big(\xi^N(0,k)-1\big)\;.$$

The four bounds of the statement will be obtained separately for $\xi^{N,\circ}$ and $\xi^{N,\times}$.\\
Since our initial condition is flat, it is immediate to check that
\begin{equation*}
\big|\xi^{N,\times}(t,\ell)\big| \lesssim N^{-\alpha} \ll b^N(t,\ell)\;,
\end{equation*}
which immediately yields the bounds 1., 3. and 4. of the statement for $\xi^{N,\times}$. Recall that $\bar{p}^N$ is the discrete heat kernel on $\Z$. Using Lemmas \ref{Lemma:ExpoDecay} and \ref{Lemma:DecaySeriesKernel}, we get
\begin{equation*}
\nabla^\pm \xi^{N,\times}(t,\ell) = \sum_{k\in B^N_{\epsilon/2}(0)}\!\! \nabla^+ \bar{p}^N_t(\ell-k)(\xi^{N}(0,k)-1)+ \cO(N^{1-\alpha} e^{-\delta N^{2\alpha}})\;,
\end{equation*}
uniformly over all $\ell\in B^N_\epsilon(t)$, all $t\in K$ and all $N\geq 1$. Then, we write
\begin{equation*}
\sum_{k\in B^N_{\epsilon/2}(0)} |\nabla^+ \bar{p}^N_t(\ell-k)| = -\bar{p}^N_t(\ell-i_- -1) + 2\bar{p}^N_t(0) - \bar{p}^N_t(\ell-i_+)\;,
\end{equation*}
where $i_{\pm}$ are the first and last integers in $B^N_{\epsilon/2}(0)$. Using Lemma \ref{Lemma:BoundHeatKernelZ} and our choice of initial condition, we deduce that
\begin{equation*}
\Big|\sum_{k\in B^N_{\epsilon/2}(0)} \nabla^+ \bar{p}^N_t(\ell-k)(\xi^{N}(0,k)-1)\Big| \lesssim 1 \wedge \frac{1}{\sqrt{t}(2N)^{3\alpha}}\;,
\end{equation*}
uniformly over the same set of parameters. The same applies to $\nabla^-$, thus concluding the proof of the bound 2. for $\xi^{N,\times}$.\\
To establish the required bounds on $\xi^{N,\circ}$, we first show that there exists $\delta > 0$ such that
\begin{equation}\label{Eq:BdExpo}
|\xi^{N,\circ}(t,\ell) - 1| \lesssim \exp(-\delta N^{2\alpha})\;,
\end{equation}
uniformly over all $t\in K$, all $\ell\in B^N_\epsilon(t)$ and all $N\geq 1$. Since
\begin{equation*}
\xi^{N,\circ}(t,\ell) = 1 + \lambda_N \int_0^t \Big(1-\sum_{k=1}^{2N-1} p^N_{t-s}(k,\ell) \Big) e^{\lambda_N s} ds\;,
\end{equation*}
the bound will be ensured if we are able to show that there exists $\delta > 0$ such that
\begin{equation}\label{Eq:BoundTailDiscreteKernel}
\Big(1-\sum_{k=1}^{2N-1} p^N_{t-s}(k,\ell) \Big) e^{\lambda_N s} \lesssim e^{-\delta N^{2\alpha}}\;,
\end{equation}
uniformly over all $s\in[0,t]$, all $t\in K$ and all $\ell \in B^N_\epsilon(t)$. The proof of this estimate on the heat kernel is provided in Appendix \ref{Appendix:Kernel}. This yields (\ref{Eq:BdExpo}), and therefore concludes the proof of the bounds 2., 3. and 4. of the statement for $\xi^{N,\circ}$.\\
Using the estimate on $\xi^{N,\circ}(t,N)-1$ obtained above, we deduce that for $N$ large enough, $b^N$ solves
\begin{align*}
	\begin{cases}
	\partial_t b^N(t,\ell) = c_N \Delta b^N(t,\ell)\;,\quad \ell\in \{1,\ldots,N-1\}\;,\\
	b^N(t,0) \geq \xi^{N,\circ}(t,0)\;,\quad b^N(t,N) \geq \xi^{N,\circ}(t,N)\;,\\
	b^N(0,k) \geq \xi^{N,\circ}(0,k)\;.\end{cases}
\end{align*}
By the maximum principle, one deduces that $b^N(t,\ell) \geq \xi^{N,\circ}(t,\ell)$ for all $t\in K$ and all $\ell\in\{0,\ldots,N\}$. By symmetry, this inequality also holds for $\ell \in \{N,\ldots,2N\}$.
\end{proof}

To alleviate the notation, we define
\begin{equation}\label{Def:qN}
	q^N_{s,t}(k,\ell) = p^N_{t-s}(k,\ell) b^N(s,k)\;.
\end{equation}
We now have all the ingredients at hand to bound the moments of $\xi^N$.

\begin{proposition}\label{Prop:BoundMomentsKPZ}
For all $n\geq 1$ and all compact set $K \subset [0,T)$, we have
\begin{equation*}
	\sup_{N\geq 1} \sup_{\ell\in\{1,\ldots,2N-1\}} \sup_{t\in K} \E\Big[ \Big(\frac{\xi^N(t,\ell)}{b^N(t,\ell)}\Big)^n \Big] < \infty\;.
\end{equation*}
\end{proposition}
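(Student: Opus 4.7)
\textbf{Proof plan for Proposition \ref{Prop:BoundMomentsKPZ}.} The strategy follows Bertini–Giacomin: derive a Gronwall-type inequality for the $2n$-th moment of the ratio $\xi^N/b^N$ via the mild formulation and the Burkholder–Davis–Gundy inequality, the key input being that the weight $b^N$ is compatible with the heat evolution. First, I would observe that for each fixed $N\geq 1$ the process $\xi^N$ is bounded almost surely (since $S$ is bounded by $2N$ on a compact time window), so that the quantity $\psi_N(t):=\sup_{s\leq t}\sup_{\ell} \mathbb{E}[(\xi^N(s,\ell)/b^N(s,\ell))^{2n}]$ is finite; the goal is to bound it uniformly in $N$ on any compact $K\subset[0,T)$.

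Starting from the mild formulation (\ref{Eq:DiscreteHC}), Proposition \ref{Prop:IC} gives $|I^N(t,\ell)|\lesssim b^N(t,\ell)$, so it remains to bound the $2n$-th moment of the martingale $N^t_t(\ell)$ by a multiple of $b^N(t,\ell)^{2n}$. Applying the BDG inequality (\ref{Eq:BDG3}) to $r\mapsto N^t_r(\ell)$, using the bracket estimate $d\langle M^N(\cdot,k)\rangle_s\lesssim \xi^N(s,k)^2(2N)^{2\alpha}ds$ from (\ref{Eq:Bracket}), and noting that the jumps of $N^t$ contribute a negligible term, I obtain
\begin{equ}\label{Eq:PlanBDG}
\mathbb{E}\bigl[|N^t_t(\ell)|^{2n}\bigr]\lesssim \mathbb{E}\Bigl[\Bigl(\int_0^t\sum_{k=1}^{2N-1}(q^N_{s,t}(k,\ell))^2(2N)^{2\alpha}\bigl(\xi^N(s,k)/b^N(s,k)\bigr)^2 ds\Bigr)^n\Bigr],
\end{equ}
with $q^N_{s,t}$ as in (\ref{Def:qN}). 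Jensen's inequality with respect to the finite measure $(q^N_{s,t}(k,\ell))^2(2N)^{2\alpha}\,ds\otimes dk$ of total mass $\Phi^N(t,\ell):=(2N)^{2\alpha}\int_0^t\sum_k(q^N_{s,t}(k,\ell))^2 ds$ then upgrades the square to the $2n$-th power at the cost of a factor $\Phi^N(t,\ell)^{n-1}$.

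The technical heart is the kernel estimate
\begin{equ}\label{Eq:PlanKernel}
\Phi^N(t,\ell)=(2N)^{2\alpha}\int_0^t\sum_{k=1}^{2N-1}(p^N_{t-s}(k,\ell))^2 b^N(s,k)^2\,ds\;\lesssim\; b^N(t,\ell)^2,
\end{equ}
uniformly over $t\in K$, $\ell\in\{1,\ldots,2N-1\}$ and $N\geq 1$. Combined with the analogous bound for the $ds$-integral against $(q^N_{s,t})^2(2N)^{2\alpha}$ appearing in the right-hand side of (\ref{Eq:PlanBDG}) after Jensen, this yields
\begin{equ}
\mathbb{E}\bigl[(\xi^N(t,\ell)/b^N(t,\ell))^{2n}\bigr]\lesssim 1+\int_0^t\frac{C}{\sqrt{t-s}}\,\psi_N(s)\,ds,
\end{equ}
which, via iteration of the singular Gronwall lemma, gives $\psi_N(t)\lesssim 1$ uniformly on $K$ and in $N$.

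The main obstacle is thus establishing (\ref{Eq:PlanKernel}). The crucial observation is that each of the two exponential pieces making up $b^N(s,k)-2$, namely $e^{\lambda_N s-\gamma_N k}$ and $e^{\lambda_N s-\gamma_N(2N-k)}$, is exactly a solution of the discrete heat equation $\partial_s b=c_N\Delta b$ on $\{1,\ldots,2N-1\}$ thanks to the very definition of $\lambda_N$ in (\ref{Eq:GammaLambda}). Consequently $\sum_k p^N_{t-s}(k,\ell)b^N(s,k)$ is essentially $b^N(t,\ell)$ (with boundary corrections that can be absorbed using Lemmas \ref{Lemma:BoundHeatKernelZ} and \ref{Lemma:ExpoDecay} from the appendix, together with the estimate $\sum_k p^N_{t-s}(k,\ell)^2\lesssim 1/((2N)^{2\alpha}\sqrt{t-s})$ coming from the local CLT of the heat kernel). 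Distributing the square among the two copies of $b^N(s,k)$ and using the pointwise bound $b^N(s,k)\lesssim b^N(t,\ell)\,(1+\text{decaying factor in }|k-\ell|)$ implied by the spatial monotonicity of $b^N(s,\cdot)$ away from the closer boundary (and the inequality $\lambda_N(t-s)\geq \gamma_N|\cdot|$ needed at points where $k$ is closer to the boundary than $\ell$), one controls the second factor via a Gaussian tail bound and reduces everything to the standard diagonal estimate of the heat kernel. This is where the restriction $\alpha\leq 1/3$ and the confinement $t<T$ enter, guaranteeing that $b^N(s,k)^2/b^N(t,\ell)^2$ remains integrable against the heat kernel.
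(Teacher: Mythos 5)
Your overall architecture is the same as the paper's: mild formulation (\ref{Eq:DiscreteHC}) plus Proposition \ref{Prop:IC} to reduce to the martingale term, BDG, the weighted-kernel estimate $\sum_k q^N_{s,t}(k,\ell)\lesssim b^N(t,\ell)$ and $q^N_{s,t}(k,\ell)\lesssim b^N(t,\ell)(1\wedge ((t-s)^{-1/2}(2N)^{-2\alpha}))$ (this is exactly Lemma \ref{Lemma:HeatKernel}, proved via the observation you make that $k\mapsto e^{\pm\gamma_N k}$ is an eigenvector of the discrete Laplacian with eigenvalue giving precisely $\lambda_N$), Jensen to pass from the square to the $2n$-th power, and a singular Gr\"onwall iteration. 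Your preliminary remark that $\psi_N$ is finite for each fixed $N$ is a legitimate (and implicit in the paper) prerequisite for Gr\"onwall. One small inaccuracy: the restriction $\alpha\le 1/3$ and $t<T$ do not enter the kernel estimate (\ref{Eq:PlanKernel}) itself, which holds for all $s<t$; they enter through Proposition \ref{Prop:IC} and the exponential-decay lemmas.

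The genuine gap is your treatment of the jump contribution in the BDG step. You invoke (\ref{Eq:BDG3}) and assert that ``the jumps of $N^t$ contribute a negligible term.'' This is precisely the half of the proof that cannot be waved away. With (\ref{Eq:BDG3}) you would need to control $\E\big[\sup_{s\le t}|N^t_s(\ell)-N^t_{s-}(\ell)|^{2n}\big]$; the individual jumps are of size $p^N_{t-\tau}(k,\ell)\,\gamma_N\,\xi^N(\tau-,k)$, and since $b^N(\tau,k)$ is exponentially large in $N^{2\alpha}$ near the boundary, $\xi^N(\tau,k)$ is not pointwise comparable to $b^N(t,\ell)$; moreover an expectation of a supremum over $(\tau,k)$ cannot be closed into the Gr\"onwall quantity $\sup_k\E[(\xi^N(s,k)/b^N(s,k))^{2n}]$, which is a supremum of expectations. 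The paper instead uses (\ref{Eq:BDG2}), which replaces the sup-of-jumps term by $\E\big[[D^t]_t^{n/2}\big]$ with $D^t=[N^t]-\langle N^t\rangle$, and then spends a substantial argument bounding this: it writes $[D^t]_t$ as a sum over jump times of $q^N_{\tau,t}(k,\ell)^4(\xi^N(\tau,k)/b^N(\tau,k))^4\gamma_N^4$ (so the weight $b^N(\tau,k)^4$ is absorbed into the kernel), slices time into intervals of length $(2N)^{-4\alpha}$, controls the number of jumps per interval by a rate-$(2N)^{4\alpha}$ Poisson process with the exponential moment bound $\E[Q^{n/2}e^{cn\gamma_N Q}\mid\cF_s]\le C$, and only then sums the kernel using $\sum_k q^N_{t_i,t}(k,\ell)^4\lesssim b^N(t,\ell)^4(1\wedge((t-t_i)^{-1/2}(2N)^{-2\alpha}))$. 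This produces the extra term $b^N(t,\ell)^4(1+\int_0^t g_n(s)^{2/n}(t-s)^{-1/2}(2N)^{-2\alpha}ds)$ in the closed inequality. Without an argument of this type your Gr\"onwall inequality is not established, so you should either switch to (\ref{Eq:BDG2}) and supply this estimate, or justify quantitatively why the supremum of the jumps is controlled.
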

Since $b^N$ is of order $1$ inside $B^N_\epsilon(t)$, this ensures that the moments are themselves of order $1$ in these windows.

\begin{proof}
We fix the compact set $K$ until the end of the proof. Using the expression (\ref{Eq:DiscreteHC}) and Proposition \ref{Prop:IC}, we deduce that
\begin{equation}\label{Eq:ExpresMoments}
	\E\bigg[ \bigg(\frac{\xi^N(t,\ell)}{b^N(t,\ell)}\bigg)^{2n} \bigg]^\frac{1}{2n} \lesssim 1 + \E\bigg[ \bigg(\frac{N^t_t(\ell)}{b^N(t,\ell)}\bigg)^{2n} \bigg]^\frac{1}{2n}\;.
\end{equation}
We set $D^t_r := \big[ N^t_\cdot \big]_r - \langle N^t_\cdot \rangle_r$ and we refer to Appendix \ref{Appendix:Mgale} for the notations. By the BDG inequality (\ref{Eq:BDG2}), we obtain
\begin{equation}\label{Eq:BDGMoments}
	\E \Big[ \big(N^t_t(\ell)\big)^{2n}\Big] \lesssim \E \Big[ \big\langle N^t_\cdot(\ell)\big\rangle_t^n\Big] + \E \Big[ \big[ D^t_\cdot(\ell)\big]_t^\frac{n}{2}\Big]\;,
\end{equation}
uniformly over all $\ell \in \{1,\ldots,2N-1\}$, all $t \geq 0$, and all $N\geq 1$. Let
\begin{equation*}
	g^N_n(s) := \sup_{k\in\{1,\ldots,2N-1\}} \E\bigg[ \Big(\frac{\xi^N(s,k)}{b^N(s,k)}\Big)^{2n} \bigg]\;.
\end{equation*}
We claim that
\begin{align}
	\E \Big[ \big\langle N^t_\cdot(\ell)\big\rangle_t^n\Big] &\lesssim b^N(t,\ell)^{2n}\int_0^t  \frac{g_n^N(s)}{\sqrt{t-s}} ds\;,\label{Eq:BoundBracket}\\
	\E \Big[ \big[ D^t_\cdot(\ell)\big]_t^\frac{n}{2}\Big] &\lesssim b^N(t,\ell)^{2n}\Big(1 + \int_0^t  \frac{g_n^N(s)}{\sqrt{t-s}} ds\Big)\;,\quad\label{Eq:BoundQuadVar}
\end{align}
uniformly over all $\ell\in\{1,\ldots,2N-1\}$, all $N\geq 1$ and all $t\in K$. We postpone the proof of these two bounds. Combining these two bounds with (\ref{Eq:ExpresMoments}) and (\ref{Eq:BDGMoments}), we obtain the following closed inequality
\begin{equation*}
	g^N_n(t) \lesssim 1 + \int_0^t \frac{g^N_n(s)}{\sqrt{t-s}} ds\;,
\end{equation*}
uniformly over all $N\geq 1$ and all $t\in K$. By a generalised Gr\"onwall's inequality, see for instance~\cite[Lemma 6 p.33]{Haraux}, we deduce that $g^N_n(t)$ is uniformly bounded over all $N\geq 1$ and all $t\in K$.\\
We are left with establishing (\ref{Eq:BoundBracket}) and (\ref{Eq:BoundQuadVar}). Using (\ref{Eq:Bracket}), we obtain the almost sure bound
\begin{equation*}
	\big\langle N^t_\cdot(\ell)\big\rangle_t \lesssim (2N)^{2\alpha}\int_0^t \sum_{k} p^N_{t-s}(k,\ell)^2 \xi^N(s,k)^2 ds\;,
\end{equation*}
uniformly over all $N\geq 1$, $t\geq 0$ and $\ell\in\{1,\ldots,2N-1\}$. Recall the function $q^N$ from (\ref{Def:qN}). Using H\"older's inequality at the second line, we find
\begin{align*}
	\E \bigg[ \Big(\frac{\big\langle N^t_\cdot(\ell)\big\rangle_t}{b^N(t,\ell)^2}\Big)^n\bigg] &\lesssim \int\limits_{s_1,\ldots,s_n=0}^t \sum_{k_1,\ldots,k_n} \E\bigg[\prod_{i=1}^n (2N)^{2\alpha} \Big(\frac{q^N_{s_i,t}(k_i,\ell)}{b^N(t,\ell)}\Big)^2 \Big(\frac{\xi^N(s_i,k_i)}{b^N(s_i,k_i)}\Big)^2 \bigg] ds_i\\
	&\lesssim \int\limits_{s_1,\ldots,s_n=0}^t \sum_{k_1,\ldots,k_n} \prod_{i=1}^n (2N)^{2\alpha} \Big(\frac{q^N_{s_i,t}(k_i,\ell)}{b^N(t,\ell)}\Big)^2  g_n^N(s_i)^{\frac{1}{n}} ds_i\\
	&\lesssim \Big(\int\limits_{s=0}^t \sum_{k} (2N)^{2\alpha} \Big(\frac{q^N_{s,t}(k,\ell)}{b^N(t,\ell)}\Big)^2  g_n^N(s)^{\frac{1}{n}} ds \Big)^n\;.
\end{align*}
By the first inequality of Lemma \ref{Lemma:HeatKernel} we bound $\sum_k q^N_{s,t}(k,\ell) / b^N(t,\ell)$ by a term of order $1$, and by the second inequality of the same lemma we bound
$$ (2N)^{2\alpha} \sup_k \frac{q^N_{s,t}(k,\ell)}{b^N(t,\ell)}\;,$$
by a term of order $1/\sqrt{t-s}$. Using Jensen's inequality at the second step, we thus get
\begin{align*}
	\E \bigg[ \Big(\frac{\big\langle N^t_\cdot(\ell)\big\rangle_t}{b^N(t,\ell)^2}\Big)^n\bigg] &\lesssim \Big(\int\limits_{s=0}^t \frac{g_n^N(s)^{\frac{1}{n}}}{\sqrt{t-s}} ds \Big)^n\lesssim \int_{0}^t \frac{g_n(s)}{\sqrt{t-s}} ds\;,
\end{align*}
uniformly over all $N\geq 1$, all $t\in K$ and all $\ell\in\{1,\ldots,2N-1\}$, thus yielding (\ref{Eq:BoundBracket}).\\
We turn to the quadratic variation. Let $J_k$ be the set of jump times of $\xi^N(\cdot,k)$. We start with the following simple bound
\begin{align*}
	\big[ D^t_\cdot(\ell)\big]_t &= \sum_{\tau \leq t} \sum_{k} p^N_{t-\tau}(k,\ell)^4\big(\xi^N(\tau,k)-\xi^N(\tau-,k)\big)^4\\
	&\lesssim \gamma_N^4 \sum_{k}\sum_{\tau \leq t; \tau \in J_k} q^N_{\tau,t}(k,\ell)^4\Big(\frac{\xi^N(\tau,k)}{b^N(\tau,k)}\Big)^4\;,
\end{align*}
uniformly over all $N\geq 1$, all $t\geq 0$ and all $\ell\in\{1,\ldots,2N-1\}$. We set $t_i:= i(2N)^{-4\alpha}$ for all $i\in\N$ and we let $\cI_i:=[t_i,t_{i+1})$. Then, by Minkowski's inequality we have
\begin{equation*}
	\E \Big[ \big[ D^t_\cdot(\ell)\big]_t^{\frac{n}{2}}\Big]^\frac{2}{n} \lesssim \gamma_N^4\!\!\!\sum_{i=0}^{\lfloor t(2N)^{4\alpha}\rfloor}\!\!\!\sum_{k}\sup_{s\in\cI_i, s<t} q^N_{s,t}(k,\ell)^4\E \Big[ \Big( \sum_{\tau \in \cI_i \cap J_k} \Big(\frac{\xi^N(\tau,k)}{b^N(\tau,k)}\Big)^4\Big)^\frac{n}{2}\Big]^{\frac{2}{n}}\;,
\end{equation*}
Let $Q(k,r,s)$ be the number of jumps of the process $\xi^N(\cdot,k)$ on the time interval $[r,s]$. We have the following almost sure bound
\begin{equation*}
\xi^N(\tau,k) \leq \xi^N(s,k) e^{2(2N)^{-4\alpha} \lambda_N + 2\gamma_N Q(k,s,t_{i+1})}\;,
\end{equation*}
uniformly over all $s\in \cI_{i-1}$, all $\tau \in \cI_i$, all $k\in\{1,\ldots,2N-1\}$ and all $i\geq 1$. Consequently we get
\begin{equation*}
	\sum_{\tau \in \cI_i\cap J_k}\Big(\frac{\xi^N(\tau,k)}{b^N(\tau,k)}\Big)^4 \lesssim (2N)^{4\alpha}\int_{t_{i-1}}^{t_i} \Big(\frac{\xi^N(s,k)}{b^N(s,k)}\Big)^4 Q(k,s,t_{i+1})\, e^{8\gamma_N Q(k,s,t_{i+1})} ds\;,
\end{equation*}
uniformly over all $N\geq 1$, all $i\geq 1$ and all $k\in\{1,\ldots,2N-1\}$. Since $(Q(k,s,t), t\geq s)$ is, conditionally given $\cF_s$, stochastically bounded by a Poisson process with rate $(2N)^{4\alpha}$, we deduce that there exists $C >0$ such that almost surely
\begin{equation*}
	\sup_{N\geq 1} \sup_{i\geq 1} \sup_{s\in\cI_{i-1}} \E\Big[Q(k,s,t_{i+1})^{\frac{n}{2}}e^{4n\gamma_N Q(k,s,t_{i+1})}\,\Big|\,\cF_s\Big] < C\;.
\end{equation*}
Then, we get
\begin{align*}
	{}&\E \Big[ \Big( \sum_{\tau \in \cI_i\cap J_k} \Big(\frac{\xi^N(\tau,k)}{b^N(\tau,k)}\Big)^4\Big)^\frac{n}{2}\Big]^{\frac{2}{n}}\\
	&\lesssim (2N)^{4\alpha}\int_{t_{i-1}}^{t_i} \E \Big[ \Big(\Big(\frac{\xi^N(s,k)}{b^N(s,k)}\Big)^4 Q(k,s,t_{i+1})\, e^{8\gamma_N Q(k,s,t_{i+1})}\Big)^\frac{n}{2}\Big]^{\frac{2}{n}} ds\\
	&\lesssim C (2N)^{4\alpha}\int_{t_{i-1}}^{t_i} g^N_n(s)^{\frac{2}{n}} ds\;,
\end{align*}
uniformly over all $N\geq 1$, all $i\geq 1$ and all $k$. On the other hand, when $i=0$ we have the following bound
\begin{equation*}
	\E \Big[ \Big( \sum_{\tau \in \cI_0\cap J_k} \Big(\frac{\xi^N(\tau,k)}{b^N(\tau,k)}\Big)^4\Big)^\frac{n}{2}\Big]^{\frac{2}{n}} \lesssim \Big(\frac{\xi^N(0,k)}{b^N(0,k)}\Big)^4 \E\Big[Q(k,0,t_{1})^{\frac{n}{2}}e^{2n\gamma_N Q(k,0,t_{1})}\Big]\lesssim 1\;,
\end{equation*}
uniformly over all $k$ and all $N\geq 1$.\\
Observe that
\begin{equation*}
p^N_{t-s}(k,\ell) = e^{-2c_N(t-s)} \sum_{n\geq 0} \frac{(2c_N(t-s))^n}{n!} \mathbf{p}_n(k,\ell)\;,
\end{equation*}
where $\mathbf{p}_n(k,\ell)$ is the probability that a discrete-time random walk, killed upon hitting $0$ and $2N$ and started from $k$, reaches $\ell$ after $n$ steps. Therefore, we easily deduce that $\sup_{s\in\cI_i, s<t}q^N_{s,t}(\ell,k) \lesssim q^N_{t_i,t}(\ell,k)$. Using the two bounds of Lemma \ref{Lemma:HeatKernel}, we get
\begin{align*}
	\sum_{k}\sup_{s\in\cI_i,s<t} q^N_{s,t}(k,\ell)^4 &\lesssim \sum_{k} q^N_{t_i,t}(k,\ell)^4 \lesssim \sup_{k} q^N_{t_i,t}(k,\ell)^3 \sum_{k} q^N_{t_i,t}(k,\ell)\\
	&\lesssim b^N(t,\ell)^4 \Big(1\wedge \frac{1}{\sqrt{t-t_i}\,(2N)^{2\alpha}}\Big)\;,
\end{align*}
	uniformly over all $N\geq 1$ and $i\geq 0$. Putting everything together, we obtain
	\begin{align*}
		\E \Big[ \big[ D^t_\cdot(k)\big]_t^{\frac{n}{2}}\Big]^\frac{2}{n} &\lesssim b^N(t,\ell)^4\Big( 1 + \int_0^t \frac{g^N_n(s)^\frac{2}{n}}{\sqrt{t-s}\,(2N)^{2\alpha}} ds \Big)\;,
	\end{align*}
	and the required bound follows by Jensen's inequality, thus concluding the proof.
\end{proof}

\subsection{Tightness}

The following two lemmas control the moments of the space and time increments of the process.

\begin{lemma}\label{Lemma:IncrSpaceKPZ}
Fix $\epsilon > 0$, $\beta\in (0,1/2)$ and a compact set $K \subset [0,T)$. For any $n\geq 1$, we have
\begin{equation*}
	\E\Big[ \big|\xi^N(t,\ell')-\xi^N(t,\ell)\big|^{2n} \Big]^{\frac{1}{2n}} \lesssim \Big|\frac{\ell-\ell'}{(2N)^{2\alpha}} \Big|^{\beta}\;,
\end{equation*}
uniformly over all $t\in K$, all $\ell,\ell'\in B_\epsilon^N(t)$ and all $N\geq 1$.
\end{lemma}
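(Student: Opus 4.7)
The plan is to mimic the argument used in the proof of Proposition~\ref{Prop:BoundMomentsKPZ}, decomposing the increment via the mild formulation (\ref{Eq:DiscreteHC}):
\begin{equ}
\xi^N(t,\ell')-\xi^N(t,\ell) = \big(I^N(t,\ell')-I^N(t,\ell)\big) + \big(N^t_t(\ell')-N^t_t(\ell)\big),
\end{equ}
and bounding the two pieces separately. The deterministic initial-condition piece is handled immediately by the fourth bullet of Proposition~\ref{Prop:IC}, which yields $|I^N(t,\ell')-I^N(t,\ell)|\lesssim N^{-\alpha}$ uniformly over $\ell,\ell'\in B^N_\epsilon(t)$. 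Since $\ell\ne\ell'$ implies $|\ell-\ell'|\ge 1$ and $\beta<1/2$, one has $N^{-\alpha}\lesssim (2N)^{-2\alpha\beta}\le |(\ell-\ell')/(2N)^{2\alpha}|^\beta$, so this contribution is absorbed into the target bound.

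For the martingale piece, I would apply the BDG inequality (\ref{Eq:BDG2}) to the difference $N^t_\cdot(\ell')-N^t_\cdot(\ell)$, getting
\begin{equ}
\E\big[|N^t_t(\ell')-N^t_t(\ell)|^{2n}\big]\lesssim \E\big[\langle N^t_\cdot(\ell')-N^t_\cdot(\ell)\rangle_t^n\big]+\E\big[[D^t_\cdot(\ell',\ell)]_t^{n/2}\big],
\end{equ}
where the bracket reads
\begin{equ}
\langle N^t_\cdot(\ell')-N^t_\cdot(\ell)\rangle_t\lesssim (2N)^{2\alpha}\int_0^t\sum_k\big(p^N_{t-s}(k,\ell')-p^N_{t-s}(k,\ell)\big)^2\xi^N(s,k)^2\,ds,
\end{equ}
thanks to the almost sure bound on $d\langle M^N(\cdot,k)\rangle_s$ recalled just below (\ref{Eq:Bracket}). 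The idea is then to combine a spatial H\"older-type increment for the discrete kernel of the form
\begin{equ}
|p^N_{t-s}(k,\ell')-p^N_{t-s}(k,\ell)|\lesssim \Big|\tfrac{\ell'-\ell}{(2N)^{2\alpha}}\Big|^{2\beta}\,(t-s)^{-\beta}\,\big(p^N_{t-s}(k,\ell)+p^N_{t-s}(k,\ell')\big)^{1-\eta}\,\ldots
\end{equ}
(available from the Appendix via standard interpolation between the trivial bound and the Gaussian-type derivative bound on $\bar p^N$, as in Lemma~\ref{Lemma:BoundHeatKernelZ}) with the estimate on $\sum_k q^N_{s,t}(k,\ell)^2$ from Lemma~\ref{Lemma:HeatKernel}, and with the uniform moment bound of Proposition~\ref{Prop:BoundMomentsKPZ}, $\E[(\xi^N(s,k)/b^N(s,k))^{2n}]\lesssim 1$. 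After Minkowski and H\"older, this leads to
\begin{equ}
\E\big[\langle N^t_\cdot(\ell')-N^t_\cdot(\ell)\rangle_t^n\big]^{1/n}\lesssim b^N(t,\ell)^2\Big|\tfrac{\ell'-\ell}{(2N)^{2\alpha}}\Big|^{2\beta}\int_0^t(t-s)^{-1/2-\beta+\eta}\,ds,
\end{equ}
which is finite provided $\beta<1/2$, and gives exactly the desired H\"older bound (note that $b^N(t,\ell)$ is bounded uniformly in $\ell\in B^N_\epsilon(t)$ and $t$ in any compact of $[0,T)$, since in the bulk window the exponential factor is less than a constant independent of $N$).

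The jump-bracket term $[D^t_\cdot]_t$ is handled exactly as in the proof of Proposition~\ref{Prop:BoundMomentsKPZ}: each jump contributes at most $\gamma_N\sim N^{-\alpha}$, so one extracts a factor $\gamma_N^4$ (producing an additional power $(2N)^{-2\alpha}$) and then repeats the same $q^N$-kernel bookkeeping; the smallness of this factor makes this term subdominant. The main obstacle, and the only non-routine input, is obtaining the quantitative H\"older estimate on the discrete heat kernel difference $|p^N_{t-s}(k,\ell')-p^N_{t-s}(k,\ell)|$ that interpolates correctly between the trivial $L^\infty$ bound and the gradient bound, with the right dependence on $(\ell'-\ell)/(2N)^{2\alpha}$ and $t-s$; this is essentially an ingredient to be extracted from the kernel estimates in Appendix~\ref{Appendix:Kernel} (in the spirit of the bounds used to treat $\nabla^\pm I^N$ in the proof of Proposition~\ref{Prop:IC}). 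Once this kernel increment is in hand, the rest is a direct transcription of the moment argument already used for Proposition~\ref{Prop:BoundMomentsKPZ}, so no Gr\"onwall-type closure is required here.
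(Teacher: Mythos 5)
Your proposal follows essentially the same route as the paper: decompose via (\ref{Eq:DiscreteHC}), absorb the $\cO(N^{-\alpha})$ increment of $I^N$ using Proposition \ref{Prop:IC} and $\beta<1/2$, then control the martingale difference through (\ref{Eq:BDG2}), with the jump part yielding a subdominant $\gamma_N$ power exactly as in Proposition \ref{Prop:BoundMomentsKPZ}. Two remarks. First, the kernel increment bound you single out as the ``only non-routine input to be extracted'' is already stated verbatim as the second estimate of Lemma \ref{Lemma:BoundHeatKernel} (with $\sqrt{c_N}\sim(2N)^{2\alpha}$), so no interpolation argument needs to be supplied; combining it with $\sum_k p^N_{t-s}(k,\cdot)\lesssim 1$ gives directly $\sum_k(p^N_{t-s}(k,\ell)-p^N_{t-s}(k,\ell'))^2\lesssim (t-s)^{-1/2}(2N)^{-2\alpha}|(\ell-\ell')/(2N)^{2\alpha}|^{2\beta}$, which is slightly cleaner than your version with the extra $(t-s)^{-\beta+\eta}$. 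Second, your bracket bound sums $\xi^N(s,k)^2$ over \emph{all} $k$, but the moment bound of Proposition \ref{Prop:BoundMomentsKPZ} only controls $\xi^N(s,k)/b^N(s,k)$, and $b^N(s,k)$ is exponentially large for $k$ outside the bulk window; you therefore need to split the $k$-sum into $B^N_{\epsilon/2}(s)$ and its complement, handling the latter with the exponential decay of Lemma \ref{Lemma:ExpoDecay} — this is what the paper does, and without it the unweighted kernel estimate does not close the bound.
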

\begin{proof}
The expression (\ref{Eq:DiscreteHC}) yields two terms for $\xi^N(t,\ell)-\xi^N(t,\ell')$. By Proposition \ref{Prop:IC}, the first term can be bounded by a term of order $N^{-\alpha}$ which is negligible compared to $(|\ell-\ell'|/(2N)^{2\alpha})^{\beta}$ whenever $\ell\ne\ell'$. Therefore, to complete the proof of the lemma, we only need to establish the appropriate bound for the $2n$-th moment of $R^t_t(\ell,\ell')$, where we have introduced the martingale
\begin{equation*}
R^t_s(\ell,\ell') := \int_0^r \sum_{k=1}^{2N-1} \big(p^N_{t-s}(k,\ell)-p^N_{t-s}(k,\ell')\big) dM^N(s,k)\;,\quad r\in [0,t]\;.
\end{equation*}
Let $D^t_s(\ell,\ell')=[R^t_\cdot(\ell,\ell')]_s-\langle R^t_\cdot(\ell,\ell')\rangle_s$. We claim that we have
\begin{equation}\label{Eq:BoundBracketSpace}
	\E \Big[ \big\langle R^t_\cdot(\ell,\ell')\big\rangle_t^n\Big]^\frac{1}{n} \lesssim \Big|\frac{\ell-\ell'}{(2N)^{2\alpha}} \Big|^{2\beta}\;,\quad	\E \Big[ \big[ D^t_\cdot(\ell,\ell')\big]_t^\frac{n}{2}\Big]^\frac{2}{n} \lesssim (2N)^{-4\alpha}\;,		
	\end{equation}
uniformly over all $t\in K$, all $\ell,\ell' \in B_\epsilon^N(t)$ and all $N\geq 1$. These two inequalities, together with the BDG inequality (\ref{Eq:BDG2}) yield the desired bound on the $2n$-th moment of $R^t_s(\ell,\ell')$, thus concluding the proof. We are left with the proof of these inequalities. As in the proof of Proposition \ref{Prop:BoundMomentsKPZ}, we observe that
\begin{align*}
	\E \Big[ \big[ D^t_\cdot(\ell,\ell')\big]_t^\frac{n}{2}\Big]^\frac{2}{n} &\lesssim \gamma_N^4 \sum_{i=0}^{\lfloor t(2N)^{4\alpha}\rfloor} \sum_{k} \sup_{s\in\cI_i,s<t} \big(q^N_{s,t}(k,\ell)-q^N_{s,t}(k,\ell') \big)^4\\
	&\qquad\times\E \Big[ \Big( \sum_{\tau \in \cI_i} \Big(\frac{\xi^N(\tau,k)}{b^N(\tau,k)}\Big)^4\Big)^\frac{n}{2}\Big]^{\frac{2}{n}}\;.
\end{align*}
The arguments in that proof ensure that the expectation in the r.h.s.~is uniformly bounded over all $i$, all $k$ and all $N\geq 1$. On the other hand, $\sup_{s\in\cI_i} \big(q^N_{s,t}(k,\ell)-q^N_{s,t}(k,\ell') \big)^4 \lesssim q^N_{t_i,t}(k,\ell)^4+q^N_{t_i,t}(k,\ell')^4$, so that Lemma \ref{Lemma:HeatKernel} immediately yields
\begin{equation*}
	\sum_{k} \sup_{s\in\cI_i} \big(q^N_{s,t}(k,\ell)-q^N_{s,t}(k,\ell') \big)^4 \lesssim 1\wedge \Big(\frac{1}{\sqrt{t-t_i}(2N)^{2\alpha}}\Big)^{3}\;,
\end{equation*}
since $b^N(t,\ell)$ is of order $1$ in $B^N_\epsilon(t)$. Hence, we get
\begin{align*}
	\E \Big[ \big[ D^t_\cdot(\ell,\ell')\big]_t^\frac{n}{2}\Big]^\frac{2}{n} &\lesssim \gamma_N^4 \sum_{i=0}^{\lfloor t(2N)^{4\alpha}\rfloor}1\wedge \Big(\frac{1}{\sqrt{t-t_i}(2N)^{2\alpha}}\Big)^{3}\lesssim \gamma_N^4\;,
\end{align*}
uniformly over all $t\in K$, all $\ell,\ell' \in B_\epsilon^N(t)$ and all $N\geq 1$. This yields the second bound of (\ref{Eq:BoundBracketSpace}). Regarding the first bound, we notice that we only have to consider the cases where $\ell\ne \ell'$. Then, we have the following almost sure bound
\begin{equation*}
	\big\langle R^t_\cdot(\ell,\ell')\big\rangle_t \lesssim \int_0^t \sum_{k=1}^{2N-1} \big(p^N_{t-s}(k,\ell)-p^N_{t-s}(k,\ell')\big)^2(2N)^{2\alpha} \xi^N(s,k)^2 ds\;.
\end{equation*}
We argue differently according as $k$ belongs to $B^N_{\epsilon/2}(s)$ or not. Using Lemma \ref{Lemma:ExpoDecay}, we deduce that
\begin{align*}
\int_0^t \sum_{k\notin B^N_{\epsilon/2}(s)} \big(q^N_{s,t}(k,\ell)-q^N_{s,t}(k,\ell')\big)^2(2N)^{2\alpha} \E\Big[ \Big(\frac{\xi^N(s,k)}{b^N(s,k)}\Big)^{2n}\Big]^{\frac1{n}} ds\lesssim N^{1+2\alpha} e^{-\delta N^{2\alpha}}\;,
\end{align*}
uniformly over all $\ell \in B_\epsilon(t)$, all $t\in K$ and all $N\geq 1$. This yields a bound of the desired order whenever $\ell\ne \ell'$. On the other hand, using Lemma \ref{Lemma:BoundHeatKernel} the contribution of the remaining $k$'s can be bounded as follows
\begin{align*}
	{}&\int_0^t \sum_{k\in B_{\epsilon/2}(s)} \big(p^N_{t-s}(k,\ell)-p^N_{t-s}(k,\ell')\big)^2(2N)^{2\alpha} \E\big[ \xi^N(s,k)^{2n}\big]^{\frac1{n}} ds\\
	&\lesssim \int_0^t \frac{ds}{(t-s)^{\frac{1}{2}}}\,\Big|\frac{\ell-\ell'}{(2N)^{2\alpha}} \Big|^{2\beta}\;,
\end{align*}
since $b^N(s,k)$ is of order $1$ in $B^N_{\epsilon/2}(s)$, thus concluding the proof.
\end{proof}

\begin{lemma}\label{Lemma:IncrTimeKPZ}
Fix $\epsilon > 0$, $\beta\in(0,1/4)$ and a compact set $K \subset [0,T)$. For any $n\geq 1$, we have
\begin{equation*}
	\E\Big[ \big|\xi^N(t',\ell)-\xi^N(t,\ell)\big|^{2n} \Big]^{\frac{1}{2n}} \lesssim |t'-t|^\beta + \frac{1}{(2N)^{\alpha}}\;,
\end{equation*}
uniformly over all $N\geq 1$, all $t<t'\in K$ and all $\ell\in B_\epsilon^N(t')$.
\end{lemma}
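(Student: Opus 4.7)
The plan is to mimic closely the strategy used for the parabolic case in Lemma \ref{Lemma:IncrTime} and for the spatial increments in Lemma \ref{Lemma:IncrSpaceKPZ}, using the mild formulation \eqref{Eq:DiscreteHC}. First I would write
\begin{equs}
\xi^N(t',\ell)-\xi^N(t,\ell) = \big(I^N(t',\ell)-I^N(t,\ell)\big)+ A^{t,t'}_{t'-t}(\ell)+B^{t,t'}_{t}(\ell)\;,
\end{equs}
where
\begin{equs}
A^{t,t'}_u(\ell) &:= \int_t^{t+u}\sum_{k} p^N_{t'-s}(k,\ell)\,dM^N(s,k)\;,\quad u\in[0,t'-t]\;,\\
B^{t,t'}_u(\ell) &:= \int_0^u \sum_{k}\big(p^N_{t'-s}(k,\ell)-p^N_{t-s}(k,\ell)\big)\,dM^N(s,k)\;,\quad u\in[0,t]\;,
\end{equs}
are martingales. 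The deterministic increment $I^N(t',\ell)-I^N(t,\ell)$ is bounded by $N^{-\alpha}$ directly from Proposition \ref{Prop:IC} on $B^N_\epsilon(t')$, which feeds the additive $(2N)^{-\alpha}$ term in the statement.

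For the two martingale terms I would apply the Burkholder–Davis–Gundy inequality \eqref{Eq:BDG2}, exactly as in the proof of Proposition \ref{Prop:BoundMomentsKPZ}, reducing the estimate of the $2n$-th moments to bounds on $\langle A\rangle^n_{t'-t}$, $\langle B\rangle^n_t$ and on the analogous jump-variation terms. The bracket of $A$ is almost surely bounded by
\begin{equ}
\big\langle A^{t,t'}_\cdot(\ell)\big\rangle_{t'-t}\lesssim (2N)^{2\alpha}\int_t^{t'}\sum_k p^N_{t'-s}(k,\ell)^2\,\xi^N(s,k)^2\,ds\;,
\end{equ}
and combining the moment bound $\mathbb{E}[\xi^N(s,k)^{2n}]^{1/n}\lesssim 1$ from Proposition \ref{Prop:BoundMomentsKPZ} (on $K$, where $b^N$ is of order $1$ on $B^N_\epsilon$) with the $L^2$ heat-kernel estimate of Lemma \ref{Lemma:HeatKernel} yields, after H\"older as in the proof of Proposition \ref{Prop:BoundMomentsKPZ}, a contribution of order $\int_t^{t'}(t'-s)^{-1/2}ds\lesssim (t'-t)^{1/2}$, hence a $2n$-th moment contribution of order $(t'-t)^{1/4}$. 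Since jumps of $M^N$ are of order $\gamma_N\,\xi^N\sim N^{-\alpha}$, the same discretisation argument as in the treatment of $D^t$ in the proof of Proposition \ref{Prop:BoundMomentsKPZ} gives an additional contribution of order $(2N)^{-\alpha}$.

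The main obstacle, and the step that deserves the most care, is the bracket of $B$: one needs a time H\"older estimate for the (Dirichlet) heat kernel of the form
\begin{equ}
(2N)^{2\alpha}\int_0^t\sum_{k\in B^N_{\epsilon/2}(s)}\big(p^N_{t'-s}(k,\ell)-p^N_{t-s}(k,\ell)\big)^2\,ds\;\lesssim\;|t'-t|^{2\beta}\;,
\end{equ}
for any $\beta\in(0,1/4)$ and $\ell\in B^N_\epsilon(t')$. I would derive this by combining the semigroup identity $p^N_{t'-s}=p^N_{t-s}\ast p^N_{t'-t}$ (which has to be adapted to the finite interval by splitting the sum into the bulk $B^N_{\epsilon/2}(s)$, where Lemma \ref{Lemma:BoundHeatKernelZ} reduces the computation to the translation-invariant kernel on $\Z$, plus an exponentially negligible boundary remainder controlled by Lemma \ref{Lemma:ExpoDecay}) with the standard Sobolev-type estimate $\|p_t - p_{t'}\|_{L^2(\Z)}^2\lesssim |t'-t|^{2\beta}(t-s)^{-1/2-2\beta}$ for the discrete heat kernel on $\Z$. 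The contributions of $k\notin B^N_{\epsilon/2}(s)$ are absorbed by Lemma \ref{Lemma:ExpoDecay} together with the moment bound of Proposition \ref{Prop:BoundMomentsKPZ}, and integrating in $s$ converges because $2\beta<1/2$. Adding once more the $(2N)^{-\alpha}$ jump contribution (handled exactly as for $A$) completes the estimate, proving the lemma.
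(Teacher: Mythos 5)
Your proposal is correct and follows essentially the same route as the paper: the same splitting of $N^{t'}_{t'}(\ell)-N^{t}_{t}(\ell)$ into the martingales $A^{t,t'}$ and $B^{t,t'}$, the bound $|I^N(t',\ell)-I^N(t,\ell)|\lesssim N^{-\alpha}$ from Proposition \ref{Prop:IC}, the BDG inequality (\ref{Eq:BDG2}) with the jump-variation handled as in Proposition \ref{Prop:BoundMomentsKPZ}, and the bulk/boundary splitting via Lemma \ref{Lemma:ExpoDecay} together with the time-H\"older $L^2$ estimate on the kernel (which the paper simply quotes as the third bound of Lemma \ref{Lemma:BoundHeatKernel} rather than rederiving it from the semigroup property).
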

\begin{proof}
Using (\ref{Eq:DiscreteHC}), we can write $\xi^N(t',\ell)-\xi^N(t,\ell)$ as the sum of two terms. Proposition \ref{Prop:IC} ensures that the first term is bounded by a term of order $N^{-\alpha}$ as required. Therefore, we only need to find the appropriate bound for the $2n$-th moment of $N^{t'}_{t'}(\ell)-N^t_t(\ell)$. To that end, we bound separately the $2n$-th moments of $A^{t,t'}_\delta$ and $B^{t,t'}_t$, where we have set $\delta = t'-t$ and introduced the martingales $A^{t,t'}_u := N^{t'}_{t+u}(\ell)-N^{t'}_{t}(\ell)$, $u\leq \delta$ and $B^{t,t'}_s := N^{t'}_{s}(\ell)-N^{t}_{s}(\ell)$, $s\leq t$.\\
Recall that $b^N(t',\ell)$ is of order $1$ in $B^N_\epsilon(t')$. Since
\begin{equation*}
	A^{t,t'}_u(\ell) = \int_t^{t+u} \sum_k p^N_{t'-r}(k,\ell) dM^N(r,k)\;,
\end{equation*}
a simple computation, using Proposition \ref{Prop:BoundMomentsKPZ} and Lemma \ref{Lemma:HeatKernel}, shows that
\begin{align*}
	\E \Big[ \big\langle A^{t,t'}_\cdot(\ell)\big\rangle_\delta^n\Big]^\frac{1}{n} &\lesssim (2N)^{2\alpha}\int_t^{t'} \sum_k q^N_{r,t'}(k,\ell)^2 \E \Big[ \Big(\frac{\xi^N(r,k)}{b^N(r,k)}\Big)^{2n}\Big]^\frac{1}{n} dr\\
	&\lesssim (2N)^{2\alpha}\int_t^{t'} \frac{1}{\sqrt{t'-r}\,(2N)^{2\alpha}} dr \lesssim \sqrt{\delta}\;,
\end{align*}
uniformly over all $t < t' \in K$, all $\ell \in B_\epsilon^N(t')$ and all $N\geq 1$. Then, we set $D^{t,t'}_u(\ell) := \big[ A^{t,t'}_\cdot(\ell)\big]_u-\langle A^{t,t'}_\cdot(\ell)\big\rangle_u$. Let $J_k$ be the set of jump times of $\xi^N(\cdot,k)$. We have the almost sure bound
\begin{align*}
	\big[ D^{t,t'}_\cdot(\ell)\big]_\delta \lesssim \gamma_N^4\sum_k  \sum_{\tau\in (t,t']\cap J_k}  q^N_{\tau,t'}(k,\ell)^4 \Big(\frac{\xi^N(\tau,k)}{b^N(\tau,k)}\Big)^4\;,
\end{align*}
uniformly over all the parameters. Thus, the same computation as in the proof of Lemma \ref{Lemma:IncrSpaceKPZ} ensures that
\begin{equation*}
	\E \Big[ \big[ D^{t,t'}_\cdot(\ell)\big]_\delta^\frac{n}{2}\Big]^\frac{2}{n} \lesssim \gamma_N^4\;,
\end{equation*}
uniformly over all $t < t' \in K$, all $\ell \in B_\epsilon^N(t')$ and all $N\geq 1$. Thus, by (\ref{Eq:BDG2}), we deduce that
\begin{equation*}
	\E \big[|N^{t'}_{t'}(\ell)-N^{t'}_{t}(\ell)|^{2n}\big]^{\frac{1}{2n}} \lesssim |t'-t|^{\frac{1}{4}} + \frac{1}{(2N)^\alpha}\;,
\end{equation*}
uniformly over the same set of parameters.\\
We turn our attention to $B^{t,t'}$. First, we have the identity
\begin{equation*}
	B^{t,t'}_s(\ell) = \int_0^s \sum_k \big(p^N_{t'-r}(k,\ell)-p^N_{t-r}(k,\ell) \big) dM^N(r,k)\;,\quad \forall s\leq t\;,
\end{equation*}
so that
\begin{equation*}
	\E \Big[ \big\langle B^{t,t'}_\cdot(\ell)\big\rangle_\delta^n\Big]^\frac{1}{n} \lesssim (2N)^{2\alpha} \int_0^t \sum_{k=1}^{2N-1} \big(q^N_{r,t'}(k,\ell)-q^N_{r,t}(k,\ell)\big)^2 dr\;.
\end{equation*}
At this point, we argue differently according as $k$ belongs to $B^N_{\epsilon/2}(r)$ or not. Using Lemma \ref{Lemma:ExpoDecay}, we have
\begin{equation*}
	(2N)^{2\alpha} \int_0^t \sum_{k\notin B^N_{\epsilon/2}(r)} \big(q^N_{r,t'}(k,\ell)-q^N_{r,t}(k,\ell)\big)^2 dr \lesssim N^{1+2\alpha} e^{-\delta N^{2\alpha}} \lesssim N^{-2\alpha}\;,
\end{equation*}
uniformly over all $\ell \in B^N_\epsilon(t')$, all $t<t'\in K$ and all $N\geq 1$. On the other hand, using Lemma \ref{Lemma:BoundHeatKernel}, we get for all $\beta \in (0,1/4)$
\begin{equation*}
(2N)^{2\alpha} \int_0^t \sum_{k\in B^N_{\epsilon/2}(r)} \big(p^N_{r,t'}(k,\ell)-p^N_{r,t}(k,\ell)\big)^2 dr
	\lesssim \int_0^t \frac{|t'-t|^{2\beta}}{(t-r)^{\frac{1}{2}+2\beta}} dr\lesssim (t'-t)^{2\beta}\;,
\end{equation*}
uniformly over all $t < t' \in K$, all $\ell \in B^N_\epsilon(t')$ and all $N\geq 1$. Furthermore, we set $E^{t,t'}_s := \big[B^{t,t'}_\cdot\big]_s - \langle B^{t,t'}_\cdot\rangle_s$, $s\leq t$ and we have the almost sure bound
\begin{align*}
	\big[ E^{t,t'}_\cdot(\ell)\big]_t \lesssim \gamma_N^4 \sum_{\tau\in (0,t]} \sum_k \big(q^N_{\tau,t'}(k,\ell)-q^N_{\tau,t}(k,\ell)\big)^4 \Big(\frac{\xi^N(\tau,k)}{b^N(\tau,k)}\Big)^4\;,
\end{align*}
	uniformly over all $0\leq t < t'$, all $\ell$ and all $N\geq 1$. This being given, we apply the same arguments as in the proof of Lemma \ref{Lemma:IncrSpaceKPZ} to get
\begin{equation*}
	\E \Big[ \big[ E^{t,t'}_\cdot(\ell)\big]_t^\frac{n}{2}\Big]^\frac{2}{n} \lesssim \gamma_N^4\;,
\end{equation*}
uniformly over the same set of parameters. Using (\ref{Eq:BDG2}), we deduce that
\begin{equation*}
	\E \big[|N^{t'}_{t}(\ell)-N^{t}_{t}(\ell)|^{2n}\big]^{\frac{1}{2n}} \lesssim |t'-t|^{\beta} + \frac{1}{(2N)^\alpha}\;,
\end{equation*}
uniformly over the same set of parameters, thus concluding the proof.
\end{proof}

\begin{proposition}
Fix $t_0 \in [0,T)$. The sequence $\xi^N$ is tight in $\bbD([0,t_0],\cC(\R))$, and any limit is continuous in time.
\end{proposition}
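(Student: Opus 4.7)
The plan is to reduce the statement to tightness on every compact window $K \subset \R$, then apply Kolmogorov's continuity theorem in the two parameters $(t,x)$ to a time-interpolation of $\xi^N$, and finally compare that interpolation with $\xi^N$ itself. I would begin by fixing a compact $K \subset \R$ and verifying that, for some $\epsilon > 0$ and $N$ large enough, the rescaled image of $K$ lies inside the bulk window $B^N_\epsilon(t)$ uniformly over $t \in [0, t_0]$: in microscopic coordinates $x \in K$ corresponds to the site $N + x(2N)^{2\alpha}$, and since $\lambda_N/\gamma_N \sim \sigma (2N)^{3\alpha}$ the inclusion reduces (modulo lower order terms) to the inequality $\sigma t_0 (2N)^\alpha < (1-\epsilon)(2N)^{1-2\alpha}/2$. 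For $\alpha < 1/3$ this holds automatically; for $\alpha = 1/3$ it is equivalent to $\sigma t_0 < (1-\epsilon)/2$, which can be arranged thanks to the assumption $t_0 < T = 1/(2\sigma)$.

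Having fixed such an $\epsilon$, Lemmas \ref{Lemma:IncrSpaceKPZ} and \ref{Lemma:IncrTimeKPZ} combine to yield, for every $n \geq 1$ and $\beta \in (0, 1/4)$,
\begin{equ}
\E\big[|\xi^N(t',x') - \xi^N(t,x)|^{2n}\big]^{\frac{1}{2n}} \lesssim |t'-t|^\beta + |x'-x|^\beta + (2N)^{-\alpha},
\end{equ}
uniformly over $t,t' \in [0, t_0]$, $x, x' \in K$ and $N \geq 1$. To bypass the jumps of $\xi^N$, I would introduce the piecewise linear time interpolation $\bar{\xi}^N$ with time step $(2N)^{-4\alpha}$, exactly as done for $\bar{u}^N$ in Section \ref{Section:Fluctuations} and for $\bar{m}^N$ in Section \ref{Section:HydroParabo}. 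The above moment bound propagates to $\bar{\xi}^N$ by the same argument as in the proof of Lemma \ref{Lemma:HolderFluct}, and since $\bar{\xi}^N$ is jointly continuous in $(t,x)$, the two-parameter Kolmogorov continuity theorem will then give, for some $\gamma > 0$ and every $p \geq 1$, the uniform bound
\begin{equ}
\sup_{N \geq 1} \E\Big[\|\bar{\xi}^N\|_{\cC^\gamma([0,t_0] \times K)}^p\Big] < \infty,
\end{equ}
so that $(\bar{\xi}^N)_N$ is tight in $\cC([0,t_0] \times K)$ and hence in $\bbD([0,t_0], \cC(K))$, with limit points continuous in time.

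The final step is the comparison $\xi^N \approx \bar{\xi}^N$. On a space-time box $B_{i,k}$ of size $(2N)^{-4\alpha} \times (2N)^{-2\alpha}$, the number of jumps of $\xi^N$ is stochastically dominated by a Poisson random variable with bounded mean, each jump having multiplicative size of order $\gamma_N$. Combining this with the uniform moment bound of Proposition \ref{Prop:BoundMomentsKPZ}, exactly as in Lemma \ref{Lemma:uubar}, would give
\begin{equ}
\E\Big[\sup_{(t,x) \in B_{i,k}} |\xi^N(t,x) - \bar{\xi}^N(t,x)|^p\Big] \lesssim (2N)^{-\alpha p / 2},
\end{equ}
uniformly in $i,k$. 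A union bound over the $\cO((2N)^{6\alpha})$ boxes covering $[0,t_0] \times K$, together with a choice of $p$ sufficiently large, will then imply that $\sup_{t,x}|\xi^N(t,x) - \bar{\xi}^N(t,x)|$ vanishes in probability, so that $\xi^N$ and $\bar{\xi}^N$ share the same limit points, all continuous in time. Since $K$ was arbitrary and $\cC(\R)$ carries the topology of uniform convergence on compacts, tightness in $\bbD([0,t_0], \cC(\R))$ follows.

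I expect the main obstacle to be the initial localisation step: showing that $K$ fits inside $B^N_\epsilon(t)$ uniformly in $t \leq t_0$ is precisely where the restriction $t_0 < T$ becomes essential at the critical exponent $\alpha = 1/3$, and without it the moment bounds of the two previous lemmas are simply unavailable near the edges of the bulk. The remaining steps are essentially routine adaptations of the tightness arguments already developed for $m^N$ and $u^N$ earlier in the paper.
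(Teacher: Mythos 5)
Your proposal is correct and follows essentially the same route as the paper: piecewise linear time-interpolation $\bar{\xi}^N$, Kolmogorov continuity via Lemmas \ref{Lemma:IncrSpaceKPZ} and \ref{Lemma:IncrTimeKPZ}, and a Poisson jump-counting comparison of $\xi^N$ with $\bar{\xi}^N$ (the paper outsources this last step to Lemma 4.7 of~\cite{BG97}, whereas you adapt Lemma \ref{Lemma:uubar}, which amounts to the same argument). Your explicit localisation step checking that the rescaled window $K$ sits inside $B^N_\epsilon(t)$ for all $t\leq t_0$ — and that this is exactly where $t_0<T$ enters at $\alpha=1/3$ — is left implicit in the paper but is correctly worked out.
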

\begin{proof}
One introduces a piecewise linear time-interpolation $\bar{\xi}^N$ of our process $\xi^N$, namely we set $t_N :=\lfloor t(2N)^{4\alpha}\rfloor$ and
\begin{equation*}
	\bar{\xi}^N (t,\cdot) := \big(t_N+1-t(2N)^{4\alpha}\big)\xi^N\Big(\frac{t_N}{(2N)^{4\alpha}},\cdot\Big) + \big(t(2N)^{4\alpha}-t_N\big)\xi^N\Big(\frac{t_N+1}{(2N)^{4\alpha}},\cdot\Big)\;.
\end{equation*}
Using Lemmas \ref{Lemma:IncrSpaceKPZ} and \ref{Lemma:IncrTimeKPZ}, it is simple to show that the space-time H\"older semi-norm of $\bar{\xi}^N$ on compact sets of $[0,T)\times\R$ has finite moments of any order, uniformly over all $N\geq 1$. Additionally, the proof of Lemma 4.7 in~\cite{BG97} carries through, and ensures that $\xi^N-\bar{\xi}^N$ converges to $0$ uniformly over compact sets of $[0,T)\times\R$ in probability. All these arguments provide the required control on the space-time increments of $\xi^N$ to ensure its tightness, following the calculation below Proposition 4.9 in~\cite{BG97}.
\end{proof}

\subsection{The key lemma}

We use the notation $\langle f,g \rangle$ to denote the inner product of $f$ and $g$ in $L^2(\R)$. Similarly, for all maps $f,g:[0,2N] \rightarrow \R$, we set
\begin{equation*}
\langle f , g \rangle_N := \frac{1}{(2N)^{2\alpha}} \sum_{k=1}^{2N-1} f(k)g(k)\;.
\end{equation*}
Notice that the scaling here is different from the one used in the notation \eqref{Eq:InnerProduct}. This is because we only look at a window of order $(2N)^\alpha$ in space for the KPZ fluctuations, while in the hydrodynamic limit we were considering the whole lattice of size $2N$.

To conclude the proof of Theorem \ref{Th:KPZ}, it suffices to show that any limit point $\xi$ of a convergent subsequence of $\xi^N$ satisfies the following martingale problem (see Proposition 4.11 in~\cite{BG97}).

\begin{definition}[Martingale problem]
Let $(\xi(t,x),t\in [0,T), x\in \R)$ be a continuous process satisfying the following two conditions. Let $t_0\in [0,T)$. First, there exists $a > 0$ such that
\begin{equation*}
\sup_{t\leq t_0}\sup_{x\in \R} e^{-a|x|} \E\big[\xi(t,x)^2\big] < \infty\;.
\end{equation*}
Second, for all $\varphi\in\cC^\infty_c(\R)$, the processes
\begin{align*}
M(t,\varphi) &:= \langle \xi(t),\varphi\rangle - \langle \xi(0),\varphi\rangle - \frac12 \int_0^t \langle \xi(s),\varphi''\rangle ds\;,\\
L(t,\varphi) &:= M(t,\varphi)^2 - 4 \int_0^t \langle \xi(s)^2,\varphi^2\rangle ds\;,
\end{align*}
are local martingales on $[0,t_0]$. Then, $\xi$ is a solution of (\ref{mSHE}) on $[0,T)$.
\end{definition}
The first condition is a simple consequence of Proposition \ref{Prop:BoundMomentsKPZ}. To prove that the second condition is satisfied, we introduce the discrete analogues of the above processes. For all $\varphi\in\cC^\infty_c(\R)$, the processes
\begin{align*}
M^N(t,\varphi) = \langle \xi^N(t),\varphi\rangle_N - \langle \xi^N(0),\varphi\rangle_N - \frac12 \int_0^t \langle \xi^N(s),(2N)^{4\alpha} \Delta \varphi\rangle_N ds\;,\\
L^N(t,\varphi) = M^N(t,\varphi)^2 - \frac{2\lambda_N}{(2N)^{2\alpha}} \int_0^t \langle \xi^N(s)^2,\varphi^2\rangle_N ds + R^N_1(t,\varphi) + R^N_2(t,\varphi)\;,
\end{align*}
are martingales, where
\begin{align*}
R^N_1(t,\varphi) &:= -\frac{\lambda_N}{(2N)^{2\alpha}}\int_0^t \langle \xi^N(s) \Delta \xi^N(s) , \varphi^2 \rangle_N ds\;,\\
R^N_2(t,\varphi) &:= (2N)^{2\alpha} \int_0^t \langle \nabla^+\xi^N(s) \nabla^- \xi^N(s) , \varphi^2 \rangle_N ds\;.
\end{align*}
If we show that $R^N_1(t,\varphi)$ and $R^N_2(t,\varphi)$ vanish in probability when $N\rightarrow\infty$, then passing to the limit on a convergent subsequence, we easily deduce that the martingale problem above is satisfied. Below, we will be working on $[N-A(2N)^{2\alpha},N+A(2N)^{2\alpha}]$ where $A$ is a large enough value such that $[-A,A]$ contains the support of $\varphi$. The moments of $\xi^N$ on this interval are of order $1$ thanks to Proposition \ref{Prop:BoundMomentsKPZ}. Since $|\Delta \xi^N| \lesssim \gamma_N \xi^N$, we have
\begin{equation*}
\E\big[ |R^N_1(t,\varphi)| \big] \lesssim \gamma_N \int_0^t \frac{1}{(2N)^{2\alpha}} \sum_k \varphi^2\Big(\frac{k-N}{(2N)^{2\alpha}}\Big) ds \lesssim \gamma_N\;,
\end{equation*}
so that $R^N_1(t,\varphi)$ converges to $0$ in probability as $N\rightarrow\infty$. To prove that $R^N_2$ converges to $0$ in probability, it suffices to apply the following delicate estimate which is the analogue of Lemma 4.8 in~\cite{BG97}.

\begin{proposition}\label{Prop:DelicateKPZ}
There exists $\kappa > 0$ such that for all $A>0$, we have
\begin{equation*}
\E\Big[\big|\E\big[\nabla^+ \xi^N(t,\ell) \nabla^- \xi^N(t,\ell) \,|\,\cF_s\big]\big|\Big] \lesssim \frac{1}{(2N)^{2\alpha+\kappa}\sqrt{t-s}}\;,
\end{equation*}
uniformly over all $\ell\in[N-A(2N)^{2\alpha},N+A(2N)^{2\alpha}]$, all $s<t$ in a compact set of $[N^{-\alpha},T)$ and all $N\geq 1$.
\end{proposition}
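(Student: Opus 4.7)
The plan is to freeze the dynamics at time $s$ via a Duhamel decomposition, then apply the It\^o isometry and exploit a sharp cancellation in $\sum_k \nabla^+ p \nabla^- p$ to extract the $(t-s)^{-1/2}$ decay. Write $\xi^N(t,\ell) = J_s(t,\ell) + \tilde N^{s,t}_t(\ell)$, where $J_s(t,\ell) := \E[\xi^N(t,\ell)\,|\,\cF_s]$ solves the deterministic discrete heat equation with initial condition $\xi^N(s,\cdot)$ and boundary values $e^{\lambda_N t}$, and $\tilde N^{s,t}_r(\ell) := \int_s^r \sum_k p^N_{t-u}(k,\ell)\,dM^N(u,k)$ is a mean-zero $(\cF_r)$-martingale. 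The conditional expectation then splits (cross terms vanish by the martingale property) as
\begin{equ}
\E\bigl[\nabla^+\xi^N(t,\ell)\nabla^-\xi^N(t,\ell)\,\big|\,\cF_s\bigr] = \nabla^+ J_s \nabla^- J_s + \E\bigl[\nabla^+ \tilde N^{s,t}_t \,\nabla^- \tilde N^{s,t}_t\,\big|\,\cF_s\bigr].
\end{equ}

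For the stochastic part, the orthogonality of the $M^N(\cdot,k)$ across sites gives, via It\^o's isometry,
\begin{equ}
\E\bigl[\nabla^+ \tilde N^{s,t}_t \,\nabla^- \tilde N^{s,t}_t\,\big|\,\cF_s\bigr] = \int_s^t \sum_k \nabla^+_\ell p^N_{t-u}(k,\ell)\,\nabla^-_\ell p^N_{t-u}(k,\ell)\,\E[\phi^N(u,k)\,|\,\cF_s]\,du,
\end{equ}
with $\phi^N(u,k) := d\langle M^N(\cdot,k)\rangle_u/du$ satisfying $|\phi^N|\lesssim (2N)^{2\alpha}\xi^N(u,k)^2$ by (\ref{Eq:Bracket}). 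The key technical ingredient is the exact identity, derived from the symmetry $p^N_\tau(a,b)=p^N_\tau(b,a)$ and the Chapman--Kolmogorov relation (and valid in the bulk up to exponentially small corrections of the form provided by Lemma~\ref{Lemma:ExpoDecay}),
\begin{equ}
\sum_k \nabla^+_\ell p^N_\tau(k,\ell)\,\nabla^-_\ell p^N_\tau(k,\ell) \;=\; 2\bar p^N_{2\tau}(1) - \bar p^N_{2\tau}(0) - \bar p^N_{2\tau}(2) \;=\; -\Delta\bar p^N_{2\tau}(1) \;=\; -\frac{1}{2c_N}\,\partial_\tau \bar p^N_{2\tau}(1).
\end{equ}
Thanks to this time-gradient structure, freezing the $k$-dependence of $\phi^N$ at $k=\ell$ and integrating by parts in $u$ turns the naively divergent $\int(t-u)^{-3/2}du$ into a clean boundary term $\E[\phi^N(s,\ell)\,|\,\cF_s]\,\bar p^N_{2(t-s)}(1)/(2c_N)$, which is of order $(2N)^{2\alpha}\cdot(2N)^{-2\alpha}(t-s)^{-1/2}\cdot(2N)^{-4\alpha}=(2N)^{-4\alpha}(t-s)^{-1/2}$ after taking expectation (using the moment bound of Proposition~\ref{Prop:BoundMomentsKPZ} for $\phi^N(s,\ell)$). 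The residual $u$-integral of $\partial_u \E[\phi^N(u,\ell)\,|\,\cF_s]$ against $\bar p^N_{2(t-u)}(1)/c_N$ contributes $\lesssim (2N)^{-4\alpha}\sqrt{t-s}$, which is dominated by the target on the compact time set. Finally, the error from the substitution $\phi^N(u,k)\rightsquigarrow\phi^N(u,\ell)$ is controlled using the H\"older regularity of $\xi^N(u,\cdot)$ from Lemma~\ref{Lemma:IncrSpaceKPZ} at scale $\sqrt{t-u}$, together with a weighted heat-kernel bound $\sum_k|\nabla^+p\nabla^-p|\,|k-\ell|^\beta\lesssim(2N)^{-6\alpha-2\alpha\beta}(t-u)^{(\beta-3)/2}$, yielding an additional factor $(2N)^{-2\alpha\beta}$ and hence the desired $(2N)^{-\kappa}$ improvement for any $\kappa<2\alpha\beta$ with $\beta<1/2$.

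For the deterministic part, further split $J_s(t,\ell) = I^N_0(t,\ell) + N^t_s(\ell)$ via the Duhamel formulation of (\ref{Eq:DiscreteHC}), where $I^N_0$ was analysed in Proposition~\ref{Prop:IC}. The contribution $\nabla^+ I^N_0 \nabla^- I^N_0$ is deterministically $\lesssim t^{-1}(2N)^{-6\alpha}$, which is controlled by the target using the standing assumption $t\geq(2N)^{-\alpha}$. The term $\nabla^+ N^t_s \nabla^- N^t_s$ is handled by the Cauchy--Schwarz inequality combined with the It\^o-isometry second-moment computation $\E[(\nabla^\pm N^t_s)^2]\lesssim(2N)^{-4\alpha}(t-s)^{-1/2}$ (itself obtained by the same exact heat-kernel identity applied over $[0,s]$), and the cross terms are similarly controlled.

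\textbf{Main obstacle.} The delicate step is the integration-by-parts argument: one must justify differentiating $u\mapsto\E[\phi^N(u,\ell)\,|\,\cF_s]$ despite $\phi^N$ fluctuating on microscopic time scale $(2N)^{-4\alpha}$. The resolution is that, after conditioning, $\E[\xi^N(u,\ell)^2\,|\,\cF_s]$ satisfies a discrete parabolic equation with source, so at the macroscopic level it has bounded time-derivative (times the natural scale $(2N)^{2\alpha}$ of $\phi^N$); this is what makes the remainder in the integration by parts subdominant to the boundary term. The quantitative combination of this regularisation with the $k$-dependence bound from H\"older regularity is what fixes the admissible range of $\kappa$ (essentially $\kappa<2\alpha\beta$ with $\beta$ arbitrarily close to $1/2$).
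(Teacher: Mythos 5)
Your decomposition of the deterministic part ($J_s = I^N(t,\ell)+N^t_s(\ell)$, handled via Proposition \ref{Prop:IC} and a second-moment computation for $\nabla^\pm N^t_s$) matches the paper's treatment of its term $D^{N,1}_s$, and your telescoping identity $\sum_k \nabla^+_\ell \bar p^N_\tau\,\nabla^-_\ell \bar p^N_\tau = -\Delta \bar p^N_{2\tau}(1)$ is correct. But the stochastic part has a genuine gap, and it is precisely the one you flag as the ``main obstacle''. The bracket (\ref{Eq:Bracket}) contains the term $-(2N)^{4\alpha}\nabla^+\xi^N(r,k)\nabla^-\xi^N(r,k)\,dr$, so your $\E[\phi^N(u,k)\,|\,\cF_s]$ is not a known source that can be frozen and integrated by parts: it contains, up to the factor $(2N)^{4\alpha}$, exactly the conditional expectation the proposition is about. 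Any bound on it that is good enough to make your scheme work (in sign, in $k$-regularity, and in $u$-regularity) is of the same nature as the statement being proved. Concretely, the integration-by-parts remainder is $\int_s^t (2c_N)^{-1}\bar p^N_{2(t-u)}(1)\,\partial_u\E[\phi^N(u,\ell)\,|\,\cF_s]\,du$, and for this to be $\lesssim (2N)^{-2\alpha-\kappa}(t-s)^{-1/2}$ you need $|\partial_u\E[\phi^N(u,\ell)\,|\,\cF_s]|\lesssim (2N)^{4\alpha-\kappa}$. Your heuristic (``$\E[\xi^2\,|\,\cF_s]$ solves a discrete parabolic equation'') does not deliver this: even for the benign part $\lambda_N\E[\xi^2\,|\,\cF_s]$ one finds $\partial_u\E[\xi^2(u,\ell)\,|\,\cF_s] = 2c_N\E[\xi\Delta\xi\,|\,\cF_s]+\E[\phi^N(u,\ell)\,|\,\cF_s]$, which the available pointwise bounds only control by $(2N)^{3\alpha}$, giving $\partial_u\E[\phi^N]\gtrsim (2N)^{5\alpha}$; and for the part $(2N)^{4\alpha}\E[\nabla^+\xi\nabla^-\xi\,|\,\cF_s]$ a useful time-derivative bound is circular. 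Without the integration by parts, the absolute-value bound $|\phi^N|\lesssim (2N)^{2\alpha}\xi^2$ combined with $\int_s^t\sum_k|K^N_{t-u}(k,\ell)|\,du\lesssim (2N)^{-4\alpha}$ only yields $(2N)^{-2\alpha}$, missing both the factor $(t-s)^{-1/2}$ and the crucial gain $(2N)^{-\kappa}$.

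The paper resolves the self-reference differently: it isolates the $-(2N)^{4\alpha}\nabla^+\xi\nabla^-\xi$ term of the bracket as a feedback term and obtains the closed Volterra inequality $f^N_s(t,\ell)\le D^N_s(t,\ell)+\int_s^t\sum_{k}(2N)^{4\alpha}|K^N_{t-r}(k,\ell)|\,f^N_s(r,k)\,dr$ for the unknown $f^N_s(t,\ell)=\E[|\E[\nabla^+\xi^N\nabla^-\xi^N\,|\,\cF_s]|]$. The source $D^N_s$ (your ``deterministic part'' plus the $\lambda_N(\xi\Delta\xi+2\xi^2)$ part of the bracket and the off-bulk contribution) is shown in Lemma \ref{Lemma:BoundDN} to satisfy the target bound, the kernel has total mass $\beta<1$ by Lemma \ref{Lemma:BoundK}, and iterating the inequality propagates the bound on $D^N_s$ to $f^N_s$. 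If you want to salvage your route, you would still need such an iteration (or a Gr\"onwall-type argument) to absorb the $\nabla^+\xi\nabla^-\xi$ part of the bracket; the telescoping identity can then serve, as in the paper's treatment of $D^{N,3}_s$, to estimate the genuinely known part $\lambda_N(\xi\Delta\xi+2\xi^2)$ of the source.
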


\noindent To prove this proposition, we need to collect some preliminary results. Recall the decomposition (\ref{Eq:DiscreteHC}). If we set
\begin{equation*}
K^N_{t-r}(k,\ell) := \nabla^+ p^N_{t-r}(k,\ell) \nabla^- p^N_{t-r}(k,\ell)\;,
\end{equation*}
(here the gradients act on the variable $\ell$), then using the martingale property of $N^t_\cdot(\ell)$ we obtain for all $s\leq t$
\begin{align*}
\E\Big[\nabla^+ \xi^N(t,\ell) \nabla^- \xi^N(t,\ell)  \, | \, \cF_s \Big] &= \big(\nabla^+ I^{N}(t,\ell) + \nabla^+ N^t_s(\ell)\big)\big(\nabla^- I^{N}(t,\ell) + \nabla^- N^t_s(\ell)\big)\\
&\quad+ \E\bigg[\int_{s}^t \sum_{k=1}^{2N-1} K^N_{t-r}(k,\ell) \,d\langle M^N(\cdot,k)\rangle_r\, \Big| \, \cF_s\bigg]\;.
\end{align*}
Set
\begin{equation*}
f^N_s(t,\ell) := \E\bigg[\Big|\E\Big[\nabla^+\xi^N(t,\ell) \nabla^-\xi^N(t,\ell)\,\big|\, \cF_s\Big]\Big|\bigg]\;.
\end{equation*}
Fix $\epsilon > 0$. Using the expression of the bracket (\ref{Eq:Bracket}) of $M^N$, we get
\begin{equation}\label{Eq:fN}
f^N_s(t,\ell) \leq D^N_s(t,\ell) + \int_s^t \sum_{k\in B^N_{\epsilon/2}(r)} (2N)^{4\alpha} |K^N_{t-r}(k,\ell)| f^N_s(r,k) dr\;,
\end{equation}
where $D^N_s(t,\ell) = D^{N,1}_s(t,\ell) + D^{N,2}_s(t,\ell) + D^{N,3}_s(t,\ell)$ with
\begin{align*}
D^{N,1}_s(t,\ell) &:= \E\Big[\big|\big(\nabla^+ I^{N}(t,\ell) + \nabla^+ N^t_s(\ell)\big)\big(\nabla^- I^{N}(t,\ell) + \nabla^- N^t_s(\ell)\big) \big|\Big]\;,\\
D^{N,2}_s(t,\ell) &:= \int_s^t \sum_{k\notin B^N_{\epsilon/2}(r)} (2N)^{4\alpha} |K^N_{t-r}(k,\ell)| f^N_s(r,k) dr\;,\\
D^{N,3}_s(t,\ell) &:= \lambda_N\E\bigg[\Big|\E\Big[\int_{s}^t \sum_{k=1}^{2N-1} K^N_{t-r}(k,\ell)\big(\xi^N(r,k)\Delta\xi^N(r,k)\\
&\qquad\qquad\qquad+2\xi^N(r,k)^2\big)dr \, \big| \, \cF_s\Big]\Big|\bigg]\;.
\end{align*}
From now on, we fix a compact set $K\subset [0,T)$.
\begin{lemma}\label{Lemma:BoundDN}
Fix $\epsilon > 0$. There exists $\kappa > 0$ such that
\begin{equation}\label{Eq:BoundDNStrong}
D^N_s(t,\ell) \lesssim 1 \wedge \frac{1}{(2N)^{2\alpha+\kappa} \sqrt{t-s}}\;,
\end{equation}
uniformly over all $\ell\in B^N_{\epsilon}(t)$, all $N^{-\alpha} \leq s < t \in K$ and all $N\geq 1$.
\end{lemma}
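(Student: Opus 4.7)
The plan is to control the three contributions $D^{N,1}_s,D^{N,2}_s,D^{N,3}_s$ separately, with only $D^{N,3}_s$ requiring real work. For $D^{N,1}_s(t,\ell)$, I would combine the pointwise gradient estimate $|\nabla^\pm I^N(t,\ell)|\lesssim t^{-1/2}(2N)^{-3\alpha}$ from Proposition \ref{Prop:IC} with a BDG-type second-moment bound on $\nabla^\pm N^t_s(\ell)$ obtained by adapting the computation in Lemma \ref{Lemma:IncrSpaceKPZ}. Expanding the product of increments into four pieces, applying Cauchy--Schwarz, and using the lower bound $t\geq N^{-\alpha}$ then yields the required rate. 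For $D^{N,2}_s(t,\ell)$, the exponential decay of $p^N$ and of its gradients off the bulk (Lemma \ref{Lemma:ExpoDecay}), combined with the crude upper bound $f^N_s(r,k)\lesssim \E[\xi^N(r,k)^2]\lesssim b^N(r,k)^2$ from Proposition \ref{Prop:BoundMomentsKPZ}, produces an overall $O(e^{-\delta N^{2\alpha}})$, which easily beats the target.

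The main work lies in $D^{N,3}_s(t,\ell)$. I would start from the algebraic identity
\begin{equation*}
2\xi^N(r,k)^2+\xi^N(r,k)\Delta\xi^N(r,k)=\xi^N(r,k)\bigl(\xi^N(r,k+1)+\xi^N(r,k-1)\bigr),
\end{equation*}
which rewrites the integrand as a product of $\xi^N$ at three neighbouring sites, and then split $\xi^N(r,k)\xi^N(r,k\pm 1)=1+\bigl(\xi^N(r,k)\xi^N(r,k\pm 1)-1\bigr)$, the value $1$ being the hydrodynamic limit of $\xi^N$ in the bulk under the flat initial data. For the constant piece, the deterministic sum $\sum_k K^N_{t-r}(k,\ell)$ can be evaluated by Chapman--Kolmogorov and symmetry of the kernel as the discrete second derivative $\nabla^+\nabla^- p^N_{2(t-r)}(\ell,\ell)$; using the heat equation relation $\Delta p^N_u=c_N^{-1}\partial_u p^N_u$, the time-integral of this quantity telescopes (up to exponentially small corrections coming from killing at the endpoints) to something of order $c_N^{-1}\sim(2N)^{-4\alpha}$, so that after multiplication by $\lambda_N\sim(2N)^{2\alpha}$ it gives a bound of the claimed shape $(2N)^{-2\alpha-\kappa}/\sqrt{t-s}$. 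For the non-constant remainder, the spatial H\"older regularity of $\xi^N$ obtained in Lemma \ref{Lemma:IncrSpaceKPZ} provides smallness of $\E[|\xi^N(r,k)\xi^N(r,k\pm 1)-1|]$ when $k$ lies in $B^N_{\epsilon/2}(r)$, and this combines with the $\ell^1$ heat-kernel gradient bound of Lemma \ref{Lemma:HeatKernel} to supply the extra gain.

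The main obstacle is the bookkeeping in the third step: one must arrange the argument so that the prefactor $\lambda_N\sim(2N)^{2\alpha}$ is absorbed both by the smallness coming from the product structure of $K^N$ (two discrete gradients of the heat kernel) and by the near-constancy of $\xi^N$ around the hydrodynamic value $1$ inside $B^N_\epsilon(t)$, while keeping the correct $1/\sqrt{t-s}$ singularity. A secondary subtlety is that the conditional expectation $\E[\,\cdot\,|\,\cF_s]$ sits inside an absolute value which itself sits inside the outer expectation, so only deterministic factors (such as $\sum_k K^N_{t-r}(k,\ell)$ or $\lambda_N$) may be pulled out at will, and the H\"older control of $\xi^N$ must be applied at the level of the integrand \emph{before} any conditional expectation is taken.
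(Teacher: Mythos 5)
Your treatment of $D^{N,1}_s$ and $D^{N,2}_s$ follows the paper's proof: the gradient bound on $I^N$ from Proposition \ref{Prop:IC}, the It\^o-isometry/bracket estimate for $\nabla^\pm N^t_s(\ell)$, and the exponential off-bulk decay of $q^N$ are exactly the ingredients used there, and your bookkeeping for these two terms is sound.

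The gap is in $D^{N,3}_s$, specifically in the centering $\xi^N(r,k)\xi^N(r,k\pm 1)=1+\bigl(\xi^N(r,k)\xi^N(r,k\pm 1)-1\bigr)$ and the claim that the remainder is small in $L^1$. Writing $\xi^N(r,k)\xi^N(r,k+1)-1=\xi^N(r,k)\nabla^+\xi^N(r,k)+\bigl(\xi^N(r,k)^2-1\bigr)$, Lemma \ref{Lemma:IncrSpaceKPZ} only controls the first piece; it says nothing about $\xi^N(r,k)^2-1$. That quantity is \emph{not} small: $\xi^N(r,\cdot)$ does not concentrate around its mean $1$ — its limit is the solution of the multiplicative stochastic heat equation (\ref{mSHE}), for which $\E[\xi(r,x)^2]>1$ at any $r>0$ — so $\E\bigl[\bigl|\E[\xi^N(r,k)^2-1\,|\,\cF_s]\bigr|\bigr]$ is of order one for $r$ bounded away from $0$, and the conditional expectation provides no cancellation since $\E[\xi^N(r,k)^2\,|\,\cF_s]\ge \E[\xi^N(r,k)\,|\,\cF_s]^2$ with the latter an order-one random variable. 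Feeding an $O(1)$ remainder into $\lambda_N\int_s^t\sum_k|\bar{K}^N_{t-r}(k,\ell)|\,dr\lesssim \lambda_N c_N^{-1}\sim (2N)^{-2\alpha}$ lands exactly on the borderline rate and misses the crucial extra factor $(2N)^{-\kappa}$, which is the whole point of the lemma (it is what makes $R^N_2$ vanish). The fix, which is what the paper does, is to center $\xi^N(r,k)^2$ around $\xi^N(t,\ell)^2$ rather than around $1$: the difference $\E\bigl[|\xi^N(r,k)^2-\xi^N(t,\ell)^2|\bigr]$ \emph{is} small, of order $|{(\ell-k)}/{(2N)^{2\alpha}}|^{2\beta}+|t-r|^{\beta}+(2N)^{-\alpha}$ by Lemmas \ref{Lemma:IncrSpaceKPZ} and \ref{Lemma:IncrTimeKPZ}, and the kernel $\bar{K}^N_{t-r}(k,\ell)$ localizes near $(t,\ell)$ so this modulus integrates to the required gain; the leftover term is the deterministic integral $\int_s^t\sum_k\bar{K}^N_{t-r}(k,\ell)\,dr$ (your telescoping computation, which correctly yields $-\tfrac{1}{2c_N}\bar{p}^N_{2(t-s)}(1)\lesssim c_N^{-1}(1\wedge (c_N(t-s))^{-1/2})$ — keep the second factor, it is where the $1/\sqrt{t-s}$ and the extra $(2N)^{-2\alpha}$ come from) multiplied by $\E[\xi^N(t,\ell)^2\,|\,\cF_s]$, whose first absolute moment is bounded by Proposition \ref{Prop:BoundMomentsKPZ}.
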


\begin{proof}
Let us observe that we have the simple bound
\begin{equation}\label{Eq:TrivialBoundfN}
f^N_s(r,k) \lesssim b^N(r,k)^2 \gamma_N^2\;,
\end{equation}
uniformly over all the parameters. Recall also that $b^N(t,\ell)$ is of order $1$ whenever $\ell \in B^N_\epsilon(t)$.\\
Let $\bar{p}^N$ be the discrete heat kernel on the whole line $\Z$ sped up by $2c_N$, see Appendix \ref{Appendix:Kernel}, and set $\bar{K}^N_t(k,\ell) = \nabla^+ \bar{p}^N_t(\ell-k)\nabla^- \bar{p}^N_t(\ell-k)$.\\
\textit{Bound of $D^{N,1}_s$.} It suffices to bound the square of the $L^2$-norms of $\nabla^\pm I^N(t,\ell)$ and $\nabla^\pm N^t_s(\ell)$. By Proposition \ref{Prop:IC}, we deduce that $(\nabla^\pm I^N(t,\ell))^2 \lesssim  N^{-5\alpha}$ uniformly over all $N^{-\alpha} \leq t \in K$, all $\ell \in B^N_\epsilon(t)$ and all $N\geq 1$.\\
We now treat $\nabla^+ N^t_s(\ell)$ (the proof is the same with $\nabla^-$). Using again Lemmas \ref{Lemma:ExpoDecay} and \ref{Lemma:DecaySeriesKernel}, we have
\begin{align*}
\E\Big[\big(\nabla^+ N^t_s(\ell)\big)^2\Big] &\lesssim \E \Big[\sum_{k=1}^{2N-1} \int_0^s\big(\nabla^+ p^N_{t-r}(k,\ell)\big)^2 d\langle M(\cdot,k)\rangle_r\Big] \\
&\lesssim (2N)^{2\alpha} \!\!\sum_{k\in B^N_{\epsilon/2}(r)}\!\! \int_0^s\big(\nabla^+ \bar{p}^N_{t-r}(k,\ell)\big)^2 dr + \cO\big(N^{1+2\alpha}e^{-\delta N^{2\alpha}}\big)\;,
\end{align*}
uniformly over all $\ell\in B^N_\epsilon(t)$, all $t\in K$ and all $N\geq 1$. Using Lemma \ref{Lemma:BoundHeatKernelZ}, we easily deduce that the last expression is bounded by a term of order $1 \wedge 1/(\sqrt{t-s}(2N)^{4\alpha})$ as required.\\
\textit{Bound of $D^{N,2}$.} Using the exponential decay of Lemma \ref{Lemma:ExpoDecay} and (\ref{Eq:TrivialBoundfN}), we deduce that there exists $\delta > 0$ such that
\begin{equation*}
\int_s^t \sum_{k\notin B^N_{\epsilon/2}(r)} (2N)^{4\alpha} |K^N_{t-r}(k,\ell)| f^N_s(r,k) dr \lesssim \int_s^t \sum_{k\notin B^N_{\epsilon/2}(r)} (2N)^{2\alpha} e^{-\delta N^{2\alpha}} dr\;,
\end{equation*}
uniformly over all $\ell\in B^N_\epsilon(t)$, all $s\leq t \in K$ and all $N\geq 1$. This trivially yields a bound of order $N^{-3\alpha}$ as required.\\
\textit{Bound of $D^{N,3}$.} By Lemmas \ref{Lemma:DecaySeriesKernel} and \ref{Lemma:ExpoDecay}, there exists $\delta>0$ such that $D^{N,3}_s(t,\ell)$ can be rewritten as
\begin{equation}\label{Eq:ExpressionDelicateRewritten}
\lambda_N\E\bigg[\Big|\E\Big[\int_{s}^t \sum_{k\in B^N_{\epsilon/2}(r)} \bar{K}^N_{t-r}(k,\ell)\big(\xi^N(r,k)\Delta\xi^N(r,k)+2\xi^N(r,k)^2\big)dr \, \big| \, \cF_s\Big]\Big|\bigg]\;,
\end{equation}
up to an error of order $N^{2\alpha+1}e^{-\delta N^{2\alpha}}$, uniformly over all $\ell \in B^N_\epsilon(t)$, all $t\in K$ and all $N\geq 1$. The error term satisfies the bound of the statement. We bound separately the two contributions arising in (\ref{Eq:ExpressionDelicateRewritten}). First, using the almost sure bound $|\Delta \xi^N(r,k)| \lesssim \gamma_N \xi^N(r,k)$, we get
\begin{align*}
{}&\lambda_N\E\bigg[\Big|\E\Big[\int_{s}^t \sum_{k\in B^N_{\epsilon/2}(r)} \bar{K}^N_{t-r}(k,\ell) \xi^N(r,k)\Delta\xi^N(r,k)dr \, \big| \, \cF_s\Big]\Big|\bigg]\\
&\lesssim (2N)^{\alpha} \int_s^t \sum_{k\in B^N_{\epsilon/2}(r)} |\bar{K}^N_{t-r}(k,\ell)| dr \;,
\end{align*}
uniformly over all $\ell \in B^N_\epsilon(t)$, all $t\in K$ and all $N\geq 1$. Using Lemma \ref{Lemma:BoundHeatKernelZ}, this easily yields a bound of order $1/(2N)^{2\alpha + \kappa}$ with $\kappa > 0$, as required. Second, we have
\begin{align*}
{}&\int_{s}^t \sum_{k\in B^N_{\epsilon/2}(r)} \bar{K}^N_{t-r}(k,\ell)\E\Big[ \xi^N(r,k)^2 \, \big| \, \cF_s\Big]dr\\
&= \int_{s}^t \sum_{k\in B^N_{\epsilon/2}(r)} \bar{K}^N_{t-r}(k,\ell)\E\Big[ \xi^N(r,k)^2 - \xi^N(t,\ell)^2 \, \big| \, \cF_s\Big]dr\\
&+ \int_{s}^t \sum_{k\in B^N_{\epsilon/2}(r)} \bar{K}^N_{t-r}(k,\ell)dr \, \E\Big[ \xi^N(t,\ell)^2 \, \big| \, \cF_s\Big]\;.
\end{align*}
We claim that we have
\begin{equation}\label{Eq:IPP}
\int_{s}^t \sum_{k\in B^N_{\epsilon/2}(r)} \bar{K}^N_{t-r}(k,\ell)dr = -\int_{t-s}^\infty \sum_{k\in\Z} \bar{K}^N_{r}(k,\ell)dr - \int_{s}^t \sum_{k\notin B^N_{\epsilon/2}(r)} \bar{K}^N_{t-r}(k,\ell)dr\;.
\end{equation}
We postpone the proof of this identity. The second term on the right can be bounded using Lemma \ref{Lemma:ExpoDecay}: it has a negligible contribution. Using Lemma \ref{Lemma:BoundHeatKernelZ} on the first term, we easily deduce that
\begin{equation*}
\lambda_N \E\bigg[\Big|\int_{s}^t \sum_{k\in B^N_{\epsilon/2}(r)} \bar{K}^N_{t-r}(k,\ell)dr \, \E\Big[ \xi^N(t,\ell)^2 \, \big| \, \cF_s\Big]\Big|\bigg] \lesssim 1 \wedge \frac{1}{\sqrt{t-s} (2N)^{4\alpha}}\;,
\end{equation*}
uniformly over all $\ell \in B^N_\epsilon(t)$, all $t\in K$ and all $N\geq 1$. On the other hand, for any given $\beta \in (0,1/4)$, the Cauchy-Schwarz inequality together with Lemmas \ref{Lemma:IncrSpaceKPZ} and \ref{Lemma:IncrTimeKPZ} yields
\begin{align*}
\E\Big[\big|\xi^N(r,k)^2 - \xi^N(t,\ell)^2 \big| \Big] &\lesssim \E\Big[\big(\xi^N(r,k)+\xi^N(r,\ell)\big)^2\Big]^{\frac12}\E\Big[\big(\xi^N(r,k)-\xi^N(r,\ell)\big)^2\Big]^{\frac12}\\
&+ \E\Big[\big(\xi^N(r,\ell)+\xi^N(t,\ell)\big)^2\Big]^{\frac12}\E\Big[\big(\xi^N(r,\ell)-\xi^N(t,\ell)\big)^2\Big]^{\frac12}\\
&\lesssim 1 \wedge \Big( \Big|\frac{\ell-k}{(2N)^{2\alpha}}\Big|^{2\beta} + |t-r|^\beta + \frac{1}{(2N)^{\alpha}} \Big) \;, 
\end{align*}
uniformly over all $\ell \in B^N_\epsilon(t)$, all $k\in B^N_{\epsilon/2}(r)$, all $r\leq t \in K$ and all $N\geq 1$. Using Lemma \ref{Lemma:BoundHeatKernelZ}, it is simple to deduce the existence of $\kappa \in (0,1)$ such that
\begin{equation*}
\lambda_N \E\bigg[\Big|\int_{s}^t \sum_{k\in B^N_{\epsilon/2}(r)} \bar{K}^N_{t-r}(k,\ell)\E\Big[ \xi^N(r,k)^2 - \xi^N(t,\ell)^2 \, \big| \, \cF_s\Big]dr \Big|\bigg] \lesssim \frac{1}{(2N)^{2\alpha+\kappa}}\;,
\end{equation*}
uniformly over all $s<t \in K$, all $\ell \in B^N_\epsilon(t)$ and all $N\geq 1$.\\
It remains to establish \eqref{Eq:IPP}. To that end, we observe that
\begin{align*}
&\int_{s}^t \sum_{k\in B^N_{\epsilon/2}(r)} \bar{K}^N_{t-r}(k,\ell)dr + \int_{s}^t \sum_{k\notin B^N_{\epsilon/2}(r)} \bar{K}^N_{t-r}(k,\ell)dr\\
&= \int_{0}^{t-s}\sum_{k\in\Z} \bar{K}^N_{r}(k,\ell)dr\\
&=\int_{0}^{\infty}\sum_{k\in\Z} \bar{K}^N_{r}(k,\ell)dr - \int_{t-s}^{\infty}\sum_{k\in\Z} \bar{K}^N_{r}(k,\ell)dr\;.
\end{align*}
Notice that the last two integrals converge absolutely thanks to the estimates collected in Lemma \ref{Lemma:BoundHeatKernelZ}. An integration by parts shows that
$$ \sum_{k\in\Z} \bar{K}^N_{r}(k,\ell) = - \sum_{k\in\Z} \bar{p}^N_{r}(\ell-k) \Delta \bar{p}^N_{r}(\ell-k-1)\;,$$
but also that
$$ \sum_{k\in\Z} \bar{K}^N_{r}(k,\ell) = - \sum_{k\in\Z} \bar{p}^N_{r}(\ell-k-1) \Delta \bar{p}^N_{r}(\ell-k)\;.$$
Recall that $\partial_t \bar{p}^N_{r} = \Delta \bar{p}^N_{r}$. Therefore
$$ \sum_{k\in\Z} \bar{K}^N_{r}(k,\ell) = -\frac12 \sum_{k\in\Z} \partial_t\big(\bar{p}^N_{r}(\ell-k) \bar{p}^N_{r}(\ell-k-1)\big)\;.$$
The integral over $(0,\infty)$ of this last expression vanishes. Therefore,
\begin{align*}
&\int_{s}^t \sum_{k\in B^N_{\epsilon/2}(r)} \bar{K}^N_{t-r}(k,\ell)dr + \int_{s}^t \sum_{k\notin B^N_{\epsilon/2}(r)} \bar{K}^N_{t-r}(k,\ell)dr\\
&= - \int_{t-s}^{\infty}\sum_{k\in\Z} \bar{K}^N_{r}(k,\ell)dr\;,
\end{align*}
thus concluding the proof of the claim.
\end{proof}

\noindent We have all the elements at hand to prove the main result of this section.
\begin{proof}[Proof of Proposition \ref{Prop:DelicateKPZ}]
Iterating (\ref{Eq:fN}) and using Lemma \ref{Lemma:BoundK} and the bound (\ref{Eq:TrivialBoundfN}), we deduce that
\begin{equation*}
f^N_s(t,\ell) \leq D^N_s(t,\ell) + \sum_{n\geq 1} H_s(t,\ell,n)\;,
\end{equation*}
where for all $n\geq 1$, we set $t_{n+1}=t$, $k_{n+1}=\ell$ and
\begin{equation*}
H_s(t,\ell,n):= \int\limits_{s\leq t_1\leq \ldots \leq t_{n} \leq t} \sum_{k_i \in B_{\epsilon/2}^N(t_i)} D_s^N(t_1,k_1) \prod_{i=1}^{n} (2N)^{4\alpha}|K^N_{t_{i+1}-t_i}(k_i,k_{i+1})| dt_i\;.
\end{equation*}
By Lemma \ref{Lemma:BoundDN}, we already know that $D^N_s(t,\ell)$ satisfies the bound of the statement of Proposition \ref{Prop:DelicateKPZ}. To conclude the proof of the proposition, we only need to show that this is also the case for the sum over $n\geq 1$ of $H_s(t,\ell,n)$.\\
Fix $A>0$. Let $n_0=c\log N$, for an arbitrary $c > -3\alpha/\log\beta$, where $\beta < 1$ is taken from Lemma \ref{Lemma:BoundK}. Using Lemmas \ref{Lemma:BoundDN} and  \ref{Lemma:BoundK}, we easily deduce that $H_s(t,\ell,n) \lesssim \beta^n$ uniformly over all $n\geq 1$, all $\ell \in \{N-A(2N)^{2\alpha}, N+A(2N)^{2\alpha}\}$ and all $N^{-\alpha} \leq s \leq t \in K$.  Given the definition of $n_0$, we deduce that
$$ \sum_{n\geq n_0}H_s(t,\ell,n) \lesssim (2N)^{-3\alpha}\;,$$
uniformly over the same set of parameters, as required.\\
Let us now treat $\sum_{n<n_0} H_s(t,\ell,n)$. We introduce
\begin{equation*}
A_s(t,\ell,n):= \int\limits_{s\leq t_1\leq \ldots \leq t_{n} \leq t} \sum_{k_i \in B^N_\epsilon(t_i)} D_s^N(t_1,k_1) \prod_{i=1}^{n} (2N)^{4\alpha}|K^N_{t_{i+1}-t_i}(k_i,k_{i+1})| dt_i\;.
\end{equation*}
By Lemma \ref{Lemma:BoundDN}, we have
\begin{equation*}
A_s(t,\ell,n)\lesssim \!\!\!\! \int\limits_{s\leq t_1\leq \ldots \leq t_{n} \leq t}\!\! \sum_{k_i \in B^N_\epsilon(t_i)} \frac{1}{(2N)^{2\alpha+\kappa}\sqrt{t_1-s}} \prod_{i=1}^{n} (2N)^{4\alpha}|K^N_{t_{i+1}-t_i}(k_i,k_{i+1})| dt_i\;.
\end{equation*}
If we restrict the domain of integration to those $t_1$ such that $t_1-s \geq (t-s)/(n+1)$, then a simple calculation based on Lemma \ref{Lemma:BoundK} ensures that this restricted integral is bounded by a term of order
\begin{equation*}
\frac{\sqrt{n+1} \beta^n}{(2N)^{2\alpha+\kappa}\sqrt{t-s}} \lesssim \frac{1}{(2N)^{2\alpha+\kappa}\sqrt{t-s}}\;,
\end{equation*}
for all $n\leq n_0$. On the other hand, when $t_1-s < (t-s)/(n+1)$ there is at least one increment $t_{i+1}-t_i$ which is larger than $(t-s)/(n+1)$. By symmetry, let us consider the case $i=1$. By Lemma \ref{Lemma:DecaySeriesKernel}, we can replace $K^N_{t_{2}-t_1}(k_1,k_{2})$ with $\bar{K}^N_{t_{2}-t_1}(k_1,k_{2})$ up to a negligible term. By Lemma \ref{Lemma:BoundHeatKernelZ}, we bound the sum over $k_1$ of $|\bar{K}^N_{t_{2}-t_1}(k_1,k_{2})|$ by a term of order $(2N)^{-4\alpha}(t_2-t_1)^{-1}$ thus yielding the bound
\begin{align*}
{}&\int_s^{s+\frac{t-s}{n+1}} \sum_{k_1 \in B^N_\epsilon(t_1)} \frac{1}{(2N)^{2\alpha+\kappa}\sqrt{t_1-s}}\, (2N)^{4\alpha}|K^N_{t_{2}-t_1}(k_1,k_{2})| dt_1\\
&\lesssim \int_{s}^{s+\frac{t-s}{n+1}} \frac{1}{(2N)^{2\alpha+\kappa}\sqrt{t_1-s}\,(t_2-t_1)}\, dt_1 + \cO(N^{1+4\alpha}e^{-\delta N^{2\alpha}})\\
&\lesssim \frac{\sqrt{\frac{t-s}{n+1}}}{(2N)^{2\alpha+\kappa}\frac{t-s}{n+1}} \lesssim \frac{1}{(2N)^{2\alpha+\kappa'} \sqrt{t-s}}\;,
\end{align*}
for all $\kappa'\in (0,\kappa)$ and all $n< n_0$. Using Lemma \ref{Lemma:BoundK}, we can bound the integral over $t_2,\ldots,t_n$ of the remaining terms by a term of order $\beta^{n-1}$. Consequently, we have proved that there exists $\kappa'>0$ such that
\begin{equation}\label{Eq:BoundAs}
A_s(t,\ell,n) \lesssim \frac{\beta^{n-1}}{(2N)^{2\alpha + \kappa'}\sqrt{t-s}}\;,
\end{equation}
uniformly over all $\ell \in [N-A(2N)^{2\alpha},N+A(2N)^{2\alpha}]$, all $t\in K$, all $s\in [0,t]$, all $n < n_0$ and all $N\geq 1$.\\
Finally, we set $B_s(t,\ell,n) := H_s(t,\ell,n) - A_s(t,\ell,n)$. We can replace each occurrence of $p^N$ by $\bar{p}^N$ up to a negligible term, using Lemma \ref{Lemma:DecaySeriesKernel}. Among the parameters $k_1,\ldots,k_n$ involved in the definition of $B_s(t,\ell,n)$, at least one them, say $k_{i_0}$, belongs to $B^N_{\epsilon/2}(t_{i_0})\backslash B^N_\epsilon(t_{i_0})$. Then, using the bound $\big| \bar{K}^N_{t}(k,\ell) \big| \leq \bar{p}^N_{t}(k,\ell)$ together with the semigroup property of the discrete heat kernel at the second line and the exponential decay of Lemma \ref{Lemma:ExpoDecay}, we get
\begin{align*}
{}&\sum_{k_{i_0 +1},\ldots,k_n} \prod_{i=i_0}^n \big| \bar{K}^N_{t_{i+1}-t_i}(k_i,k_{i+1}) \big|\\
&\leq \sum_{k_{i_0 +1},\ldots,k_n} \prod_{i=i_0}^n \bar{p}^N_{t_{i+1}-t_i}(k_i,k_{i+1})= \bar{p}^N_{t-t_{i_0}}(k_{i_0},\ell)\lesssim e^{-\delta N^{2\alpha}}\;,
\end{align*}
uniformly over all the parameters. Using Lemma \ref{Lemma:BoundDN}, one easily gets
\begin{equation*}
B_s(t,\ell,n) \lesssim (2N)^{n4\alpha} e^{-\delta(2N)^{2\alpha}}\;,
\end{equation*}
uniformly over all $n\geq 1$, all $s<t\in K$ and all $\ell \in [N-A(2N)^{2\alpha},N+A(2N)^{2\alpha}]$. Given the definition of $n_0$, we deduce that the sum over all $n<n_0$ of the latter is negligible w.r.t.~$(2N)^{-3\alpha}$, uniformly over the same set of parameters. This concludes the proof.
\end{proof}

\section{Appendix}

\subsection{Martingale inequalities}\label{Appendix:Mgale}

Let $X(t),t\geq 0$ be a c\`adl\`ag, mean zero, square-integrable martingale. Let $\langle X \rangle_t, t\geq 0$ denote the bracket of $X$, that is, the unique predictable process such that $X^2-\langle X \rangle$ is a martingale. Let $[X]_t$ denote its quadratic variation: in the case where the martingale is of finite variation, we have
\begin{equation*}
{[X]}_t = \sum_{\tau \in (0,t]} (X_\tau - X_{\tau-})^2\;.
\end{equation*}
The Burkholder-Davis-Gundy inequality ensures that for every $p \geq 1$, there exists $c(p) > 0$ such that
\begin{equation}\label{Eq:BDG}
\E\big[ |X_t|^p \big]^{\frac{1}{p}} \leq c(p) \E\Big[ [X]_t^{\frac{p}{2}} \Big]^{\frac{1}{p}}\;.
\end{equation}
It happens that the process $D_t = [X]_t - \langle X \rangle_t$ is also a martingale. Thus, using twice the Burkholder-Davis-Gundy inequality, one gets that for every $p\geq 2$ there exists $c'(p) > 0$ such that
\begin{equation}\label{Eq:BDG2}
	\E\big[ |X_t|^p \big]^{\frac{1}{p}} \leq c'(p)\Big( \E\Big[ \langle X \rangle_t^{\frac{p}{2}} \Big]^{\frac{1}{p}} + \E\Big[ [ D ]_t^{\frac{p}{4}} \Big]^{\frac{1}{p}}\Big)\;.
\end{equation}
We will also rely on the following inequality
\begin{equation}\label{Eq:BDG3}
	\E\Big[ \sup_{s\leq t} |X_s|^p \Big]^{\frac{1}{p}} \leq c''(p) \Big( \E\Big[ \langle X \rangle_t^{\frac{p}{2}} \Big]^{\frac{1}{p}} + \E\Big[ \sup_{s\leq t}|X_s-X_{s-}|^p \Big]^{\frac{1}{p}} \Big)\;,
\end{equation}
which can be found in~\cite{Lepingle} for instance.

\subsection{Discrete heat kernel estimates}\label{Appendix:Kernel}

We introduce the fundamental solution $p^N_t(k,\ell)$ of the discrete heat equation sped up by a factor $c_N > 0$
\begin{align}\label{DiscreteHeat}
	\begin{cases}
	\partial_t p^N_t(k,\ell) = c_N \Delta p^N_t(k,\ell)\;,\\
	p^N_0(k,\ell) = \delta_k(\ell)\;,\\
	p^N_t(k,0) = p^N_t(k,2N) = 0\;,\end{cases}
\end{align}
for all $k,\ell \in \{1,\ldots,2N-1\}$, as well as its analogue $\bar{p}^N_t(\ell)$ on $\Z$:
\begin{align}\label{DiscreteHeatKPZWholeLine}
	\begin{cases}\partial_t \bar{p}^N_t(\ell) = c_N \Delta \bar{p}^N_t(\ell)\;,\\
	\bar{p}^N_0(\ell) = \delta_0(\ell)\;,\end{cases}
\end{align}
for all $\ell \in \Z$. The latter is more tractable than the former since it is translation invariant. Using a coupling between a simple random walk on $\Z$ and a simple random walk killed at $0$ and $2N$, we get the elementary bound $p^N_t(k,\ell) \leq \bar{p}^N_t(\ell-k)$ for all $k,\ell \in \{1,\ldots,2N-1\}$ and all $t\geq 0$. The following estimates are standard, see for instance Lemma A.1 in~\cite{DemboTsai} or Lemma 26 in~\cite{EthLab15}.

\begin{lemma}\label{Lemma:BoundHeatKernel}
For all $\beta \in [0,1]$, we have
\begin{align*}
p^N_t(k,\ell) &\lesssim 1 \wedge \frac{1}{\sqrt{t c_N}}\;,\\
|p^N_t(k,\ell)-p^N_t(k,\ell')| &\lesssim 1 \wedge \frac{1}{\sqrt{t c_N}}\Big|\frac{\ell-\ell'}{\sqrt{c_N}}\Big|^\beta\;,\\
|p^N_t(k,\ell)-p^N_{t'}(k,\ell)| &\lesssim 1 \wedge \frac{1}{\sqrt{t c_N}}\Big|\frac{t-t'}{t}\Big|^\beta\;,\\
\end{align*}
uniformly over all $0 \leq t < t'$, all $k,\ell,\ell' \in \{1,\ldots,2N-1\}$ and all $N\geq 1$. The same bounds hold for $\bar{p}^N$.
\end{lemma}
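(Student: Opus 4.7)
The plan is to first establish all three bounds for the whole-line kernel $\bar{p}^N$, then transfer them to $p^N$ via the method of images. Since $\bar{p}^N_t$ is a probability distribution on $\Z$, the Fourier representation
\begin{equ}
\bar{p}^N_t(\ell) = \frac{1}{2\pi}\int_{-\pi}^{\pi} e^{-i\ell\theta}\exp\big(2c_N t(\cos\theta - 1)\big)\,d\theta
\end{equ}
is the cleanest starting point. From the elementary bound $|\bar p^N_t(\ell)|\leq 1$ and from splitting the integral around $\theta = 0$ using $\cos\theta - 1 \leq -\theta^2/4$ for $|\theta|\leq \pi/2$ and the exponentially small contribution of $|\theta|>\pi/2$, one obtains the first bound $\bar p^N_t(\ell)\lesssim 1\wedge(tc_N)^{-1/2}$.

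For the $\beta=1$ spatial estimate, differentiating in $\ell$ brings down a factor $(e^{-i\theta}-1)$ in the Fourier representation of $\nabla^+\bar p^N_t(\ell)$; since $|e^{-i\theta}-1|\lesssim |\theta|$ this yields $|\nabla^+\bar p^N_t(\ell)|\lesssim c_N^{-1/2}(tc_N)^{-1/2}$, hence $|\bar p^N_t(\ell)-\bar p^N_t(\ell')|\lesssim (tc_N)^{-1/2}|\ell-\ell'|/\sqrt{c_N}$. Combined with the trivial bound of the previous step, the elementary interpolation $\min(A,B|\ell-\ell'|)\leq A^{1-\beta}B^\beta|\ell-\ell'|^\beta$ produces the whole range $\beta\in[0,1]$. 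For the temporal bound I would use the discrete heat equation $\partial_t \bar p^N_t = c_N\Delta \bar p^N_t$ together with the iterated spatial estimate $|\Delta \bar p^N_t(\ell)|\lesssim c_N^{-1}(tc_N)^{-1/2}t^{-1}$ (equivalently, two factors $(e^{-i\theta}-1)$ in Fourier): integrating gives $|\bar p^N_t(\ell)-\bar p^N_{t'}(\ell)|\lesssim (tc_N)^{-1/2}|t-t'|/t$, and interpolating once more with the trivial bound yields the case of general $\beta\in[0,1]$.

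To pass from $\bar p^N$ to $p^N$, I would use the reflection principle. Extending by antisymmetry across $0$ and periodically with period $4N$ gives the representation
\begin{equ}
p^N_t(k,\ell) = \sum_{n\in\Z}\Big(\bar p^N_t(\ell-k+4nN) - \bar p^N_t(\ell+k+4nN)\Big)\;.
\end{equ}
Because $\bar p^N_t$ enjoys Gaussian-type tails on the scale $\sqrt{tc_N}$ (obtained again from the Fourier representation by shifting the contour, or equivalently from large deviation bounds for the simple random walk), each of the three estimates already proved on $\bar p^N$ passes to the series term by term, the sum being dominated by the term $n=0$ up to a geometric correction. This gives the corresponding estimates for $p^N_t(k,\ell)$.

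The only mildly delicate point will be the Fourier estimate on $|\Delta \bar p^N_t(\ell)|$ entering the temporal Hölder bound, since one must track two factors of $(e^{-i\theta}-1)$ while keeping the gain in $t$; splitting the integration domain at $|\theta|\sim (tc_N)^{-1/2}$ and treating the two regions separately handles this cleanly. Everything else is a direct interpolation or a term-by-term summation argument.
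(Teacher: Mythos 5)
The paper never proves this lemma --- it is quoted as ``classical'' with pointers to Dembo--Tsai and to~\cite{EthLab15} --- so there is no in-paper argument to compare against. Your Fourier representation, the interpolation $\min(A,B|\ell-\ell'|)\leq A^{1-\beta}B^{\beta}|\ell-\ell'|^{\beta}$, and the transfer to $p^N$ by the method of images with Gaussian tail control are exactly the standard route of those references; the on-diagonal bound and the temporal bound (via $\partial_t\bar p^N=c_N\Delta\bar p^N$ and $|\Delta\bar p^N_t(\ell)|\lesssim (tc_N)^{-3/2}$) are correct as you present them.

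There is, however, a genuine error in your spatial gradient step. The factor $|e^{-i\theta}-1|\lesssim|\theta|$ gives
\begin{equ}
|\nabla^+\bar p^N_t(\ell)|\;\lesssim\;\int_{-\pi}^{\pi}|\theta|\,e^{-c\,tc_N\theta^2}\,d\theta\;\lesssim\;\frac{1}{tc_N}\;,
\end{equ}
not $c_N^{-1/2}(tc_N)^{-1/2}=c_N^{-1}t^{-1/2}$ as you claim. The two agree only for $t\geq 1$; for $t<1$ your bound is strictly smaller and is in fact false: at $t\asymp c_N^{-1}$ the walk has made only $O(1)$ jumps, so nearest-neighbour increments of $\bar p^N_t$ are of order one (e.g.\ $\bar p^N_{1/c_N}(0)-\bar p^N_{1/c_N}(1)=e^{-2}(I_0(2)-I_1(2))\approx 0.09$), whereas $c_N^{-1}t^{-1/2}=c_N^{-1/2}\to 0$. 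With the correct gradient bound your interpolation produces $|\bar p^N_t(\ell)-\bar p^N_t(\ell')|\lesssim (tc_N)^{-1/2}\bigl(|\ell-\ell'|/\sqrt{tc_N}\bigr)^{\beta}$, i.e.\ the parabolic-scaling form of the references, which implies the displayed inequality (with $\sqrt{c_N}$ rather than $\sqrt{tc_N}$ in the H\"older factor) only when $t\gtrsim 1$. The same order-one increment at $t\asymp c_N^{-1}$ shows that the displayed spatial bound, read literally, cannot hold uniformly down to $t=0$ for any $\beta>0$: the statement should carry $\sqrt{tc_N}$, and that corrected form is what your method proves and what is actually used wherever the lemma is invoked. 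So the fix is to state and prove the gradient estimate as $(tc_N)^{-1}$ and carry the parabolic scaling through the interpolation, flagging the discrepancy with the printed statement rather than trying to reach it.
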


Let us also state the following simple bounds.
\begin{lemma}\label{Lemma:BoundHeatKernelZ}
We have $\sum_k \bar{p}^N_t(k)|k| \lesssim \sqrt{c_N t}\;,\quad \bar{p}^N_t(\ell) \lesssim 1/ \sqrt{c_N t}$ as well as
\begin{equation*}
\sum_{k\in\Z} |\nabla \bar{p}^N_t(k)| \leq 2\bar{p}^N_t(0)\;,\quad \sum_{k\in\Z} |\nabla \bar{p}^N_t(k)| |k| \lesssim 1\;,\quad |\nabla \bar{p}^N_t(\ell)| \lesssim 1\wedge \frac{1}{tc_N}\;,
\end{equation*}
uniformly over all $\ell\in\Z$, all $t\geq 0$ and all $N\geq 1$.
\end{lemma}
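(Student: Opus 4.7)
All four bounds follow from the probabilistic interpretation of $\bar{p}^N_t(\cdot)$ as the law of $Y_t$, a continuous-time simple random walk on $\Z$ with left and right jump rates equal to $c_N$, together with the symmetry $\bar{p}^N_t(-k)=\bar{p}^N_t(k)$ and the unimodality $\bar{p}^N_t(k+1)\leq \bar{p}^N_t(k)$ for all $k\geq 0$. Unimodality follows from the Bessel-function representation $\bar{p}^N_t(k)=e^{-2c_N t}I_{|k|}(2c_N t)$ together with the standard inequality $I_0(x)>I_1(x)>I_2(x)>\cdots>0$ for $x>0$. For the first bound, Cauchy-Schwarz gives $\sum_k \bar{p}^N_t(k)|k|\leq \sqrt{\sum_k \bar{p}^N_t(k)\,k^2}$, and two discrete summations by parts against the heat equation yield $\frac{d}{dt}\sum_k \bar{p}^N_t(k)\,k^2 = c_N\sum_k \bar{p}^N_t(k)\,\Delta(k^2) = 2c_N$, so $\sum_k \bar{p}^N_t(k)\,k^2=2c_N t$.

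For the second bound, unimodality and symmetry imply $\nabla \bar{p}^N_t(k)\geq 0$ for $k\leq -1$ and $\nabla \bar{p}^N_t(k)\leq 0$ for $k\geq 0$. Both one-sided sums therefore telescope, $\sum_{k\leq -1}\nabla \bar{p}^N_t(k)=\bar{p}^N_t(0)$ and $-\sum_{k\geq 0}\nabla \bar{p}^N_t(k)=\bar{p}^N_t(0)$, giving $\sum_k|\nabla \bar{p}^N_t(k)|=2\bar{p}^N_t(0)$.

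For the third bound, I perform Abel summation on each side of zero separately, again relying on unimodality. On the positive side, a direct summation by parts gives $\sum_{k\geq 1}k\bigl[\bar{p}^N_t(k)-\bar{p}^N_t(k+1)\bigr]=\sum_{k\geq 1}\bar{p}^N_t(k)$; on the negative side, the substitution $j=-k$ followed by use of symmetry reduces the sum to $\sum_{j\geq 1}j\bigl[\bar{p}^N_t(j-1)-\bar{p}^N_t(j)\bigr]=\sum_{i\geq 0}\bar{p}^N_t(i)$. Adding the two half-sums yields $\sum_{k\in\Z}\bar{p}^N_t(k)=1$.

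Finally, for the fourth bound, the trivial estimate $|\nabla \bar{p}^N_t(\ell)|\leq 2\bar{p}^N_t(0)\leq 2$ handles the regime $c_N t\lesssim 1$. For $c_N t\gtrsim 1$, I use the semigroup identity $\bar{p}^N_t=\bar{p}^N_{t/2}\ast \bar{p}^N_{t/2}$ to transfer the discrete gradient onto one of the factors, $\nabla \bar{p}^N_t(\ell)=\sum_k \bar{p}^N_{t/2}(k)\,\nabla \bar{p}^N_{t/2}(\ell-k)$. Bounding the kernel pointwise by $\bar{p}^N_{t/2}(0)$ and invoking the second bound just established,
\begin{equ}
|\nabla \bar{p}^N_t(\ell)|\leq \bar{p}^N_{t/2}(0)\sum_j|\nabla \bar{p}^N_{t/2}(j)|\leq 2\bar{p}^N_{t/2}(0)^2\lesssim \frac{1}{c_N t}\;,
\end{equ}
where the last step uses $\bar{p}^N_t(0)\lesssim 1/\sqrt{c_N t}$ from Lemma~\ref{Lemma:BoundHeatKernel}. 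The only nontrivial outside input is the unimodality of $\bar{p}^N_t$; once that is granted, everything else is routine bookkeeping.
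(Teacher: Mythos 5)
Your proof is correct, and for the first three bounds it follows essentially the same route as the paper: Cauchy--Schwarz against the second moment $2c_Nt$ for the first bound, and unimodality plus telescoping/Abel summation for the two gradient sums (you supply the justification of unimodality via the Bessel representation $\bar{p}^N_t(k)=e^{-2c_Nt}I_{|k|}(2c_Nt)$ and $I_0>I_1>\cdots$, which the paper leaves implicit when it asserts the sign of $\nabla\bar{p}^N_t$). The genuine divergence is in the fourth bound: the paper simply invokes the Fourier decomposition of $\bar{p}^N$, i.e.\ $|\nabla\bar{p}^N_t(\ell)|\leq\frac{1}{2\pi}\int_{-\pi}^{\pi}|e^{i\theta}-1|e^{-2c_Nt(1-\cos\theta)}d\theta\lesssim 1/(tc_N)$, whereas you use the Chapman--Kolmogorov identity $\bar{p}^N_t=\bar{p}^N_{t/2}\ast\bar{p}^N_{t/2}$ to write $\nabla\bar{p}^N_t(\ell)=\sum_k\bar{p}^N_{t/2}(k)\,\nabla\bar{p}^N_{t/2}(\ell-k)$, bound the kernel by $\bar{p}^N_{t/2}(0)$, and feed in the $\ell^1$ gradient bound you just proved together with $\bar{p}^N_{t/2}(0)\lesssim 1/\sqrt{c_Nt}$. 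Your route has the merit of recycling the second bound and staying entirely on the probabilistic side (no Fourier analysis), at the cost of needing the on-diagonal estimate from Lemma \ref{Lemma:BoundHeatKernel} as input; the Fourier route is a one-line computation but is external to the rest of the argument. Both are standard and fully rigorous.
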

\begin{proof}
Notice that $\sum_k \bar{p}^N_t(k)|k|$ is smaller than the square root of the variance of a simple random walk on $\Z$ at time $2c_N t$. This easily yields the first bound. The second bound follows from the Fourier decomposition of $\bar{p}^N$. We turn to the bounds involving the gradient of $\bar{p}^N$. First, $\nabla \bar{p}^N_t(k)$ is positive if $k<0$, and negative otherwise. Then, we have the simple identity
\begin{equation*}
\sum_{k<0} \nabla\bar{p}^N_t(k) = -\sum_{k\geq 0} \nabla\bar{p}^N_t(k) = \bar{p}^N_t(0) \;,
\end{equation*}
which yields the first bound. Regarding the second bound, a simple integration by parts yields
\begin{equation*}
-\sum_{k<0} \nabla\bar{p}^N_t(k) k = \sum_{k\leq 0} \bar{p}^N_t(k)\;,\quad -\sum_{k\geq 0} \nabla\bar{p}^N_t(k) k = \sum_{k> 0} \bar{p}^N_t(k)\;,
\end{equation*}
so that we get $\sum_{k\in\Z} |\nabla \bar{p}^N_t(k)| |k|=1$. To get the third bound, it suffices to use the Fourier decomposition of $\bar{p}^N$.
\end{proof}
From now on, we work in the setting of Section \ref{Section:KPZ}. Recall the definition of $q^N$ from (\ref{Def:qN}).
\begin{lemma}\label{Lemma:HeatKernel}
Uniformly over all $0\leq s < t$, all $\ell \in \{1,\ldots,2N-1\}$ and all $N\geq 1$, we have
\begin{align}\label{Eq:BoundHeatMass}
	\sum_{k=1}^{2N-1} q^N_{s,t}(k,\ell) \lesssim b^N(t,\ell)\;,\quad q^N_{s,t}(k,\ell) \lesssim b^N(t,\ell)\Big( 1\wedge \frac{1}{\sqrt{t-s}\, (2N)^{2\alpha}} \Big)\;.\;\;
\end{align}
\end{lemma}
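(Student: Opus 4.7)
My plan rests on the observation that $b^N(t,\ell)=2+\exp(\lambda_N t-\gamma_N(\ell\wedge(2N-\ell)))$ is essentially a supersolution of the discrete heat equation $\partial_t u=c_N\Delta u$ with positive Dirichlet data at $\{0,2N\}$. Indeed, a direct calculation gives $\partial_t b^N(t,\ell)=\lambda_N(b^N(t,\ell)-2)$ and, by the very choice $\lambda_N=c_N(e^{\gamma_N}-2+e^{-\gamma_N})$, also $c_N\Delta b^N(t,\ell)=\lambda_N(b^N(t,\ell)-2)$ on $\{1,\ldots,2N-1\}\setminus\{N\}$. The corner of $\ell\mapsto\ell\wedge(2N-\ell)$ makes $b^N$ a \emph{strict} supersolution at the center $\ell=N$, and at the endpoints $b^N(t,0)=b^N(t,2N)=2+e^{\lambda_N t}>0$.

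For the first bound I would set $v(r,\ell):=\sum_{k=1}^{2N-1}p^N_r(k,\ell)b^N(s,k)$ and compare it to $w(r,\ell):=b^N(s+r,\ell)$: the function $v$ solves $\partial_r v=c_N\Delta v$ with zero Dirichlet conditions and initial value $v(0,\cdot)=b^N(s,\cdot)$, whereas $w$ is a supersolution with $w\geq 0$ on the parabolic boundary and $w(0,\cdot)=v(0,\cdot)$. The discrete parabolic maximum principle then yields $v(r,\ell)\leq w(r,\ell)$ pointwise; evaluating at $r=t-s$ produces $\sum_k q^N_{s,t}(k,\ell)\leq b^N(t,\ell)$.

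For the pointwise bound, the trivial factor $1$ follows immediately from nonnegativity: $q^N_{s,t}(k,\ell)\leq\sum_{k'}q^N_{s,t}(k',\ell)\leq b^N(t,\ell)$. For the sharper factor $1/\sqrt{(t-s)(2N)^{2\alpha}}$ I would split $b^N(s,k)\leq 2+E^+(s,k)+E^-(s,k)$ with $E^+(r,m):=\exp(\lambda_N r-\gamma_N m)$ and $E^-(r,m):=\exp(\lambda_N r-\gamma_N(2N-m))$. The constant piece gives $2p^N_{t-s}(k,\ell)\lesssim 1/\sqrt{(t-s)c_N}\lesssim b^N(t,\ell)/\sqrt{(t-s)c_N}$ using Lemma \ref{Lemma:BoundHeatKernel} and $b^N\geq 2$. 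Each $E^\pm$ is an exact eigenfunction: $\partial_r E^\pm=c_N\Delta E^\pm$ on all of $\Z$, by the choice of $\lambda_N$. Dominating $p^N_{t-s}(k,\ell)\leq\bar{p}^N_{t-s}(k-\ell)$ via the coupling of a killed walk with a free walk on $\Z$, and using the Gaussian local-CLT bound $\bar{p}^N_r(j)\lesssim e^{-j^2/(2c_N r)}/\sqrt{c_N r}$, the expression $\bar{p}^N_{t-s}(k-\ell)E^+(s,k)$ becomes a Gaussian in $k$ tilted by an exponential. Completing the square locates the maximum at $k^*=\ell-\gamma_N c_N(t-s)$, with value $\lambda_N(s+t)/2-\gamma_N\ell+o(1)$ (invoking $\lambda_N\sim c_N\gamma_N^2$). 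Since $(s+t)/2\leq t$ and $E^+$ is non-decreasing in time, this supremum is $\lesssim E^+(t,\ell)/\sqrt{(t-s)c_N}\leq b^N(t,\ell)/\sqrt{(t-s)c_N}$; the $E^-$ piece is treated symmetrically.

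The hard part will be this last Gaussian optimisation, which requires the full local-CLT Gaussian estimate for $\bar{p}^N$ (strictly sharper than the sup bound stated in Lemma \ref{Lemma:BoundHeatKernel}, but available from the references cited there). Conceptually the calculation encodes the fact that the Doob $h$-transform of $\bar{p}^N$ by the eigenfunction $E^+$ is a sub-Markov kernel whose sup bound matches that of $\bar{p}^N$ itself, and the gap between $\lambda_N(s+t)/2$ and $\lambda_N t$ in the optimised exponent is exactly the slack granted by the eigenvalue relation $\lambda_N=c_N(e^{\gamma_N}-2+e^{-\gamma_N})$. The domination $p^N\leq\bar{p}^N$ ensures that killing at the boundaries only shrinks the kernel and thus cannot spoil the estimate.
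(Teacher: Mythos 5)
Your proposal is correct in outline but reaches the two bounds by a genuinely different route from the paper, and it contains one concrete sign error that would derail the first step as written. The paper does not use a comparison principle at all: it bounds the ratio $b^N(s,k)/b^N(t,\ell)$ directly by $e^{-\lambda_N(t-s)+\gamma_N|\ell-k|}$ (splitting on whether $b^N(s,k)\leq 3$), and then reduces both estimates to the tilted-kernel bounds $\sum_k \bar p^N_{t-s}(\ell-k)e^{\gamma_N|\ell-k|}\leq 2e^{\lambda_N(t-s)}$ (the eigenfunction identity you also invoke) and $\sup_k \bar p^N_{t-s}(\ell-k)e^{\gamma_N|\ell-k|-\lambda_N(t-s)}\lesssim 1/(\sqrt{t-s}\,(2N)^{2\alpha})$, the latter obtained by Fourier analysis of $\bar p^N_{t-s}(\cdot)e^{\gamma_N\cdot}$ rather than by a Gaussian local-CLT optimisation. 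Your Doob $h$-transform remark is exactly the right mechanism here and is cleaner than completing the square, since the Gaussian upper bound with the correct variance is delicate outside the CLT window; I would phrase the pointwise step entirely through the tilted walk (jump rates $c_Ne^{\pm\gamma_N}$), whose kernel is a genuine probability kernel with sup of order $1/\sqrt{c_N(t-s)}$.

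The error: $b^N$ is \emph{not} a supersolution of $\partial_t u=c_N\Delta u$ at the centre; it is a strict \emph{subsolution} there. Indeed $\ell\mapsto e^{-\gamma_N(\ell\wedge(2N-\ell))}$ is the exponential of a convex kink, so its discrete Laplacian at $\ell=N$ exceeds the interior eigenvalue relation: one computes
\begin{equ}
\partial_t b^N(t,N)-c_N\Delta b^N(t,N)=c_N\big(e^{-\gamma_N}-e^{\gamma_N}\big)e^{\lambda_N t-\gamma_N N}\leq 0\;.
\end{equ}
Consequently the comparison $v\leq w$ cannot be read off from the parabolic maximum principle as stated, since $w$ fails to dominate the heat flow precisely at $\ell=N$. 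The fix is already contained in your pointwise argument: compare instead with $\tilde w(r,\ell)=2+E^+(s+r,\ell)+E^-(s+r,\ell)$, which is an exact solution on all of $\{1,\ldots,2N-1\}$ (each $E^\pm$ being an eigenfunction on $\Z$ and the constant being harmonic), satisfies $\tilde w(0,\cdot)\geq b^N(s,\cdot)$ and $\tilde w\geq 0$ on the lateral boundary, and obeys $\tilde w\leq 2\,b^N$; this yields $\sum_k q^N_{s,t}(k,\ell)\leq 2\,b^N(t,\ell)$, which is all the lemma asks for.
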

\begin{proof}
By symmetry, it is sufficient to prove the lemma under the further assumption $\ell \leq N$. If $b^N(s,k) \leq 3$, then we have $b^N(s,k)/b^N(t,\ell) \leq 3$ while if $b^N(s,k) > 3$, then a simple calculation ensures that
\begin{equation*}
\frac{b^N(s,k)}{b^N(t,\ell)} \lesssim \begin{cases} e^{-\lambda_N(t-s) + \gamma_N(\ell-k)}\quad\mbox{ if }k\leq N\;,\\
 e^{-\lambda_N(t-s) + \gamma_N(k-\ell)}\quad\mbox{ if }k\geq N\;,\end{cases}
\end{equation*}
uniformly over all $s < t$, all $k\in \{1,\ldots,2N-1\}$, all $\ell\in\{1,\ldots,N\}$ and all $N\geq 1$. Therefore, it suffices to show that
\begin{equation}\label{Eq:BoundKernelCrude}
\sum_{k=1}^{2N-1} p^N_{t-s}(k,\ell) \lesssim 1\;,\quad p^N_{t-s}(k,\ell) \lesssim 1\wedge \frac{1}{\sqrt{t-s}(2N)^{2\alpha}}\;, 
\end{equation}
as well as
\begin{equation}\label{Eq:BoundKernelWeighted}\begin{split}
\sum_{k=1}^{2N-1} p^N_{t-s}(k,\ell)e^{-\lambda_N(t-s)+\gamma_N |\ell-k|} &\lesssim 1\;,\\
p^N_{t-s}(k,\ell)e^{-\lambda_N(t-s)+\gamma_N |\ell-k|} &\lesssim 1\wedge \frac{1}{\sqrt{t-s}(2N)^{2\alpha}}\;. 
\end{split}\end{equation}
Regarding (\ref{Eq:BoundKernelCrude}), the first bound is immediate since $p^N_t(\cdot,\ell)$ is a sub-probability measure, while the second bound is proved in Lemma \ref{Lemma:BoundHeatKernel}. We turn to (\ref{Eq:BoundKernelWeighted}). Since $k\mapsto e^{a k}$ is an eigenvector of the discrete Laplacian on $\Z$ with eigenvalue $2(\cosh a -1)$, we deduce that
\begin{align*}
\sum_{k=1}^{2N-1} p^N_{t-s}(k,\ell) e^{\gamma_N |\ell-k|} &\leq \sum_{k\in\Z} \bar{p}^N_{t-s}(\ell-k) e^{\gamma_N |\ell-k|}\\
&\leq 2 e^{2c_N(t-s)( \cosh \gamma_N - 1)} = 2 e^{\lambda_N (t-s)}\;,
\end{align*}
thus yielding the first bound. To get the second bound, it suffices to use the Fourier decomposition of $\bar{p}^N_{t-s}(\cdot)e^{\gamma_N \cdot}$.
\end{proof}

We also recall a simple bound on the heat kernel, in the flavour of large deviations techniques. For all $a>0$, $t>0$ and $N\geq 1$, we have
\begin{equation}\label{Eq:LargeDevKernel}
\sum_{k\geq a} \bar{p}^N_t(k) \leq e^{2t c_N g\big(\frac{a}{2c_N t}\big)}\;,
\end{equation}
where $g(x) =\cosh(\argsh x) - x\,\argsh x - 1$ for all $x\in\R$. By studying the function $g(x)/x$, one easily deduces that the term on the r.h.s.~is increasing with $t$.\\
We let $\bar{q}^N_{s,t}(k,\ell) := \bar{p}^N_{t-s}(k,\ell) b^N(s,k)$.

\begin{lemma}\label{Lemma:ExpoDecay}
Fix a compact set $K\subset [0,T)$ and $\epsilon>0$. There exists $\delta >0$ such that
\begin{equation*}
q^N_{s,t}(k,\ell) \leq \bar{q}^N_{s,t}(k,\ell) \lesssim e^{-\delta N^{2\alpha}}\;,
\end{equation*}
uniformly over all $k\notin B^N_{\epsilon/2}(s)$, all $\ell\in B^N_\epsilon(t)$ and all $0 \leq s \leq t \in K$.
\end{lemma}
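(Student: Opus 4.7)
The plan is to reduce the bound on $q^N$ to a large-deviations estimate on the free heat kernel $\bar p^N$ together with a trivial bound on $b^N$. The inequality $q^N \leq \bar q^N$ is immediate: the killed walk encoded by $p^N$ is stochastically dominated by the free walk encoded by $\bar p^N$ (couple them to agree up to the first time the free walk exits $\{1,\ldots,2N-1\}$). So it only remains to bound $\bar p^N_{t-s}(\ell-k)\,b^N(s,k)$.

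For the trivial factor, note that in the KPZ scaling $\gamma_N = 2\sigma/(2N)^\alpha$ and $\lambda_N = c_N(e^{\gamma_N}+e^{-\gamma_N}-2) \to 2\sigma^2$ as $N\to\infty$, so $\lambda_N$ is bounded and consequently
\begin{equ}
b^N(s,k) \leq 2 + e^{\lambda_N s} \lesssim 1\;,
\end{equ}
uniformly over $k\in\{1,\ldots,2N-1\}$, $s\in K$ and $N\geq 1$.

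Next comes the geometric step. If $\ell \in B^N_\epsilon(t)$ then $\ell \in [\tfrac{\lambda_N}{\gamma_N}t+\epsilon N,\,2N-\tfrac{\lambda_N}{\gamma_N}t-\epsilon N]$, while $k\notin B^N_{\epsilon/2}(s)$ forces $k<\tfrac{\lambda_N}{\gamma_N}s+\tfrac{\epsilon}{2}N$ or $k>2N-\tfrac{\lambda_N}{\gamma_N}s-\tfrac{\epsilon}{2}N$. Since $s\leq t$, in either case $|\ell-k| \geq \epsilon N/2$. Now I invoke the large-deviation bound (\ref{Eq:LargeDevKernel}): monotonicity of $\bar p^N_t$ away from $0$ (combined with summing the tail) gives
\begin{equ}
\bar p^N_{t-s}(\ell-k) \leq 2 \exp\Bigl(2(t-s)c_N\, g\Bigl(\tfrac{|\ell-k|}{2(t-s)c_N}\Bigr)\Bigr)\;,
\end{equ}
with $g(x)=\sqrt{1+x^2}-x\,\argsh(x)-1$ negative on $(0,\infty)$.

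The heart of the argument is now a uniform lower bound on the exponent. Setting $u = |\ell-k|/(2(t-s)c_N)$, the expansion $g(x)\leq -x^2/(2(1+x^2/3))$ combined with $c_N \sim (2N)^{4\alpha}/2$ yields
\begin{equ}
2(t-s)c_N\, g(u) \leq -c\,\frac{|\ell-k|^2}{(t-s)c_N + |\ell-k|^2} \cdot |\ell-k| \lesssim -\,\frac{N^2}{(2N)^{4\alpha}}\;,
\end{equ}
uniformly over $s\leq t \in K$ and admissible $k,\ell$ (the denominator is dominated by the larger of its two terms; in either case the right-hand side is of order $-N^{2-4\alpha}$ since $(t-s)c_N\lesssim N^{4\alpha}$ and $|\ell-k|\gtrsim N$). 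Since $\alpha\in(0,1/3]$ we have $2-4\alpha \geq 2\alpha$, hence $N^{2-4\alpha} \geq N^{2\alpha}$, and the claimed bound $\bar q^N_{s,t}(k,\ell) \lesssim e^{-\delta N^{2\alpha}}$ follows. The only delicate point is the comparison $2-4\alpha\geq 2\alpha$ which is precisely why the whole KPZ theorem is restricted to $\alpha \leq 1/3$; for $\alpha=1/3$ the two exponents coincide and $\delta$ must be chosen small enough.
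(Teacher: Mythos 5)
There is a genuine gap, and it sits at the very first step. You assert that $\lambda_N = c_N(e^{\gamma_N}-2+e^{-\gamma_N}) \to 2\sigma^2$, hence $b^N(s,k)\lesssim 1$. This is false: with $\gamma_N = 2\sigma(2N)^{-\alpha}$ and $c_N\sim (2N)^{4\alpha}/2$ one gets $\lambda_N \sim c_N\gamma_N^2 \sim 2\sigma^2(2N)^{2\alpha}$ (it is $\lambda_N/(2N)^{2\alpha}$, not $\lambda_N$, that converges to $2\sigma^2$; the paper uses explicitly that ``$\lambda_N$ is of order $N^{2\alpha}$''). Consequently $b^N(s,k) = 2+e^{\lambda_N s-\gamma_N(k\wedge(2N-k))}$ is \emph{not} uniformly bounded on the range of the lemma: for $k$ deep outside the bulk (say $k=1$) and $s>0$ it is of order $e^{\lambda_N s}=e^{cN^{2\alpha}}$, i.e.\ exponentially large in exactly the scale $N^{2\alpha}$ the lemma is about. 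Beating this weight is the whole content of the lemma, and your reduction to ``$\bar p^N$ decays, $b^N$ is $O(1)$'' removes the difficulty rather than resolving it.

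Once the weight is restored, your geometric input $|\ell-k|\geq \epsilon N/2$ is no longer sufficient. The Gaussian cost of a displacement of order $\epsilon N$ in time $t-s$ is only $e^{-c\epsilon^2 N^{2-4\alpha}}$, and at $\alpha=1/3$ this has the same order $N^{2/3}=N^{2\alpha}$ as $\lambda_N s$ in the exponent of $b^N$ — but with a constant proportional to $\epsilon^2$, which cannot dominate $2\sigma^2 s$ for small $\epsilon$. What actually saves the day is the drift built into the windows: $k\notin B^N_{\epsilon/2}(s)$ and $\ell\in B^N_\epsilon(t)$ give (for $k\le N$) the stronger bound $\ell-k\geq \tfrac{\lambda_N}{\gamma_N}(t-s)+\tfrac{\epsilon}{2}N$, and the Gaussian cost of travelling the distance $\tfrac{\lambda_N}{\gamma_N}(t-s)$ in time $t-s$ precisely offsets the factor $e^{\lambda_N(t-s)}$ (together with the $-\gamma_N k$ term), leaving a negative margin of order $N^{2\alpha}$ coming from the extra $\tfrac{\epsilon}{2}N$. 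This is why the paper keeps the full exponent $2(t-s)c_Ng\bigl(\tfrac{\ell-k}{2c_N(t-s)}\bigr)+\lambda_N s-\gamma_N k$, optimizes it over $s$ (splitting $s\in[0,t-\eta]$ versus $s\in[t-\eta,t]$ and treating $4\alpha\le 1$ and $4\alpha>1$ separately). Your opening inequality $q^N\le\bar q^N$ and the use of (\ref{Eq:LargeDevKernel}) are fine, but the core balancing argument is missing.
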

\begin{proof}
Let us consider the case where $b^N(s,k)\geq 3$; by symmetry we can assume that $k \in \{1,\ldots,N\}$. Then, we apply (\ref{Eq:LargeDevKernel}) to get
\begin{equation}\label{Eq:IntermediateExpoDecay}
\bar{p}^N_{t-s}(\ell-k) e^{\lambda_N s - \gamma_N k} \leq e^{2(t-s) c_N g\big(\frac{\ell-k}{2c_N (t-s)}\big) + \lambda_N s - \gamma_N k}\;.
\end{equation}
We argue differently according to the value of $\alpha$. If $4\alpha \leq 1$, then $(\ell-k)/c_N$ is bounded away from $0$ uniformly over all $N\geq 1$, all $k\notin B_{\epsilon/2}(s)$ and all $\ell\in B_\epsilon(t)$. Using the concavity of $g$, we deduce that there exists $d>0$ such that the logarithm of the r.h.s.~of (\ref{Eq:IntermediateExpoDecay}) is bounded by
\begin{equation*}
-d(\ell-k) + \lambda_N s -\gamma_N k \lesssim -d \epsilon \frac{N}{2}\;,
\end{equation*}
thus concluding the proof in that case.\\
We now treat the case $4\alpha > 1$. Let $\eta > 0$. First, we assume that $s \in [0,t-\eta]$. For any $c >1/4!$, we have $g(x)\leq -x^2/2 + cx^4$ for all $x$ in a neighbourhood of the origin. Then, for $N$ large enough we bound the logarithm of the r.h.s.~of (\ref{Eq:IntermediateExpoDecay}) by
\begin{equation*}
f(s) = -\frac{1}{2}\frac{(\ell-k)^2}{2c_N(t-s)} + c \frac{(\ell-k)^4}{(2c_N(t-s))^3} + \lambda_N s - \gamma_N k\;.
\end{equation*}
A tedious but simple calculation shows the following. There exists $\delta' > 0$, only depending on $\epsilon$, such that $\sup_{s\in [0,t-\eta]} f(s) \leq -\delta' N^{2\alpha}$ for all $N$ large enough. This ensures the bound of the statement in the case where $s\in[0,t-\eta]$.\\
Using the monotonicity in $t$ of (\ref{Eq:LargeDevKernel}), we easily deduce that for all $s\in[t-\eta,t]$, we have
\begin{equation*}
\bar{p}^N_{t-s}(\ell-k) e^{\lambda_N s - \gamma_N k}\leq e^{f(t-\eta) + \lambda_N \eta}\;.
\end{equation*}
Recall that $\lambda_N$ is of order $N^{2\alpha}$. Choosing $\eta < \delta'$ small enough and applying the bound obtained above, we deduce that the statement of the lemma holds true.\\
The case where $b^N(s,k)$ is smaller than $3$ is simpler, one can adapt the above arguments to get the required bound.
\end{proof}

\begin{proof}[Proof of (\ref{Eq:BoundTailDiscreteKernel})]
The quantity $1-\sum_{k=1}^{2N-1} p^N_{t-s}(k,\ell)$ is equal to the probability that a simple random walk, sped up by $2c_N$ and started from $\ell$, has hit $0$ or $2N$ by time $t-s$. By the reflexion principle, this is smaller than twice
\begin{equation*}
\sum_{k\geq \ell} \bar{p}^N_{t-s}(k) + \sum_{k\geq 2N-\ell} \bar{p}^N_{t-s}(k)\;.
\end{equation*}
We restrict ourselves to bounding the first term, since one can proceed similarly for the second term. Using (\ref{Eq:LargeDevKernel}), we deduce that it suffices to bound $\exp(2(t-s)c_N g\big(\ell/(2(t-s)c_N)\big) +\lambda_N s)$. This is equal to the l.h.s.~of (\ref{Eq:IntermediateExpoDecay}) when $k=0$, so that the required bound follows from the arguments presented in the last proof.
\end{proof}
Finally, we rely on the following representation of $p^N$:
\begin{equation*}
p^N_t(k,\ell) = \sum_{j\in\Z} \bar{p}^N_t(k+ j4N-\ell) - \bar{p}^N_t(-k+ j4N-\ell)\;.
\end{equation*}
The next lemma shows that $p^N_t(k,\ell)$ can be replaced by $\bar{p}^N_t(\ell-k)$ up to some negligible term, whenever $\ell$ is in the $\epsilon$-bulk at time $t$. 

\begin{lemma}\label{Lemma:DecaySeriesKernel}
Fix $\epsilon > 0$ and a compact set $K\subset [0,T)$. There exists $\delta > 0$ such that uniformly over all $s\leq t \in K$, all $k\in\{1,\ldots,2N-1\}$, all $\ell\in B_\epsilon^N(t)$ and all $N\geq 1$, we have
\begin{equation*}
\big| p^N_{t-s}(k,\ell) - \bar{p}^N_{t-s}(k,\ell) \big| b^N(s,k) \lesssim e^{-\delta N^{2\alpha}}\;.
\end{equation*}
\end{lemma}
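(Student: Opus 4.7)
\textbf{Proof proposal for Lemma \ref{Lemma:DecaySeriesKernel}.}

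The plan is to exploit the method-of-images representation of $p^N$ just recalled before the statement. Writing $u := t-s$ for brevity and using the symmetry $\bar{p}^N_u(-\cdot) = \bar{p}^N_u(\cdot)$, the representation gives
\begin{equs}
p^N_u(k,\ell) - \bar{p}^N_u(\ell-k) &= -\bar{p}^N_u(k+\ell) \\
&\quad + \sum_{j\neq 0} \Big( \bar{p}^N_u(k-\ell+4Nj) - \bar{p}^N_u(k+\ell-4Nj) \Big)\;.
\end{equs}
So the difference splits into the principal reflection term $-\bar{p}^N_u(k+\ell)$ and an absolutely summable series of further reflections. Since $\ell \in B^N_\epsilon(t)$ enforces $\epsilon N \leq \ell \leq 2N-\epsilon N$ and $k\in\{1,\ldots,2N-1\}$, every argument on the right-hand side has absolute value at least $\epsilon N$: indeed $k+\ell \geq \ell \geq \epsilon N$, $|k-\ell+4Nj|\geq 4N|j|-(2N-1)\geq 2N$ for $|j|\geq 1$, and $|k+\ell-4Nj|\geq 4N-(2N-1)-(2N-\epsilon N)\geq \epsilon N$ for $j=\pm 1$ (and grows like $4N|j|$ otherwise).

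The next step is to apply the large deviation bound \eqref{Eq:LargeDevKernel} to each term. If $m \geq \epsilon N$ and $u\leq T$, then for the heat kernel sped up by $2c_N \sim (2N)^{2\alpha}$ we have $m/(2c_N u) \gtrsim N^{1-2\alpha}$, which tends to infinity since $\alpha \leq 1/3$ forces $1-2\alpha > 0$. Using the asymptotic $g(x) \sim -x\log x$ as $x\to\infty$ for the rate function $g$ introduced below \eqref{Eq:LargeDevKernel}, one gets
\begin{equs}
\bar{p}^N_u(m) \leq \exp\!\big(2u c_N\, g(m/(2c_N u))\big) \lesssim \exp(-c\, m \log(m/(c_N u)))\;,
\end{equs}
uniformly over $u \in (0,T]$ and $m\geq \epsilon N$. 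Applied to $m= k+\ell \geq \epsilon N$, and to each $m=|k\pm\ell\mp 4Nj|\geq \max(\epsilon N, 4N|j|/2)$ for $j\neq 0$, this yields a bound of order $\exp(-cN\log N)$ for the principal term and a geometrically decaying tail in $|j|$, so the whole series is bounded by $\exp(-c' N\log N)$ for some $c'>0$. Proving this asymptotic lower bound on $-g$, or more honestly just reusing the convexity/Taylor argument already carried out in the proof of Lemma~\ref{Lemma:ExpoDecay} (with $\epsilon N$ in place of the lattice-scale quantity $\epsilon N^{2\alpha}$), is what I would treat as the technical core. The case distinction on small and large values of $u$ (corresponding to the split $s\in[0,t-\eta]$ vs.\ $s\in[t-\eta,t]$ there) has to be redone, but the mechanism is identical.

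Finally, since $b^N(s,k) \lesssim e^{\lambda_N s} \lesssim e^{C(2N)^{2\alpha}}$ uniformly over $k\in\{1,\ldots,2N-1\}$ and $s\in K$, multiplying the previous bound by $b^N(s,k)$ gives
\begin{equs}
\big|p^N_u(k,\ell) - \bar{p}^N_u(\ell-k)\big|\, b^N(s,k) \lesssim \exp\!\big(C N^{2\alpha} - c' N\log N\big)\;,
\end{equs}
which is bounded by $e^{-\delta N^{2\alpha}}$ for any $\delta>0$ as soon as $N$ is large enough (since $N\log N \gg N^{2\alpha}$ when $\alpha\leq 1/3$, indeed for any $\alpha<1$). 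The main obstacle, as indicated, is verifying the large deviation estimate in the precise regime $m\asymp \epsilon N$ with $u \in (0,T]$ and $c_N\asymp N^{2\alpha}$ arbitrary; the other steps (the method-of-images decomposition, the summability in $j$, and combining with $b^N(s,k)$) are essentially bookkeeping once that estimate is in hand.
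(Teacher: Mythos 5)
Your decomposition via the method of images is exactly the one the paper uses, and isolating the principal reflection $-\bar{p}^N_u(k+\ell)$ together with a summable tail over $j\neq 0$ is fine. The gap is quantitative and sits precisely where you flagged the ``technical core'', but the affirmative claims you make there are wrong. The kernel is sped up by $2c_N \sim (2N)^{4\alpha}$, not $(2N)^{2\alpha}$ (it is $\lambda_N$, not $c_N$, that is of order $N^{2\alpha}$). Consequently $m/(2c_N u) \gtrsim N^{1-4\alpha}/u$, which does \emph{not} tend to infinity once $\alpha > 1/4$; for $u$ of order one and $\alpha=1/3$ it tends to zero. One is then in the Gaussian regime $g(x)\approx -x^2/2$, and (\ref{Eq:LargeDevKernel}) only yields $\bar{p}^N_u(m) \lesssim \exp(-c\,m^2/(c_N u)) \approx \exp(-c\,N^{2-4\alpha})$ for $m\asymp N$, not $\exp(-c\,N\log N)$. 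At $\alpha=1/3$ one has $2-4\alpha = 2\alpha$, so this decay is of exactly the same order as the possible growth $b^N(s,k)\leq e^{CN^{2\alpha}}$: bounding the kernel and $b^N(s,k)$ separately by their worst cases, as in your final display, cannot produce $e^{-\delta N^{2\alpha}}$.

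The repair is what the paper actually does: keep the product $\bar{p}^N_{t-s}(\cdot)\,e^{\lambda_N s-\gamma_N(k\wedge(2N-k))}$ together and optimize the full exponent $-\tfrac{(k+\ell)^2}{4c_N(t-s)}+c\tfrac{(k+\ell)^4}{(2c_N(t-s))^3}+\lambda_N s-\gamma_N k$ jointly over $s$, exploiting that $\ell\in B^N_\epsilon(t)$ forces $\gamma_N\ell\geq\lambda_N t+\gamma_N\epsilon N$; this is the case $4\alpha>1$ of the proof of Lemma \ref{Lemma:ExpoDecay}, including the split $s\in[0,t-\eta]$ versus $s\in[t-\eta,t]$ and the monotonicity in $t$ of (\ref{Eq:LargeDevKernel}). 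Your argument as written is only valid in the regime $4\alpha\leq 1$, where the ratio stays bounded away from zero and concavity of $g$ gives a decay linear in the macroscopic distance, which does crush $b^N$. So the structure of your proof matches the paper's, but the key estimate must be imported from (and not merely analogized to) Lemma \ref{Lemma:ExpoDecay}, applied to the reflected distances $|k\pm\ell\mp 4Nj|$, rather than derived from the $g(x)\sim -x\log x$ asymptotics.
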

\begin{proof}
We only consider the case where $b^N(s,k) > 3$ since the other case is simpler. Observe that there exists $C>0$ such that $\log b^N(s,k) \leq C N^{2\alpha}$ for all $s\in K$ and all $k\in\{1,\ldots,2N-1\}$. Arguing differently according to the relative values of $4\alpha$ and $1$, and using the bound (\ref{Eq:LargeDevKernel}), we deduce that there exists $j_0\geq 1$ such that
\begin{equation}\label{Eq:PeriodicKernel}
\sum_{j\in\Z: |j| \geq j_0} \bar{p}^N_{t-s}(k+ j4N-\ell)b^N(s,k) + \bar{p}^N_{t-s}(-k+ j4N-\ell)b^N(s,k) \lesssim e^{-\delta N^{2\alpha}}\;,
\end{equation}
uniformly over all $s\leq t \in K$, all $k\in\{1,\ldots,2N-1\}$ and all $\ell\in B_\epsilon(t)$. On the other hand, the arguments in the proof of Lemma \ref{Lemma:ExpoDecay} yield that
\begin{align*}
\sum_{j\in\Z: |j| < j_0} \bar{p}^N_{t-s}(-k+ j4N-\ell)b^N(s,k) &\lesssim e^{-\delta N^{2\alpha}}\;,\\
\sum_{j\in\Z: 0 < |j| < j_0,} \bar{p}^N_{t-s}(k+ j4N-\ell)b^N(s,k) &\lesssim e^{-\delta N^{2\alpha}}\;,
\end{align*}
uniformly over the same set of parameters, thus concluding the proof.
\end{proof}

\begin{lemma}\label{Lemma:BoundK}
Fix $\epsilon > 0$. There exist $\beta \in (0,1)$ such that
\begin{equation*}
\int_s^t \sum_{k\in B^N_{\epsilon/2}(r)} |K_{t-r}^N(k,\ell)|(2N)^{4\alpha} dr < \beta\;,
\end{equation*}
uniformly over all $s \leq t \in K$, all $\ell\in B^N_\epsilon(t)$ and all $N$ large enough.
\end{lemma}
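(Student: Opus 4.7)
The plan is to reduce the integral to the full-line kernel $\bar p^N$, perform a sign analysis of the discrete gradients, and then extract a factor strictly below one via an explicit Fourier/Bessel computation. Setting $\bar K^N_u(k,\ell) := \nabla^+_\ell \bar p^N_u(\ell-k)\,\nabla^-_\ell \bar p^N_u(\ell-k)$, one applies Lemma \ref{Lemma:DecaySeriesKernel} to the shifted copies of $\bar p^N$ in the image representation of $p^N$ (since $\nabla^\pm_\ell$ commutes with this summation) to obtain
\begin{equ}
\big|K^N_u(k,\ell) - \bar K^N_u(k,\ell)\big| \lesssim e^{-\delta N^{2\alpha}}\;,
\end{equ}
uniformly over $k \in B^N_{\epsilon/2}(r)$, $\ell \in B^N_\epsilon(t)$ and $r \in [s,t]$, so its contribution to the integral is $\cO(N^{1+4\alpha} e^{-\delta N^{2\alpha}}) = o(1)$. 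Extending the sum over $k$ to all of $\Z$ (which only increases it, as we take absolute values) and translating via $m := \ell - k$, it suffices to prove that, for some $\beta < 1$ and $N$ large enough,
\begin{equ}
\Xi_N(T) := (2N)^{4\alpha}\int_0^T \sum_{m \in \Z}|g_u(m)\, g_u(m-1)|\, du < \beta\;, \qquad g_u(m) := \nabla^+ \bar p^N_u(m)\;,
\end{equ}
uniformly in $T = t - s \le \sup K$.

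The key observation is a sign analysis of $g_u$. Since $\bar p^N_u(m) = e^{-2c_Nu} I_{|m|}(2c_Nu)$ is symmetric and strictly decreasing in $|m|$, one has $g_u(m) \le 0$ for $m \ge 0$, $g_u(m) \ge 0$ for $m \le -1$, and $g_u(-1) = -g_u(0)$. Consequently $g_u(m) g_u(m-1)$ is non-negative except at $m = 0$, where it equals $-g_u(0)^2$, so
\begin{equ}
\sum_{m \in \Z}|g_u(m) g_u(m-1)| = \sum_{m \in \Z} g_u(m)\,g_u(m-1) + 2\,g_u(0)^2\;.
\end{equ}
Using $\widehat{g_u}(\theta) = (e^{i\theta}-1)\, e^{-2c_Nu(1-\cos\theta)}$ and Plancherel applied to the shifted inner product, the first sum equals $\frac1\pi \int_{-\pi}^\pi (1-\cos\theta)\cos\theta\, e^{-4c_Nu(1-\cos\theta)} d\theta$, and $\int_{-\pi}^\pi \cos\theta\, e^{-a(1-\cos\theta)} d\theta = 2\pi e^{-a} I_1(a)$ yields, after integrating $c_N\int_0^T du$, the value $-\tfrac12 e^{-4c_NT} I_1(4c_NT) \le 0$. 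For the second summand, $g_u(0) = e^{-2c_Nu}(I_1(2c_Nu) - I_0(2c_Nu))$ and the substitution $v = 2c_N u$ give $2c_N\int_0^T g_u(0)^2\, du = \int_0^{2c_NT} e^{-2v} (I_0(v) - I_1(v))^2\, dv$. Combining,
\begin{equ}
c_N\int_0^T \sum_m |g_u(m)g_u(m-1)|\, du \;\le\; \int_0^\infty e^{-2v}(I_0(v) - I_1(v))^2\, dv\;.
\end{equ}

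The crux is then to prove that the last integral is \emph{strictly} less than $1/2$. Using the Bessel integral representation $I_0(v) - I_1(v) = \frac1\pi \int_0^\pi (1-\cos\theta) e^{v\cos\theta} d\theta$, squaring and integrating $\int_0^\infty e^{-2v} dv$ inside first yields
\begin{equ}
\int_0^\infty e^{-2v}(I_0 - I_1)^2 dv \;=\; \frac{1}{\pi^2}\int_0^\pi\!\!\int_0^\pi \frac{(1-\cos\theta)(1-\cos\phi)}{2-\cos\theta-\cos\phi}\, d\theta\, d\phi\;.
\end{equ}
The elementary inequality $\frac{ab}{a+b} \le \frac{a+b}{4}$ is strict except on $\{a=b\}$; applied with $a = 1-\cos\theta$, $b = 1-\cos\phi$, equality forces $\theta = \phi$ on $[0,\pi]^2$, a null set. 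Therefore the double integral is strictly smaller than $\frac{1}{4\pi^2}\int_0^\pi\!\!\int_0^\pi (2-\cos\theta-\cos\phi)\,d\theta\, d\phi = \tfrac12$; denote this strict upper bound by $c^* < \tfrac12$. Since $(2N)^{4\alpha}/c_N = e^{\gamma_N} + e^{-\gamma_N} \to 2$ as $\gamma_N \to 0$, we conclude $\Xi_N(T) \le (e^{\gamma_N} + e^{-\gamma_N})\, c^* \to 2c^* < 1$, so that any $\beta \in (2c^*, 1)$ works for $N$ large enough. The main difficulty lies in obtaining this \emph{strict} inequality from the AM-GM step; the remainder reduces to the sign analysis and routine Fourier/Bessel identities.
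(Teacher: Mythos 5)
Your proof is correct. The first step coincides with the paper's: you replace $K^N$ by the translation-invariant kernel $\bar K^N$ via Lemma \ref{Lemma:DecaySeriesKernel}, pick up the same negligible error $\cO(N^{1+4\alpha}e^{-\delta N^{2\alpha}})$, and extend the sum over $k$ to all of $\Z$ by positivity. Where you diverge is in the second step: the paper simply cites Lemma A.3 of \cite{BG97} for the bound $\beta'<1$ on the whole-line quantity, whereas you prove that bound from scratch. Your sign analysis of $g_u(m)=\nabla^+\bar p^N_u(m)$ (non-negative product except at $m=0$, where it contributes $-g_u(0)^2$), the Plancherel identity giving the non-positive term $-\tfrac12 e^{-4c_NT}I_1(4c_NT)$, the reduction of the remaining term to $\int_0^\infty e^{-2v}(I_0(v)-I_1(v))^2dv$, and the double-integral representation
\begin{equ}
\frac{1}{\pi^2}\int_0^\pi\!\!\int_0^\pi \frac{(1-\cos\theta)(1-\cos\phi)}{2-\cos\theta-\cos\phi}\,d\theta\,d\phi < \frac12
\end{equ}
via the strict AM--GM inequality off the diagonal all check out, as does the final conversion factor $(2N)^{4\alpha}/c_N = e^{\gamma_N}+e^{-\gamma_N}\to 2$, which is exactly why the threshold $1/2$ is the right target. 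What your route buys is a self-contained proof, independent of \cite{BG97}, together with an explicit admissible constant $\beta\in(2c^*,1)$; what it costs is length, since the paper's citation disposes of the step in one sentence. One minor point worth making explicit if you write this up: the discrete gradients $\nabla^\pm$ act on $\ell$, so after Lemma \ref{Lemma:DecaySeriesKernel} you need the replacement estimate at $\ell\pm1$ as well, which holds since $\ell\pm1$ still lies in a marginally enlarged bulk $B^N_{\epsilon'}(t)$ for $N$ large.
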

\begin{proof}
Lemma \ref{Lemma:DecaySeriesKernel} ensures that
\begin{align*}
\int_s^t \!\!\sum_{k\in B_{\epsilon/2}(r)}\!\! |K^N_{t-r}(k,\ell)| (2N)^{4\alpha} dr &= \int_s^t \!\!\sum_{k\in B_{\epsilon/2}(r)}\!\! |\bar{K}^N_{t-r}(k,\ell)| (2N)^{4\alpha} dr\\
&+ \cO(N^{1+4\alpha}e^{-\delta N^{2\alpha}})\;,
\end{align*}
uniformly over all $\ell \in B_\epsilon(t)$, all $t\in K$ and all $N\geq 1$. Lemma A.3 in~\cite{BG97} ensures that the first term on the r.h.s.~is smaller than some $\beta' \in (0,1)$. Since the second term vanishes as $N\rightarrow\infty$, the bound of the statement follows.
\end{proof}

\section*{Acknowledgments}
I am indebted to Reda Chhaibi for some deep discussions on this work at an early stage of the project. I would also like to thank Christophe Bahadoran for a fruitful discussion on the notion of entropy solutions for the inviscid Burgers equation, Nikos Zygouras for pointing out the article~\cite{DobrushinHryniv} and Julien Reygner for his helpful comments on a preliminary version of the paper.\\
I am grateful to an anonymous referee for his/her very careful reading of the article and for making many constructive comments that helped me to improve presentation and readability.

\bibliographystyle{abbrvnat}
\bibliography{library}

\begin{thebibliography}{41}
\providecommand{\natexlab}[1]{#1}
\providecommand{\url}[1]{\texttt{#1}}
\expandafter\ifx\csname urlstyle\endcsname\relax
  \providecommand{\doi}[1]{doi: #1}\else
  \providecommand{\doi}{doi: \begingroup \urlstyle{rm}\Url}\fi

\bibitem[Amir et~al.(2011)Amir, Corwin, and Quastel]{ACQ}
G.~Amir, I.~Corwin, and J.~Quastel.
\newblock Probability distribution of the free energy of the continuum directed
  random polymer in {$1+1$} dimensions.
\newblock \emph{Comm. Pure Appl. Math.}, 64\penalty0 (4):\penalty0 466--537,
  2011.
\newblock ISSN 0010-3640.
\newblock \doi{10.1002/cpa.20347}.

\bibitem[Bahadoran(2012)]{Baha}
C.~Bahadoran.
\newblock Hydrodynamics and hydrostatics for a class of asymmetric particle
  systems with open boundaries.
\newblock \emph{Comm. Math. Phys.}, 310\penalty0 (1):\penalty0 1--24, 2012.
\newblock ISSN 0010-3616.
\newblock \doi{10.1007/s00220-011-1395-6}.

\bibitem[Bardos et~al.(1979)Bardos, le~Roux, and N{\'e}d{\'e}lec]{Bardos}
C.~Bardos, A.~Y. le~Roux, and J.-C. N{\'e}d{\'e}lec.
\newblock First order quasilinear equations with boundary conditions.
\newblock \emph{Comm. Partial Differential Equations}, 4\penalty0 (9):\penalty0
  1017--1034, 1979.
\newblock ISSN 0360-5302.
\newblock \doi{10.1080/03605307908820117}.

\bibitem[Bertini and Giacomin(1997)]{BG97}
L.~Bertini and G.~Giacomin.
\newblock Stochastic {B}urgers and {KPZ} equations from particle systems.
\newblock \emph{Comm. Math. Phys.}, 183\penalty0 (3):\penalty0 571--607, 1997.
\newblock ISSN 0010-3616.
\newblock \doi{10.1007/s002200050044}.

\bibitem[Billingsley(1999)]{Billingsley}
P.~Billingsley.
\newblock \emph{Convergence of probability measures}.
\newblock Wiley Series in Probability and Statistics: Probability and
  Statistics. John Wiley \& Sons, Inc., New York, second edition, 1999.
\newblock ISBN 0-471-19745-9.
\newblock \doi{10.1002/9780470316962}.
\newblock A Wiley-Interscience Publication.

\bibitem[B{\"u}rger et~al.(2007)B{\"u}rger, Frid, and Karlsen]{BFK07}
R.~B{\"u}rger, H.~Frid, and K.~H. Karlsen.
\newblock On the well-posedness of entropy solutions to conservation laws with
  a zero-flux boundary condition.
\newblock \emph{J. Math. Anal. Appl.}, 326\penalty0 (1):\penalty0 108--120,
  2007.
\newblock ISSN 0022-247X.
\newblock \doi{10.1016/j.jmaa.2006.02.072}.

\bibitem[Corwin(2012)]{Corwin}
I.~Corwin.
\newblock The {K}ardar-{P}arisi-{Z}hang equation and universality class.
\newblock \emph{Random Matrices Theory Appl.}, 1\penalty0 (1):\penalty0
  1130001, 76, 2012.
\newblock ISSN 2010-3263.
\newblock \doi{10.1142/S2010326311300014}.

\bibitem[Corwin and Tsai(2017)]{CT}
I.~Corwin and L.-C. Tsai.
\newblock K{PZ} equation limit of higher-spin exclusion processes.
\newblock \emph{Ann. Probab.}, 45\penalty0 (3):\penalty0 1771--1798, 2017.
\newblock ISSN 0091-1798.
\newblock \doi{10.1214/16-AOP1101}.

\bibitem[{Corwin} et~al.(2016){Corwin}, {Shen}, and {Tsai}]{CST}
I.~{Corwin}, H.~{Shen}, and L.-C. {Tsai}.
\newblock {ASEP(q,j) converges to the KPZ equation}.
\newblock \emph{ArXiv e-prints}, Feb. 2016.

\bibitem[Da~Prato and Zabczyk(1992)]{DPZ}
G.~Da~Prato and J.~Zabczyk.
\newblock \emph{Stochastic equations in infinite dimensions}, volume~44 of
  \emph{Encyclopedia of Mathematics and its Applications}.
\newblock Cambridge University Press, Cambridge, 1992.
\newblock ISBN 0-521-38529-6.
\newblock \doi{10.1017/CBO9780511666223}.

\bibitem[De~Masi et~al.(1989)De~Masi, Presutti, and Scacciatelli]{DeMasi89}
A.~De~Masi, E.~Presutti, and E.~Scacciatelli.
\newblock The weakly asymmetric simple exclusion process.
\newblock \emph{Ann. Inst. H. Poincar\'e Probab. Statist.}, 25\penalty0
  (1):\penalty0 1--38, 1989.
\newblock ISSN 0246-0203.

\bibitem[Dembo and Tsai(2016)]{DemboTsai}
A.~Dembo and L.-C. Tsai.
\newblock Weakly {A}symmetric {N}on-{S}imple {E}xclusion {P}rocess and the
  {K}ardar--{P}arisi--{Z}hang {E}quation.
\newblock \emph{Comm. Math. Phys.}, 341\penalty0 (1):\penalty0 219--261, 2016.
\newblock ISSN 0010-3616.
\newblock \doi{10.1007/s00220-015-2527-1}.

\bibitem[Derrida et~al.(2005)Derrida, Enaud, Landim, and
  Olla]{DerridaEnaudLandimOlla}
B.~Derrida, C.~Enaud, C.~Landim, and S.~Olla.
\newblock Fluctuations in the weakly asymmetric exclusion process with open
  boundary conditions.
\newblock \emph{J. Stat. Phys.}, 118\penalty0 (5-6):\penalty0 795--811, 2005.
\newblock ISSN 0022-4715.
\newblock \doi{10.1007/s10955-004-1989-x}.

\bibitem[Dittrich and G{\"a}rtner(1991)]{DitGar91}
P.~Dittrich and J.~G{\"a}rtner.
\newblock A central limit theorem for the weakly asymmetric simple exclusion
  process.
\newblock \emph{Math. Nachr.}, 151:\penalty0 75--93, 1991.
\newblock ISSN 0025-584X.
\newblock \doi{10.1002/mana.19911510107}.

\bibitem[Dobrushin and Hryniv(1996)]{DobrushinHryniv}
R.~Dobrushin and O.~Hryniv.
\newblock Fluctuations of shapes of large areas under paths of random walks.
\newblock \emph{Probab. Theory Related Fields}, 105\penalty0 (4):\penalty0
  423--458, 1996.
\newblock ISSN 0178-8051.
\newblock \doi{10.1007/BF01191908}.

\bibitem[Enaud and Derrida(2004)]{Enaud}
C.~Enaud and B.~Derrida.
\newblock Large deviation functional of the weakly asymmetric exclusion
  process.
\newblock \emph{J. Statist. Phys.}, 114\penalty0 (3-4):\penalty0 537--562,
  2004.
\newblock ISSN 0022-4715.
\newblock \doi{10.1023/B:JOSS.0000012501.43746.cf}.

\bibitem[Etheridge and Labb{\'e}(2015)]{EthLab15}
A.~M. Etheridge and C.~Labb{\'e}.
\newblock Scaling limits of weakly asymmetric interfaces.
\newblock \emph{Comm. Math. Phys.}, 336\penalty0 (1):\penalty0 287--336, 2015.
\newblock ISSN 0010-3616.
\newblock \doi{10.1007/s00220-014-2243-2}.

\bibitem[Funaki and Sasada(2010)]{FunaSasa}
T.~Funaki and M.~Sasada.
\newblock Hydrodynamic limit for an evolutional model of two-dimensional
  {Y}oung diagrams.
\newblock \emph{Comm. Math. Phys.}, 299\penalty0 (2):\penalty0 335--363, 2010.
\newblock ISSN 0010-3616.
\newblock \doi{10.1007/s00220-010-1082-z}.

\bibitem[G{\"a}rtner(1988)]{Gartner88}
J.~G{\"a}rtner.
\newblock Convergence towards {B}urgers' equation and propagation of chaos for
  weakly asymmetric exclusion processes.
\newblock \emph{Stochastic Process. Appl.}, 27\penalty0 (2):\penalty0 233--260,
  1988.
\newblock ISSN 0304-4149.
\newblock \doi{10.1016/0304-4149(87)90040-8}.

\bibitem[Gon{\c{c}}alves and Jara(2014)]{GJEnergy}
P.~Gon{\c{c}}alves and M.~Jara.
\newblock Nonlinear fluctuations of weakly asymmetric interacting particle
  systems.
\newblock \emph{Arch. Ration. Mech. Anal.}, 212\penalty0 (2):\penalty0
  597--644, 2014.
\newblock ISSN 0003-9527.
\newblock \doi{10.1007/s00205-013-0693-x}.

\bibitem[Gubinelli and Perkowski(2018)]{GubPerEnergy}
M.~Gubinelli and N.~Perkowski.
\newblock Energy solutions of {KPZ} are unique.
\newblock \emph{J. Amer. Math. Soc.}, 31\penalty0 (2):\penalty0 427--471, 2018.
\newblock ISSN 0894-0347.
\newblock \doi{10.1090/jams/889}.

\bibitem[Hairer(2013)]{HairerKPZ}
M.~Hairer.
\newblock {Solving the KPZ equation}.
\newblock \emph{Annals of Mathematics}, 178\penalty0 (2):\penalty0 559--664,
  June 2013.
\newblock ISSN 0003-486X.
\newblock \doi{10.4007/annals.2013.178.2.4}.

\bibitem[Hairer(2014)]{HairerReg}
M.~Hairer.
\newblock A theory of regularity structures.
\newblock \emph{Invent. Math.}, 198\penalty0 (2):\penalty0 269--504, 2014.
\newblock ISSN 0020-9910.
\newblock \doi{10.1007/s00222-014-0505-4}.

\bibitem[Haraux(1981)]{Haraux}
A.~Haraux.
\newblock \emph{Nonlinear evolution equations---global behavior of solutions},
  volume 841 of \emph{Lecture Notes in Mathematics}.
\newblock Springer-Verlag, Berlin-New York, 1981.
\newblock ISBN 3-540-10563-8.

\bibitem[Janowsky and Lebowitz(1994)]{JanLeb}
S.~A. Janowsky and J.~L. Lebowitz.
\newblock Exact results for the asymmetric simple exclusion process with a
  blockage.
\newblock \emph{J. Statist. Phys.}, 77\penalty0 (1-2):\penalty0 35--51, 1994.
\newblock ISSN 0022-4715.
\newblock \doi{10.1007/BF02186831}.

\bibitem[Kardar et~al.(1986)Kardar, Parisi, and Zhang]{KPZ86}
M.~Kardar, G.~Parisi, and Y.-C. Zhang.
\newblock {Dynamical scaling of growing interfaces}.
\newblock \emph{Phys. Rev. Lett.}, 56:\penalty0 889--892, 1986.
\newblock \doi{10.1103/PhysRevLett.56.889}.

\bibitem[Kipnis and Landim(1999)]{KipLan}
C.~Kipnis and C.~Landim.
\newblock \emph{Scaling limits of interacting particle systems}, volume 320 of
  \emph{Grundlehren der Mathematischen Wissenschaften [Fundamental Principles
  of Mathematical Sciences]}.
\newblock Springer-Verlag, Berlin, 1999.
\newblock ISBN 3-540-64913-1.
\newblock \doi{10.1007/978-3-662-03752-2}.

\bibitem[Kipnis et~al.(1989)Kipnis, Olla, and Varadhan]{KOV}
C.~Kipnis, S.~Olla, and S.~R.~S. Varadhan.
\newblock Hydrodynamics and large deviation for simple exclusion processes.
\newblock \emph{Comm. Pure Appl. Math.}, 42\penalty0 (2):\penalty0 115--137,
  1989.
\newblock ISSN 0010-3640.
\newblock \doi{10.1002/cpa.3160420202}.

\bibitem[{Labb{\'e}}(2017)]{LabbeKPZ}
C.~{Labb{\'e}}.
\newblock {Weakly asymmetric bridges and the KPZ equation}.
\newblock \emph{Comm. Math. Phys.}, 353\penalty0 (3):\penalty0 1261--1298,
  2017.
\newblock ISSN 0010-3616.
\newblock \doi{0.1007/s00220-017-2875-0}.

\bibitem[Lenglart et~al.(1980)Lenglart, L{\'e}pingle, and Pratelli]{Lepingle}
E.~Lenglart, D.~L{\'e}pingle, and M.~Pratelli.
\newblock Pr\'esentation unifi\'ee de certaines in\'egalit\'es de la th\'eorie
  des martingales.
\newblock In \emph{Seminar on {P}robability, {XIV} ({P}aris, 1978/1979)
  ({F}rench)}, volume 784 of \emph{Lecture Notes in Math.}, pages 26--52.
  Springer, Berlin, 1980.
\newblock With an appendix by Lenglart.

\bibitem[M{\'a}lek et~al.(1996)M{\'a}lek, Ne{\v{c}}as, Rokyta, and
  R{{u}}{\v{z}}i{\v{c}}ka]{Ruzicka}
J.~M{\'a}lek, J.~Ne{\v{c}}as, M.~Rokyta, and M.~R{{u}}{\v{z}}i{\v{c}}ka.
\newblock \emph{Weak and measure-valued solutions to evolutionary {PDE}s},
  volume~13 of \emph{Applied Mathematics and Mathematical Computation}.
\newblock Chapman \& Hall, London, 1996.
\newblock ISBN 0-412-57750-X.
\newblock \doi{10.1007/978-1-4899-6824-1}.

\bibitem[Mueller(1991)]{Mueller}
C.~Mueller.
\newblock On the support of solutions to the heat equation with noise.
\newblock \emph{Stochastics Stochastics Rep.}, 37\penalty0 (4):\penalty0
  225--245, 1991.
\newblock ISSN 1045-1129.

\bibitem[Otto(1996)]{Otto}
F.~Otto.
\newblock Initial-boundary value problem for a scalar conservation law.
\newblock \emph{C. R. Acad. Sci. Paris S\'er. I Math.}, 322\penalty0
  (8):\penalty0 729--734, 1996.
\newblock ISSN 0764-4442.

\bibitem[Petrov(1975)]{Petrov}
V.~V. Petrov.
\newblock \emph{Sums of independent random variables}.
\newblock Springer-Verlag, New York-Heidelberg, 1975.
\newblock Translated from the Russian by A. A. Brown, Ergebnisse der Mathematik
  und ihrer Grenzgebiete, Band 82.

\bibitem[Quastel(2012)]{QuastelKPZ}
J.~Quastel.
\newblock Introduction to {KPZ}.
\newblock In \emph{Current developments in mathematics, 2011}, pages 125--194.
  Int. Press, Somerville, MA, 2012.

\bibitem[Rezakhanlou(1991)]{Reza}
F.~Rezakhanlou.
\newblock Hydrodynamic limit for attractive particle systems on {${\bf Z}^d$}.
\newblock \emph{Comm. Math. Phys.}, 140\penalty0 (3):\penalty0 417--448, 1991.
\newblock ISSN 0010-3616.

\bibitem[Rudin(1966)]{Rudin}
W.~Rudin.
\newblock \emph{Real and complex analysis}.
\newblock McGraw-Hill Book Co., New York-Toronto, Ont.-London, 1966.

\bibitem[Spohn(2017)]{Spohn}
H.~Spohn.
\newblock The {K}ardar-{P}arisi-{Z}hang equation: a statistical physics
  perspective.
\newblock In \emph{Stochastic processes and random matrices}, pages 177--227.
  Oxford Univ. Press, Oxford, 2017.

\bibitem[Triebel(1978)]{Triebel}
H.~Triebel.
\newblock \emph{Interpolation theory, function spaces, differential operators},
  volume~18 of \emph{North-Holland Mathematical Library}.
\newblock North-Holland Publishing Co., Amsterdam-New York, 1978.
\newblock ISBN 0-7204-0710-9.

\bibitem[Vovelle(2002)]{Vovelle}
J.~Vovelle.
\newblock Convergence of finite volume monotone schemes for scalar conservation
  laws on bounded domains.
\newblock \emph{Numer. Math.}, 90\penalty0 (3):\penalty0 563--596, 2002.
\newblock ISSN 0029-599X.
\newblock \doi{10.1007/s002110100307}.

\bibitem[Walsh(1986)]{Walsh}
J.~B. Walsh.
\newblock An introduction to stochastic partial differential equations.
\newblock In \emph{\'{E}cole d'\'et\'e de probabilit\'es de {S}aint-{F}lour,
  {XIV}---1984}, volume 1180 of \emph{Lecture Notes in Math.}, pages 265--439.
  Springer, Berlin, 1986.
\newblock \doi{10.1007/BFb0074920}.

\end{thebibliography}

\end{document}